\newcommand{\der}{\delta}
\newcommand{\dn}{\cs_{n}}
\newcommand{\hddx}{\dti}
\newcommand{\hdx}{\delti}
\newcommand{\hxu}{\tilde{x}^1}
\newcommand{\hxd}{\tilde{x}^2}
\newcommand{\hz}{\hat z}
\newcommand{\hph}{\hat\phi}
\newcommand{\ka}{\kappa}
\newcommand{\iot}{\int_{0}^{t}}
\newcommand{\ist}{\int_{s}^{t}}
\newcommand{\lot}{[\ell_1,\ell_2]}
\newcommand{\norm}[1]{\lVert #1\rVert}
\newcommand{\ott}{[0,T]}
\newcommand{\ou}{[0,1]}
\newcommand{\xti}{\tilde{x}}
\newcommand{\Xti}{\tilde{X}}
\newcommand{\Yti}{\tilde{Y}}
\newcommand{\yti}{\tilde{y}}
\newcommand{\zti}{\tilde{z}}
\newcommand{\Iti}{\tilde{I}}
\newcommand{\yun}{y^{(1)}}
\newcommand{\zun}{z^{(1)}}
\newcommand{\zde}{z^{(2)}}
\newcommand{\Dti}{\tilde{D}}
\newcommand{\hphi}{\hat{\phi}}
\newcommand{\delti}{\tilde{\delta}}
\newcommand{\Lati}{\tilde{\Lambda}}
\newcommand{\cacti}{\tilde{\cac}}
\newcommand{\Rti}{\tilde{R}}
\newcommand{\Ati}{\tilde{A}}
\newcommand{\gti}{\tilde{g}}
\newcommand{\fti}{\tilde{f}}
\newcommand{\hti}{\tilde{h}}
\newcommand{\Mti}{\tilde{M}}
\newcommand{\dti}{\tilde{d}}
\newcommand{\rti}{\tilde{r}}
\newcommand{\ytiun}{\yti^{(1)}}
\newcommand{\ytide}{\yti^{(2)}}
\newcommand{\ztiun}{\zti^{(1)}}
\newcommand{\ztide}{\zti^{(2)}}
\newcommand{\yde}{y^{(2)}}
\newcommand{\deldeti}{\doubletilde{$\delta$}}
\newcommand{\cacdeti}{\doubletilde{$\mathcal{C}$}}
\newcommand{\Rdeti}{\doubletilde{$R$}}
\newcommand{\Udeti}{\doubletilde{$U$}}
\newcommand{\Adeti}{\doubletilde{$A$}}
\newcommand{\Bdeti}{\doubletilde{$B$}}
\newcommand{\Uti}{\tilde{U}}
\newcommand{\Vti}{\tilde{V}}
\newcommand{\Cti}{\tilde{C}}
\newcommand{\Qti}{\tilde{\cq}}
\newcommand{\gati}{\tilde{\gamma}}
\newcommand{\Mtil}{\tilde{M}}
\newcommand{\Fti}{\tilde{F}}
\newcommand{\Bti}{\tilde{B}}
\newcommand{\Xdeti}{\doubletilde{$X$}}
\newcommand{\Cdeti}{\doubletilde{$C$}}
\newcommand{\Xep}{X^\ep}
\newcommand{\Xtiunep}{\Xti^{1,\ep}}
\newcommand{\Xtideep}{\Xti^{2,\ep}}
\newcommand{\Xtitrep}{\Xti^{3,\ep}}
\newcommand{\Xunep}{X^{1,\ep}}
\newcommand{\Xet}{X^\eta}
\newcommand{\Xtiunet}{\Xti^{1,\eta}}
\newcommand{\Xtideet}{\Xti^{2,\eta}}
\newcommand{\Xtitret}{\Xti^{3,\eta}}
\newcommand{\Xde}{X^\Delta}
\newcommand{\Xtiunde}{\Xti^{1,\Delta}}
\newcommand{\Xtidede}{\Xti^{2,\Delta}}
\newcommand{\Xtitrde}{\Xti^{3,\Delta}}
\newcommand{\Xdetiquep}{\Xdeti^{4,\ep}}
\newcommand{\Xdetiquet}{\Xdeti^{4,\eta}}
\newcommand{\Xdetiqude}{\Xdeti^{4,\Delta}}
\newcommand{\Xepun}{X^{\ep,(1)}}
\newcommand{\Xetun}{X^{\eta,(1)}}
\newcommand{\Xepde}{X^{\ep,(2)}}
\newcommand{\Xetde}{X^{\eta,(2)}}
\DeclareMathOperator{\id}{\text{Id}}
\newcommand{\ca}{{\mathcal A}}
\newcommand{\cb}{{\mathcal B}}
\newcommand{\cac}{{\mathcal C}}
\newcommand{\ce}{{\mathcal E}}
\newcommand{\cf}{{\mathcal F}}
\newcommand{\ch}{{\mathcal H}}
\newcommand{\cj}{{\mathcal J}}
\newcommand{\cl}{{\mathcal L}}
\newcommand{\cm}{{\mathcal M}}
\newcommand{\cn}{{\mathcal N}}
\newcommand{\cq}{{\mathcal Q}}
\newcommand{\cs}{{\mathcal S}}
\newcommand{\cZ}{{\mathcal Z}}
\newcommand{\cz}{{\mathcal Z}}
\newcommand{\al}{\alpha}
\newcommand{\be}{\beta}
\newcommand{\ga}{\gamma}
\newcommand{\ep}{\varepsilon}
\newcommand{\la}{\lambda}
\newcommand{\si}{\sigma}
\newcommand{\laa}{\Lambda}
\newcommand{\N}{{\mathbb N}}
\newcommand{\R}{{\mathbb R}}
\newcommand{\lla}{\left\langle}
\newcommand{\rra}{\right\rangle}
\newcommand{\lcl}{\left\{}
\newcommand{\rcl}{\right\}}
\newcommand{\lp}{\left(}
\newcommand{\rp}{\right)}
\newcommand{\lc}{\left[}
\newcommand{\rc}{\right]}
\newcommand{\lln}{\left|}
\newcommand{\rrn}{\right|}
\newcommand{\bean}{\begin{eqnarray*}}
\newcommand{\eean}{\end{eqnarray*}}
\newcommand{\ben}{\begin{enumerate}}
\newcommand{\een}{\end{enumerate}}
\newcommand{\beq}{\begin{equation}}
\newcommand{\eeq}{\end{equation}}
\newtheorem{theorem}{Theorem}[section]
\newtheorem{corollary}[theorem]{Corollary}
\newtheorem{definition}[theorem]{Definition}
\newtheorem{hypothesis}{Hypothesis}
\newtheorem{lemma}[theorem]{Lemma}
\newtheorem{proposition}[theorem]{Proposition}
\theoremstyle{remark}
\newtheorem{remark}[theorem]{Remark}
\begin{document}

\title[Rough Volterra equations]{Rough Volterra equations 2: convolutional generalized integrals}
\author{Aurélien Deya \and Samy Tindel}
\address{
{\it Aurélien Deya and Samy Tindel:}
{\rm Institut {\'E}lie Cartan Nancy, Nancy-Universit\'e, B.P. 239,
54506 Vand{\oe}uvre-l{\`e}s-Nancy Cedex, France}.
{\it Email: }{\tt deya@iecn.u-nancy.fr}, {\tt tindel@iecn.u-nancy.fr}
}

\keywords{Rough paths theory; Stochastic Volterra equations; Fractional Brownian motion.}

\subjclass[2000]{60H05, 60H07, 60G15}

\date{\today}

\begin{abstract}
We define and solve Volterra equations driven by an irregular signal, by means of a variant of the rough path theory allowing to handle generalized integrals weighted by an exponential coefficient. The results are applied to the fractional Brownian motion with Hurst coefficient $H>1/3$.
\end{abstract}

\maketitle

\tableofcontents

\section{Introduction}

Let $x$ be a general $n$-dimensional Hölder continuous path with Hölder exponent $\ga>0$, an initial condition $a\in\R^d$, and $\si:\R_+\times\R_+\times\R^d\to\R^{d,n}$ a smooth enough function. Then a general form of stochastic Volterra equation driven by $x$ (considered as a noisy input) can be written as:
\begin{equation}\label{eq:volterra}
y_t=a+\int_0^t \si(t,u,y_u)\, dx_u,
\quad\mbox{ for }\quad
s\in[0,T],
\end{equation}
where $T$ an arbitrary positive constant. This kind of system being widely used in the physical and biological literature, its noisy version has also been intensively studied when the driving motion $x$ is a Brownian motion  \cite{BM1,BM2,Le} or a general semi-martingale \cite{Pr}. If the coefficient $\si$ is also considered as a random function, which is natural in  many situations, some anticipative stochastic calculus techniques are required in order to solve equation (\ref{eq:volterra}), and we refer to \cite{AN,CLP,CD,OZ,NR,PP} for the main results in this direction. It should be mentioned at this point that the last of those references \cite{PP} is motivated by financial models of capital growth rate, which goes beyond the classical physical or biological applications of Volterra equations.

\smallskip

It seems then quite natural to  generalize the aforementioned results, and consider systems like (\ref{eq:volterra}) driven by general continuous processes, whose prototype can be thought of as a $n$-dimensional fractional Brownian motion. In this case, and when one desires to go beyond the Young case $\ga>1/2$, rough paths type techniques must come into the picture. However, the classical rough path theory introduced by Terry Lyons \cite{LQ-bk} (see also the nice introductions \cite{FV,Le}) is mostly designed to handle the case of diffusion type equations, and there have been an intensive activity during the last couple of years in order to extend these semi-pathwise techniques to other systems, such as delay equations \cite{NNT} or PDEs \cite{CF,GT}. The current article fits then into this global project, and we shall see how to perturb the original rough path setting in order to handle systems like (\ref{eq:volterra}).

\smallskip

Before we come to a description of our main results, let us mention a few choices we have made for this paper:

\smallskip

\noindent
{\it (i)} Like in \cite{NNT,GT}, we have chosen to work with a variant of the rough path theory introduced by Gubinelli in \cite{Gu}, and called algebraic integration. This method is based on some simple enough algebraic considerations, and this relative simplicity makes it amenable to intuitions on possible generalizations of the original setting, beyond the diffusion case. In the case of Volterra equations handled here, we will see that, in spite of the huge amount of technical details involved in our proofs, the main ideas on which our constructions rely are quite natural.

\smallskip

\noindent
{\it (ii)} We have specialized equation (\ref{eq:volterra}) in the following manner: instead of considering a general coefficient of the form  $\si(t,u,y_u)$, we have assumed that the coefficient $\si$ can be decomposed under the form $\phi(t-u) \, \si(x)$, for a given kernel $\phi:\R_+\to\R$ and a matrix-valued function $\si$ defined on $\R^d$. Furthermore, an additional hypothesis is made on the kernel $\phi$: we assume that it can be written as the Laplace transform of a certain function $\hph$, namely that
\beq\label{eq:dcp-phi-laplace-intro}
\phi(v)=\int_0^\infty e^{-v \xi} \hat\phi(\xi) \, d\xi, 
\quad\mbox{and}\quad
\int_0^\infty (1+\xi)^\beta | \hat\phi(\xi) | \, d\xi <\infty,
\eeq
for a certain $\beta>0$. This additional assumption is made in order to take advantage of the multiplicative property of the exponential function, and it should be noticed here that the same kind of results could have been obtained by means of Fourier (instead of Laplace) transforms. The integrability hypothesis on $\hat\phi(\xi)$ is morally equivalent to a regularity condition on our kernel $\phi$. Once these assumptions are made, and up to a an application of Fubini's theorem which can be justified easily in case of a smooth driving process $x$, our Volterra system can be written as:
\beq\label{eq:sde-laplace-1}
y_t=a+\int_0^\infty d\xi\, \hph(\xi) \int_0^t e^{- \xi(t-u)}\,  \si(y_u) \, dx_u .
\eeq

\smallskip

\noindent
{\it (iii)} An additional  cosmetic change is the following: in order to ease some of our future expansions, we transpose the matrix notations given before and set $y_t\equiv y_t^*$. With this little change in the usual notations, one is left with the following system:
\beq\label{eq:sde-laplace}
y_t=a+\int_0^\infty d\xi\, \hph(\xi) \int_0^t e^{- \xi(t-u)} \, dx_u \,  \si(y_u).
\eeq
This is the general form under which we shall solve our Volterra problem.

\smallskip

With these preliminaries in hand, the main results contained in this paper can be roughly summarized as follows (see Theorem \ref{main-theorem-rough} below for a precise statement):
\begin{theorem}\label{thm:1.1}
Let $x$ be a $n$-dimensional fractional Brownian motion with Hurst parameter $H>1/3$. Assume that $\phi$ can be decomposed as (\ref{eq:dcp-phi-laplace-intro}), with $\beta=2$, and that $\si$ is a $\cac^{3,\textbf{\textit{b}}}$-function. Then equation (\ref{eq:sde-laplace}) admits a unique solution on any arbitrary interval $\ott$, in a class of paths called convolutional controlled processes, and where the integral with respect to $x$ has to be interpreted as in Proposition \ref{prop:intg-controlled-proc}.
\end{theorem}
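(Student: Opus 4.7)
The plan is to recast \eqref{eq:sde-laplace} as a fixed-point problem $y = \Gamma(y)$ in a tailor-made Banach space of convolutional controlled processes, and to solve it by a Picard iteration; the construction of the map $\Gamma$ itself is already provided by Proposition \ref{prop:intg-controlled-proc}, which defines the inner integral $z^\xi_t(y) := \int_0^t e^{-\xi(t-u)}\, dx_u\, \si(y_u)$ for each fixed $\xi \geq 0$. The outer integration $\Gamma(y)_t := a + \int_0^\infty \hph(\xi)\, z^\xi_t(y)\, d\xi$ is then a genuine Lebesgue integral in $\xi$, so the whole strategy rests on producing $\xi$-uniform bounds in the controlled-process semi-norms.

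First I would collect the quantitative estimates of Proposition \ref{prop:intg-controlled-proc} with explicit dependence on $\xi$; the semigroup identity $e^{-\xi(t-u)} = e^{-\xi(t-s)}e^{-\xi(s-u)}$ plays a central role here, since it makes the exponential weight compatible with the $\delta$-operator and hence with the sewing construction. Because $H>1/3$, the standard second order expansion of the increment of $z^\xi$ is required, which forces the exponentially weighted L\'evy area of $x$ into the picture. The key quantitative claim is that all relevant semi-norms of $z^\xi(y)$ grow at most polynomially in $(1+\xi)$ with exponent at most $\beta=2$, so that the outer integral converges absolutely in view of \eqref{eq:dcp-phi-laplace-intro}.

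Once $\Gamma$ has been shown to map the convolutional controlled process class to itself, I would carry out the fixed-point argument on a sufficiently small sub-interval $[0,T_0]$. The three bounded derivatives of $\si$ (the $\cac^{3,\textbf{\textit{b}}}$ hypothesis), together with the usual gain of a small power $T_0^{\eta}$ in the rough-path semi-norms, will make $\Gamma$ a strict contraction, yielding existence and uniqueness locally. The solution is then extended to the whole interval $\ott$ by concatenation: restarting the equation at time $T_0$ preserves the convolutional structure, provided one treats the piece $\int_0^{T_0} e^{-\xi(t-u)}\, dx_u\, \si(y_u)$ already constructed as an affine perturbation of the restarted problem, which remains a convolutional controlled process through its dependence on $t$ via the exponential kernel.

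The main obstacle will be to keep, throughout the argument, a sharp enough control on the $\xi$-dependence of both the norm of $z^\xi(y)$ and of the Lipschitz constant of $y \mapsto z^\xi(y)$, so that the outer Lebesgue integral against $(1+\xi)^\beta |\hph(\xi)|\, d\xi$ produces finite and contracting bounds. This demands meticulous bookkeeping through every step of the rough path machinery, in particular through the sewing/$\Lambda$ map used in Proposition \ref{prop:intg-controlled-proc}. For the fractional Brownian motion part of the statement, it then suffices to verify by a moment calculation, in the spirit of the classical fBm L\'evy area construction, that the exponentially weighted second order enhancement of $x$ is almost surely bounded with norm integrable against $(1+\xi)^2\, d\xi$; this is precisely where the threshold $H>1/3$ and the choice $\beta = 2$ match up.
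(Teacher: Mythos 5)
Your overall architecture (twisted/convolutional sewing, controlled paths, fixed point, then a stochastic enhancement of $x$) matches the paper's, but two steps that you treat as routine are precisely where the paper has to work hardest, and as written your argument has gaps there. First, the globalization. In the rough regime the stability estimate for the map $\Gamma$ is \emph{quadratic} in the controlled norm (see (\ref{eqn:resultat})): one gets a bound of the form $c\{1+\ep^{\ga-\ka}\cn[\yti;\Qti^\ka]^2+\dots\}$, and moreover the ``affine perturbation'' coming from the past is not harmless: after restarting at a time $l$, the increment $f_{ts}=\int_0^\infty d\xi\,\hphi(\xi)a_{ts}(\xi)e^{-\xi(s-l)}\hti(\xi)$ has norm proportional to $\cn[\hti;\cl_1]$, i.e.\ to the Laplace-variable $L^1$ size of the restarted initial condition, and this quantity is not bounded uniformly over the successive intervals. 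Consequently a Picard contraction on a fixed small $[0,T_0]$ followed by concatenation with the same interval length does not close: both the invariant-ball radius and the admissible interval length must be recalibrated at each step. The paper resolves this with intervals of shrinking length $\ep_n=1/(N+n)$, balls of polynomially growing radius $(N+n)^{\al_2}$, a growth control $\cn[\yti_{l^N_{n+1}};\cl_1]\le (N+n+1)^{\al_1}$ (condition (H)), and a compatibility system on $(\al_1,\al_2)$; some argument of this kind is needed, and your proposal does not supply it. (This is also why the companion-paper approach without the twisted operator $\delti$ fails to give global existence.)

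Second, the probabilistic input for $1/3<H<1/2$ is far from ``a moment calculation in the spirit of the classical L\'evy area''. One must construct not only the exponentially weighted area $\Xti^2$ but also the three-parameter increment $\Xti^3$, verify the algebraic relation $\delti\Xti^2=\Xti^1\otimes x^1+\Xti^3$ in the limit, and obtain almost-sure H\"older bounds measured in the weighted $\cl_1$-in-$\xi$ norms; this requires Garsia--Rodemich--Rumsey lemmas adapted to the twisted operators $\delti$ and $\deldeti$, and a construction of the iterated integrals themselves, since for $H<1/2$ they are not a priori defined. The paper explicitly avoids the Stratonovich/Malliavin route (the trace term is delicate for exponentially weighted kernels) and instead uses Unterberger's analytic approximation of fBm, with contour-deformation estimates of the kernel $(-i(z-w)+\ep)^{2H-2}$; the choice $\beta=2$ is forced at exactly this point, through the variance bound of the convolutional area which carries a factor $(1+\xi^2)$ (Lemma \ref{lem:lemme-cinq}). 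Your one-sentence appeal to a standard moment computation therefore leaves the main analytic content of the fBm case unproved; you would need either to reproduce such a construction or to justify why the classical Stratonovich trace-term analysis goes through with the exponential weights.
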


\smallskip

Let us now say a few words about the methodology we have adopted in order to solve our equation: as mentioned before, it consists in an elaboration of the tools introduced in \cite{Gu}. Let us recall that these latter reference relies on the definition of an elementary operator $\delta$, which transforms for instance a function $f$ of one variable $t\in\ott$ into a function of two variables as $(\der f)_{ts}=f_t-f_s$. Under some algebraic and analytic conditions, this operator $\der$ can be inverted, its inverse is called $\Lambda$, and this inverse allows to construct a generalized integral of Young type. If one wants to solve an equation of the form $dy_t= dx_t \, \si(y_t)$, a possible strategy is then the following: remark first that the a priori increments of $y$ can be decomposed as:
\beq\label{eq:weak-ctrl-intro}
(\der y)_{ts}= (\der x)_{ts} \, \zeta_s + r_{ts},
\quad\mbox{with}\quad
\zeta_s=\si(y_s), \,\mbox{and } r_{ts}= \int_s^t dx_u \, \lc \si(y_u)-\si(y_s) \rc.
\eeq
Furthermore, if $x$ is a $\ga$-Hölder process, one also expects $y$ to be $\ga$-Hölder continuous. Thus, if $\si$ is regular enough, $\zeta$ will inherit the same regularity, and it is also easily conceived that $r$ should have the double regularity, namely $2\ga$-Hölder. This is precisely the structure asked in \cite{Gu} for the solution to the diffusion-type equation $dy_t= dx_t \, \si(y_t)$, and a process admitting the decomposition (\ref{eq:weak-ctrl-intro}) is called a weakly controlled processes. The second important point in the theory is to notice that a reasonable definition of the integral $\int_s^t dx_u \, [ \si(y_u)-\si(y_s) ]$ can be given for a controlled process, provided that the so-called Levy area associated to $x$ (formally defined as $\int\int dx dx$) can be constructed, and thanks to the operator $\Lambda$ mentioned above. This integration step transforms a weakly controlled process into a weakly controlled process, and allows to settle a fixed point argument for the resolution of the diffusion equation driven by $x$.

\smallskip

Let us try to explain now what has to be changed to the original algebraic integration setting in order to handle the case of a Volterra equations: 

\smallskip

\noindent
{\it (i)}
Observe first that in order to solve equation (\ref{eq:sde-laplace}), the main step is to define accurately the rough integral $\yti_t(\xi)\equiv\int_0^t e^{- \xi(t-u)} \, dx_u \,  \si(y_u)$ for all $t,\xi\in\R_+$. As will be explained at Section \ref{sec:heuristic-young}, an important step in this direction is to note that, in order to get some increments of $\yti$ involving only integrals of the form $\ist$, one has to introduce some twisted increments of the form $\delti \yti_{ts}(\xi)\equiv \der \yti_{ts}(\xi)-(e^{-\xi(t-s)}-1)\yti_s(\xi)$. Then it is easily checked, in case of smooth paths $y$ and $x$, that the a priori twisted increments of the solution $\yti$ to equation (\ref{eq:sde-laplace}) can be expressed as 
$$
\delti \yti_{ts}(\xi) = \ist e^{-\xi(t-v)} \, dx_v \, \si(y_v).
$$
The operator $\delti$ will thus play a central role in our computations, a fact which is reminiscent from the calculations contained in \cite{GT} for the definition of rough PDEs. It turns out that the operator $\delti$ can also be inverted under some algebraic and analytic conditions. This inverse gives then raise to a generalized convolutional Young integral, which is at the core of our definition of the integral $\int_0^t e^{- \xi(t-u)} \, dx_u \,  \si(y_u)$.

\smallskip

\noindent
{\it (ii)}
The notion of controlled paths has also to be changed for the resolution of equation (\ref{eq:sde-laplace}), and we shall introduce a notion of convolutional controlled path, which will be detailed at Section \ref{sec:conv-ctrl-path}. They are basically defined as in equation (\ref{eq:weak-ctrl-intro}), except that $\der x$ is replaced by an increment of the form $x^1$, with $x^1_{ts}=\ist \phi(t-v)\, dx_v$, which is assumed to exist once and for all. Then as in the diffusion case, we are able to define a natural extension of the notion of integral for those convolutional controlled processes, provided that some double iterated integrals based on $x$ can be defined. More specifically, the equivalent of the notion of Levy area in our Volterra context is an increment indexed by the Laplace variable, of the form:
\beq\label{eq:def-tx2-intro}
\tilde x^2_{ts}(\xi)=\int_0^\infty d\eta\, \hph(\eta) 
\ist e^{- \xi(t-v_1)} \, dx_{v_1} \int_s^{v_1} e^{- \eta(v_1-v_2)} \, dx_{v_2}.
\eeq
Here again, this definition is only formal in case of a Hölder path $x$, but once it is assumed to exist and to satisfy suitable analytic and algebraic hypotheses, a good notion of integral can be constructed for convolutional controlled processes. This allows again a fixed point procedure in order to solve our Volterra equation. A quick glimpse at the proof of Theorem \ref{main-theorem-rough} will show however that this fixed point procedure is trickier than in the diffusion case.

\smallskip

\noindent
{\it (iii)} An essential step in our approach is thus a good definition of the double integral (\ref{eq:def-tx2-intro}), and the study of its regularity in $(s,t)$. This can be done quite easily (up to some Garsia type regularity theorems which have to be proven) when $x$ is taken as a Brownian motion, and when the integrals with respect to $x$ are interpreted in the Itô sense. However, an important part of the current article will be devoted to the definition of (\ref{eq:def-tx2-intro}) when $x$ is a fractional Brownian motion with $1/3<H<1/2$. Here again, the path we have followed in order to obtain this definition is not completely standard, and let us say a few words about it. Indeed, the usual way to define a double integral like (\ref{eq:def-tx2-intro}) in case of a fBm is to use Stratonovich integrals, in the sense of the Malliavin calculus as explained in \cite{Nu-cours}. However, this way to compute our iterated integrals involves a decomposition of the Stratonovich integral into a Skorokhod type term plus a trace term, which is hard (though not impossible) to analyze in case of an exponentially weighted integral like ours. We have thus decided to adopt another strategy, and have resorted to an analytical approximation of the fractional Brownian motion introduced in \cite{Un}. This latter approximation, which will be recalled at Section \ref{sec:applic-rough-fbm}, has the advantage to yield almost explicit and elementary computations, based on the analysis of singularities for some locally analytic functions defined on the complex plane. In our opinion, the calculations we obtain are thus more elegant than in the Malliavin calculus setting.

\smallskip

As we mentioned before, the resolution of rough Volterra equations relies thus on a few simple ideas. These ideas are however long to formalize when one wishes to give most of the details of the calculations, which explains the bulk of the  current article. It should also be mentioned at this point that we could have tried to solve equation (\ref{eq:volterra}), in its general form, without recurring to twisted convolutional increments as we did, by just following the standard algebraic integration formalism. This is in fact what we have done in the companion paper \cite{DT}, and this idea works fine for the Young case, namely for a Hölder coefficient $\ga>1/2$. However,  this first method fails to give a global existence result in the case $1/3<\ga<1/2$, mainly because the Picard iterations don't lead to a contraction property (we refer the reader to \cite{DT} for a more detailed argument). This important drawback justifies the introduction of the convolutional generalized integration we have used in the current paper.

Here is how our article is organized: we recall some basic definitions of algebraic integration at Section \ref{sec:alg-integration}. Section \ref{sec:Young-volterra} is devoted to the simpler case of Young equations, which allows to explain our method with less technical apparatus. Then at Section \ref{sec:rough-volterra} we move to the rough case of our Volterra equation, and explain all the details of the method we have chosen in order to solve it. Finally, we apply our theory to the fractional Brownian motion case at Section \ref{sec:applic-rough-fbm}.

\section{Algebraic integration}
\label{sec:alg-integration}

This section is devoted to recall the very basic elements of the algebraic integration theory introduced in \cite{Gu}, in order to fix notations for the remainder of the paper. We also include a proof of the existence of the so-called sewing map $\Lambda$ which is simpler than the one contained in the original paper \cite{Gu}, and is even a further simplification of the proof proposed in \cite{GT}.

\subsection{Increments}\label{sec:incr}

As mentioned in the introduction, the extended integral we deal
with is based on the notion of increment, together with an
elementary operator $\der$ acting on them. The notion of increment can be introduced in the following way:  for two arbitrary real numbers
$\ell_2>\ell_1\ge 0$, a vector space $V$, and an integer $k\ge 1$, we denote by
$\cac_k(V)$ the set of continuous functions $g : [\ell_1,\ell_2]^{k} \to V$ such
that $g_{t_1 \cdots t_{k}} = 0$
whenever $t_i = t_{i+1}$ for some $i\le k-1$.
Such a function will be called a
\emph{$(k-1)$-increment}, and we will
set $\cac_*(V)=\cup_{k\ge 1}\cac_k(V)$. The operator $\der$
alluded to above can be seen as an operator acting on
$k$-increments, 
and is defined as follows on $\cac_k(V)$:
\begin{equation}
  \label{eq:coboundary}
\delta : \cac_k(V) \to \cac_{k+1}(V) \qquad
(\delta g)_{t_1 \cdots t_{k+1}} = \sum_{i=1}^{k+1} (-1)^i g_{t_1
  \cdots \hat t_i \cdots t_{k+1}} ,
\end{equation}
where $\hat t_i$ means that this particular argument is omitted.
Then a fundamental property of $\der$, which is easily verified,
is that
$\delta \delta = 0$, where $\delta \delta$ is considered as an operator
from $\cac_k(V)$ to $\cac_{k+2}(V)$.
 We will denote $\cZ\cac_k(V) = \cac_k(V) \cap \text{Ker}\delta$
and $\cb \cac_k(V) =\cac_k(V) \cap \text{Im}\delta$.

\smallskip

Some simple examples of actions of $\der$,
which will be the ones we will really use throughout the paper,
 are obtained by letting
$g\in\cac_1$ and $h\in\cac_2$. Then, for any $t,u,s\in\lot$, we have
\begin{equation}
\label{eq:simple_application}
  (\der g)_{ts} = g_t - g_s,
\quad\mbox{ and }\quad
(\der h)_{tus} = h_{ts}-h_{tu}-h_{us}.
\end{equation}
Furthermore, it is readily checked that
the complex $(\cac_*,\delta)$ is \emph{acyclic}, i.e.
$\cZ \cac_{k+1}(V) = \cb \cac_{k}(V)$ for any $k\ge 1$. In particular, the following basic property, which we
label  for further use, holds true:
\begin{lemma}\label{exd}
Let $k\ge 1$ and $h\in \cz\cac_{k+1}(V)$. Then there exists a (non unique)
$f\in\cac_{k}(V)$ such that $h=\der f$.
\end{lemma}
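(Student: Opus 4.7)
The lemma is just the acyclicity of the complex $(\cac_*, \der)$ in positive degrees, as stated immediately above its statement, and the natural strategy is to exhibit an explicit primitive $f$ of $h$ via a contracting homotopy. Fix once and for all a point $a \in [\ell_1,\ell_2]$ (for definiteness $a = \ell_1$) and set
\begin{equation*}
f_{t_1 \cdots t_k} := -\, h_{a\, t_1 \cdots t_k}, \qquad t_1,\ldots,t_k \in [\ell_1,\ell_2].
\end{equation*}

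First I would check that $f$ genuinely belongs to $\cac_k(V)$. Continuity of $f$ is inherited at once from continuity of $h$, and if $t_i = t_{i+1}$ for some $i \le k-1$, then in the tuple $(a, t_1, \ldots, t_k)$ the entries in positions $i+1$ and $i+2$ coincide, which forces $h_{a\, t_1 \cdots t_k} = 0$ by the defining property of $\cac_{k+1}(V)$. Hence $f \in \cac_k(V)$.

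Next I would apply the hypothesis $\der h = 0$ to the tuple $(a, t_1, \ldots, t_{k+1}) \in [\ell_1,\ell_2]^{k+2}$ and isolate the contribution of the first index. From definition (\ref{eq:coboundary}) one has
\begin{equation*}
0 \;=\; (\der h)_{a\, t_1 \cdots t_{k+1}}
\;=\; -\, h_{t_1 \cdots t_{k+1}} \;+\; \sum_{i=1}^{k+1} (-1)^{i+1}\, h_{a\, t_1 \cdots \hat t_i \cdots t_{k+1}},
\end{equation*}
so that, recalling the definition of $f$,
\begin{equation*}
h_{t_1 \cdots t_{k+1}} \;=\; \sum_{i=1}^{k+1} (-1)^{i+1}\, h_{a\, t_1 \cdots \hat t_i \cdots t_{k+1}} \;=\; \sum_{i=1}^{k+1} (-1)^i\, f_{t_1 \cdots \hat t_i \cdots t_{k+1}} \;=\; (\der f)_{t_1 \cdots t_{k+1}}.
\end{equation*}
This already proves $h = \der f$.

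Non-uniqueness is then immediate: any $g \in \cz\cac_k(V)$ (for instance a $\der$-image of an element of $\cac_{k-1}(V)$ when $k \ge 2$, or any constant $V$-valued map when $k = 1$) can be added to $f$ without affecting the identity $\der f = h$. The whole proof is thus a piece of purely algebraic bookkeeping on signs; the only place where one should tread carefully is the index shift when rewriting the alternating sum of $(\der h)_{a\, t_1 \cdots t_{k+1}}$ as a combination of $f$-values, and that is really the extent of the ``difficulty'' here.
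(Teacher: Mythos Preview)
Your proof is correct and follows essentially the same contracting-homotopy idea that the paper uses (it is only spelled out there in the twisted setting, Proposition~\ref{acyc}, with a reference to \cite{Gu} for the untwisted complex). The only cosmetic difference is that the paper appends the fixed point at the \emph{end}, setting $B_{t_1\cdots t_k}=h_{t_1\cdots t_k\, s}$ with $s=\ell_1$ and then observing $\der B=(-1)^{k+1}h$, whereas you prepend it at the beginning; both are standard chain homotopies and the sign bookkeeping you give is accurate.
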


\noindent
Observe that Lemma \ref{exd} implies that all the elements
$h \in\cac_2(V)$ such that $\der h= 0$ can be written as $h = \der f$
for some (non unique) $f \in \cac_1(V)$. Thus we get a heuristic
interpretation of $\der |_{\cac_2(V)}$:  it measures how much a
given 1-increment  is far from being an {\it exact} increment of a
function (i.e. a finite difference).

\smallskip

Notice that our future discussions will mainly rely on
$k$-increments with $k \le 2$, for which we will use some
analytical assumptions. Namely,
we measure the size of these increments by H\"older norms
defined in the following way: for $f \in \cac_2(V)$ let
$$
\norm{f}_{\mu} \equiv
\sup_{s,t\in\lot}\frac{\lVert f_{ts}\rVert_V}{|t-s|^\mu},
\quad\mbox{and}\quad
\cac_1^\mu(V)=\lcl f \in \cac_2(V);\, \norm{f}_{\mu}<\infty  \rcl.
$$
In the same way, for $h \in \cac_3(V)$, set
\begin{eqnarray}
  \label{eq:normOCC2}
  \norm{h}_{\gamma,\rho} &=& \sup_{s,u,t\in\lot}
\frac{\lVert h_{tus}\rVert_V}{|u-s|^\gamma |t-u|^\rho}\\
\norm{h}_{\mu} &\equiv &
\inf\left \{\sum_i \norm{h_i}_{\rho_i,\mu-\rho_i} ;\, h  =\sum_i h_i,\, 0 < \rho_i < \mu \right\} ,\nonumber
\end{eqnarray}
where the last infimum is taken over all sequences $\{h_i \in \cac_3(V) \}$ such that $h
= \sum_i h_i$ and for all choices of the numbers $\rho_i \in (0,z)$.
Then  $\norm{\cdot}_\mu$ is easily seen to be a norm on $\cac_3(V)$, and we set
$$
\cac_3^\mu(V):=\lcl h\in\cac_3(V);\, \norm{h}_{\mu}<\infty \rcl.
$$
Eventually,
let $\cac_3^{1+}(V) = \cup_{\mu > 1} \cac_3^\mu(V)$,
and remark that the same kind of norms can be considered on the
spaces $\cZ \cac_3(V)$, leading to the definition of some spaces
$\cZ \cac_3^\mu(V)$ and $\cZ \cac_3^{1+}(V)$. In order to avoid ambiguities, we shall denote by $\cn[f;\, \cac_j^\kappa]$ the $\kappa$-Hölder norm on the space $\cac_j$, for $j=1,2,3$. For $\zeta\in\cac_j(V)$, we also set $\mathcal{N}[\zeta;\mathcal{C}_{j}^{0}(V)]=\sup_{s\in[\ell_1; \ell_2]^j}\lVert \zeta_s\rVert_{V}$.

\vspace{0.2cm}

With these notations in mind,
the following proposition is a basic result which is at the core of our approach to path-wise integration:

\begin{theorem}[The sewing map] \label{prop:Lambda}
Let $\mu >1$. For any $h\in \cz \cac_3^\mu([0,1]; V)$, there exists a unique $\Lambda h \in \cac_2^\mu([0,1];V)$ such that $\der( \Lambda h )=h$. Furthermore,
\begin{eqnarray} \label{contraction}
\norm{ \Lambda h}_\mu \leq c_\mu  \, \cn[h;\, \cac_3^{\mu}(V)],
\end{eqnarray}
with $c_\mu =2+2^\mu \sum_{k=1}^\infty k^{-\mu}$. This gives rise to a linear continuous map $\laa:  \cz \cac_3^\mu([0,1]; V) \rightarrow \cac_2^\mu([0,1];V)$ such that $\der \laa =\id_{ \cz \cac_3^\mu([0,1]; V)}$.
\end{theorem}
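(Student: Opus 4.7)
The plan is to split the proof into separate uniqueness and existence arguments. Uniqueness would follow directly from Lemma \ref{exd} combined with the assumption $\mu>1$, while existence would be obtained by constructing $\Lambda h$ explicitly as a limit of Riemann-type sums of $h$, where the hypothesis $\delta h=0$ plays a crucial role.

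For uniqueness, I would suppose that $f_1,f_2\in\cac_2^\mu([0,1];V)$ both satisfy $\delta f_i=h$, and note that their difference $\phi:=f_1-f_2$ lies in $\cz\cac_2(V)$. By Lemma \ref{exd} one has $\phi=\delta g$ for some $g\in\cac_1(V)$; the bound $|g_t-g_s|=|\phi_{ts}|\le \norm{\phi}_\mu\,|t-s|^\mu$ with $\mu>1$ would force $g$ to be constant, hence $\phi=0$.

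For existence, I would proceed by Riemann sum approximation. For $0\le s<t\le 1$ and a partition $\pi=\{s=u_0<\cdots<u_N=t\}$ of $[s,t]$, consider
\[
S_\pi^{s,t}:=\sum_{k=1}^{N-1}h_{u_{k+1},u_k,s},
\]
with the convention $S_{\{s,t\}}^{s,t}=0$. Using $\delta h=0$, a short computation would show that inserting a point $v\in(u_k,u_{k+1})$ gives $S_{\pi'}^{s,t}-S_\pi^{s,t}=h_{u_{k+1},v,u_k}$, of size at most $\cn[h;\,\cac_3^\mu(V)]\,(u_{k+1}-u_k)^\mu$. Running this procedure in reverse, from $\pi$ down to $\{s,t\}$, and at each stage removing the intermediate point where $u_{j+1}-u_{j-1}$ is smallest (hence bounded by $2(t-s)/k$ when $k$ intermediate points remain, since these gaps sum to at most $2(t-s)$), would yield the telescoping estimate
\[
|S_\pi^{s,t}|\le 2^\mu\,\cn[h;\,\cac_3^\mu(V)]\,(t-s)^\mu\sum_{k=1}^{N-1}k^{-\mu}.
\]
Since $\mu>1$, this controls $S_\pi$ uniformly, and a comparable estimate for two arbitrary partitions via their common refinement would show that the family $\{S_\pi^{s,t}\}$ is Cauchy as $|\pi|\to 0$. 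The limit $\Lambda h_{ts}$ would then satisfy the claimed H\"older estimate, with the constant $c_\mu$ accommodating a minor boundary correction from the passage to the limit.

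The final step would be to verify $\delta(\Lambda h)=h$. Given $s<u<t$, I would concatenate partitions $\pi''$ of $[s,u]$ and $\pi'$ of $[u,t]$ into a partition $\pi$ of $[s,t]$, then use $\delta h=0$ applied to quadruples of the form $(\tilde u_{k+1},\tilde u_k,u,s)$ to rewrite each term $h_{\tilde u_{k+1},\tilde u_k,s}$ coming from the $[u,t]$ side as $h_{\tilde u_{k+1},\tilde u_k,u}$ plus telescoping boundary contributions. The identity
\[
S_\pi^{s,t}=S_{\pi''}^{s,u}+S_{\pi'}^{u,t}+h_{tus}
\]
would emerge, and passing to the limit would give $\Lambda h_{ts}=\Lambda h_{tu}+\Lambda h_{us}+h_{tus}$, i.e.\ $\delta\Lambda h=h$. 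The main obstacle is precisely this last step: one must invoke $\delta h=0$ twice (once for convergence of the Riemann sums under insertion, once for compatibility under splitting at $u$), and track carefully how the third argument of $h$ interacts with the partition. Modulo these algebraic manipulations, the analytical core of the proof reduces to the summability of $\sum k^{-\mu}$ for $\mu>1$.
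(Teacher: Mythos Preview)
Your argument is correct and is essentially the paper's proof in a different dress: your Riemann sums $S_\pi^{s,t}=\sum_{k=1}^{N-1} h_{u_{k+1},u_k,s}$ coincide exactly with the paper's $M^n_{ts}$ once one writes $h=\delta B$ and telescopes (one finds $S_\pi^{s,t}=B_{ts}-\sum_{k=0}^{N-1}B_{u_{k+1},u_k}$), and the decisive greedy point-removal estimate---choosing at each stage the interior point with minimal $u_{j+1}-u_{j-1}\le 2(t-s)/k$---is identical in both proofs. The only cosmetic difference is that the paper establishes convergence along the single sequence of dyadic partitions of $[0,1]$, which delivers continuity of $\Lambda h$ for free as a uniform limit of continuous $M^n$, whereas your Cauchy argument via common refinements treats each pair $(s,t)$ separately and would require one extra line to recover joint continuity.
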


\begin{proof}
The original proof of the existence of $\laa$ (with a somewhat different constant $C_\mu$) can be found in \cite{Gu} and has been simplified in \cite{GT}. We give here a more elementary proof, which will be easy to adapt to the pertubated incremental operator $\delti$ (see Section \ref{sect:convol-increments}).

\smallskip

\noindent
{\it Uniqueness.} 
Let $M,M' \in \cac_2^\mu$ such that $\der M=\der M' =h$. In particular, $\der(M-M')=0$, and thus, according to \ref{exd}, $M-M'=\der q$, with $q\in \cac_1$. But then $q \in \cac_1^\mu$ with $\mu>1$, hence $q$ is constant, and as a consequence, $M=M'$.

\smallskip

\noindent
{\it Existence.}
By definition of $\cz \cac_3^\mu$, we know that there exists $B \in \cac_2$ such that $\der B=h$. Consider now the sequence $(\pi^n)_n$ of dyadic partitions of $[0,1]$, that is
$$\pi^n=\{ 0=t_0^n \leq t_1^n \leq \dots \leq t_{2^n}^n =1\}, \ \mbox{with} \ t_i^n=\frac{i}{2^n},$$
and set, for all $s,t\in [0,1]$,

$$M^n_{ts}=\begin{cases}
0 & \mbox{if} \ \pi^n \cap (s,t)=\emptyset,\\
B_{ts}-B_{t_{j}^n s}-B_{t t_{j}^n} & \mbox{if} \ \pi^n \cap (s,t)=\{t_{j}^n\},\\
B_{ts}-B_{t_{j}^n s}-B_{t t_{l}^n}-\sum_{i=j}^{l-1} B_{t_{i+1}^n t_i^n} & \mbox{if} \ \pi^n \cap (s,t)=\{t_j^n \leq \dots \leq t_l^n \}.
\end{cases}$$
It is readily checked that the mapping $M^n:s,t \mapsto M^n_{ts}$ is continuous on $[0,1]^2$. We are now going to show that the sequence $(M^n)_{n\in \mathbb{N}}$ converges in the space $\cac([0,1]^2;V)$ of continuous functions on $[0,1]^2$, endowed with the norm $\cn[\cdot\, ;\, \cac_2^{0}(V)]$.

\smallskip

Let then $s,t \in [0,1]$, $n\in \mathbb{N}$, and denote
\bean
\pi^n \cap (s,t) &=& \{ t_j^n \leq t_{j+1}^n \leq \dots  \leq t_l^n \}\\
 & =& \{ t_{2j}^{n+1} \leq t_{2j+2}^{n+1} \leq \dots \leq t_{2l-2}^{n+1} \leq t_{2l}^{n+1} \}, \ \  \mbox{with} \ j \leq l \leq 2^n.
\eean
If $s< t_{2j-1}^{n+1} $ and $t\leq t_{2l+1}^{n+1}$, then
$$\pi^{n+1} \cap (s,t)=\{ t_{2j-1}^{n+1} \leq t_{2j}^{n+1} \leq t_{2j+1}^{n+1} \leq \dots \leq t_{2l-1}^{n+1} \leq t_{2l}^{n+1} \},$$
and in that case
$$M^{n+1}_{ts}-M^n_{ts}=(\der B)_{t_{2j}^{n+1} t_{2j-1}^{n+1} s }+\sum_{i=j}^{l-1} (\der B)_{ t_{2i+2}^{n+1} t_{2i+1}^{n+1}t_{2i}^{n+1} },$$
which, since $\der B=h$, leads to
$$\norm{ M_{ts}^{n+1}-M_{ts}^n }_V \leq \cn[h;\, \cac_3^{\mu}(V)] \lp \frac{1}{2^{n+1}} \rp^\mu (1+l-j).$$
We proceed likewise for the cases $(s \geq t_{2j-1}^{n+1},t>t_{2l+1}^{n+1})$, $(s < t_{2j-1}^{n+1}, t > t_{2l+1}^{n+1})$ and $(s \geq t_{2j-1}^{n+1}, t\leq t_{2l+1}^{n+1})$, to finally get
\bean
\lVert M_{ts}^{n+1}-M_{ts}^n \rVert_V &\leq & \cn[h;\, \cac_3^{\mu}(V)] \lp \frac{1}{2^{\mu}} \rp^{n+1} (2+l-j)\\
& \leq & \frac{\cn[h;\, \cac_3^{\mu}(V)]}{2^\mu} \lcl 2 \lp \frac{1}{2^\mu}\rp^n +\lp \frac{1}{2^{\mu-1}} \rp^n \rcl,
\eean
and thus
$$\cn[ M^{n+1}-M^n ;\cac_2^0(V)] \leq \frac{\cn[h;\, \cac_3^{\mu}(V)]}{2^\mu} \lcl 2 \lp \frac{1}{2^\mu}\rp^n +\lp \frac{1}{2^{\mu-1}} \rp^n \rcl.$$
Since we have considered $\mu >1$, this proves that the series $\sum_n \cn[M^{n+1}-M^n;\, \cac_2^{0}(V)]$ converges, and thus $\sum_n (M^{n+1}-M^n)$ converges in $\cac([0,1]^2;V)$ endowed with the norm $\cn[\cdot\, ;\, \cac_2^{0}(V)]$, the latter space being complete. But, invoking the fact that $M^0=0$, we have $M^N=\sum_{n=0}^{N-1} (M^{n+1}-M^n)$, which entails the uniform convergence of $M^N$ towards an element $M\in \cac([0,1]^2;V)$. We can already notice that for all $n$, $M^n_{tt}=0$, which yields the same property for $M$, so that $M\in \cac_2$.

\smallskip

Take now $0\leq s \leq u \leq t \leq 1$ and denote $\pi^n \cap (s,u)=\{t_j^n, \dots, t_l^n \}$, $\pi^n \cap [u,t)=\{ t_{j'}^n, \dots, t_{l'}^n \}$, hence $\pi^n \cap (s,t)=\{t_j^n, \dots, t_l^n \}\cup \{ t_{j'}^n, \dots, t_{l'}^n \}$. Thus,
$$M^n_{ts}=B_{ts}-B_{t_j^n s}-B_{t t_{l'}^n}-\sum_{i=j}^{l-1} B_{t_{i+1}^n t_i^n}-\sum_{i=j'}^{l'-1} B_{t_{i+1}^n t_i^n}-B_{ t_{j'}^n t_l^n}.$$

We will assume that $t_{j'}^n > u$, the case $t_{j'}^n =u$ leading to the same relation (\ref{eq:chasles}). Then
\begin{multline*}
M^n_{ts} =B_{ts}+ \lc B_{us}-B_{t_j^n s}-B_{u t_l^n }-\sum_{i=j}^{l-1} B_{ t_{i+1}^n t_i^n} \rc +\lc B_{tu}-B_{t_{j'}^nu}-B_{t t_{l'}^n}-\sum_{i=j'}^{l'-1} B_{ t_{i+1}^n t_i^n} \rc \\
+B_{u t_n^n}+B_{t_{j'}^n u}-B_{us}-B_{tu}-B_{t_{j'}^nt_l^n},
\end{multline*}
which can be written as
\begin{eqnarray}\label{eq:chasles}
M^n_{ts}=M^n_{us}+M^n_{tu}+h_{tus}-h_{ t_{j'}^n u t_l^n},
\end{eqnarray}
Since $h\in \cac_3^\mu$, $\lim_{n\rightarrow \infty} h_{ t_{j'}^n u t_l^n} =0$, so that, by letting $n$ tend to infinity in the previous relation, we get $\der M=h$.

\smallskip

Finally, let us show that for any $s,t \in [0,1]$ and $n\in \mathbb{N}$,
\begin{eqnarray} \label{ineq:cmu}
\lVert M^n_{ts} \rVert_V \leq c_\mu \, \cn[h;\, \cac_3^{\mu}(V)] \lln t-s\rrn^\mu,
\end{eqnarray}
which will prove inequality (\ref{contraction}) and as a consequence, the Hölder regularity of $M$. To this end, fix $s,t \in [0,1]$, $n\in \mathbb{N}$. If $\pi^n \cap (s,t)=\emptyset$, the result is obvious. If $\pi^n \cap (s,t)=\{t_j^n\}$, $M^n_{ts}=(\der B)_{t t_j^n s}=h_{t t_j^n s}$, hence $\lVert M^n_{ts}\rVert_V \leq \cn[h;\, \cac_3^{\mu}(V)] \lln t-s \rrn^\mu \leq c_\mu \, \cn[h;\, \cac_3^{\mu}(V)] \lln t-s\rrn^\mu$. If $\pi^n \cap (s,t)=\{t_j^n, \dots ,t_l^n \}$, pick $k \in \{j+1, \dots, l-1\}$ such that 
$$| t_{k+1}^n -t_{k-1}^n | \leq \frac{2}{l-j-1} \lln t-s \rrn.$$
At this point, the previous relation does not seem very relevant insofar as the distances between two successive points of $\pi^n$ are equal. In fact, this relation will make sense when we iterate the scheme. Consider indeed the new partition $\hat\pi=\{t_j^n, \dots, t_{k-1}^n,t_{k+1}^n, \dots,t_l^n \}$ and define $\hat{M}_{ts}^n$ according to the same principle as $M^n_{ts}$, using $\hat\pi$ instead of $\pi^n \cap (s,t)$. Then
$$M^n_{ts}-\hat{M}_{ts}^n = B_{t_{k+1}^n t_{k-1}^n}-B_{t_k^n t_{k-1}^n}-B_{t_{k+1}^n t_k^n}= h_{t_{k+1}^n t_k^n t_{k-1}^n},$$
and as a result
$$\lVert M^n_{ts}-\hat{M}_{ts}^n \rVert_V \leq \cn[h;\, \cac_3^{\mu}(V)] \frac{2^\mu}{(l-j-1)^\mu } \lln t-s \rrn^\mu.$$
We iterate the procedure until the partition reduces to the empty set, to get
$$\lVert M^n_{ts} \rVert_V \leq  \cn[h;\, \cac_3^{\mu}(V)] \lln t-s \rrn^\mu \lp 2+2^\mu \sum_{k=1}^{l-j-1} \frac{1}{k^\mu}\rp \leq c_\mu \, \cn[h;\, \cac_3^{\mu}(V)] \lln t-s \rrn^\mu.$$

\end{proof}

The following corollary gives a first relation between the structures we have just introduced and generalized integrals, in the sense that it connects the operators $\der$ and $\Lambda$ with Riemann sums.

\begin{corollary}[Integration of small increments]
\label{cor:integration}
For any 1-increment $g\in\cac_2 (V)$, such that $\der g\in\cac_3^{1+}$,
set
$
\delta f = (\id-\Lambda \delta) g
$.
Then
$$
(\delta f)_{ts} = \lim_{|\Pi_{ts}| \to 0} \sum_{i=0}^n g_{t_{i+1}\, t_i},
$$
where the limit is over any partition $\Pi_{ts} = \{t_0=t,\dots,
t_n=s\}$ of $[t,s]$ whose mesh tends to zero. The
1-increment $\delta f$ is the indefinite integral of the 1-increment $g$.
\end{corollary}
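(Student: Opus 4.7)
The plan is first to justify the notation $\delta f = (\id - \Lambda\delta) g$, and then prove the convergence of the Riemann sums by a telescoping-plus-remainder decomposition. For the first step, note that $\delta g \in \cac_3^{1+}$ together with $\delta\delta = 0$ places $\delta g$ in $\cZ\cac_3^{1+}$, so Theorem \ref{prop:Lambda} produces $\Lambda\delta g \in \cac_2^{1+}$ with $\delta\Lambda\delta g = \delta g$. Hence $\delta(g - \Lambda\delta g) = 0$, and Lemma \ref{exd} yields an element $f \in \cac_1(V)$ such that $g - \Lambda\delta g = \delta f$, which makes the notation legitimate.

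For the convergence, I would fix a partition $\Pi_{ts} = \{s = t_0 < t_1 < \cdots < t_n = t\}$ and write
$$
\sum_{i=0}^{n-1} g_{t_{i+1}\, t_i}
= \sum_{i=0}^{n-1} (\delta f)_{t_{i+1}\, t_i} + \sum_{i=0}^{n-1} (\Lambda \delta g)_{t_{i+1}\, t_i}.
$$
The first sum collapses by telescoping to $f_t - f_s = (\delta f)_{ts}$, so everything reduces to showing that the second sum tends to $0$ as the mesh $|\Pi_{ts}|$ shrinks.

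To handle the remainder, choose $\mu > 1$ such that $\delta g \in \cac_3^\mu(V)$. Theorem \ref{prop:Lambda} then gives $\Lambda \delta g \in \cac_2^\mu(V)$, whence
$$
\left\| \sum_{i=0}^{n-1} (\Lambda \delta g)_{t_{i+1}\, t_i} \right\|_V
\le \cn[\Lambda\delta g;\, \cac_2^\mu(V)] \sum_{i=0}^{n-1} |t_{i+1} - t_i|^\mu
\le \cn[\Lambda\delta g;\, \cac_2^\mu(V)] \, |\Pi_{ts}|^{\mu - 1} \, |t-s|,
$$
and this vanishes as $|\Pi_{ts}| \to 0$ because $\mu - 1 > 0$. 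Combining with the telescoping identity yields the claim.

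There is no substantive obstacle: the entire analytic content is already packaged in Theorem \ref{prop:Lambda}, and the corollary is merely the observation that $\Lambda \delta g$ captures precisely the part of $g$ that is washed out by Riemann summation. The only minor point to watch is the bookkeeping of the partition orientation (the statement writes $t_0 = t$ and $t_n = s$), but this does not affect the telescoping argument, which works identically in either direction.
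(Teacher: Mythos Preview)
Your proof is correct and follows essentially the same approach as the paper: both arguments telescope $\delta f$ over the partition and bound the remainder $\sum_i (\Lambda\delta g)_{t_{i+1}t_i}$ by $\cn[\Lambda\delta g;\cac_2^\mu]\,|\Pi_{ts}|^{\mu-1}\,|t-s|$. Your preliminary justification of the notation $\delta f = (\id-\Lambda\delta)g$ via Lemma~\ref{exd} is a welcome addition that the paper leaves implicit.
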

\begin{proof}
For any partition $\Pi_{t}=\{s=t_0 < t_1 <...<t_n=t\}$ of $[s,t]$, write
$$(\der f)_{ts}=\sum_{i=0}^n (\der f)_{t_{i+1}t_i} =\sum_{i=0}^n g_{t_{i+1}t_i}-\sum_{i=0}^n \laa_{t_{i+1}t_i}(\der g).$$
Observe now that for some $\mu >1$ such that $\der g \in \cac_3^\mu$,
$$\Big\Vert\sum_{i=0}^n \laa_{t_{i+1}t_i}(\der g)\Big\Vert_V \leq \sum_{i=0}^n \norm{\laa_{t_{i+1}t_i}(\der g)}_V \leq \cn[\laa(\der g);\, \cac_2^\mu(V)] \, \lln \Pi_{ts}\rrn^{\mu-1} \, \lln t-s\rrn,$$
and as a consequence, $\lim_{\lln \Pi_{ts}\rrn \rightarrow 0} \sum_{i=0}^n \laa_{t_{i+1}t_i}(\der g) =0$.
\end{proof}

\subsection{Computations in $\cac_*$}\label{cpss}

For sake of simplicity, let us assume for the moment
that $V=\R$, and set $\cac_k(\R)=\cac_k$. Then
the complex $(\cac_*,\delta)$ is an (associative, non-commutative)
graded algebra once endowed with the following product:
for  $g\in\cac_n $ and $h\in\cac_m $ let  $gh \in \cac_{n+m} $
the element defined by
\begin{equation}\label{cvpdt}
(gh)_{t_1,\dots,t_{m+n-1}}=g_{t_1,\dots,t_{n}} h_{t_{n},\dots,t_{m+n-1}},
\quad
t_1,\dots,t_{m+n+1}\in\lot.
\end{equation}
In this context, we have the following useful properties.

\begin{proposition}\label{difrul}
The following differentiation rules hold true:
\begin{enumerate}
\item
Let $g,h$ be two elements of $\cac_1 $. Then
\begin{equation}\label{difrulu}
\der (gh) = \der g\,  h + g\, \der h.
\end{equation}
\item
Let $g \in \cac_1 $ and  $h\in \cac_2 $. Then
$$
\der (gh) = \der g\, h + g \,\der h, \qquad
\der (hg) = \der h\, g  - h \,\der g.
$$
\end{enumerate}
\end{proposition}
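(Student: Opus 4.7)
Both identities in the proposition are purely combinatorial statements, obtained by a direct expansion of the definitions: the formula (2.2)--(2.3) for $\der$ on $\cac_1$ and $\cac_2$, and the product (2.7) on $\cac_*$. No clever idea is required; the only risk of error lies in bookkeeping, since the convention (2.7) pastes the two factors by sharing their common endpoint $t_n$, and one must keep track of which factor holds which argument.

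For part (1), the plan is to fix $t,s \in \lot$ and expand both sides. On the left one has $(gh)_t = g_t h_t$, so $\der(gh)_{ts} = g_t h_t - g_s h_s$. On the right, (2.7) gives $(\der g \, h)_{ts} = (\der g)_{ts} \, h_s = (g_t-g_s)h_s$ and $(g \, \der h)_{ts} = g_t\,(\der h)_{ts} = g_t(h_t - h_s)$. Adding them produces a telescoping cancellation of the $g_t h_s$ terms, recovering exactly $g_t h_t - g_s h_s$. This is a one-line verification.

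For part (2) I would repeat the same exercise at three arguments $t, u, s$. For the first identity, $(gh)_{ts} = g_t h_{ts}$, and applying (2.3) yields $\der(gh)_{tus} = g_t h_{ts} - g_t h_{tu} - g_u h_{us}$. On the other side, the product rule gives $(\der g \, h)_{tus} = (g_t - g_u) h_{us}$ and $(g \, \der h)_{tus} = g_t(h_{ts} - h_{tu} - h_{us})$, whose sum reproduces the same three-term expression after the $g_t h_{us}$ contributions cancel. For the second identity, the shared index $t_n$ now sits \emph{inside} the second slot of $h$ rather than outside it, since $(hg)_{ts} = h_{ts} g_s$; consequently $\der(hg)_{tus} = h_{ts} g_s - h_{tu} g_u - h_{us} g_s$, while $(\der h \, g)_{tus} = (h_{ts} - h_{tu} - h_{us}) g_s$ and $(h \, \der g)_{tus} = h_{tu}(g_u - g_s)$.

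The main (and only) subtle point is thus the sign discrepancy between the two formulas in part (2): adding the two terms produces $h_{ts} g_s - h_{tu} g_s - h_{us} g_s + h_{tu} g_u - h_{tu} g_s$, which is \emph{not} $\der(hg)_{tus}$, whereas subtracting them gives exactly the right expression. This sign originates entirely from the positional asymmetry in (2.7): when $g$ is in the second slot and carries the "boundary" index $s$, it is $\der g$ (viewed at $(u,s)$) that must be flipped to realign with the indexing of $\der(hg)$. Once this positional rule is noted, no further obstacle arises, and both identities follow by immediate expansion.
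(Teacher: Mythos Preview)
Your proof is correct and follows the same approach as the paper's own proof: direct expansion of both sides using the definitions of $\der$ and the product convention. The paper in fact only writes out the verification of part (1) explicitly and declares the other identities ``equally trivial''; you have supplied the details for part (2) as well, including the correct tracking of the shared index in the product and the resulting sign in the $\der(hg)$ formula.
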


\begin{proof}
We will just prove (\ref{difrulu}), the other relations being equally trivial:
if $g,h\in\cac_1 $, then
$$
\lc \der (gh) \rc_{ts}
= g_th_t-g_sh_s
=g_t\lp h_t-h_s \rp +\lp  g_t-g_s\rp h_s\\
=g_t \lp \der h \rp_{ts}+ \lp \der g \rp_{ts} h_s,
$$
which proves our claim.

\end{proof}

\vspace{0.2cm}

The iterated integrals of smooth functions on $\lot$ are obviously
particular cases of elements of $\cac$ which will be of interest for
us, and let us recall  some basic  rules for these objects:
consider $f,g\in\cac_1^\infty $, where $\cac_1^\infty $ is the set of
smooth functions from $\lot$ to $\R$. Then the integral $\int dg \,
f$, which will be denoted by
$\cj(dg \,  f)$, can be considered as an element of
$\cac_2^\infty$. That is, for $s,t\in\lot$, we set
$$
\cj_{ts}(dg \,  f)\left(\int  dg f \right)_{ts} = \int_s^t  dg_u f_u.
$$
The multiple integrals can also be defined in the following way:
given a smooth element $h \in \cac_2^\infty$ and $s,t\in\lot$, we set
$$
\cj_{ts}(dg\, h )\equiv
\left(\int dg h \right)_{ts} = \int_s^t dg_u h_{us} .
$$
In particular, the double integral $\cj_{ts}( df^3df^2\,f^1)$ is defined, for
$f^1,f^2,f^3\in\cac_1^\infty$, as
$$
\cj_{ts}( df^3df^2\,f^1)
=\lp \int df^3df^2\,f^1  \rp_{ts}
= \int_s^t df_u^3 \,\cj_{us}\lp  df^2 \, f^1 \rp .
$$
Now, suppose that the $n$th order iterated integral of $df^n\cdots df^2 \,f^1$, still denoted by $\cj(df^n$ $\cdots df^2 \,f^1)$, has been defined for
$f^1,f^2\ldots, f^n\in\cac_1^\infty$.
Then, if $f^{n+1}\in\cac_1^\infty$, we set
\begin{equation}\label{multintg}
\cj_{ts}(df^{n+1}df^n \cdots df^2 f^1)\int_s^t  df_u^{n+1}\, \cj_{us}\lp df^n\cdots df^2 \,f^1\rp\,,
\end{equation}
which defines the iterated integrals of smooth functions recursively.
Observe that a $n$th order integral $\cj(df^n\cdots df^2 df^1)$ could be defined along the same lines.

\medskip

The following relations between multiple integrals and the operator $\der$ will also be useful in the remainder of the paper (see e.g. \cite{GT} for a proof of these elementary facts):
\begin{proposition}\label{dissec}
Let $f,g$ be two elements of $\cac_1^\infty$. Then, recalling the convention
(\ref{cvpdt}), it holds that
$$
\der f = \cj( df), \qquad
\der\lp \cj( dg f)\rp = 0, \qquad
\der\lp \cj (dg df)\rp =  (\der g) (\der f) = \cj(dg) \cj(df),
$$
and, in general,
$$
 \der \lp \cj( df^n \cdots df^1)\rp   =  \sum_{i=1}^{n-1}
\cj\lp df^n \cdots df^{i+1}\rp \cj\lp df^{i}\cdots df^1\rp.
$$
\end{proposition}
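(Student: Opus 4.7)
The statement breaks into a batch of elementary identities plus an inductive formula, so the plan is to dispose of the three opening identities by direct computation and then prove the general case by induction on $n$, reducing at each stage to the two-fold integral identity using the recursive definition (\ref{multintg}).

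First I would check the three initial relations by unfolding definitions. For $\der f = \cj(df)$, both sides equal $f_t - f_s$. For $\der(\cj(dg\,f)) = 0$, the Chasles relation $\int_s^t = \int_s^u + \int_u^t$ applied to $\cj_{ts}(dg\,f) = \int_s^t dg_v\, f_v$ immediately gives $(\der \cj(dg\,f))_{tus} = 0$ once one observes that the integrand $f_v$ depends only on $v$ (i.e.\ there is no shift by $s$ in this definition). For $\der(\cj(dg\,df))$, one writes
\[
\cj_{ts}(dg\,df) = \int_s^t dg_v\, (f_v-f_s),
\]
and then a direct computation using $\int_s^t - \int_u^t - \int_s^u = 0$ yields
\[
(\der \cj(dg\,df))_{tus} = \int_u^t dg_v\,(f_u-f_s) = (\der g)_{tu}\,(\der f)_{us},
\]
which is exactly $((\der g)(\der f))_{tus}$ under the product convention (\ref{cvpdt}).

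For the general formula I would proceed by induction on $n$, the case $n=2$ being the third identity above. Set $H_{us} = \cj_{us}(df^n\cdots df^1)$ and $h_{ts} = \cj_{ts}(df^{n+1}df^n\cdots df^1) = \int_s^t df^{n+1}_u\, H_{us}$. Then for $s\le v\le t$, the Chasles trick yields
\[
(\der h)_{tvs} = \int_v^t df^{n+1}_u\,\bigl(H_{us} - H_{uv}\bigr) = \int_v^t df^{n+1}_u\,\bigl(H_{vs} + (\der H)_{uvs}\bigr),
\]
where the last equality uses $H_{us} - H_{uv} - H_{vs} = (\der H)_{uvs}$. The first piece produces $(\der f^{n+1})_{tv}\cdot H_{vs} = \cj_{tv}(df^{n+1})\,\cj_{vs}(df^n\cdots df^1)$, which is the $i=n$ term of the desired sum. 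For the second piece I would invoke the inductive hypothesis to expand $(\der H)_{uvs} = \sum_{i=1}^{n-1}\cj_{uv}(df^n\cdots df^{i+1})\,\cj_{vs}(df^i\cdots df^1)$, pull the $v$--$s$ factor out of the integral in $u$, and recognize $\int_v^t df^{n+1}_u\,\cj_{uv}(df^n\cdots df^{i+1}) = \cj_{tv}(df^{n+1}df^n\cdots df^{i+1})$ by the definition (\ref{multintg}). Reindexing, the two contributions assemble into the claimed sum up to $i=n$.

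There is no substantial obstacle here: everything is essentially Chasles plus careful bookkeeping. The only point that requires a little care is the distinction between $\cj(dg\,f)$ (where the integrand is a genuine function on $\lot$, so $\der$ kills it) and $\cj(dg\,h)$ with $h$ a true 2-increment (where the shift $h_{us}$ introduces the extra term $(\der g)(\der f)$); keeping this distinction straight in the inductive step — in particular not confusing $H_{vs}$ with a function of a single argument — is the only place where a miscount of indices could creep in.
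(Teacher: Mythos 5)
Your proof is correct: the three elementary identities follow by direct computation as you indicate, and the induction via $(\der h)_{tvs}=\int_v^t df^{n+1}_u\,(H_{vs}+(\der H)_{uvs})$ together with the recursive definition (\ref{multintg}) assembles the general formula without gaps. The paper itself omits the proof and refers to \cite{GT}; your argument is exactly the standard Chasles-plus-induction computation used there, so there is nothing further to compare.
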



\section{Volterra equations in the Young setting}\label{sec:Young-volterra}

Recall that we wish to solve equation (\ref{eq:sde-laplace}), and we start this program by studying the Young case, i.e. the case of a driving process $x$ which is assumed to be $\ga$-Hölder continuous with $\ga>1/2$. This allows us to introduce most of the general tools used in the sequel, and this section is thus conceived as an introduction to the rough case which will be treated later on. In order to get a feeling of the kind of structure needed in order to deal with Volterra equations, we will start with some heuristic considerations, which are basically justified in case of a smooth driving noise $x$. Then we shall proceed to define rigorously the equation, and solve it in a suitable class of functions. 

\subsection{Heuristic considerations}
\label{sec:heuristic-young}
Assume for the moment that $x$ is a smooth process, in which case equation (\ref{eq:sde-laplace}) is well defined and solvable when $\si$ is a regular coefficient. In order to get an intuition of the natural operators associated to our equation, let us recast it, in quite a redundant way, as a system:
\beq\label{eq:volterra-system1}
\begin{cases}
y_t&=a+\int_0^\infty   \yti_t(\xi)\, \hph(\xi) \, d\xi  \\
\yti_t(\xi)&=\iot e^{-\xi(t-v)} dx_v\, \si(y_v).
\end{cases}
\eeq
Notice that the first relation above is not sufficient in order to determine $\yti$ as a function of $y$. However, the second one defines $\yti$ without ambiguity.

\smallskip

As we already mentioned in the introduction, if one wants to generalize the system we have just written to a non smooth signal $x$, it is now easily seen that the main step is to give a rigorous meaning to the integral $\iot e^{-\xi(t-v)} dx_v\, \si(y_v)$ defining $\yti_t(\xi)$. To this purpose, and having introduced the main tools of algebraic integration in the last section, the first idea one may have in mind is to get a suitable expression of the increments $(\der\yti(\xi))_{ts}\equiv \yti(\xi)_t-\yti(\xi)_s$ for a given $\xi$. And indeed in case of a smooth driving process $x$, invoking equation (\ref{eq:volterra-system1}), those  increments can be written as:
\bean
(\der \yti(\xi))_{ts}&=& \ist e^{-\xi(t-v)} dx_v\, \si(y_v) +  a_{ts}(\xi) \int_0^s e^{-\xi(s-v)} dx_v\, \si(y_v)  \\
&=&\ist e^{-\xi(t-v)} dx_v\, \si(y_v) +  a_{ts}(\xi) \yti_s(\xi),
\eean
where we have set $a_{ts}(\xi)=e^{-\xi(t-s)} -1$. Notice now that the first term $\ist e^{-\xi(t-v)} dx_v \si(y_v)$ above is really similar to what one obtains in the diffusion case, namely an integral of the form $\ist$. However, the second term $a_{ts}(\xi) \yti_s(\xi)$ is a little clumsy for further expansions. Hence, a straightforward idea is to make it disappear by just setting  $(\delti \yti )_{ts}(\xi)=(\der \yti)_{ts}(\xi)-a_{ts}(\xi)\yti_s(\xi)$. Then the last equation can be read as $(\delti \yti )_{ts}(\xi)=\ist e^{-\xi(t-v)} dx_v\, \si(y_v)$, and the system (\ref{eq:volterra-system1}) becomes 
\beq\label{eq:volterra-system2}
\begin{cases}
y_t&=a+\int_0^\infty   \yti_t(\xi)\, \hph(\xi) \, d\xi   \\
(\delti \yti )_{ts}(\xi)&=\ist e^{-\xi(t-v)} dx_v\, \si(y_v),
\end{cases}
\eeq
with the initial condition $\yti_0 \equiv 0$. This very simple fact, together with the nice algebraic properties which will be seen below, converts the elementary operator $\delti$ into the central object in order to solve our Volterra system.

\smallskip

These preliminaries being admitted, we shall essentially focus in the sequel on the process $\yti$, by merging the two equations of the last system into a single one:
\begin{eqnarray}\label{eq:volterra-ytilde}
(\delti \yti)_{ts}(\xi)=\int_s^t e^{-\xi(t-v)} dx_v \, \si\lp a+\int_0^\infty d\eta\, \hphi(\eta)\yti_v(\eta) \rp.
\end{eqnarray}
The original solution process $y$ can then be recovered in an obvious way, and we shall solve the Volterra equation under the form (\ref{eq:volterra-ytilde}).

\subsection{Convolutional increments}
\label{sect:convol-increments}

Let us turn now to the main concern of this section, that is the definition
of a complex $(\cacti_*,\delti)$ which behaves nicely for
the definition of our Volterra problem.

\smallskip

Notice that, due to the fact that $e^{-\xi(t_1-t_2)}$ is nicely bounded
only for $t_1>t_2$, our integration domains will
be of the form $\dn=\dn(\lot)$, where $\dn$ stands for the n-simplex
$$
\cs_n = \{(t_1,\dots,t_n) :  \ell_2\ge t_1 \ge t_2 \ge \cdots \ge t_n  \ge \ell_1\}.
$$
Let then $V$ be a separable Banach space.
In order to define the basic family of continuous increments we will
work with, we first need to specify the (Banach) functional space each $\yti_{ts}(\cdot)$ will belong to, with a special emphasis on the Laplace coordinate. In fact, the calculations to come (see for example Lemma \ref{lem:sigma}) incite us to consider the $\cl^1$-type space induced by the norm
$$\cn[\gti; \cl_\beta(V)]:=\cn[\gti; \cl_{\beta,\hphi}(V)]=\int_0^\infty d\xi |\hphi(\xi)| (1+\xi^ \beta) \norm{\gti(\xi)}_V,$$
where $\beta >0$ is fixed. Then, we define $\cacti_{n, \beta}$ as the space of continuous applications from $\cs_n$ to $\cl_ \beta(V)$.
Observe that an operator
$\der:\cacti_{n,\be}(V)\to\cacti_{n+1,\be}(V)$ can be
defined  just like in (\ref{eq:coboundary}). In particular, if $\Ati\in\cacti_{1,\be}(V)$
and $\Bti\in\cacti_{2,\be}(V)$, the relation (\ref{eq:simple_application}) is still
valid. 

\smallskip

As we have seen at Section \ref{sec:heuristic-young}, a suitable family of operators related to our Volterra equation is given by $\delti:\cacti_{n,\be}(V)\to\cacti_{n+1,\be}(V)$,
defined for any positive $\xi$ by
\begin{equation}\label{defhd}
[ \delti \Ati ]_{t_1\dots t_{n+1}}(\xi)=[ \der \Ati  ]_{t_1\dots t_{n+1}}(\xi)
- a_{t_1t_2}(\xi)\Ati_{t_2\dots t_{n+1}}(\xi),
\quad\mbox{ for }\quad
\Ati\in\cacti_{n,\be}(V),
\end{equation}
where $(t_1\dots t_{n+1})\in\cs_{n+1}$. In the remainder of the paper, we will explicitly write the variable $\xi$ down only when there might be a confusion. Thus, we simply write
$\delti  \Ati=\der \Ati-a\, \Ati$, where we made use of the convention
(\ref{cvpdt}).
As in Section \ref{sec:incr}, one can define, for $n\ge 1$, 
$$
\cz\cacti_{n,\be}(V)=\cacti_{n,\be}(V) \cap \ker(\delti),
\quad\mbox{ and }\quad
\cb\cacti_{n,\be}(V)=\cacti_{n,\be}(V) \cap \mbox{Im}(\delti)
$$

In fact, when $V=\R^k$ or $V=\R^{k,d}$, endowed with their natural Euclidian norms, the convention (\ref{cvpdt}) can be extended according to the following principle:
\begin{lemma}\label{lem:delta-prod-ML}
Let $\Mti\in\cacti_{n,\be}(\R^{k,l})$ and $L\in \cac_m(\R^l)$. Then $\Mti L$, defined by the relation
$$(\Mti L)_{t_1 \ldots t_{m+n-1}}(\xi)=\Mti_{t_1 \ldots t_n}(\xi) \, L_{t_n \ldots t_{m+n-1}},$$
belongs to $\cacti_{m+n-1,\be}(\R^k)$. Moreover, when $n=2$, the following algebraic relations hold true:
$$
\der( \Mtil L )= \der \Mtil \, L- \Mtil\,  \der L ,\quad\mbox{and}\quad
\delti (\Mtil L) = \delti \Mtil  \, L- \Mtil\,  \der L .
$$
\end{lemma}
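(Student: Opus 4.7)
The plan is to split the lemma into two independent parts: (i) the membership claim $\Mtil L \in \cacti_{m+n-1,\be}(\R^k)$, which is a direct check from the definitions, and (ii) the Leibniz-type identities for $n=2$, which are convolutional variants of the classical differentiation rules from Proposition \ref{difrul}.

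For (i), I would verify in turn the three defining features of $\cacti_{m+n-1,\be}(\R^k)$. The bound on the $\cl_\be$ norm follows by pulling $L_{t_n \ldots t_{m+n-1}}$ out of the integral in $\xi$, giving
\[
\cn[(\Mtil L)_{t_1 \ldots t_{m+n-1}};\, \cl_\be(\R^k)] \le \|L_{t_n \ldots t_{m+n-1}}\|_{\R^l}\, \cn[\Mtil_{t_1 \ldots t_n};\, \cl_\be(\R^{k,l})],
\]
which is finite by hypothesis on $\Mtil$ and $L$. Continuity on the simplex $\cs_{m+n-1}$ then follows from continuity of $\Mtil$ and $L$ together with dominated convergence in the integral over $\xi$. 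Finally, the vanishing condition splits naturally: if $t_i = t_{i+1}$ with $i \le n-1$, the $\Mtil$-factor already vanishes; if $i \ge n$, the $L$-factor vanishes.

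For the $\der$-Leibniz rule in (ii), I would expand both sides at a generic tuple $(t_1, \ldots, t_{m+2})$ using the coboundary formula (\ref{eq:coboundary}) and the product convention (\ref{cvpdt}). The key structural point is that the pivot time between $\Mtil$ and $L$ is shared, so removing an index $t_i$ with $i \in \{1,2\}$ shifts the pivot from $t_2$ to $t_3$, while removing any $t_i$ with $i \ge 3$ leaves it at $t_2$. Collecting the alternating signs then separates the sum cleanly into a $(\der\Mtil)\cdot L$ piece and a $\Mtil \cdot (\der L)$ piece, with the minus sign between them arising exactly as in the identity $\der(hg) = \der h\, g - h\, \der g$ of Proposition \ref{difrul}(2); the variable $\xi$ plays no role here, since it does not interact with the time arguments.

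The twisted identity then follows mechanically from the definition $\delti = \der - a\cdot$ in (\ref{defhd}). The crucial observation is that the perturbation $a_{t_1 t_2}(\xi)$ involves only the first two time arguments, which lie inside the $\Mtil$-block under the convention (\ref{cvpdt}); consequently $a \cdot (\Mtil L) = (a \Mtil) \cdot L$. Combining this with the previous Leibniz rule regroups the contributions as $(\der\Mtil - a\Mtil)\cdot L - \Mtil\cdot\der L = \delti\Mtil\cdot L - \Mtil\cdot\der L$. I do not expect any significant obstacle: the lemma is essentially a bookkeeping statement ensuring that the twisted coboundary $\delti$ is compatible with the product convention, and the only mild care required concerns signs and the placement of the $a$ factor.
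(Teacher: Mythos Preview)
Your proposal is correct and follows essentially the same approach as the paper. The paper's own proof is considerably more terse---it cites only the norm inequality $\|\Mtil_{t_1\ldots t_n}(\xi)L_{t_n\ldots t_{m+n-1}}\|_{\R^k}\le \|\Mtil_{t_1\ldots t_n}(\xi)\|_{\R^{k,l}}\|L_{t_n\ldots t_{m+n-1}}\|_{\R^l}$ for part~(i), refers to Proposition~\ref{difrul} for the first identity, and then writes out the short computation $\delti(\Mtil L)_{t_1\ldots t_{m+2}}=(\der\Mtil)_{t_1t_2t_3}L_{t_3\ldots t_{m+2}}-\Mtil_{t_1t_2}(\der L)_{t_2\ldots t_{m+2}}-a_{t_1t_2}\Mtil_{t_2t_3}L_{t_3\ldots t_{m+2}}$ for the twisted identity---but your more explicit treatment of continuity, the vanishing condition, and the placement of the $a$ factor is exactly what underlies those steps.
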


\begin{proof}
The first part of our claim is an obvious consequence of 
$$\norm{\Mti_{t_1 \ldots t_n}(\xi) \, L_{t_n \ldots t_{m+n-1}}}_{\R^k} \leq \norm{\Mti_{t_1 \ldots t_n}(\xi)}_{\R^{k,l}} \norm{ L_{t_n \ldots t_{m+n-1}}}_{\R^n}.$$
As for the algebraic relations, the first one follows from Proposition \ref{difrul}, while
\bean
\lefteqn{ \delti(\Mti L)_{t_1...t_{m+2}}}\\
&=& \der(\Mti L)_{t_1...t_{m+2}}-a_{t_1t_2} \Mti_{t_2t_3} L_{t_3...t_{m+2}}\\
&=&(\der \Mti)_{t_1t_2t_3} L_{t_3...t_{m+2}}-\Mti_{t_1t_2}(\der L)_{t_2...t_{m+2}}-a_{t_1t_2} \Mti_{t_2t_3} L_{t_3...t_{m+2}}\\
&=& [(\der \Mti)_{t_1t_2t_3}-a_{t_1t_2}\Mti_{t_2t_3}] L_{t_3...t_{m+2}}-(\Mti \, \der L)_{t_1...t_{m+2}}.
\eean
\end{proof}

With these preliminaries in hand, it is now easily shown that the perturbed operators $\delti$ preserve some important properties
of the original coboundary $\delta$:
\begin{proposition}\label{acyc}
$\delti\delti =0$. More precisely, the couple $(\cacti_{*,\be}(V),\delti)$ satisfies $\cz \cacti_{n,\be}(V) =\cb\cacti_{n,\be}(V)$ for all $n \ge 0$.
\end{proposition}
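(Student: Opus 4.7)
The plan is to split the proposition into two halves: the nilpotency $\delti \circ \delti = 0$, which gives $\cb\cacti_{n,\be}(V) \subseteq \cz\cacti_{n,\be}(V)$, and the reverse inclusion $\cz\cacti_{n,\be}(V) \subseteq \cb\cacti_{n,\be}(V)$.

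For nilpotency, I would simply unfold the definition: for $\Ati \in \cacti_{n,\be}(V)$,
\begin{equation*}
\delti^2 \Ati = \der(\der \Ati - a\,\Ati) - a\,(\der \Ati - a\,\Ati) = -\der(a\,\Ati) - a\,\der\Ati + a^2\,\Ati,
\end{equation*}
using only $\der\der = 0$. Next I would extend the product rule of Proposition~\ref{difrul}(2) and Lemma~\ref{lem:delta-prod-ML} to $a \in \cacti_2$ times $\Ati \in \cacti_n$ for arbitrary $n$: a direct bookkeeping from the coboundary formula (\ref{eq:coboundary}) yields $\der(a\,\Ati) = \der a \cdot \Ati - a\,\der \Ati$ in the product notation (\ref{cvpdt}). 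The key elementary observation is then $\der a = a \cdot a$: the multiplicative identity $e^{-\xi(t-s)} = e^{-\xi(t-u)}\,e^{-\xi(u-s)}$ rewrites as $(\der a)_{tus}(\xi) = a_{tu}(\xi)\,a_{us}(\xi)$, i.e.\ $\der a = a^2$ in the convention (\ref{cvpdt}). Substituting back produces the cancellation $\delti^2 \Ati = -a^2 \Ati + a\,\der\Ati - a\,\der\Ati + a^2 \Ati = 0$.

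For the reverse inclusion, given $\Bti \in \cz\cacti_{n,\be}(V)$ I would construct an explicit primitive, by analogy with the formula behind Lemma~\ref{exd}, via
\begin{equation*}
\Ati_{t_1 \ldots t_{n-1}}(\xi) := (-1)^n\, \Bti_{t_1 \ldots t_{n-1}\, \ell_1}(\xi).
\end{equation*}
The verification $\delti \Ati = \Bti$ then comes from evaluating the cocycle condition $\delti\Bti = 0$ at the $(n+1)$-tuple $(t_1,\dots,t_n,\ell_1)$, which gives $(\der\Bti)_{t_1\ldots t_n\,\ell_1} = a_{t_1 t_2}(\xi)\,\Bti_{t_2\ldots t_n\,\ell_1}(\xi)$. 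Expanding $\der\Ati$ via (\ref{eq:coboundary}) and isolating the top-level term $\Bti_{t_1\ldots t_n}$ from the resulting telescoping sum reproduces exactly $\delti\Ati = \Bti$. Membership $\Ati \in \cacti_{n-1,\be}(V)$ is then automatic: continuity on $\cs_{n-1}$ and the vanishing on consecutive equal arguments are both inherited from $\Bti$, since the appended coordinate $\ell_1$ sits at the left endpoint of the simplex and creates no new equality; and $\cn[\Ati_{t_1\ldots t_{n-1}};\cl_\be(V)]$ is controlled by the corresponding norm of $\Bti$.

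The only genuinely non-routine ingredient is spotting the identity $\der a = a\cdot a$, which makes the exponential perturbation interact self-similarly with the coboundary and drives both halves of the argument. A more conceptual viewpoint is that the pointwise-in-$\xi$ twist $(T_\xi \Ati)_{t_1\ldots t_n} := e^{\xi t_1}\Ati_{t_1\ldots t_n}(\xi)$ intertwines $\delti$ and $\der$, so the result could in principle be transported from Lemma~\ref{exd}; but multiplication by $e^{\xi t_1}$ need not preserve $\cl_\be$-integrability in $\xi$, which is why the explicit primitive above is more convenient.
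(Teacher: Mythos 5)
Your proof is correct and follows essentially the same route as the paper: nilpotency via the product rule together with the key identity $(\der a)_{tus}=a_{tu}\,a_{us}$, and exactness via the explicit primitive obtained by appending the base point (the paper uses $s=0$, you use $\ell_1$, which amounts to the same choice) with the matching sign. No gaps worth flagging.
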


\begin{proof}
This proof is borrowed from \cite[Proposition 3.1]{GT} and is included here for sake of completeness. If $\Fti\in\cacti_{n,\be}(V)$,
according to the fact that
$\der\der=0$ and thanks to Lemma \ref{lem:delta-prod-ML}, we have
\begin{eqnarray*}
\delti\delti \Fti&=&
\lp \der-a \rp [ \lp \der-a \rp \Fti  ]
=\der\der \Fti -\der(a \,\Fti)-a \,\der \Fti+ a \, a \, \Fti\\
&=&-\der a \, \Fti +a \, \der \Fti -a\, \der \Fti+ a\, a\, \Fti
=a\, a\, \Fti-\der a\, \Fti.
\end{eqnarray*}
Furthermore, it is readily checked that
$$
(\delta a)_{tus} = a_{tu}\, a_{us}, \qquad (t,u,s) \in \cs_3,
$$
which gives $\delti\delti \Fti=0$.

\smallskip

The fact that
$\mbox{Im}\delti_{|\cacti_{n+1,\be}(V)}=\ker\delti_{|\cacti_{n+1,\be}(V)}$ can be proved along the
same lines as for the $(\cac_*,\delta)$ complex~\cite{Gu}:
pick $\Ati\in\cacti_{n+1,\be}(V)$ such that
$\delti \Ati=0$, and set $\Bti_{t_1\dots t_n}=\Ati_{t_1\dots t_n s}$, with $s=0$. Then
\begin{eqnarray*}
[\delti \Bti]_{t_1\dots t_{n+1}}&=&
[\der \Bti]_{t_1\dots t_{n+1}s}
+(-1)^{n+1}\Ati_{t_1\dots t_{n+1}}-a_{t_1t_2} \Ati_{t_2\dots t_{n}s}\\
&=&[\delti \Ati]_{t_1\dots t_{n+1}s}+(-1)^{n+1}\Ati_{t_1\dots t_{n+1}}
=(-1)^{n+1}\Ati_{t_1\dots t_{n+1}}.
\end{eqnarray*}
Thus, setting $\Cti=(-1)^{n+1} \Bti$, we get $\delti \Cti=\Ati$.

\end{proof}

The cochain complex $(\cacti_{*,\be}(V),\delti)$ will be the structure at the base of all the constructions in this paper.
Let us also mention at this point that, when the meaning is obvious, we will transpose the notations of Section \ref{sec:incr} to our
convolutional setting. Furthermore, whenever this doesn't lead to an ambiguous situation, we will write $\cacti_{n,\be}$ instead of $\cacti_{n,\be}(V)$.

\smallskip

We will now define an equivalent of the iterated integrals of Section
\ref{cpss} in our convolution context: for two smooth functions $f,g$,
$\ell_1\le s < t \le \ell_2$ and $\xi\ge 0$, define
\begin{equation*}
\cj_{ts}(\hddx g\, f )(\xi)=\int_s^t e^{-\xi(t-v)} dg_v\, f_v,
\end{equation*}
and for $h \in \cac_2^\infty$,
$$\cj_{ts}(\dti g \, h )(\xi) =\int_s^t e^{-\xi(t-v)} dg_v \, h_{vs}.$$
Once these elementary blocks have been defined, the iterated integrals
\begin{equation}\label{eq:not-iter-convol-intg}
\cj( \dti g^1 \dots \dti g^n \, f )
\quad\mbox{ for }\quad
g^1,\dots,g^n,f\in \cac_1^\infty,
\end{equation}
should be defined as functions of several variables, according to the same recursive principle as in Section \ref{cpss}:
$$\cj_{ts}(\dti g^1 \dots \dti g^n \, f)(\xi^1, \dots, \xi^{n})=\int_s^t e^{-\xi^1(t-v)} dg^1_v \, \cj_{vs}(\dti g^2 \dots \dti g^n  \,  f)(\xi^2, \dots, \xi^{n}).$$
In particular, $\cj_{ts}(\dti x \dti x)(\xi,\eta)=\int_s^t e^{-\xi(t-v)} dx_v \int_s^v e^{-\eta(v-w)} dx_w$.

\smallskip

The following relations between $\delti$ and these integrals will be useful for our purposes:
\begin{proposition}\label{prop:dif-convol-intg}
Let $f,g\in\cac_1^\infty$. Then
$$
\delti \lp \cj(\dti g \, f)\rp=0\qquad
\delti\lp \cj(\dti g \, \der f)\rp=\cj( \dti g) \, \der f.
$$
\end{proposition}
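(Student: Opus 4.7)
The plan is to verify both identities by direct computation, leveraging the semigroup property $e^{-\xi(t-u)}e^{-\xi(u-v)}=e^{-\xi(t-v)}$ of the exponential kernel together with Chasles' relation for Riemann integrals. Throughout, fix $(t,u,s)\in\cs_3$ and $\xi\ge 0$.

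For the first identity, set $\Ati_{ts}(\xi):=\cj_{ts}(\dti g\, f)(\xi)=\int_s^t e^{-\xi(t-v)}dg_v\, f_v$ and compute $(\der\Ati)_{tus}(\xi)$ by splitting $\int_s^t=\int_s^u+\int_u^t$. The cancellation that drives everything is
$$
\Ati_{ts}(\xi)-\Ati_{tu}(\xi)=\int_s^u e^{-\xi(t-v)}dg_v\, f_v = e^{-\xi(t-u)}\int_s^u e^{-\xi(u-v)}dg_v\, f_v = e^{-\xi(t-u)}\Ati_{us}(\xi),
$$
so that $(\der\Ati)_{tus}(\xi)=(e^{-\xi(t-u)}-1)\Ati_{us}(\xi)=a_{tu}(\xi)\Ati_{us}(\xi)$, which precisely means $\delti\Ati=\der\Ati-a\,\Ati=0$.

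For the second identity, note first that applying the previous argument with $f\equiv 1$ yields $\delti\cj(\dti g)=0$, i.e.\ $\Gti:=\cj(\dti g)$ satisfies $\delti\Gti=0$. Next, expand $(\der f)_{vs}=f_v-f_s$ inside the integral defining $\Bti:=\cj(\dti g\,\der f)$ to obtain the decomposition
$$
\Bti_{ts}(\xi)=\int_s^t e^{-\xi(t-v)}dg_v\, f_v - f_s\int_s^t e^{-\xi(t-v)}dg_v = \Ati_{ts}(\xi)-\Gti_{ts}(\xi)\, f_s.
$$
In the notation of Lemma \ref{lem:delta-prod-ML} (with $\Mti=\Gti\in\cacti_{2,\be}$ and $L=f\in\cac_1$, so $(\Gti f)_{ts}(\xi)=\Gti_{ts}(\xi)f_s$), this reads $\Bti=\Ati-\Gti\, f$. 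Invoking the Leibniz-type formula from that lemma,
$$
\delti\Bti=\delti\Ati-\delti(\Gti f)=\delti\Ati-\delti\Gti\cdot f + \Gti\cdot \der f=\Gti\cdot \der f=\cj(\dti g)\, \der f,
$$
which is the desired identity (the product being interpreted via the convention (\ref{cvpdt}): $[\cj(\dti g)\,\der f]_{tus}(\xi)=\cj_{tu}(\dti g)(\xi)\,(\der f)_{us}$).

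Neither step presents a genuine obstacle; the single point requiring some care is to recognize the decomposition $\Bti=\Ati-\Gti f$ as an instance of the product convention (\ref{cvpdt}), so that Lemma \ref{lem:delta-prod-ML} can be applied rather than re-expanding $(\der\Bti)_{tus}$ term-by-term. If one prefers to avoid that identification, the alternative is a one-line check of $(\der\Bti)_{tus}(\xi)=a_{tu}(\xi)\Bti_{us}(\xi)+\Gti_{tu}(\xi)(\der f)_{us}$ from the explicit expressions for $\Bti_{ts}$, $\Bti_{tu}$, $\Bti_{us}$, using the computation already performed for $\Ati$ and $\Gti$; the conclusion then follows upon subtracting $a\,\Bti$.
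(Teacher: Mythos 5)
Your proposal is correct: the paper dismisses this proposition with the single word ``Straightforward,'' and your computation (Chasles plus the semigroup identity $e^{-\xi(t-v)}=e^{-\xi(t-u)}e^{-\xi(u-v)}$ giving $\delti\Ati=0$, then the decomposition $\cj(\dti g\,\der f)=\Ati-\cj(\dti g)\,f$ combined with the Leibniz rule of Lemma \ref{lem:delta-prod-ML}) is exactly the elementary verification the authors have in mind. Nothing is missing, and the remark that one may instead check $(\der\Bti)_{tus}=a_{tu}\Bti_{us}+\Gti_{tu}(\der f)_{us}$ directly is a fine alternative.
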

\begin{proof}
Straightforward.

\end{proof}

\subsection{Hölder spaces and $\Lati$-map}
\label{sec:holder-spaces}
In the Young setting, it will be enough to let our solution live in some Hölder-type spaces. Indeed, one expects the solution $y$ to (\ref{eq:volterra-system2}) to belong to a space of the form $\cac_1^\beta$ (as defined at Section \ref{sec:incr}) for any $1-\ga<\beta<\gamma$, where $\gamma$ is the Hölder regularity exponent of the noise $x$. Since $\beta+\ga>1$ in this case, the exponentially weighted integrals with respect to $x$ can be interpreted in the Young sense, as will be explained below.

\smallskip

As far as the path $\yti$ alluded to in (\ref{eq:volterra-system2}) is concerned, we also expect his increments $\delti \yti$ to be regular enough. Thus, we shall resort to the following natural Hölder spaces: 
$$\cacti_{2,\be}^\mu:= \{ \yti \in \cacti_{2,\be}: \ \cn[\yti; \cacti_{2,\be}^\mu]:=\sup_{0\leq s < t \leq T} \frac{\cn[\yti_{ts}; \cl_\be]}{\lln t-s\rrn^\mu} < \infty \},$$
$$\cacti_{1,\be}^\mu:= \{ \yti \in \cacti_{1,\be}: \ \delti \yti \in \cacti_{2,\be}^\mu \}.$$
Notice that our definition of the space $\cacti_{1,\be}^\mu$ is based on the twisted operator $\delti$ instead of $\der$.
For any $\hti \in \cacti_{3,\be}$, set, just as in the standard case,
$$\cn[\hti;  \cacti_{3,\be}^{(\ga,\rho)}]:=\sup_{0\leq s <u<t \leq T} \frac{\cn[\hti_{tus}; \cl_\be]}{\lln t-u \rrn^\ga \lln u-s \rrn^{\rho}} ,$$
$$\cn[\hti; \cacti_{3,\be}^\mu ]:= \inf \lcl \sum_i \cn[h_i; \cacti_{3,\be}^{(\rho_i,\mu-\rho_i)}]; \ h=\sum_i h_i, \ 0<\rho_i < \mu\rcl,$$
where the last infimum is taken over all sequences $\{\hti \in \cacti_{3,\be}\}$ such that $h=\sum_i h_i$ and for all choices of the numbers $\rho_i \in (0,\mu)$.
Denote also $\cz \cacti_{k,\be}^\mu := \text{Im}(\delti) \cap \cacti_{k,\be}^\mu$ and observe that the property $\cz \cac_2^\mu =\{0\}$ if $\mu >1$ remains true for $\delti$:
\begin{lemma}\label{lem:zcacti-deux-mu}
If $\mu >1$, then $\cz \cacti_{2,\be}^\mu =\{0\}$.
\end{lemma}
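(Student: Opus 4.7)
The plan is to mimic the classical fact that a $\mu$-Hölder function with $\mu>1$ must be constant, adapted to the twisted complex $(\cacti_{*,\be},\delti)$. By Proposition \ref{acyc}, the equality $\text{Im}(\delti)=\ker(\delti)$ holds on $\cacti_{2,\be}$, so showing $\cz\cacti_{2,\be}^\mu=\{0\}$ amounts to proving that every $\yti\in\cacti_{2,\be}^\mu$ satisfying $\delti\yti=0$ must vanish.

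First I would spell out the identity $\delti\yti=0$ explicitly. Using the definition (\ref{defhd}) together with (\ref{eq:simple_application}), one computes that for $(t,u,s)\in\cs_3$ and $\xi\geq 0$,
$$[\delti\yti]_{tus}(\xi)=\yti_{ts}(\xi)-\yti_{tu}(\xi)-e^{-\xi(t-u)}\yti_{us}(\xi),$$
so $\delti\yti=0$ is equivalent to the \emph{convolutional Chasles relation}
$$\yti_{ts}(\xi)=\yti_{tu}(\xi)+e^{-\xi(t-u)}\yti_{us}(\xi),\qquad s\leq u\leq t.$$
A straightforward induction on any partition $s=t_0<t_1<\cdots<t_n=t$ then yields the telescoping formula
$$\yti_{ts}(\xi)=\sum_{i=0}^{n-1}e^{-\xi(t-t_{i+1})}\yti_{t_{i+1}t_i}(\xi).$$

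Next, I would specialize to the uniform partition $t_i=s+i(t-s)/n$ and take the $\cl_\be$-norm of the above identity. The key observation is that $e^{-\xi(t-t_{i+1})}\leq 1$ for every $\xi\geq 0$, which decouples the exponential weight from the integration in the Laplace variable. Together with the $\mu$-Hölder hypothesis on $\yti$, this gives
$$\cn[\yti_{ts};\cl_\be]\leq\sum_{i=0}^{n-1}\cn[\yti_{t_{i+1}t_i};\cl_\be]\leq n\cdot\cn[\yti;\cacti_{2,\be}^\mu]\cdot\bigl((t-s)/n\bigr)^\mu=\frac{\cn[\yti;\cacti_{2,\be}^\mu]\,(t-s)^\mu}{n^{\mu-1}}.$$
Since $\mu>1$, letting $n\to\infty$ forces $\cn[\yti_{ts};\cl_\be]=0$ for every $(t,s)\in\cs_2$, whence $\yti=0$ in $\cacti_{2,\be}$, as required.

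There is really no hard obstacle in this argument; the only point that required a little care is the interplay between the exponential weight $e^{-\xi(t-u)}$ coming from $\delti$ and the Laplace-variable norm $\cn[\cdot\,;\cl_\be]$. Once one notices that the weight is pointwise bounded by $1$, the Laplace integration is harmless and the argument collapses to the classical one-dimensional ``$\mu$-Hölder with $\mu>1$ implies trivial'' computation. This proof has the additional virtue of being structurally identical to the uniqueness part of Theorem \ref{prop:Lambda}, which is consistent with the fact that both statements will be invoked to construct the convolutional sewing map $\Lati$ later on.
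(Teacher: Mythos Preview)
Your proof is correct and follows essentially the same approach as the paper: both arguments derive the telescoping identity $\yti_{ts}=\sum_i e^{-\cdot(t-t_{i+1})}\yti_{t_{i+1}t_i}$ (the paper writes it for $\Mti=\delti\fti$, you obtain it from $\delti\yti=0$ via Proposition~\ref{acyc}, which is the same thing), bound the exponential by $1$ inside the $\cl_\be$-norm, apply the $\mu$-H\"older estimate on each piece, and let the mesh tend to zero. The only cosmetic difference is that you use the uniform partition explicitly while the paper phrases it for a general partition with vanishing mesh.
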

\begin{proof}
Let $\Mti=\delti \fti \in \cz \cacti_{2,\be}^\mu$. Consider the telescopic sum $(\delti \fti)_{ts}=\sum_{i=0}^n e^{- . \, (t-t_{i+1})} (\delti \fti)_{t_{i+1}t_i}$
with respect to the partition $\Pi_{ts}=\{s=t_0 <t_1 <...<t_n=t\}$ of the interval $[s,t]$. Then
$$\cn[\Mti_{ts}; \cl_\be] \leq  \sum_{i=0}^n \cn[(\delti \fti)_{t_{i+1}t_i}; \cl_\be] \leq  \cn[\Mti; \cacti_{2,\be}^\mu] \, \lln t-s \rrn \, \lln \Pi_{ts}\rrn^{\mu-1},$$
which tends to $0$ as the mesh $\lln \Pi_{ts}\rrn$ of the partition decreases to $0$.

\end{proof}
 
\smallskip

As we already mentioned, an essential tool in order to define generalized convolutional integrals is the following inverse of the operator $\delti$:
\begin{proposition}[The convolutional sewing map]\label{th:conv-lambda}
Let $\mu >1, \be >0$. For any $\hti \in \cz \cacti_{3,\be}^\mu$, there exists a unique $\Lati \hti \in \cacti_{2,\be}^\mu$ such that $\delti(\Lati \hti)=\hti$. Furthermore,
\begin{equation}\label{ineqhla}
\cn[\Lati \hti; \cacti_{2,\ka}^\mu] \leq c_\mu \, \cn[\hti; \cacti_{3,\ka}^\mu],
\end{equation}
with $c_\mu=2+2^\mu \sum_{k=1}^\infty \frac{1}{k^\mu}$. This gives rise to a linear continuous map $\Lati:  \cz \cacti_{3,\be}^\mu \rightarrow \cacti_{2,\be}^\mu$ such that $\delti\Lati =\id_{ \cz \cacti_{3,\be}^\mu}$. 
\end{proposition}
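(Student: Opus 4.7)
The plan is to transpose the proof of Theorem \ref{prop:Lambda} verbatim to the twisted setting, replacing $\der$ by $\delti$ and the standard Chasles relation by the one associated with $\ker(\delti_{|\cacti_2})$. The two ingredients that make this substitution work are already available: Lemma \ref{lem:zcacti-deux-mu} for uniqueness and Proposition \ref{acyc} for the starting block of existence. The key quantitative input is the elementary observation that every exponential factor appearing in the twisted telescoping identities is of the form $e^{-\xi\tau}$ with $\xi\ge 0$ and $\tau\ge 0$, hence bounded by $1$, so the weight $|\hph(\xi)|(1+\xi^\be)$ in $\cn[\cdot;\cl_\be]$ passes through these factors without any penalty.

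Uniqueness is immediate: if $\Mti,\Mti'$ both satisfy $\delti\Mti=\delti\Mti'=\hti$, then $\Mti-\Mti'\in\cz\cacti_{2,\be}^\mu$, and Lemma \ref{lem:zcacti-deux-mu} gives $\Mti=\Mti'$. For existence, I would start by applying Proposition \ref{acyc} to pick $\Bti\in\cacti_{2,\be}$ with $\delti\Bti=\hti$. Iterating the twisted Chasles relation $M_{ts}(\xi)=M_{tu}(\xi)+e^{-\xi(t-u)}M_{us}(\xi)$ satisfied by any $M\in\ker(\delti_{|\cacti_2})$ suggests defining, for the $n$-th dyadic partition $\pi^n$ of the underlying interval and $\pi^n\cap(s,t)=\{t_j<\cdots<t_l\}$,
\[
M^n_{ts}(\xi) \;=\; \Bti_{ts}(\xi) - \Bti_{tt_l}(\xi) - \sum_{i=j}^{l-1} e^{-\xi(t-t_{i+1})}\,\Bti_{t_{i+1}t_i}(\xi) - e^{-\xi(t-t_j)}\,\Bti_{t_js}(\xi),
\]
with the obvious modification when the intersection is empty or a singleton. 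By construction, $M^n_{ts}(\xi)$ measures the defect of $\Bti$ from twisted additivity over $\pi^n$.

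Refining $\pi^n$ to $\pi^{n+1}$ amounts to inserting dyadic midpoints. A direct computation shows that each insertion of a point $t_k^{n+1}$ between $t_i^n$ and $t_{i+1}^n$ modifies the telescope by exactly one term of the form $e^{-\xi(t-t_{i+1}^n)}(\delti\Bti)_{t_{i+1}^n\,t_k^{n+1}\,t_i^n}(\xi)=e^{-\xi(t-t_{i+1}^n)}\hti_{t_{i+1}^n\,t_k^{n+1}\,t_i^n}(\xi)$. Since $|e^{-\xi(t-t_{i+1}^n)}|\le 1$, integrating against $|\hph(\xi)|(1+\xi^\be)\,d\xi$ yields a pointwise estimate
\[
\cn\!\bigl[(M^{n+1}-M^n)_{ts};\cl_\be\bigr] \;\le\; \cn[\hti;\cacti_{3,\be}^\mu]\cdot 2^{-\mu(n+1)}(2+l-j),
\]
which is identical in form to the estimate obtained in the proof of Theorem \ref{prop:Lambda}. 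Summing the geometric-type series in $2^{-(\mu-1)n}$ (using $\mu>1$) gives the convergence of $M^n$ to some $M\in\cacti_{2,\be}$ in the norm $\cn[\cdot;\cacti_{2,\be}^0]$. Passing to the limit in the twisted three-point identity satisfied by $M^n$ up to the vanishing remainder $\hti_{\bullet\bullet\bullet}$ at scales $\sim 2^{-n}$ yields $\delti M=\hti$.

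It remains to establish the H\"older bound \eqref{ineqhla}, and here I would use the point-removal argument of Theorem \ref{prop:Lambda}: given $\pi^n\cap(s,t)=\{t_j,\ldots,t_l\}$, extract $k$ with $|t_{k+1}-t_{k-1}|\le 2|t-s|/(l-j-1)$, compare $M^n$ with its analog $\hat M^n$ on the partition obtained by removing $t_k$, note that the difference is precisely $e^{-\xi(t-t_{k+1})}\hti_{t_{k+1}t_kt_{k-1}}(\xi)$, and iterate. The telescoping bound produces the exact constant $c_\mu=2+2^\mu\sum_{k\ge 1}k^{-\mu}$. The main obstacle I anticipate is bookkeeping, namely checking that every rearrangement of partition points produces weighted differences whose exponential prefactors collapse into $e^{-\xi(t-t_\star)}$ with $t_\star\ge s$, so that those factors are uniformly $\le 1$ and drop out of the $\cn[\cdot;\cl_\be]$-estimate; once this is verified, the numerical constants are literally those of the untwisted case.
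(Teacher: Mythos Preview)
Your proposal is correct and mirrors the paper's own proof essentially verbatim: uniqueness via Lemma \ref{lem:zcacti-deux-mu}, existence via the dyadic construction $\Mti^n_{ts}(\xi)=\Bti_{ts}(\xi)-\Bti_{tt_l}(\xi)-\sum e^{-\xi(t-t_{i+1})}\Bti_{t_{i+1}t_i}(\xi)-e^{-\xi(t-t_j)}\Bti_{t_js}(\xi)$ with $\delti\Bti=\hti$ given by Proposition \ref{acyc}, the same Cauchy estimate on $\Mti^{n+1}-\Mti^n$, and the same point-removal argument for the H\"older bound with constant $c_\mu$. The one observation you single out---that every exponential prefactor has the form $e^{-\xi\tau}$ with $\tau\ge 0$ and hence is bounded by $1$ in the $\cl_\be$-norm---is precisely the mechanism the paper uses implicitly to reduce all estimates to those of Theorem \ref{prop:Lambda}.
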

\begin{proof}
It follows the same line as the proof of Proposition \ref{prop:Lambda}.

\smallskip

\noindent
{\it Uniqueness.}
Let $\Mti, \Mti' \in \cacti_{2,\be}^\mu$ such that $\delti \Mti=\delti \Mti'=\hti$. In particular, $\delti(\Mti-\Mti')=0$, hence $\Mti-\Mti' \in \cz \cacti_{2,\be}^\mu$. Thanks to lemma \ref{lem:zcacti-deux-mu}, we deduce $\Mti=\Mti'$.

\smallskip

\noindent
{\it Existence.}
As in the standard case, consider $\Bti \in \cacti_{2,\be}$ such that $\delti \Bti=\hti$ and construct
$$\Mti_{ts}^n=\begin{cases}
0 & \mbox{if} \ \pi^n \cap (s,t)=\emptyset\\
(\delti \Bti)_{t t_j^n s} & \mbox{if} \ \pi^n \cap (s,t)=\{t_j^n\}
\end{cases}$$
and if $\pi^n \cap (s,t)=\{t_j^n \leq ... \leq t_l^n\}$,
$$\Mti_{ts}^n=\Bti_{ts}-\Bti_{tt_l^n}-\sum_{i=j}^{l-1} e^{-.\, (t-t_{i+1}^n)} \Bti_{t_{i+1}t_i}-e^{-.\, (t-t_j^n)}\Bti_{t_j^ns},$$
where $\pi^n$ stands for the $n$-dyadic partition of $[0,1]$.\\
It is readily checked that $\Mti^n$ is continuous from $[0,1]^2$ to $\cl_\be$. Moreover, if for instance $\pi^n \cap (s,t)=\{t_{2j}^{n+1} \leq t_{2j+2}^{n+1}\leq ...\leq t_{2l}^{n+1} \}$, with $s < t_{2j-1}^{n+1}$ and $t >t_{2l+1}^{n+1}$, then
$$\Mti_{ts}^{n+1}-\Mti_{ts}^n =(\delti \Bti)_{tt_{2l+1}^{n+1}t_{2l}^{n+1}}+e^{-.\, (t-t_{2j}^{n+1})}(\delti \Bti)_{t_{2j}^{n+1}t_{2j-1}^{n+1}s} 
 +\sum_{i=j}^{l-1} e^{-.\, (t-t_{2i+2}^{n+1})}(\delti \Bti)_{t_{2i+2}^{n+1}t_{2i+1}^{n+1}t_{2i}^{n+1}},$$
and since $\delti \Bti=\hti$, this yields:
$$\cn[\Mti_{ts}^{n+1}-\Mti_{ts}^n; \cl_\be] \leq \frac{\cn[\hti; \cacti_{3,\be}^\mu]}{2^\mu} \lcl 2 \lp \frac{1}{2^\mu}\rp^n +\lp \frac{1}{2^{\mu-1}}\rp^n \rcl.$$
This estimation remains true for the other cases of intersection of $\pi^n$ with $(s,t)$. Using the same arguments as with $\laa$, we thus get the existence of a limit $\Mti$ of $\Mti^n$ in $\cacti_{2,\be}$.

\smallskip

The fact that $\delti \Mti =\hti$ can be proved just as in Proposition \ref{prop:Lambda}, and it is the same for the estimation
$$\cn[\Mti_{ts}^n; \cl_\be]\leq c_\mu \, \cn[\hti; \cacti_{3,\be}^\mu] \, \lln t-s \rrn^\mu.$$

\end{proof}

We also have the following equivalent of Corollary \ref{cor:integration}, which links $\Lati$ with convolutional Riemann sums,  at our disposal:
\begin{corollary}\label{cor:integration-tilde}
For any $1$-increment $\gti \in \cacti_{2,\beta}$ such that $\delti \gti \in \cacti_{3, \beta}^{\mu}$ ($\mu >1$), set $\delti \fti=(\id-\Lati \delti)\gti$. Then
$$(\delti \fti)_{ts} =\lim_{\lln \Pi_{ts} \rrn \rightarrow 0} \sum_{i=0}^n e^{-.(t-t_{i+1})}\gti_{t_{i+1}t_i} \quad \mbox{in} \ \cl_ \beta,$$
where the limit is over any partition $\Pi_{ts} = \{t_0=t,\dots,
t_n=s\}$ of $[t,s]$ whose mesh tends to zero.
\end{corollary}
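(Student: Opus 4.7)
The plan is to mimic the proof of Corollary \ref{cor:integration}, substituting the twisted machinery everywhere: the operator $\delti$ replaces $\delta$, the $\cl_\beta$ norm replaces the ambient norm on $V$, and the plain Riemann sum is replaced by the exponentially weighted sum appearing in the statement.

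First I would establish a telescoping identity for the twisted coboundary. For any $\Mti \in \cacti_{2,\beta}$ and any partition $\Pi_{ts} = \{s = t_0 < t_1 < \cdots < t_{n+1} = t\}$, an induction on the size of the partition starting from the defining relation
$$\Mti_{ts} = \Mti_{tu} + e^{-\cdot(t-u)} \Mti_{us} + (\delti \Mti)_{tus},$$
valid for any intermediate point $u\in(s,t)$, shows that
$$\Mti_{ts} = \sum_{i=0}^{n} e^{-\cdot(t-t_{i+1})} \Mti_{t_{i+1} t_i} + R_n(\Mti, \Pi_{ts}),$$
where $R_n$ is a finite sum of terms of the form $e^{-\cdot(t-t_j)} (\delti \Mti)_{t_j t_i t_k}$. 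The key simplification here is the semigroup property $e^{-\xi(t-v)} e^{-\xi(v-u)} = e^{-\xi(t-u)}$, which ensures that the iterated exponential weights collapse to the clean form $e^{-\cdot(t-t_{i+1})}$ rather than some nested product depending on the full partition.

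Next I would apply this identity to $\Mti = \delti \fti$. Since $\delti\delti = 0$ by Proposition \ref{acyc}, the remainder $R_n$ vanishes identically, whence
$$(\delti \fti)_{ts} = \sum_{i=0}^{n} e^{-\cdot(t-t_{i+1})} (\delti \fti)_{t_{i+1} t_i}.$$
Substituting the definition $\delti \fti = \gti - \Lati \delti \gti$ then yields
$$(\delti \fti)_{ts} = \sum_{i=0}^{n} e^{-\cdot(t-t_{i+1})} \gti_{t_{i+1} t_i} - \sum_{i=0}^{n} e^{-\cdot(t-t_{i+1})} (\Lati \delti \gti)_{t_{i+1} t_i}.$$
To conclude it suffices to show that the second sum vanishes in $\cl_\beta$ as $|\Pi_{ts}| \to 0$. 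The pointwise bound $e^{-\xi(t-t_{i+1})} \leq 1$ gives $\cn[e^{-\cdot(t-t_{i+1})} f; \cl_\beta] \leq \cn[f; \cl_\beta]$, and combining this with the H\"older estimate \eqref{ineqhla} from Proposition \ref{th:conv-lambda} applied to $\Lati \delti \gti \in \cacti_{2,\beta}^\mu$ yields
$$\sum_{i=0}^{n} \cn[e^{-\cdot(t-t_{i+1})} (\Lati \delti \gti)_{t_{i+1} t_i}; \cl_\beta] \leq c_\mu \, \cn[\delti \gti; \cacti_{3,\beta}^\mu] \, |\Pi_{ts}|^{\mu-1} |t-s|,$$
which tends to zero since $\mu > 1$.

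The only mildly delicate point is the semigroup-style telescoping in the first step, which requires a careful induction to verify that the exponential factors combine correctly; once that identity is in hand, the remainder of the argument is a routine adaptation of the classical $(\cac_*, \delta)$ reasoning and no further obstacle arises.
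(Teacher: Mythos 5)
Your proposal is correct and follows essentially the same route as the paper: the paper's proof likewise starts from the telescoping identity $(\delti \fti)_{ts}=\sum_i e^{-\cdot(t-t_{i+1})}(\delti \fti)_{t_{i+1}t_i}$ (a direct consequence of the semigroup property, or equivalently of $\delti\delti=0$ as you argue) and then kills the $\sum_i e^{-\cdot(t-t_{i+1})}(\Lati\delti\gti)_{t_{i+1}t_i}$ term using the bound $e^{-\xi(t-t_{i+1})}\le 1$ together with the contraction estimate (\ref{ineqhla}), giving the factor $|\Pi_{ts}|^{\mu-1}|t-s|\to 0$. Your more careful derivation of the telescoping identity with an explicit remainder is a harmless elaboration of the same argument.
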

\begin{proof}
We use the same arguments as in the standard case, starting from
$(\delti \fti)_{ts}=\sum_i e^{-.(t-t_{i+1})}(\delti \fti)_{t_{i+1}t_i}.$ 

\end{proof}

\subsection{Young convolution integral}

Recall that, according to the notations of Section \ref{sect:convol-increments}, the Volterra equation (\ref{eq:volterra-ytilde}) we are interested in can be read as 
\begin{eqnarray} \label{eq:volterra-ytilde-bis}
\yti_0 \equiv 0, \qquad \delti \yti =\cj\lp \dti x \, \si\lp a+\int_0^\infty d\eta \, \hphi(\eta)\yti(\eta)\rp \rp.
\end{eqnarray}
We will now define integrals of the form $\cj(\dti x \, z)$, such as the one appearing in the right hand side of equation (\ref{eq:volterra-ytilde-bis}), when $x,z$ are only $\ga$-Hölder with $\ga>1/2$. This will rely on the following assumption, which is trivially met when $x$ is a smooth path:
\begin{hypothesis}\label{hyp:X1}
Assume that, for some $\ga \in (1/2,1)$, $x$ is a path in $\cac_1^{\ga}(\R^{1,d})$, allowing to define an increment $\xti^1 \in \cacti_{2,\ga}^\ga(\R^{1,d})$ which satisfies $\delti \xti^1=0$.
\end{hypothesis}

\begin{remark}\label{rmk:X1}
The increment $\xti^1$ represents morally the integral $\cj(\hddx x)$, which will be defined as a Wiener integral in the fractional Brownian case (see Section \ref{subsec:application}).  Furthermore, under Hypothesis~\ref{hyp:X1}, the increment $x^1_{ts}\equiv\int_0^\infty \hxu_{ts}(\xi) \, \hph(\xi) \, d\xi$ is well defined as an element of $\cac_2^{\ga}$. The fact that $\xti^1(\cdot)\in\cl_{\ga}$ will be simply ensured by the condition $\int_0^\infty (1+\xi^\ga) |\hph(\xi)|\, d\xi<\infty$.
\end{remark}

\begin{theorem}
\label{th:young}
Let $x$ be a path from $\ott$ to $\R^{1,d}$ satisfying Hypothesis~\ref{hyp:X1}, for a given $\ga\in(1/2,1)$.
Let  $z\in\cac_{1}^{\ga}$, and for $\xi\in\R_+$, set
\beq\label{eq-def-convol-intg-young}
\cj(\hddx x \, z)(\xi)=\hxu(\xi)\,  z+\Lati [\hxu\, \der z ](\xi)=( \id-\Lati \delti  ) [\hxu\, z](\xi).
\eeq
Then
\begin{enumerate}
\item
$\cj(\hddx x \, z)$ is well defined as an element of $\cacti_{2,\ga}^\ga$, and coincides with the usual Riemann integral $\int_s^t e^{-\xi(t-v)} dx_v\, z_v$ when $\ga=1$.
\item
For a constant $c_x>0$, we have, for all $\ell_1 < \ell_2$,
$$
\cn[\cj(\dti x \,z);\cacti_{2,\ga}^\ga([\ell_1,\ell_2])] 
\le
c_x
 \lcl \cn[z;\cac_1^0([\ell_1,\ell_2])]+  \ep^\ga \cn[z;\cac_1^{\ga}([\ell_1,\ell_2])]\rcl,
$$
where
the norms $\cn$ have been defined at Section \ref{sec:holder-spaces}, $\cn[z; \cacti_{1}^0]:=\sup_{\ell_1\leq s \leq \ell_2}\lVert z_s\rVert$ and $\ep=\lln \ell_2-\ell_1\rrn$.
\item
It holds that, for any $\ell_1\le s<t\le \ell_2$,
$$
\cj_{ts}(\hddx x\,  z)
=\lim_{|\Pi_{ts}|\to 0}\sum_{i=0}^n e^{-.(t-t_{i+1})} \hxu_{t_{i+1},t_{i}} \,z_{t_{i}} \quad \mbox{in} \ \cl_\ga,
$$
where the limit is over all partitions $\Pi_{ts} = \{t_0=t,\dots,
t_n=s\}$ of $[s,t]$ as the mesh of the partition goes to zero.
\end{enumerate}
\end{theorem}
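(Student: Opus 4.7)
The plan is to verify the three assertions in order, with the key step being to apply the convolutional sewing map of Proposition~\ref{th:conv-lambda} to the increment $\hxu\,\der z \in \cacti_{3,\ga}$. First, Hypothesis~\ref{hyp:X1} gives $\hxu \in \cacti_{2,\ga}^\ga(\R^{1,d})$ and the assumption $z \in \cac_1^\ga$ yields $\der z \in \cac_2^\ga(\R^d)$, so the pointwise bound
\begin{equation*}
\cn[(\hxu\,\der z)_{tus};\cl_\ga] \le \cn[\hxu_{tu};\cl_\ga]\,\|(\der z)_{us}\| \le \cn[\hxu;\cacti_{2,\ga}^\ga]\,\cn[z;\cac_1^\ga]\,(t-u)^\ga(u-s)^\ga
\end{equation*}
places $\hxu\,\der z$ in $\cacti_{3,\ga}^{(\ga,\ga)} \subset \cacti_{3,\ga}^{2\ga}$. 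Lemma~\ref{lem:delta-prod-ML} then gives $\delti(\hxu\,\der z) = (\delti\hxu)\,\der z - \hxu\,\der(\der z) = 0$, since $\delti\hxu = 0$ and $\der\der = 0$. As $2\ga > 1$, Proposition~\ref{th:conv-lambda} produces a unique $\Lati(\hxu\,\der z) \in \cacti_{2,\ga}^{2\ga}$ with $\delti\Lati(\hxu\,\der z) = \hxu\,\der z$, so the right-hand side of~\eqref{eq-def-convol-intg-young} is well defined in $\cacti_{2,\ga}^\ga$. A second application of Lemma~\ref{lem:delta-prod-ML} to $\hxu\,z$ (with $z \in \cac_1$) yields $\delti(\hxu\,z) = (\delti\hxu)\,z - \hxu\,\der z = -\hxu\,\der z$, whence $(\id - \Lati\delti)(\hxu\,z) = \hxu\,z + \Lati(\hxu\,\der z)$, which confirms the second equality in~\eqref{eq-def-convol-intg-young}.

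For item~(2), bound the two summands separately. One has directly
\begin{equation*}
\cn[\hxu\,z;\cacti_{2,\ga}^\ga] \le \cn[\hxu;\cacti_{2,\ga}^\ga]\,\cn[z;\cac_1^0],
\end{equation*}
while, using the elementary inclusion $\cn[f;\cacti_{2,\ga}^\ga] \le \ep^\ga\,\cn[f;\cacti_{2,\ga}^{2\ga}]$ valid on an interval of length $\ep$, inequality~\eqref{ineqhla} of Proposition~\ref{th:conv-lambda} provides
\begin{equation*}
\cn[\Lati(\hxu\,\der z);\cacti_{2,\ga}^\ga] \le \ep^\ga\,c_{2\ga}\,\cn[\hxu;\cacti_{2,\ga}^\ga]\,\cn[z;\cac_1^\ga].
\end{equation*}
Adding the two estimates yields the claim, with the constant $c_x$ depending only on $\cn[\hxu;\cacti_{2,\ga}^\ga]$ and $c_{2\ga}$.

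Finally, item~(3) follows at once from Corollary~\ref{cor:integration-tilde} applied to $\gti = \hxu\,z \in \cacti_{2,\ga}$: we have checked that $\delti\gti = -\hxu\,\der z$ lies in $\cacti_{3,\ga}^{2\ga}$ with $2\ga > 1$, and the identity above identifies $(\id - \Lati\delti)\gti$ with $\cj(\dti x\,z)$, so the Corollary yields exactly the announced Riemann-sum formula. The identification with the classical Riemann integral when $\ga = 1$ is then standard: for smooth $x$, one has $e^{-\xi(t-t_{i+1})}\hxu_{t_{i+1},t_i}(\xi) = \int_{t_i}^{t_{i+1}} e^{-\xi(t-v)}\,dx_v$, and the Riemann sums converge to $\int_s^t e^{-\xi(t-v)}\,dx_v\, z_v$. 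No genuine obstacle arises in the proof: the whole argument is a transposition of Gubinelli's original Young construction to the twisted complex, the sole modification being that the cancellation $\delti\hxu = 0$ (rather than $\der x = 0$) is what forces $\hxu\,\der z$ into the kernel of $\delti$ and so makes $\Lati$ applicable.
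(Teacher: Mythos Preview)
Your proof is correct and follows essentially the same approach as the paper: both rely on checking that $\hxu\,\der z\in\cz\cacti_{3,\ga}^{2\ga}$ via Lemma~\ref{lem:delta-prod-ML} and $\delti\hxu=0$, invoking the contraction bound~\eqref{ineqhla} for item~(2), and Corollary~\ref{cor:integration-tilde} for item~(3). The only cosmetic difference is the order of exposition for item~(1): the paper starts from the smooth case and derives the formula $\cj(\hddx x\,\der z)=\Lati(\hxu\,\der z)$ by applying $\delti$ to the Riemann integral identity, whereas you first establish well-definedness abstractly and then recover the smooth case through the Riemann-sum representation of item~(3); both routes are equivalent.
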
 

\begin{proof}
(1) In the regular case, the integral $\cj_{ts}(\hddx x\,  z)(\xi)\equiv\int_s^t e^{-\xi(t-v)} dx_v\, z_v$ is defined in the Riemann sense, and it is readily checked that
\beq\label{eq:exp-cj-hdx-z-regular}
\cj_{ts}(\hddx x\,  z) = \hxu_{ts}\,  z_s+ \cj_{ts}(\hddx x\,  \der z),
\eeq
and hence
$$
\cj_{ts}(\hddx x\,  \der z)=\cj_{ts}(\hddx x\,  z)-\hxu_{ts}\,  z_s.
$$
Applying $\delti$ to both sides of this last relation and taking into account Proposition~\ref{prop:dif-convol-intg}, Lemma~\ref{lem:delta-prod-ML} and Hypothesis~\ref{hyp:X1}, we obtain:
$$
\delti\lp  \cj(\hddx x\,  \der z) \rp = - \delti \hxu \, z + \hxu \, \der z
= \hxu \, \der z.
$$
Now, if $\hxu$ and $z$ are $\ga$-Hölder continuous with $\ga>1/2$, $\Lati$ can be applied to the relation above, and one can write $\cj_{ts}(\hddx x\,  \der z)=\Lati(\hxu \, \der z)$. Plugging this equality into (\ref{eq:exp-cj-hdx-z-regular}), we obtain the expression (\ref{eq-def-convol-intg-young}). Thus our integral coincides with the usual one in case of a regular process $x$.

\smallskip

Since $2\ga >1$, the item (2) is a direct consequence of the contraction property (\ref{ineqhla}) of $\Lati$. As for (3), it stems from Corollary \ref{cor:integration-tilde}. 

\end{proof}

\subsection{Volterra equations}
We are now ready to solve equation (\ref{eq:volterra-ytilde-bis}), by interpreting the integral $\cj( \dti x \, \si( a+\int_0^\infty d\eta \, \hphi(\eta)\yti(\eta)) )$ as in Theorem \ref{th:young}. Before stating the main theorem in this direction, let us introduce the subspace $\cacti_{1,\ga}^{0,\ga}$ of $\cacti_{1,\ga}^\ga$ induced by the norm
$$\cn[ \yti; \cacti_{1,\ga}^{0,\ga}]=\cn[\yti; \cacti_{1,\ga}^0]+\cn[\yti; \cacti_{1,\ga}^\ga],$$
where $\cn[\yti; \cacti_{1,\ga}^0]:=\sup_{0\leq s \leq T} \cn[\yti_s; \cl_\ga]$. With this new space in hand, we can prove the following elementary lemma, which will be used throughout the proof of the theorem:

\begin{lemma}\label{lem:sigma}
Let $\si \in \cac_b^2$ and for any $\yti \in \cacti_{1,\ga}^{0,\ga}$, set $y=a+\int_0^\infty d\eta \, \hphi(\eta)\yti(\eta)$. Then $\si(y) \in \cac_1^\ga$ and
$$\cn[\si(y); \cac_1^\ga] \leq c_\si \, \cn[\yti; \cacti_{1,\ga}^{0,\ga}].$$
Moreover, if $\ytiun,\ytide \in \cacti_{1,\ga}^{0,\ga}([\ell_1,\ell_2])$ are such that $\ytiun_{\ell_1}=\ytide_{\ell_1}$, then
$$\cn[\si(\yun)-\si(\yde); \cac_1^{0}([\ell_1,\ell_2])] \leq c_\si \, \ep^\ga \, \cn[\ytiun-\ytide; \cacti_{1,\ga}^{0,\ga}([\ell_1,\ell_2])],$$
$$\cn[\si(\yun)-\si(\yde); \cac_1^{\ga}([\ell_1,\ell_2])] \leq c_\si  \lcl 1+\cn[\ytide; \cacti_{1,\ga}^{0,\ga}([\ell_1,\ell_2])]\rcl\, \cn[\ytiun-\ytide; \cacti_{1,\ga}^{0,\ga}([\ell_1,\ell_2])],$$
with $\ep=\lln \ell_1-\ell_2\rrn$.
\end{lemma}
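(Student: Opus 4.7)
The plan is to first show that the map $\yti \mapsto y$ sends $\cacti_{1,\ga}^{0,\ga}$ continuously into $\cac_1^{\ga}$, then apply standard Lipschitz and Taylor-type arguments for $\sigma \in \cac_b^2$. The bridge between the two H\"older/Laplace structures is the identity $\der \yti = \delti \yti + a\,\yti$ coming from (\ref{defhd}), together with the elementary estimate $|e^{-\eta(t-s)}-1| \leq \eta^\ga (t-s)^\ga$ valid for $\eta\ge 0$ and $\ga \in [0,1]$.

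\textbf{Step 1: regularity of $y$.} Writing $(\der y)_{ts} = \int_0^\infty d\eta\, \hphi(\eta) (\der \yti)_{ts}(\eta)$ and splitting $\der \yti = \delti \yti + a \yti$, I would estimate
$$
|(\der y)_{ts}| \leq \int_0^\infty d\eta\, |\hphi(\eta)|\, |\delti\yti_{ts}(\eta)| + \int_0^\infty d\eta\, |\hphi(\eta)|\, |e^{-\eta(t-s)}-1|\, |\yti_s(\eta)|.
$$
The first integral is bounded by $\cn[\delti\yti_{ts};\cl_\ga] \leq \cn[\yti;\cacti_{1,\ga}^\ga]\, |t-s|^\ga$. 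Using the exponential bound recalled above, the second one is bounded by $|t-s|^\ga \int_0^\infty d\eta\, |\hphi(\eta)| (1+\eta^\ga) |\yti_s(\eta)| \leq |t-s|^\ga\,\cn[\yti;\cacti_{1,\ga}^0]$. Summing the two yields $\cn[y;\cac_1^\ga] \leq \cn[\yti;\cacti_{1,\ga}^{0,\ga}]$, and the first claim follows from $\sigma\in \cac_b^2$ (hence Lipschitz).

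\textbf{Step 2: uniform Lipschitz estimate.} Set $\Delta\yti=\ytiun-\ytide$ and $\Delta y=\yun-\yde$. Since $\Delta\yti_{\ell_1}=0$, for every $t\in[\ell_1,\ell_2]$ one has $\Delta\yti_t(\eta) = (\der\Delta\yti)_{t\ell_1}(\eta) = (\delti\Delta\yti)_{t\ell_1}(\eta)$, hence
$$
\cn[\Delta\yti_t;\cl_\ga] \leq \cn[\delti\Delta\yti;\cacti_{2,\ga}^\ga]\,(t-\ell_1)^\ga \leq \ep^\ga\,\cn[\Delta\yti;\cacti_{1,\ga}^{0,\ga}],
$$
which integrated against $\hphi$ gives $\cn[\Delta y;\cac_1^0]\leq \ep^\ga\,\cn[\Delta\yti;\cacti_{1,\ga}^{0,\ga}]$. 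Applying the mean value theorem to $\sigma$ yields the desired $\cac_1^0$ bound.

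\textbf{Step 3: H\"older Lipschitz estimate.} This is the more delicate part. I would write
$$
[\sigma(\yun_t)-\sigma(\yun_s)]-[\sigma(\yde_t)-\sigma(\yde_s)] = \int_0^1 \sigma'(\xi^1_\theta)(\der\Delta y)_{ts}\, d\theta + \int_0^1[\sigma'(\xi^1_\theta)-\sigma'(\xi^2_\theta)](\der\yde)_{ts}\, d\theta,
$$
with $\xi^i_\theta = \yde_s + \theta(y^{(i)}_t-\yde_s)$ (standard finite-difference rewriting). The first term is controlled by $\|\sigma'\|_\infty\,\cn[\Delta y;\cac_1^\ga]\,(t-s)^\ga \leq c_\sigma\,\cn[\Delta\yti;\cacti_{1,\ga}^{0,\ga}]\,(t-s)^\ga$ thanks to Step 1 applied to $\Delta\yti$. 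For the second, $|\xi^1_\theta-\xi^2_\theta| \leq |\Delta y_s|+\theta|(\der\Delta y)_{ts}| \leq c\,\cn[\Delta\yti;\cacti_{1,\ga}^{0,\ga}]$, and $|(\der\yde)_{ts}| \leq c\,\cn[\ytide;\cacti_{1,\ga}^{0,\ga}]\,(t-s)^\ga$ by Step 1 applied to $\ytide$, so this term is bounded by $c_\sigma\,\cn[\Delta\yti;\cacti_{1,\ga}^{0,\ga}]\,\cn[\ytide;\cacti_{1,\ga}^{0,\ga}]\,(t-s)^\ga$. Combining gives the announced estimate.

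The main technical obstacle is the interplay between $\der$ and $\delti$ in Step 1: without the pointwise bound $|e^{-\eta(t-s)}-1|\leq \eta^\ga(t-s)^\ga$ together with the weight $(1+\eta^\ga)$ built into $\cl_\ga$, one cannot transfer $\cacti_{2,\ga}^\ga$-regularity of $\delti\yti$ into $\cac_1^\ga$-regularity of $y$. Once this bridge is established, Steps 2 and 3 are essentially standard.
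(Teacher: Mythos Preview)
Your approach is essentially the same as the paper's: the key bridge is exactly the splitting $\der\yti=\delti\yti+a\,\yti$ together with the elementary bound $|e^{-\eta(t-s)}-1|\le \eta^\ga|t-s|^\ga$, which the paper singles out as the crucial observation; Steps 1 and 2 match the paper's argument almost line by line, and Step 3 is a written-out version of the ``classical estimation'' the paper merely quotes.

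There is, however, a slip in Step 3: with your choice $\xi^i_\theta=\yde_s+\theta(y^{(i)}_t-\yde_s)$ the displayed identity is false, since then $\int_0^1\si'(\xi^1_\theta)(\yun_t-\yde_s)\,d\theta=\si(\yun_t)-\si(\yde_s)$ rather than $\si(\yun_t)-\si(\yun_s)$, and the two terms on the right recombine to $\si(\yun_t)-\si(\yde_t)$, not to the double increment on the left. The intended (and standard) parametrization is $\xi^i_\theta=y^{(i)}_s+\theta(y^{(i)}_t-y^{(i)}_s)$; with this correction $\xi^1_\theta-\xi^2_\theta=\Delta y_s+\theta(\der\Delta y)_{ts}$ and all your subsequent bounds go through unchanged, yielding precisely the paper's inequality.
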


\begin{proof}
The three inequalities are mostly due to the obvious estimation
$$\lln a_{ts}(\xi)\rrn =\lln e^{-\xi(t-s)}-1\rrn =\lln e^{-\xi(t-s)}-1\rrn^{1-\ga} \lln e^{-\xi(t-s)}-1\rrn^\ga \leq \lln t-s\rrn^\ga \xi^\ga.$$
Indeed, we have for instance
\bean
\cn[ \der(\si(y))_{ts} ]_V & \leq & \norm{\si'}_\infty \int_0^\infty d\xi |\hphi(\xi)| \lln (\der \yti)_{ts}(\xi)\rrn\\
&\leq & \norm{\si'}_\infty \lcl \int_0^\infty d\xi |\hphi(\xi)| | (\delti \yti)_{ts}(\xi)|+\int_0^\infty d\xi | \hphi(\xi)| \lln a_{ts}(\xi)\rrn |\yti_s(\xi)| \rcl\\
&\leq &  \norm{\si'}_\infty \lln t-s\rrn^\ga \lcl \cn[\yti; \cacti_{1,\ga}^\ga]+\cn[\yti; \cacti_{1,\ga}^0]\rcl,
\eean
and therefore $\cn[\si(y); \cac_1^\ga] \leq \norm{\si'}_\infty \, \cn[\yti; \cacti_{1,\ga}^{0,\ga}]$.

\smallskip

The second inequality can be obtained in the same way, after noticing that, for all $s\in [\ell_1,\ell_2]$,
$$\norm{ \si(\yun_s)-\si(\yde_s) }_V \leq \norm{\si'}_\infty \int_0^\infty d\xi | \hphi(\xi)| \norm{ \der(\ytiun-\ytide)_{s\ell_1}(\xi) }_V.$$
As far as the third inequality is concerned, we can invoke the classical estimation
\begin{multline*}
\norm{\der(\si(\yun)-\si(\yde))_{ts}}_V \leq \norm{\si'}_\infty \norm{\der(\yun-\yde)_{ts}}_V +\norm{\si''}_\infty \norm{\der(\yde)_{ts} }_V  \\
\lp \norm{\yun_t-\yde_t}_V+\norm{\yun_s-\yde_s}_V \rp.
\end{multline*}

\end{proof}

We are now in position to prove the

\begin{theorem}
\label{th:young-volterra}
Assume Hypothesis~\ref{hyp:X1} holds true for some $\ga >1/2$, and that $\si\in C^{2,b}$.
Then equation (\ref{eq:volterra-ytilde-bis}) admits a unique solution in $\cacti_{1,\ga}^{0,\ga}$, where the integral $\cj( \dti x \, \si( a+\int_0^\infty d\eta \, \hphi(\eta)\yti(\eta)) )$ stands for the Young convolutional integral introduced in Theorem \ref{th:young}. 
\end{theorem}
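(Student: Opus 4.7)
The plan is to set up a Picard iteration on a small interval, prove it is a contraction there, and then patch together local solutions to cover $\ott$. Given an initial value $\yti^{*}\in\cl_{\ga}$ at time $\ell_{1}$, and observing (from Hypothesis~\ref{hyp:X1} and a direct computation on $a_{ts}(\xi)$) that $\delti\lp e^{-\xi(\cdot-\ell_{1})}\yti^{*}(\xi)\rp=0$, I define on $[\ell_{1},\ell_{2}]$ the Picard map
\begin{equation*}
\Gamma(\yti)_{t}(\xi) = e^{-\xi(t-\ell_{1})}\yti^{*}(\xi) + \cj_{t\ell_{1}}\lp\dti x\,\si(y)\rp(\xi),
\quad \text{where } y=a+\int_{0}^{\infty}\hph(\eta)\yti(\eta)\,d\eta,
\end{equation*}
and the integral is the Young convolutional integral of Theorem~\ref{th:young}. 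By construction $\Gamma(\yti)_{\ell_{1}}=\yti^{*}$ and $\delti\Gamma(\yti)=\cj(\dti x\,\si(y))$, so any fixed point of $\Gamma$ solves~\eqref{eq:volterra-ytilde-bis} on $[\ell_{1},\ell_{2}]$ with the prescribed initial condition.

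The key estimates are supplied by Theorem~\ref{th:young}(2) and Lemma~\ref{lem:sigma}. Combining them yields, with $\ep=\ell_{2}-\ell_{1}$,
\begin{equation*}
\cn[\Gamma(\yti);\cacti_{1,\ga}^{\ga}([\ell_{1},\ell_{2}])] \le c_{x}\lcl \|\si\|_{\infty} + \ep^{\ga}c_{\si}\,\cn[\yti;\cacti_{1,\ga}^{0,\ga}([\ell_{1},\ell_{2}])]\rcl,
\end{equation*}
while $\cn[\Gamma(\yti);\cacti_{1,\ga}^{0}]\le\cn[\yti^{*};\cl_{\ga}]+\ep^{\ga}\cn[\Gamma(\yti);\cacti_{1,\ga}^{\ga}]$. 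Choosing $N=2\cn[\yti^{*};\cl_{\ga}]+2c_{x}\|\si\|_{\infty}$ and then $\ep$ small enough (depending on $N$ but \emph{not} on $\yti$) shows that $\Gamma$ stabilises the closed ball $B_{N}=\{\yti:\yti_{\ell_{1}}=\yti^{*},\ \cn[\yti;\cacti_{1,\ga}^{0,\ga}]\le N\}$. For the contraction step, $\Gamma(\ytiun)-\Gamma(\ytide)=\cj\bigl(\dti x\,(\si(\yun)-\si(\yde))\bigr)$ when $\ytiun_{\ell_{1}}=\ytide_{\ell_{1}}=\yti^{*}$, and combining Theorem~\ref{th:young}(2) with the last two inequalities of Lemma~\ref{lem:sigma} produces
\begin{equation*}
\cn[\Gamma(\ytiun)-\Gamma(\ytide);\cacti_{1,\ga}^{0,\ga}] \le c_{x}c_{\si}\ep^{\ga}(2+N)\,\cn[\ytiun-\ytide;\cacti_{1,\ga}^{0,\ga}].
\end{equation*}
Shrinking $\ep$ once more yields a strict contraction on $B_{N}$, whence a unique local solution by Banach's theorem.

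For the global step I iterate this construction on successive intervals $[k\ep,(k+1)\ep]$, using the terminal value of the previous step as the new initial condition. The crucial point is that the size $\ep$ above depends only on the norm of the current initial datum and on universal constants; since on each new interval the initial datum is bounded by the $\cacti_{1,\ga}^{0}$ norm of the solution on the preceding interval, a discrete Gronwall-type iteration shows that $\cn[\yti;\cacti_{1,\ga}^{0,\ga}([0,T])]$ stays finite, so finitely many patches suffice to cover $\ott$. Global uniqueness follows by comparing two solutions on each patch and invoking the contraction estimate. The main obstacle here is mostly bookkeeping: verifying that the twisted H\"older norm $\cacti_{1,\ga}^{0,\ga}$ is preserved by the patching procedure, which is ensured by the fact that $\delti$ has a genuinely local character and by the representation $\yti_{t}=e^{-\xi(t-\ell_{1})}\yti_{\ell_{1}}+(\delti\yti)_{t\ell_{1}}$ used to glue consecutive solutions into a single element of $\cacti_{1,\ga}^{0,\ga}([0,T])$.
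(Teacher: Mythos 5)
Your local argument is sound and is essentially the paper's fixed-point scheme rewritten with the interval anchored at $\ell_1$: the map $\Gamma$, the identity $\delti\Gamma(\yti)=\cj(\dti x\,\si(y))$, the invariant ball via Theorem \ref{th:young}(2) and Lemma \ref{lem:sigma}, and the contraction estimate are all correct. The genuine gap is in the globalization. In your setup the sup-part of $\cn[\yti;\cacti_{1,\ga}^{0,\ga}([\ell_1,\ell_2])]$ is at least $\cn[\yti^{*};\cl_\ga]$, so the admissible step size must satisfy roughly $\ep^\ga\le c/N$ with $N\approx 2\cn[\yti^{*};\cl_\ga]+2c_x\norm{\si}_\infty$; hence $\ep$ shrinks as the datum of the current patch grows, and the assertion that ``a discrete Gronwall-type iteration shows the norm stays finite, so finitely many patches suffice'' is exactly the point that needs proof. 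As phrased it is close to circular: finiteness of the norm on $[0,T]$ presupposes that the patches reach $T$, and nothing in your sketch rules out that the successive lengths $\ep_1,\ep_2,\dots$ sum to less than $T$. The paper's proof is engineered precisely to avoid this: it takes all norms on $[0,(l+1)\ep]$, uses $\yti_0=0$ to bound the sup-part by $T^\ga$ times the H\"older part, and exploits that only $\cn[\si(y);\cac_1^0]\le\norm{\si}_\infty$ (never the size of the datum) enters the constant term, so that $\ep=(4c^1_{x,\si}(1+T^\ga))^{-1/\ga}$ can be chosen independently of the initial condition $\yti^l$ while the ball radii $N_l$ are allowed to grow; then $[0,T]$ is covered in a fixed number of steps, the contraction being obtained on further subintervals of length $\eta$ with Lemma \ref{lem:stability} handling the stability of the smaller balls.

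Your route can be repaired, but it requires a quantitative accounting you did not carry out: since $\si$ is bounded, the fixed point on each patch satisfies $\cn[\yti;\cacti_{1,\ga}^\ga([t_{i-1},t_i])]\le 2c_x\norm{\si}_\infty$ uniformly in $i$, so the datum grows by at most $K\ep_i^\ga$ per patch; feeding this into $\ep_i^\ga\approx c/N_i$ gives $N_i\lesssim\sqrt{i}$, hence $\ep_i\gtrsim i^{-1/(2\ga)}$, and because $\ga>1/2$ the series $\sum_i\ep_i$ diverges, so finitely many patches do cover $[0,T]$. Some such argument (or the paper's uniform-$\ep$ device) must be supplied; without it the global existence claim is unsupported, and this is exactly the mechanism the authors single out as the reason naive Picard patching fails in the rough case.
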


\begin{proof}
Let $\ep >0$ (we shall fix this constant retrospectively), $l\in \mathbb{N}$, and suppose that we have already constructed a solution $\yti^l \in \cacti_{1,\ga}^{0,\ga}([0,l\ep])$. If $l=0$, then $\yti^0=\yti^0_0=0$. We mean to extend $\yti^l$ into a solution $\yti^{l+1} \in \cacti_{1,\ga}^{0,\ga}([0,(l+1)\ep])$, by resorting to a fixed point argument.

\smallskip

\noindent
{\it Step 1: Existence of invariant balls.}
Let $\yti \in \cacti_{1,\ga}^{0,\ga}([0,(l+1)\ep])$ such that $\yti_{|[0,l\ep]}=\yti^l$ and set $\zti=\Gamma(\yti)$ the element of $\cacti_{1,\ga}([0,(l+1)\ep])$ defined by $\zti_{|[0,l\ep]} =\yti^l$ and for all $s,t \in [0,(l+1)\ep]$, $(\delti \zti)_{ts}=\cj_{ts} \lp \dti x \, \si(y)\rp$, where, as in Lemma \ref{lem:sigma}, $y=a+\int_0^\infty d\eta \, \hphi(\eta) \yti(\eta)$.

\smallskip

We know from Theorem \ref{th:young} that
$$\cn[\zti; \cacti_{1,\ga}^\ga([l\ep,(l+1)\ep])] \leq c_{x} \lcl \cn[\si(y); \cac_1^0([0,(l+1)\ep])]+\ep^\ga \, \cn[\si(y); \cac_1^\ga([0,(l+1)\ep])] \rcl,$$
which, together with Lemma \ref{lem:sigma}, gives
$$\cn[\zti; \cacti_{1,\ga}^\ga([l\ep,(l+1)\ep])] \leq c^1_{x,\si} \lcl 1+\ep^\ga \cn[\yti; \cacti_{1,\ga}^{0,\ga}([0,(l+1)\ep])] \rcl.$$
If $0\leq s \leq l\ep \leq t \leq (l+1)\ep$, we use the relation $\delti \delti=0$ to deduce
$$0=(\delti \delti \zti)_{t,l\ep, s} =(\delti \zti)_{ts}-(\delti \zti)_{t,l\ep}-e^{-\xi(t-l\ep)} (\delti \zti)_{l\ep,s},$$
and hence
\bean
\cn[ (\delti \zti)_{ts}; \cl_{\ga}] &\leq & \cn[(\delti \zti)_{t,l\ep}; \cl_{\ga}]+\cn[(\delti \zti)_{l\ep,s}; \cl_{\ga}]\\
& \leq & 2\max \lp \cn[\zti; \cacti_{1,\ga}^\ga([l\ep,(l+1)\ep]), \cn[\yti^l; \cacti_{1,\ga}^\ga([0,l\ep])] \rp \lln t-s\rrn^\ga.
\eean
Furthermore, for all $s,t \in [0,(l+1)\ep]$, $\zti_s=(\delti \zti)_{s0}$, and thus 
$$\cn[\zti; \cacti_{1,\ga}^0([0,(l+1)\ep])] \leq \cn[\zti; \cacti_{1,\ga}^{0,\ga}([0,(l+1)\ep])] T^\ga.$$

\smallskip

We are therefore incited to set
\bean
\ep&=&\lp 4c^1_{x,\si} (1+T^\ga)\rp^{-1/\ga}  \\
N_{l+1}&=&\max \lp 2(1+T^\ga) \cn[\yti^l; \cacti_{1,\ga}^\ga([0,l\ep])], 4 c^1_{x,\si} (1+T^\ga)\rp.
\eean
Indeed, for such values, it is readily checked that if $\cn[\yti; \cacti_{1,\ga}^{0,\ga}([0,(l+1)\ep])] \leq N_{l+1}$, then $\cn[\zti; \cacti_{1,\ga}^\ga([0,(l+1)\ep])] \leq \frac{N_{l+1}}{1+T^\ga}$ and $\cn[\zti; \cacti_{1,\ga}^0([0,(l+1)\ep])] \leq \frac{N_{l+1}}{1+T^\ga} T^\ga$, which gives $\cn[\zti; \cacti_{1,\ga}^{0,\ga}([0,(l+1)\ep])] \leq N_{l+1}$. In other words, the ball
$$\cq_{\yti^l,(l+1)\ep}^{N_{l+1}}=\{ \yti \in \cacti_{1,\ga}^{0,\ga}([0,(l+1)\ep]): \ \yti_{|[0,l\ep]}=\yti^l, \ \cn[\yti; \cacti_{1,\ga}^{0,\ga}([0,(l+1)\ep])]\leq N_{l+1} \}$$
is left invariant by $\Gamma$.

\smallskip

The independance of $\ep$ with respect to the initial condition $\yti^l$ allows to repeat the scheme with the same $\ep$ and thus to get a sequence of radii $(N_k)_{k\geq 1}$ such that the sets $\cq_{\yti^k,k\ep}^{N_k}$ are invariant by $\Gamma$. Of course, the definition of the latter mapping has to be adapted (in the natural way) to each of those sets.

\smallskip

\noindent
{\it Step 2: Contraction property.} We will now search for a division of the previous intervals $[l\ep,(l+1)\ep]$ into subintervals $[l\ep, l\ep+\eta],[l\ep +\eta,l\ep+2\eta],\dots$ of the same lenght $\eta$ (possibly depending on $\ep,l$), on which a contraction relation holds. 

\smallskip

For $i\in \{1,2\}$, let $\yti^{(i)} \in \cacti_{1,\ga}^{0,\ga}([0,l\ep +\eta])$ such that $\yti^{(i)}_{|[0,l\ep]}=\yti^l$, $\cn[\yti^{(i)}; \cacti_{1,\ga}^{0,\ga}([0,l\ep+\eta])]\leq N_{l+1}$, and denote $\zti^{(i)}=\Gamma(\yti^{(i)})$, where $\Gamma$ is defined as in Step 1, but restricted to $\cacti_{1,\ga}^{0,\ga}([0,l\ep +\eta])$. According to Theorem \ref{th:young}, 
\begin{multline*}
\cn[\ztiun-\ztide; \cacti_{1,\ga}^{\ga}([l\ep,l\ep+\eta]) \leq c_{\ga,x} \big\{ \cn[\si(\yun)-\si(\yde); \cac_1^0([l\ep,l\ep+\eta])]\\
+\eta^\ga \cn[\si(\yun)-\si(\yde); \cac_1^\ga([l\ep,l\ep+\eta])]\big\},
\end{multline*}
which, together with Lemma \ref{lem:sigma}, implies
$$\cn[\ztiun-\ztide; \cacti_{1,\ga}^{\ga}([l\ep,l\ep+\eta])] \leq c^2_{x,\si} \lcl 1+N_{l+1}\rcl  \eta^\ga \cn[\ytiun -\ytide; \cacti_{1,\ga}^{0,\ga}([l\ep,l\ep+\eta])]. $$

Since the processes $\ytiun-\ytide$, $\ztiun-\ztide$ vanish on $[0,l\ep]$, we can more simply write 
$$\cn[\ztiun-\ztide; \cacti_{1,\ga}^{\ga}([0,l\ep+\eta])] \leq c^2_{x,\si} \lcl 1+N_{l+1}\rcl  \eta^\ga \cn[\ytiun -\ytide; \cacti_{1,\ga}^{\ga}([0,l\ep+\eta])]. $$
Besides, $(\ztiun-\ztide)_s=\delti(\ztiun-\ztide)_{s, l\ep}$, so that $\cn[\ztiun-\ztide; \cacti_{1,\ga}^0([0,l\ep+\eta])] \leq \cn[\ztiun-\ztide; \cacti_{1,\ga}^{0,\ga}([0,l\ep+\eta])] \eta^\ga$. Finally, we get
$$\cn[\ztiun-\ztide; \cacti_{1,\ga}^{0,\ga}([0,l\ep+\eta])] \leq c^2_{x,\si} \lcl 1+N_{l+1}\rcl ( 1+T^\ga )  \eta^\ga \cn[\ytiun -\ytide; \cacti_{1,\ga}^{0,\ga}([0,l\ep+\eta])]. $$
Fix then $\eta=\inf \lp \ep, (2c^2_{x,\si} \lcl 1+N_{l+1}\rcl ( 1+T^\ga ) )^{-1/\ga}\rp$. In this case, $\Gamma$ becomes a strict contraction on the set
$$\{ \yti \in \cacti_{1,\ga}^{0,\ga}([0,l\ep+\eta]): \ \yti_{|[0,l\ep]}=\yti^l, \ \cn[\yti; \cacti_{1,\ga}^{0,\ga}([0,l\ep+\eta])] \leq N_{l+1} \}.$$
Using the stability of $\cq_{\yti^l,(l+1)\ep}^{N_{l+1}}$, we can easily show that the latter set is invariant by $\Gamma$ too (cf Lemma \ref{lem:stability} below). Consequently, there exists a unique fixed point in this set, which we denote by $\yti^{l,\eta}$. Since $\eta$ does not depend on $\yti^l$, the same calculation then remains true on the (invariant) set 
$$\{\yti \in \cacti_{1,\ga}^{0,\ga}([0,l\ep+2\eta]): \ \yti_{|[0,l\ep+\eta]}=\yti^{l,\eta}, \ \cn[\yti; \cacti_{1,\ga}^{0,\ga}([0,l\ep+2\eta])] \leq N_{l+1} \}.$$
Thus, $\yti^{l,\eta}$ can be extended in a solution $\yti^{l,2\eta}$ defined on $[0,l\ep+2\eta]$ and proceeding so until the whole interval $[l\ep,(l+1)\ep]$ is covered, we get the expected extension $\yti^{l+1}$. 

\end{proof}

\begin{lemma}\label{lem:stability}
With the notations of the preceding proof, the set
$$\{ \yti \in \cacti_{1,\ga}^{0,\ga}([0,l\ep+\eta]): \ \yti_{|[0,l\ep]}=\yti^l, \ \cn[\yti; \cacti_{1,\ga}^{0,\ga}([0,l\ep+\eta])] \leq N_{l+1} \}$$
is invariant by $\Gamma$.
\end{lemma}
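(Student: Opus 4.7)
The plan is to derive this stability property as a corollary of the invariance of the larger ball $\cq_{\yti^l,(l+1)\ep}^{N_{l+1}}$ already established in Step~1 of the preceding proof. Two ingredients are at the core of this reduction: (a) the norm $\cn[\cdot\, ;\cacti_{1,\ga}^{0,\ga}([\ell_1,\ell_2])]$ is \emph{monotone} with respect to the underlying interval (shrinking the interval on which the supremum is taken can only decrease the norm); (b) the mapping $\Gamma$ is \emph{local} in the Volterra sense, i.e.\ $\Gamma(\yti)_{|[0,l\ep+\eta]}$ depends only on $\yti_{|[0,l\ep+\eta]}$. Property (b) follows from the fact that $(\delti\Gamma(\yti))_{ts}=\cj_{ts}(\dti x \,\si(y))$ only involves values $y_v$ for $v\in[s,t]$ (recall from Theorem~\ref{th:young} that $\cj_{ts}(\dti x\, z)=\xti^1_{ts}\, z_s+\Lati[\xti^1\,\der z]_{ts}$ and from Proposition~\ref{th:conv-lambda} that $\Lati$ acts locally on $\cz\cacti_{3,\be}^\mu$).

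Given these two facts, the proof consists in extending an element $\yti$ of the set at hand into an element $\hat\yti\in\cq_{\yti^l,(l+1)\ep}^{N_{l+1}}$. A convenient choice is the \emph{exponential extension}
\[
\hat\yti_t(\xi)=\begin{cases} \yti_t(\xi) & \text{if } t\in[0,l\ep+\eta], \\ e^{-\xi(t-(l\ep+\eta))}\,\yti_{l\ep+\eta}(\xi) & \text{if } t\in[l\ep+\eta,(l+1)\ep]. \end{cases}
\]
A short computation then shows that $(\delti\hat\yti)_{ts}(\xi)$ vanishes identically when $s,t\geq l\ep+\eta$, and equals $e^{-\xi(t-(l\ep+\eta))}(\delti\yti)_{(l\ep+\eta),s}(\xi)$ when $s\leq l\ep+\eta\leq t$. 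Combined with the elementary bound $|e^{-\xi(t-(l\ep+\eta))}|\leq 1$ and the inequality $(l\ep+\eta)-s\leq t-s$, this yields both
\[
\cn[\hat\yti; \cacti_{1,\ga}^{0}([0,(l+1)\ep])]\leq\cn[\yti;\cacti_{1,\ga}^{0}([0,l\ep+\eta])], \quad \cn[\hat\yti;\cacti_{1,\ga}^\ga([0,(l+1)\ep])]\leq\cn[\yti;\cacti_{1,\ga}^\ga([0,l\ep+\eta])],
\]
so that $\hat\yti\in\cq_{\yti^l,(l+1)\ep}^{N_{l+1}}$.

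By the invariance established in Step~1, $\cn[\Gamma(\hat\yti);\cacti_{1,\ga}^{0,\ga}([0,(l+1)\ep])]\leq N_{l+1}$. Locality (b) gives $\Gamma(\hat\yti)_{|[0,l\ep+\eta]}=\Gamma(\yti)$, and the monotonicity (a) therefore entails
\[
\cn[\Gamma(\yti);\cacti_{1,\ga}^{0,\ga}([0,l\ep+\eta])]\leq\cn[\Gamma(\hat\yti);\cacti_{1,\ga}^{0,\ga}([0,(l+1)\ep])]\leq N_{l+1},
\]
which is the desired conclusion. The main (though mild) technical point is the verification of the monotonicity/locality statement (b): one must be careful to check that the definition of $\cj_{ts}$ through $\Lati$ only uses values of the integrand on the simplex over $[s,t]$, which follows from the uniqueness statement in Proposition~\ref{th:conv-lambda}.
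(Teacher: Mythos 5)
Your proposal is correct and follows essentially the same route as the paper: the same exponential extension $\hat\yti$ beyond $l\ep+\eta$, the same computation showing $\delti\hat\yti$ vanishes on $[l\ep+\eta,(l+1)\ep]$ and is a damped copy of $(\delti\yti)_{l\ep+\eta,s}$ across the junction, then invariance of the large ball $\cq_{\yti^l,(l+1)\ep}^{N_{l+1}}$ from Step 1 plus restriction back to $[0,l\ep+\eta]$. Your only addition is spelling out the locality of $\Gamma$ (that $\Gamma(\hat\yti)_{|[0,l\ep+\eta]}=\Gamma(\yti)$), which the paper simply asserts as clear.
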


\begin{proof}
Let $\yti$ an element of this set and $\zti=\Gamma(\yti)$. Set
$$\hat{y}_t=\begin{cases}
\yti_t & \mbox{if} \ t\leq l\ep+\eta\\
e^{-.(t-(l\ep+\eta))}\yti_{l\ep+\eta} & \mbox{if} \ t\in [l\ep+\eta,(l+1)\ep].
\end{cases}
$$
Then $\hat{y}$ is easily shown to be continuous, that is $\hat{y} \in \cacti_{1,\ga}([0,(l+1)\ep])$. Moreover, if $s,t \in [l\ep+\eta,(l+1)\ep]$, $(\delti \yti)_{ts}=0$, whereas if $s \leq l\ep+\eta \leq t$, $(\delti \hat{y})_{ts}=e^{-.(t-(l\ep+\eta))}(\delti \yti)_{l\ep+\eta,s}$, so that $\cn[\hat{y}; \cacti_{1,\ga}^\ga([0,(l+1)\ep])] \leq \cn[\yti; \cacti_{1,\ga}^\ga([0,l\ep+\eta])]$. Since $\cn[\hat{y}; \cacti_{1,\ga}^0([0,(l+1)\ep])] \leq \cn[\yti; \cacti_{1,\ga}^0([0,l\ep+\eta])]$, we deduce $\cn[\hat{y}; \cacti_{1,\ga}^{0,\ga}([0,(l+1)\ep])] \leq \cn[\yti; \cacti_{1,\ga}^{0,\ga}([0,l\ep+\eta])]\leq N_{l+1}$, which means that $\hat{y} \in \cq_{\yti^l,(l+1)\ep}^{N_{l+1}}$. But we know from the first step of the preceding proof that $\cq_{\yti^l,(l+1)\ep}^{N_{l+1}}$ is invariant by $\Gamma$, and so, if $\hat{z}=\Gamma(\hat{y})$, $\cn[\hat{z}; \cacti_{1,\ga}^{0,\ga}([0,(l+1)\ep])] \leq N_{l+1}$. It is now clear that $\zti=\hat{z}_{|[0,l\ep+\eta]}$, which finally leads to $\cn[\zti; \cacti_{1,\ga}^{0,\ga}([0,l\ep+\eta])] \leq \cn[\hat{z}; \cacti_{1,\ga}^{0,\ga}([0,(l+1)\ep])] \leq N_{l+1}$.

\end{proof}

\subsection{Application to fBm} \label{subsec:application}
We now aim at proving that the previous results can be applied to a fractional brownian motion $X=(X^{(1)},\ldots,X^{(n)})$ with Hurst parameter $H> 1/2$. Before we start with this program, let us recall what we mean by fBm in this paper (we refer to \cite{Nu-cours} for further details on this process): for computational sake for the case $1/3<H<1/2$, we will consider $X$ as a centered Gaussian process indexed by $\R$ (even if our equation is indexed by $\ott$), with covariance 
$$R_H(t,s)_{i,j}=E(X^{(i)}_s X^{(j)}_t)=\frac12 \delta_{i,j} (|s|^{2H}+|t|^{2H}-|t-s|^{2H}), \quad s,t\in\R.$$
We assume that the underlying probability space $(\Omega,{\mathcal F},P)$ on which $X$ is defined  is such
that $\Omega$ is  the Banach space of all the continuous funtions
 $C_0(\R;\R^n)$, which vanish at time $0$, endowed with the supremum norm on compact sets. $P$ is the only  probability measure such that the canonical process
$\{X_t; t\in\R\}$ is a $n$-dimensional fBm with parameter $H$ and
the  $\sigma$-algebra ${\mathcal F}$ is the completion of the Borel 
$\sigma$-algebra of $\Omega$ with respect to $P$.

\smallskip

In order to apply our general results to the fBm, we need to define Wiener integrals with respect to $X$. To this purpose, denote by $\ch$ the completion of the $\R^d$-valued step functions $\ce$ with respect to the inner product
$$\left\langle (\textbf{1}_{[0,t_1]},\ldots,\textbf{1}_{[0,t_n]}),
(\textbf{1}_{[0,s_1]},\dots,\textbf{1}_{[0,s_n]})\right\rangle=\sum_{i=1}^{n} R_H(s_i,t_i), \quad s_i,t_i\in\R.
$$
When $H>1/2$, it can be checked that this inner product can be expressed as:
\beq\label{eq:inner-pdt-H}
\lla f,\, g\rra_{\ch}=c_H \sum_{i=1}^{n} \int_\R \int_\R f^{(i)}_u \, g^{(i)}_v \, |u-v|^{2H-2}  \, du dv,
\quad\mbox{with}\quad
c_H=H(2H-1).
\eeq
for all $f,g\in\ch$. It can then be shown that the family of Wiener integrals $\{X(h);\, h\in\ch\}$ with respect to $X$ forms an isonormal Gaussian process, with $E[X(h_1)\, X(h_2)]=\langle h_1,\, h_2\rangle_{\ch}$. 
\smallskip

With these notations and facts in hand, a natural definition of $\Xti^1$ is as follows: for $\xi,s,t\in\R_+$ and $i\le n$, set
\begin{equation}\label{appli:x-tilde-1}
\Xti^{1,(i)}_{ts}(\xi)=\int_s^t e^{-\xi(t-v)}dX_v^{(i)}:= X(h(t,s;\xi)),
\quad\mbox{with}\quad
h_v(t,s;\xi)= e^{-\xi(t-v)} \textbf{1}_{[s,t]}(v) \, e_i,
\end{equation}
where $e_i$ denotes the $i\textsuperscript{th}$ vector of the canonical basis in $\R^n$. We will now show that this process satisfies Hypothesis \ref{hyp:X1}, under some integrability assumptions on $\hphi$. First, it is readily checked that $\delti \Xti^1 =0$. In order to prove that $\Xti^1 \in \cacti_{1,\ga}^\ga$ a.s, we shall use a Garsia-Rodemich-Rumsey (GRR in short in the sequel) type result, which is an extension of the original paper \cite{GRR} in 3 directions: (i) Like in \cite{Gu}, we will get a regularity result for a general function $R$ defined on $\cs_2$, which is not necessarily the increment of a function $f\in\cac_1$. (ii) The conditions on $R$ involve $\delti R$ instead of $\delta R$ (iii) $R$ also depends on the Laplace variable $\xi$. It should be noticed at this point that, for the remainder of the section, $\cs_2$ stands for $\cs_2(\ott)$.

\begin{proposition}\label{prop:g-r-r}
Let $(V,\norm{.})$ a Banach space and fix $\xi \geq 0$. Let $\Rti: \cs_2 \times \mathbb{R}_+ \rightarrow V$ such that $\Rti_{..}(\xi) \in \cac_2(V)$ and define
$$\Uti(\xi)=\iint_{0<v<w<T} \psi\lp \frac{\norm{\Rti_{wv}(\xi)}}{\phi(\lln w-v\rrn )}\rp dvdw,$$
where $\psi, \phi: \mathbb{R}^+\rightarrow \mathbb{R}^+$ are strictly increasing functions and $\phi(0)=0$.
Assume now that there exists some $\Cti(\xi)\geq 0$ such that, for all $s<t$ in $\ott$, 
\beq\label{eq:27}
\sup_{s \leq u \leq t} \norm{(\delti \Rti)_{tus}(\xi)} \leq \psi^{-1}\lp \frac{4\, \Cti(\xi)}{\lln t-s\rrn^2}\rp \phi(t-s).
\eeq
Then, for all $0\leq s\leq t \leq T$,
$$\norm{\Rti_{ts}(\xi)} \leq 8 \int_0^{\lln t-s \rrn}  \psi^{-1}\lp \frac{4\, \Uti(\xi)}{r^2}\rp d\phi(r)+9\int_0^{\lln t-s\rrn}\psi^{-1}\lp \frac{4\, \Cti(\xi)}{r^2}\rp  d\phi(r).$$
\end{proposition}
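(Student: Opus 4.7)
The plan is to reduce the statement to a direct adaptation of the classical Garsia--Rodemich--Rumsey inequality, in the form given in \cite{Gu} and closely analogous to the dyadic construction used in Proposition \ref{prop:Lambda}. The key observation driving the reduction is the following: for any $(t,u,s) \in \cs_3$ and any $\xi \ge 0$, the definition \eqref{defhd} of $\delti$ yields
$$
\Rti_{ts}(\xi) = \Rti_{tu}(\xi) + e^{-\xi(t-u)} \, \Rti_{us}(\xi) + (\delti \Rti)_{tus}(\xi),
$$
and since $t \ge u$ and $\xi \ge 0$ force $0 < e^{-\xi(t-u)} \le 1$, the triangle inequality produces
$$
\norm{\Rti_{ts}(\xi)} \le \norm{\Rti_{tu}(\xi)} + \norm{\Rti_{us}(\xi)} + \norm{(\delti \Rti)_{tus}(\xi)}.
$$
This is formally the same inequality one would obtain with the classical increment $\delta \Rti$ in place of $\delti \Rti$. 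As the Laplace parameter $\xi$ is frozen throughout the estimate, the problem collapses to a one-variable GRR question.

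With this reduction in hand, the next step is to carry out the usual averaging argument. For any subinterval $[s',t'] \subset [s,t]$, Markov's inequality applied to the restriction of the double integral defining $\Uti(\xi)$ to the square $[s',t']^2$ yields, for every such pair, a point $u \in (s',t')$ lying in a set of positive measure at which
$$
\psi \! \lp \frac{\norm{\Rti_{t'u}(\xi)}}{\phi(t'-u)} \rp \le \frac{4\, \Uti(\xi)}{(t'-s')^2} \qquad \mbox{and} \qquad \psi \! \lp \frac{\norm{\Rti_{us'}(\xi)}}{\phi(u-s')} \rp \le \frac{4\, \Uti(\xi)}{(t'-s')^2}
$$
hold simultaneously. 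Iterating this selection on a dyadic tree of subintervals of $[s,t]$, choosing at each step a splitting point in the central portion of the current interval, produces a sequence of scales $d_n$ of order $(t-s)/2^n$. Applying the triangle inequality of the first paragraph at every splitting and bounding each contribution of $\delti \Rti$ by $\phi(d_n) \, \psi^{-1}(4\, \Cti(\xi)/d_n^2)$ via hypothesis \eqref{eq:27}, a telescoping argument yields an estimate of the form
$$
\norm{\Rti_{ts}(\xi)} \le A \sum_{n \ge 0} \phi(d_n) \, \psi^{-1}\!\lp \frac{4\, \Uti(\xi)}{d_n^2} \rp + B \sum_{n \ge 0} \phi(d_n) \, \psi^{-1}\!\lp \frac{4\, \Cti(\xi)}{d_n^2} \rp
$$
for constants $A$ and $B$ of order unity.

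The two series above are then recognised, through the monotonicity of $\psi^{-1}$ and of $\phi$, as Riemann-type approximations of the integrals $\int_0^{\lln t-s\rrn} \psi^{-1}(4\, \Uti(\xi)/r^2)\, d\phi(r)$ and $\int_0^{\lln t-s\rrn} \psi^{-1}(4\, \Cti(\xi)/r^2)\, d\phi(r)$ respectively. Careful tracking of the dyadic geometry yields the stated multiplicative constants $8$ and $9$; the extra unit in the second one reflects the crude application of \eqref{eq:27} at the very first splitting, where the supremum over $u$ is invoked rather than the pointwise estimate produced by averaging.

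I do not foresee any genuinely conceptual obstacle: the twisted operator $\delti$ produces exactly the same norm inequalities as $\delta$ thanks to the contraction property $|e^{-\xi(t-u)}| \le 1$, so no new algebraic or analytic ingredient is needed beyond the classical GRR proof. The main effort is the dyadic bookkeeping of the iteration and the sharp constant-tracking leading to the factors $8$ and $9$, which I expect to be the most error-prone step.
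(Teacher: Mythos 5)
Your reduction is the paper's own starting point: the identity $\Rti_{ts}(\xi)=\Rti_{tu}(\xi)+e^{-\xi(t-u)}\Rti_{us}(\xi)+(\delti\Rti)_{tus}(\xi)$ together with $0<e^{-\xi(t-u)}\le 1$ makes $\delti$ behave, in norm, exactly like $\delta$. The gap lies in your averaging step. Markov's inequality applied to the double integral $\Uti(\xi)$ restricted to the square $[s',t']^2$ only controls the two-dimensional measure of the set of \emph{pairs} $(v,w)$ at which $\psi\lp\norm{\Rti_{wv}(\xi)}/\phi(w-v)\rp$ is large; it gives no control on the one-dimensional slices $u\mapsto\psi\lp\norm{\Rti_{t'u}(\xi)}/\phi(t'-u)\rp$ and $u\mapsto\psi\lp\norm{\Rti_{us'}(\xi)}/\phi(u-s')\rp$ with the endpoints $t'$, $s'$ fixed in advance, because such slices have Lebesgue measure zero inside the square. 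Continuity of $\Rti_{\cdot\cdot}(\xi)$ gives no quantitative rate, so the asserted existence of a positive-measure set of $u$ with both slice bounds at level $4\,\Uti(\xi)/(t'-s')^2$ does not follow; this is precisely the difficulty the Garsia--Rodemich--Rumsey mechanism is built to circumvent, and it prevents any bisection scheme in which the selected point is compared with prescribed interval endpoints. (Taken literally, a full dyadic tree would moreover generate $2^n$ contributions of $\delti\Rti$ at level $n$, which need not be summable for H\"older-type $\phi,\psi$; your displayed sums implicitly assume one term per scale.)

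The paper follows Stroock's argument instead: introduce $I(v)=\int_s^v\psi\lp\norm{\Rti_{vu}(\xi)}/\phi(v-u)\rp du$, so that $\int_s^t I(v)\,dv\le\Uti(\xi)$, and build a single decreasing chain $s_k=s+\la_k\downarrow s$ started from an arbitrary interior point $s_0$, selecting $s_{k+1}$ by a staggered two-fold Chebyshev argument: $I(s_{k+1})\le 2\,\Uti(\xi)/\al_k$ and $\psi\lp\norm{\Rti_{s_ks_{k+1}}(\xi)}/\phi(\la_k-\la_{k+1})\rp\le 2\,I(s_k)/\al_k$, where $\al_k$ is defined by $2\phi(\al_k)=\phi(\la_k)$. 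Thus each increment along the chain is controlled through the $I$-bound secured at the \emph{previous} step, producing $4\,\Uti(\xi)/(\al_k\al_{k-1})$, a product of two consecutive scales rather than the inverse square of an interval length; note also that it is $\phi(\la_k)$, not $\la_k$, that halves, so the scales are not genuinely dyadic for general $\phi$. The $\delti\Rti$ terms along the chain are bounded by hypothesis (\ref{eq:27}), the tail $\Rti_{s_{n+1}s}(\xi)$ vanishes by continuity since $\Rti_{\cdot\cdot}(\xi)\in\cac_2(V)$, and a symmetric chain from $s_0$ towards $t$, combined through the identity at the triple $(t,s_0,s)$ with one crude application of (\ref{eq:27}) there, yields the constants $8$ and $9$ (your reading of the extra unit is correct). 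As written, however, your key selection step does not follow from Markov's inequality, and the proof would need to be rebuilt around this staggered selection.
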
  

\begin{proof}
See Appendix.

\end{proof}

A preliminary step, before we prove the desired continuity result for $\Xti^1$, is to show that this process is at least an element of $\cac_2(\R^{1,d})$ for any fixed $\xi$. This is achieved in the following lemma:
\begin{lemma}\label{lem:c-2-delta}
The process $\Xti^1$ defined by formula (\ref{appli:x-tilde-1}) admits a modification $\Xti^{1,\ast}$ such that, almost surely, $\Xti^{1,\ast}_{..}(\xi) \in \cac_2([0,T])$ for any $\xi \geq 0$.
\end{lemma}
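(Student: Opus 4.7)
The plan is to apply a Kolmogorov-type continuity argument to the three-parameter centered Gaussian field $(s,t,\xi)\mapsto \Xti^{1,(i)}_{ts}(\xi)$ defined on $\cs_2\times \R_+$. Since each such random variable is $X(h(t,s;\xi))$ with $h(t,s;\xi)\in\ch$, we have $\Xti^{1,(i)}_{ts}(\xi)-\Xti^{1,(i)}_{t's'}(\xi')=X(h(t,s;\xi)-h(t',s';\xi'))$, which is centered Gaussian with variance $\norm{h(t,s;\xi)-h(t',s';\xi')}_\ch^2$. By Gaussian hypercontractivity, any $L^{2p}$-estimate reduces to the corresponding $\ch$-norm estimate, i.e.
\begin{equation*}
\E\big|\Xti^{1,(i)}_{ts}(\xi)-\Xti^{1,(i)}_{t's'}(\xi')\big|^{2p}\le C_p \, \norm{h(t,s;\xi)-h(t',s';\xi')}_\ch^{2p}.
\end{equation*}

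Thus the core task is to establish, for every $M>0$, a uniform bound of the form
\begin{equation*}
\norm{h(t,s;\xi)-h(t',s';\xi')}_\ch^2 \le C_{M,T}\big(|t-t'|^{2H}+|s-s'|^{2H}+|\xi-\xi'|^2\big),
\quad (s,t,\xi),(s',t',\xi')\in\cs_2\times[0,M].
\end{equation*}
This I would obtain by splitting $h(t,s;\xi)-h(t',s';\xi')=[h(t,s;\xi)-h(t',s';\xi)]+[h(t',s';\xi)-h(t',s';\xi')]$ and bounding each piece via the explicit formula (\ref{eq:inner-pdt-H}). The $\xi$-increment at fixed $(s',t')$ is handled by the elementary estimate $|e^{-\xi(t'-v)}-e^{-\xi'(t'-v)}|\le T|\xi-\xi'|$ on $[s',t']$, which combined with $\int_{s'}^{t'}\int_{s'}^{t'}|u-v|^{2H-2}\,du\,dv\le C\,T^{2H}$ yields a bound of order $|\xi-\xi'|^2$. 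The $(s,t)$-increment at fixed $\xi$ is handled by a three-strip decomposition of the symmetric difference of $[s,t]$ and $[s',t']$, together with the basic estimate $\norm{e^{-\xi(b-\cdot)}\mathbf{1}_{[a,b]}e_i}_\ch^2\le (b-a)^{2H}$ (obtained by dominating the exponential by $1$ in (\ref{eq:inner-pdt-H})) and, on the overlap, the bound $|e^{-\xi(t-v)}-e^{-\xi(t'-v)}|\le \xi|t-t'|\le M|t-t'|$.

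Given these variance bounds, choosing $p$ sufficiently large so that $2pH>3$ (to overcome the three-parameter dimension), the classical Kolmogorov continuity theorem applied on the compact set $\cs_2\times[0,M]$ furnishes a modification $\Xti^{1,\ast,M}$ that is almost surely continuous in $(s,t,\xi)$ on this set, and in particular $\Xti^{1,\ast,M}_{..}(\xi)\in\cac_2([0,T])$ for every $\xi\in[0,M]$. Taking $M\in\N$ and intersecting the corresponding full-measure events, the modifications $\Xti^{1,\ast,M}$ agree on overlaps (both being modifications of the same underlying Gaussian process and continuous, hence equal at every point on a dense countable set and then everywhere by continuity), so they paste together into a single modification $\Xti^{1,\ast}$ defined on $\cs_2\times\R_+$ with the required property.

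The main technical obstacle is the bookkeeping for the $(s,t)$-increment of $h$ in $\ch$, because the exponential weight and the indicator function of $[s,t]$ interact; but this reduces to a routine three-strip calculation since the condition $H>1/2$ makes the kernel $|u-v|^{2H-2}$ locally integrable and allows the variance to be controlled by the crude pointwise bound $\norm{h}_\ch^2\le c_H\int\int |h_u||h_v||u-v|^{2H-2}\,du\,dv$.
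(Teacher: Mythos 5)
Your proposal is correct and follows essentially the same route as the paper: a Kolmogorov-type continuity argument for the three-parameter Gaussian field, with second-moment estimates on increments obtained from the explicit covariance formula (\ref{eq:inner-pdt-H}) by splitting the increment into a Laplace-variable piece (Lipschitz bound $T|\xi-\xi'|$ on the exponential) and a time piece (boundary strips plus an overlap term bounded by $\xi|t-t'|\le M|t-t'|$), the paper localizing $\xi$ in $[0,N]$ exactly as you localize in $[0,M]$ and relying on chaos/hypercontractivity to pass from variances to higher moments. The only cosmetic difference is that you phrase the estimate as an $\ch$-norm bound on the kernel difference and make the pasting over $M\in\N$ explicit, while the paper works directly with the three Wiener integrals in its decomposition.
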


\begin{proof}

Let us lean on the following version of the Kolmogorov continuity criterion: consider a process $\{ \tilde{Z}_{ts}(\xi), \, s,t \in [0,T], \, \xi \in \R^+\}$ living in any of the Wiener chaos associated to $X$, and assume that for all $N \in \N$ and all $(s_1,t_1,\xi_1),(s_2,t_2,\xi_2) \in [0,T]^2 \times [0,N]$,
\begin{equation} \label{kolmogorov}
E\lc \lVert \tilde{Z}_{t_1s_1}(\xi_1)-\tilde{Z}_{t_2s_2}(\xi_2) \rVert^2 \rc \leq c_N \lcl \lln s_1-s_2 \rrn^{\al_1}+\lln t_1-t_2 \rrn^{\al_2}+\lln \xi_1-\xi_2 \rrn^{\al_3} \rcl, 
\end{equation}
for some $\al_1,\al_2,\al_3 >0$. Then $\tilde{Z}$ admits a modication $\tilde{Z}^\ast$ such that, almost surely, $\tilde{Z}^\ast_{..}(\xi)$ is continuous for any $\xi \geq 0$.

\smallskip

To show that $\Xti^1$ actually satisfies (\ref{kolmogorov}), suppose for instance $s_2 <s_1 < t_2 <t_1$. Then
\begin{multline*}
\Xti^1_{t_1s_1}(\xi_1)-\Xti_{t_2s_2}^1(\xi_2)\\
=\int_{s_1}^{t_1} [e^{-\xi_1(t_1-u)}-e^{-\xi_2(t_2-u)}] \, dX_u +\int_{t_2}^{t_1} e^{-\xi_1(t_1-u)} dX_u-\int_{s_2}^{s_1} e^{-\xi_2(t_2-u)} dX_u. 
\end{multline*}
But, on the one hand, relation (\ref{eq:inner-pdt-H}) yields
\begin{multline*}
E\lc \norm{ \int_{t_2}^{t_1} e^{-\xi_1(t_1-u)} dX_u }^2 \rc = c_H \int_{t_2}^{t_1} du \int_{t_2}^{t_1} dv \, e^{-\xi_1(t_1-u)} e^{-\xi_1(t_1-v)} \lln u-v \rrn^{2H-2}\\ \leq c \lln t_1-t_2 \rrn^{2H}
\end{multline*}
and likewise 
\begin{equation*}
E \lc \norm{ \int_{s_2}^{s_1} e^{-\xi_2(t_2-u)}dX_u }^2 \rc \leq c \lln s_1-s_2 \rrn^{2H}.
\end{equation*} 
On the other hand, still evoking relation (\ref{eq:inner-pdt-H}), we have
\begin{multline*}
E \lc \norm{ \int_{s_1}^{t_2} [e^{-\xi_1(t_1-u)}-e^{-\xi_2(t_2-u)}] \, dX_u }^2 \rc \\
\leq c \int_{s_1}^{t_1} du \int_{s_1}^{t_1} dv \lln e^{-\xi_1(t_1-u)}-e^{-\xi_2(t_2-u)} \rrn  \lln u-v \rrn^{2H-2} \lln e^{-\xi_1(t_1-v)}-e^{-\xi_2(t_2-v)} \rrn,
\end{multline*}
which, together with the estimation
\bean
\lln e^{-\xi_1(t_1-u)}-e^{-\xi_2(t_2-u)} \rrn & \leq & \lln e^{-\xi_1(t_1-u)}-e^{-\xi_1(t_2-u)} \rrn +\lln e^{-\xi_1(t_2-u)}-e^{-\xi_2(t_2-u)} \rrn \\
&\leq & \xi_1 \lln t_1-t_2 \rrn +\lln t_2-u \rrn \lln \xi_1-\xi_2 \rrn \\
& \leq & N \lln t_1-t_2 \rrn+T \lln \xi_1-\xi_2 \rrn,
\eean
leads to
\begin{equation*}
E \lc \norm{ \int_{s_1}^{t_2} [e^{-\xi_1(t_1-u)}-e^{-\xi_2(t_2-u)}] \, dX_u }^2 \rc \leq c_N \lcl \lln t_1-t_2\rrn+\lln \xi_1-\xi_2 \rrn \rcl^2.
\end{equation*}

\end{proof}

We are thus in position to apply our general results to the fBm case:
\begin{theorem}\label{thm:3.14}
Let $X$ be a fBm with Hurst parameter $H > 1/2$ and $\ga \in (1/2,H)$ such that $\int_0^\infty d\xi \, |\hphi(\xi)|(1+\xi^\ga) < \infty$. Then the process $\Xti^1$ defined by the Wiener integral (\ref{appli:x-tilde-1}) satisfies Hypothesis \ref{hyp:X1} for $\ga$ a.s. Consequently, if in addition, $\si \in \cac^{2,b}$, the system
$$\begin{cases}
(\delti \Yti)_{ts}=\cj_{ts}\lp\dti X \, \si\lp a+\int_0^\infty d\xi \, \hphi(\xi) \Yti(\xi)\rp \rp\\
\Yti_0 =0
\end{cases} $$
admits a unique solution in $\cacti_{1,\ga}^{0,\ga}$ a.s.
\end{theorem}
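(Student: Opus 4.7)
The statement has two parts: first verify that the process $\Xti^1$ defined by (\ref{appli:x-tilde-1}) satisfies Hypothesis \ref{hyp:X1}, and then invoke Theorem \ref{th:young-volterra}. The regularity $X \in \cac_1^\ga(\R^{1,d})$ a.s. is the classical Kolmogorov/fBm result, and the identity $\delti \Xti^1 = 0$ follows from a direct computation using the Wiener integral representation (\ref{appli:x-tilde-1}) together with the definition (\ref{defhd}) of $\delti$ and the semigroup-like identity $e^{-\xi(t-v)} = e^{-\xi(t-u)}\, e^{-\xi(u-v)}$ for $v \le u \le t$. The bulk of the work is to show that $\Xti^1 \in \cacti_{2,\ga}^\ga(\R^{1,d})$ almost surely.

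My plan for this main point is to apply the GRR-type result (Proposition \ref{prop:g-r-r}) to $\Rti = \Xti^1$ for each fixed $\xi$. Since $\delti \Xti^1 = 0$, we may choose $\Cti(\xi) = 0$ in (\ref{eq:27}). Taking $\psi(x) = x^{2p}$ and $\phi(r) = r^{\ga + 1/p}$ for an integer $p$ to be specified, the proposition yields the pathwise estimate
\beq\label{eq:sketch-pathwise}
\norm{\Xti^1_{ts}(\xi)} \le c_p\, \Uti(\xi)^{1/(2p)}\, \lln t-s\rrn^\ga,
\quad \text{with } \Uti(\xi) = \iint_{0<v<w<T} \frac{\norm{\Xti^1_{wv}(\xi)}^{2p}}{\lln w-v\rrn^{(2\ga + 2/p)p}}\, dv\, dw,
\eeq
valid for all $0 \le s < t \le T$ simultaneously, with the same $\Uti(\xi)$.

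The next step is moment control of $\Uti(\xi)$ uniformly in $\xi$. Using (\ref{eq:inner-pdt-H}) with the explicit kernel of $\Xti^{1,(i)}_{ts}(\xi)$, a direct computation gives
\[
E\bigl[\norm{\Xti^1_{ts}(\xi)}^2\bigr] = c_H \int_s^t \int_s^t e^{-\xi(t-u)} e^{-\xi(t-v)} \lln u-v \rrn^{2H-2}\, du\, dv \le c\, \lln t-s\rrn^{2H},
\]
uniformly in $\xi \ge 0$, since $e^{-\xi(t-\cdot)} \le 1$. By Gaussian hypercontractivity this upgrades to $E[\norm{\Xti^1_{ts}(\xi)}^{2p}] \le c_p \lln t-s\rrn^{2pH}$, so
\[
E[\Uti(\xi)] \le c_p \iint_{0<v<w<T} \lln w-v\rrn^{2p(H - \ga) - 2}\, dv\, dw,
\]
which is finite as soon as $p$ is chosen large enough that $\ga < H - 1/(2p)$ (possible since $\ga < H$).

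Finally, by Jensen's inequality $E[\Uti(\xi)^{1/(2p)}]$ is bounded by a constant independent of $\xi$. Integrating (\ref{eq:sketch-pathwise}) against $|\hphi(\xi)|(1+\xi^\ga)\, d\xi$ and applying Fubini gives
\[
E\lc \sup_{s<t} \frac{\cn[\Xti^1_{ts};\cl_\ga]}{\lln t-s\rrn^\ga} \rc \le c_p \int_0^\infty |\hphi(\xi)|(1+\xi^\ga)\, E[\Uti(\xi)^{1/(2p)}]\, d\xi \le c_p' \int_0^\infty |\hphi(\xi)|(1+\xi^\ga)\, d\xi,
\]
which is finite by assumption. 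Hence $\Xti^1 \in \cacti_{2,\ga}^\ga$ almost surely, so Hypothesis \ref{hyp:X1} holds and Theorem \ref{th:young-volterra} gives the unique solution. I expect the main technical hurdle to be calibrating the GRR parameters $(p, \psi, \phi)$ so that one simultaneously (i) obtains the correct H\"older exponent $\ga$ in (\ref{eq:sketch-pathwise}), (ii) keeps the moment $E[\Uti(\xi)]$ finite, and (iii) preserves the uniformity in $\xi$ that is essential for the final Fubini step against the Laplace weight.
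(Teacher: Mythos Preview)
Your proposal is correct and follows essentially the same route as the paper: apply the GRR-type Proposition~\ref{prop:g-r-r} to $\Xti^1$ with $\psi(x)=x^{2p}$, $\phi(r)=r^{\ga+1/p}$ and $\Cti(\xi)=0$, bound the second moment of $\Xti^1_{wv}(\xi)$ by $c\,|w-v|^{2H}$ uniformly in $\xi$ via~(\ref{eq:inner-pdt-H}), lift to $2p$-th moments by Gaussianity, choose $p$ large enough, then integrate against the Laplace weight after a Jensen step. The only point you leave implicit is the prerequisite $\Xti^1_{\cdot\cdot}(\xi)\in\cac_2$ for each fixed $\xi$ (needed to invoke Proposition~\ref{prop:g-r-r}); the paper handles this separately as Lemma~\ref{lem:c-2-delta} via a Kolmogorov continuity argument in the three variables $(s,t,\xi)$.
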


\begin{proof}
As mentioned before, we just need to check that $\Xti^1\in\cac_{2,\ga}^{\ga}(\R^{1,d})$. Furthermore, with Lemma \ref{lem:c-2-delta} in hand, and thanks to the fact that $\delti\Xti^1=0$, we can apply Proposition \ref{prop:g-r-r} to $\Xti^1$, with $\psi(x)=x^{2p}$ and $\phi(x)=x^{\ga+1/p}$, to obtain
$\norm{\Xti^1_{ts}(\xi)} \leq c \lln t-s \rrn^\ga (\Uti_{\ga,2p}(\xi))^{1/2p}$, where $p$ is an arbitrary strictly positive number and
\begin{eqnarray} \label{defiuti}
\Uti_{\ga,2p}(\xi)=\int_{T\times T} \frac{\norm{ \Xti^1_{wv}(\xi)}^{2p}}{|w-v|^{2\ga p+2}} dvdw,
\end{eqnarray}
Therefore, we just have to prove that $\cn[\Uti_{\ga,2p}^{1/2p};\cl_\ga]=\int_0^\infty d\xi \, \hphi(\xi)(1+\xi^\ga) (\Uti_{\ga,2p}(\xi))^{1/2p} < \infty $ a.s, since in this case, $\Xti^1 \in \cacti_{2,\ga}$ and $\cn[\Xti^1; \cacti_{2,\ga}^\ga] \leq c \, \cn[\Uti_{\ga,2p}^{1/2p}; \cl_\ga] < \infty$ a.s. In fact, we are going to show that $E[\cn[\Uti_{\ga,2p}^{1/2p};\cl_\ga]] <\infty$.

\smallskip 

Let us start with the Jensen inequality 
$$E[\cn[\Uti_{\ga,2p}^{1/2p};\cl_{\ga}]] \leq \int_0^\infty d\xi |\hphi(\xi)| (1+\xi^\ga) E[\Uti_{\ga,2p}(\xi)]^{1/2p}.$$
Notice then that, as we already mentioned in the proof of Lemma \ref{lem:c-2-delta}, 
\begin{equation}\label{isom-young}
E[\norm{\Xti^1_{wv}(\xi)}^{2p}] \leq c \lln w-v\rrn^{2Hp},
\end{equation}
which leads to
$E[\Uti_{\ga,2p}(\xi)] \leq c \int_{T\times T} \lln w-v\rrn^{2Hp-2\ga p+2}dwdv.$
This means that if we take $\ga \in (1/2, H)$ and $p>1/(H-\ga)$, $E[\Uti_{\ga,2p}(\xi)] \leq M$ for some constant $M$ independent of $\xi$, and as a consequence,
$$E[\cn[\Uti_{\ga,2p}^{1/2p};\cl_\ga]] \leq M^{1/2p} \int_0^\infty d\xi \, |\hphi(\xi)|(1+\xi^\ga).$$
The proof is now easily finished.

\end{proof}

\section{Rough Volterra equations}\label{sec:rough-volterra}

Our aim is still to solve equation (\ref{eq:volterra-system2}) or (\ref{eq:volterra-ytilde-bis}), in a case where $x$ satisfies Hypothesis~\ref{hyp:X1}, but where we replace the condition $\ga>1/2$ by $\ga>1/3$ only. Like in the Young case, our first task is thus to give a suitable interpretation to the integral in (\ref{eq:volterra-ytilde-bis}), which goes beyond the Young case. We will see that the key to this improvement is to introduce a new class of processes.

\subsection{Convolutional controlled paths}\label{sec:conv-ctrl-path}
As in the Young case, let us start with some heuristic considerations: let us go back for a moment to the Volterra equation under the form 
$$
y_t=a+\iot \phi(t-v) \, dx_v \si(y_v),
$$
and assume that $x$ is a smooth path in $\cac_1^{\infty}$ and $\si,\phi$ are regular coefficients. Then the equation above admits a unique solution $y$, whose increments can be decomposed into:
$$
(\der y)_{ts}=\ist \phi(t-v) \, dx_v \, \si(y_v) +\int_0^s [\phi(t-v)-\phi(t-v)]\, dx_v \, \si(y_v)
= x^1_{ts} \, \si(y_s) + r_{ts},
$$
with
\bean
x^1_{ts}&=&\ist \phi(t-v) \, dx_v  \\
r_{ts}&=&\ist \phi(t-v) \, dx_v \, (\der\si(y))_{vs}+\int_0^s [\phi(t-v)-\phi(t-v)]\, dx_v \, \si(y_v):= r_{ts}^1+r_{ts}^2.
\eean
This is exactly the structure which will be imposed for the solution to our equation, and let us analyze it a little further: if we assume now that $x$ has only a regularity of the form $\cac_1^\ga$ with $\ga>1/3$ and that $y$ is $\cac_1^{\ka}$ for any $\ka<\ga$, then we expect $x^1$ to be an element of $\cac_2^\ga$, under some regularity conditions on $\phi$ (which will in fact be assumed to be a differentiable kernel). As far as the remainder term $r$ is concerned, we expect it to inherit the Hölder regularity of $y$ and $x$ for $r^1$, and the regularity of $\phi$ for $r^2$. Hence, the remainder term $r$ should be an element of $\cac_2^{2\ka}$. It is also worth recalling from Remark~\ref{rmk:X1} that, if $x$ is a path allowing to apply Fubini's theorem, then $x^1$ should satisfy:
\beq
x^1_{ts}=\int_0^\infty \hxu_{ts}(\xi) \, \hph(\xi) \, d\xi.
\eeq
It is thus natural to formulate the following assumption on our driving process $x$:
\begin{hypothesis}\label{hyp:X1-bis}
Assume that, for some $\ga \in (1/3,1/2)$ and $\be >0$, $x$ allows to define a process $\xti^1 \in \cacti_{2,\be}^\ga(\R^{1,n})$ such that $\delti \xti^1=0$. Set then
$$x^1_{ts}=\int_0^\infty \xti_{ts}^1(\xi)\hphi(\xi) d\xi.$$
\end{hypothesis}

\begin{remark}\label{rmk:4.1}
Notice that, contrary to Hypothesis \ref{hyp:X1}, the index $\be$ and the exponent $\ga$ may be different here. In fact, for some computational reasons that will arise in the proof of Theorem \ref{main-theorem-rough}, we shall be prompted to take $\be=1$. Therefore, from now on, let us only focus on the spaces $\cacti_{i,1}^\ga(\R^{1,n})$ ($i\in \{ 1,2,3\}$), that we more simply denote by $\cacti_i^\ga(\R^{1,n})$.
\end{remark}

Fix an interval $I=[a,b] \subset [0,T]$ and denote $\ep=|I|=b-a$. With the above considerations in mind, the natural spaces to work with in order to solve equation (\ref{eq:volterra-system2}) can be defined as follows:
\begin{definition}
A path $y\in\cac_1(I;\mathbb{R}^k)$ is said to be a convolutional process controlled by $x^1$ (with regularity $\ka$) if $\der y$ can be decomposed into:
\beq\label{eq:weak-X1-dcp}
 (\der y)_{ts}= (x^1_{ts} \, \zeta_s)^\ast + r_{ts},
 \quad\mbox{with}\quad
 \zeta\in\cac_1^{\ka}(I;\R^{n,k}), \mbox{ and } r\in \cac_2^{2\ka}(I;\mathbb{R}^k).
\eeq
Denote the space of such controlled paths by $\cq^\ka(I;\R^k)$ and for any $h \in \R^k$, write $\cq_h^\ka(I;\R^k)$ $=\{ y \in \cq(I;\R^k): \ y_a=h\}$ . Then the norm associated to $\cq^\ka(I;\R^k)$ is
$$
\cn [y;\cq^\ka]=\cn [y;\cac_1^{\ka}]+\cn [\zeta;\cac_1^{0}]+\cn [\zeta;\cac_1^{\ka}]
+\cn [r;\cac_2^{2\ka}].
$$ 
Notice that if $1/3 < \ka < \ga$, $\cq^\ka \subset \cac_1^\ga$ and 
$$\cn[y; \cac_1^\ka(I)] \leq c_x \ep^{\ga-\ka} \, \cn[y; \cq^\ka(I)].$$
\end{definition}

In fact, as in the Young case, we shall focus on the form (\ref{eq:volterra-ytilde}) of the original equation which also involves a process $\yti$ indexed by the Laplace variable $\xi$. In this setting, the same reasoning as above applied to $\yti$ leads to the introduction of the following spaces:
\begin{definition}\label{def:controlled-path-q}
A path $\yti \in \cacti_{1}(I;\mathbb{R}^k)$ is said to be a process controlled by $\hxu$ (with regularity $\ka$) if $\delti \yti$ can be decomposed into:
$$
 (\delti \yti)_{ts}=(\xti_{ts}^1 \, \zeta_s)^\ast+\rti_{ts},
 \quad\mbox{with}\quad
 \zeta\in\cac_1^{\ka}(I;\R^{n,k}), \mbox{ and } \rti\in \cacti_{2}^{2\ka}(I;\mathbb{R}^k),
$$
where we recall that we have set $\cacti_{i}^{\nu}:=\cacti_{i,1}^{\nu}$ according to Remark \ref{rmk:4.1}. Denote this second space of controlled paths by $\Qti^\ka(I;\R^k)$ and for any $\hti \in \cl_1$, write $\Qti_{\hti}^\ka(I;\R^k) =\{\yti \in \Qti^\ka(I;\R^k): \ \yti_a=\hti \}$. Then the norm associated to $\Qti^\ka(I;\R^k)$ is
$$
\cn[\yti; \Qti^\ka]=\cn[\yti;\cacti_{1}^\ka]+\cn[\zeta; \cac_1^{0}]+\cn[\zeta;\cac_1^\ka]+\cn[\rti; \cacti_{2}^{2\ka}].$$ 
\end{definition}

\smallskip

It is then readily shown that the space of controlled processes is stable by composition with a smooth enough function:
\begin{proposition}\label{cp:weak-phi}
Let $z\in\cq^\ka(I;\R^k)$ with decomposition (\ref{eq:weak-X1-dcp}),
$\si \in   C^{2,b}(\R^k;\R^l)$ and set $\hz=\si(z)$.
Then $\hz\in\cq^\ka(I;\R^l)$, and it can be decomposed into
$$
\der \hz= (x^1 \, \hat\zeta)^\ast  +\hat r,
$$
with 
$$
\hat\zeta_s= \zeta_s( D \si(z_s))^\ast
,\quad
\hat r_{ts}=   D \si(z_s)\, r_{ts}  + \lc \der(\si(z))_{ts}- D \si(z_s)(\der z)_{ts} \rc, 
$$
where $ D \si$ stands for the matrix-valued coefficient $( \frac{\partial \si^i}{\partial x^j} )_{1\leq i\leq l,1\leq j\leq k}$, and the norm of $\hz$ as a convolutional controlled process can be bounded as:
\begin{equation}\label{eqn:deux-etape}
\cn[\hz;\cq^\ka(I;\R^l)]\le
c_{\si}\lcl 1+\cn[z;\cq^\ka(I;\R^k)]^2  \rcl.
\end{equation}
Furthermore, if $\zun, \zde \in \cq^\ka(I)$ are such that $\zun_a=\zde_a$, then
\begin{equation}\label{deux-etape-bis}
\cn[\si(\zun)-\si(\zde);\cq^\ka(I;\R^l)] \leq c_{\si,\zun,\zde} \,  \cn[\zun-\zde; \cq^\ka(I;\R^k)],
\end{equation}
where
$$c_{\si,\zun,\zde} \leq c_\si \lcl 1+\cn[\zun; \cq^\ka(I;\R^k)]+\cn[\zde; \cq^\ka(I;\R^k)] \rcl^2.$$
\end{proposition}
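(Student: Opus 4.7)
The plan is to identify the controlled-path decomposition of $\hz = \si(z)$ by a first order Taylor expansion, check the required H\"older bounds on the new ingredients $\hat\zeta$ and $\hat r$, and then replay the strategy for the difference $\si(\zun) - \si(\zde)$.

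First I would write
\begin{equation*}
(\der \hz)_{ts} = D\si(z_s) (\der z)_{ts} + R^{\si}_{ts}(z), \qquad R^\si_{ts}(z) := \si(z_t) - \si(z_s) - D\si(z_s)(\der z)_{ts}.
\end{equation*}
Plugging in $(\der z)_{ts} = (x^1_{ts}\zeta_s)^\ast + r_{ts}$ and observing the matrix identity $D\si(z_s)(x^1_{ts}\zeta_s)^\ast = (x^1_{ts}\zeta_s(D\si(z_s))^\ast)^\ast$, the claimed expressions for $\hat\zeta$ and $\hat r$ fall out. To get (\ref{eqn:deux-etape}) I would then bound each seminorm of $\cq^\ka$ separately: $\cn[\hat\zeta;\cac_1^0]$ is controlled by $|D\si|_\infty \, \cn[\zeta;\cac_1^0]$, the product-rule identity
\begin{equation*}
\der(\hat\zeta)_{ts} = (\der\zeta)_{ts}(D\si(z_s))^\ast + \zeta_t \, (\der [D\si(z)])_{ts}^\ast
\end{equation*}
combined with $|\der(D\si(z))_{ts}| \le |D^2\si|_\infty \cn[z;\cac_1^\ka] |t-s|^\ka$ bounds $\cn[\hat\zeta;\cac_1^\ka]$, and $\hat r$ splits into a direct piece $D\si(z_s) r_{ts}$ and a Taylor remainder satisfying $|R_{ts}^\si(z)| \le \tfrac12 |D^2\si|_\infty \cn[z;\cac_1^\ka]^2 |t-s|^{2\ka}$. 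Collecting these contributions, and using $\cn[z;\cac_1^\ka] \le c_x \cn[z;\cq^\ka]$, produces the quadratic bound in (\ref{eqn:deux-etape}).

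For the difference estimate (\ref{deux-etape-bis}), the same Taylor procedure applied to both $\zun$ and $\zde$ gives
\begin{equation*}
\hat\zeta^{(1)}_s - \hat\zeta^{(2)}_s = (\zeta^{(1)}_s - \zeta^{(2)}_s)(D\si(\zun_s))^\ast + \zeta^{(2)}_s \, [D\si(\zun_s) - D\si(\zde_s)]^\ast,
\end{equation*}
and an analogous splitting of $\hat r^{(1)}_{ts} - \hat r^{(2)}_{ts}$ into a first piece containing $D\si(\zun_s)r^{(1)}_{ts} - D\si(\zde_s)r^{(2)}_{ts}$ and a second piece $R^\si_{ts}(\zun) - R^\si_{ts}(\zde)$. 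Writing $R^\si_{ts}(z) = \int_0^1 (1-\theta) D^2\si(z_s + \theta \der z_{ts})[\der z_{ts}, \der z_{ts}]\, d\theta$ and pivoting reduces this last difference to an elementary part where $D^2\si$ is frozen and acts on $(\der \zun_{ts})^{\otimes 2} - (\der \zde_{ts})^{\otimes 2}$, controlled through the factorisation $(\der \zun)^{\otimes 2} - (\der \zde)^{\otimes 2} = \der\zun \otimes \der(\zun-\zde) + \der(\zun - \zde) \otimes \der\zde$, and a part where a difference of $D^2\si$-values evaluated at close points multiplies $(\der \zun_{ts})^{\otimes 2}$, controlled through the modulus of continuity of $D^2\si$ together with the assumption $\zun_a = \zde_a$, which delivers the key inequality $|\zun_s - \zde_s| \le \ep^\ka \cn[\zun - \zde;\cac_1^\ka]$.

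The main obstacle is precisely this Taylor-remainder difference: the bookkeeping must arrange that every term carries exactly one factor of $\cn[\zun - \zde;\cq^\ka]$ and at most two factors of $\cn[\zun;\cq^\ka] + \cn[\zde;\cq^\ka]$, so that the final constant $c_{\si,\zun,\zde}$ matches the quadratic form claimed in the statement. The other seminorms of the difference are controlled along exactly the same lines as in the first estimate, with the Lipschitz character of $D\si$ and the identity $\zun_a = \zde_a$ absorbing the remaining cross terms.
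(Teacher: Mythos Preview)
Your approach is correct and coincides with the paper's: the paper does not give a detailed argument but simply refers to \cite{Gu}, Proposition~4, noting that the proof carries over verbatim once $\der x$ is replaced by $x^1$. Your Taylor expansion, identification of $\hat\zeta$ and $\hat r$, and term-by-term bounding of the four seminorms is exactly that argument.

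One caveat on the difference estimate: your appeal to the ``modulus of continuity of $D^2\si$'' for the piece where a difference of $D^2\si$-values multiplies $(\der\zun_{ts})^{\otimes 2}$ will not by itself produce a bound that is \emph{linear} in $\cn[\zun-\zde;\cq^\ka]$ with the clean quadratic constant $c_{\si,\zun,\zde}$ stated --- for that you need $D^2\si$ Lipschitz, i.e.\ $\si\in C^{3,b}$ rather than the $C^{2,b}$ announced in the hypotheses. This extra regularity is precisely what the paper assumes both in its main result (Theorem~\ref{main-theorem-rough}) and in the localized analogue of this proposition (Proposition~\ref{prop:passage-y-sigmay}, where the corresponding double-increment of $D\si$ is handled via a two-parameter interpolation path requiring a third derivative). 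So the discrepancy lies in the statement's hypotheses rather than in your strategy; under $C^{3,b}$ your outline goes through as written.
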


\begin{proof}
It is exactly the same as the proof of \cite[Proposition 4]{Gu}, replacing $\der X$ with $x^1$.

\end{proof}

Finally, let us mention that, in the remainder of the article, we will write $\cq^\ka(I)$ and $\Qti_\be^\ka(I)$ instead of $\cq^\ka(I;V)$ and $\Qti_\be^\ka(I;V)$ whenever this does not lead to an ambiguous situation.

\subsection{Integration of controlled processes}
We now aim at giving a precise sense to the integral $\cj_{ts}(\dti x \, \si(y))$ which appears in (\ref{eq:volterra-ytilde-bis}) and stands for $\int_s^t e^{-.(t-u)} dx_u \, \si(y_u)$ in case of smooth processes. As $\ga < 1/2$, we can no longer resort to Young's interpretation. In fact, in order to define this integral, we will rely, as usual in the rough path theory, on the a priori existence of  some Levy area type process adapted to our problem (notice that the following hypothesis covers Hypothesis \ref{hyp:X1-bis}):

\begin{hypothesis}\label{hyp:X2}
Assume that, for some $\ga \in (1/3,1/2)$, $x$ allows to define three processes $\xti^1 \in \cacti_{1}^\ga(\R^{1,n})$, $\xti^2 \in \cacti_{2}^{2\ga}(\R^{n,n})$ and $\xti^3 \in \cacti_{3}^{3\ga}(\R^{n,n})$ satisfying $\delti \xti^1 =0$ and
$$(\delti \xti^2)_{tus}=\xti^1_{tu}\otimes  x^1_{us}+\xti^3_{ts}.$$
\end{hypothesis}

\begin{remark}\label{rmk:4.5}
In case of a smooth process $x$, the increment $\hxd$ represents now the double iterated integral $\ist e^{-\xi(t-v)} dx_v \otimes x^1_{vs}$, which can also be written, with a slight adaptation of the notations of Section \ref{sect:convol-increments}, as the (partially) integrated Levy area
$$\xti_{ts}^2(\xi)=\int_0^\infty d\eta \, \hphi(\eta) \cj_{ts}(\dti x \otimes \dti x)(\xi,\eta).$$
As for $\xti^3$, it is given in this case by 
$$
\xti^3_{tus}(\xi)=\int_u^t e^{-\xi(t-v)} dx_v  \otimes (\der x^1)_{vus}
=\int_u^t e^{-\xi(t-v)} dx_v \otimes \int_s^u [\phi(v-w)-\phi(u-w)] dx_w.
$$
\end{remark}

\smallskip

The last ingredient we need before we can  integrate convolutional controlled processes with respect to the increment $\dti x$ is a matrix equivalent of Lemma \ref{lem:delta-prod-ML}: if $A,B \in \R^{k,l}$, denote $A \cdot B=\mbox{Tr}(AB^*)$. Obviously, $\lln A \cdot B \rrn \leq \norm{A}\norm{B}$, and hence:
\begin{lemma}\label{lem:prod-scal-matrices}
If $\Mti \in \cacti_{2}(\R^{k,l})$ and $L\in \cac_m(\R^{k,l})$, then $\Mti \cdot L \in \cacti_{m+1}(\R)$ and
$$\delti(\Mti \cdot L)=\delti \Mti \cdot L-\Mti \cdot \der L.$$
\end{lemma}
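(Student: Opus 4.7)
My plan is to closely mimic the argument of Lemma~\ref{lem:delta-prod-ML}, with the bilinear operation $(A,B)\mapsto AB$ replaced by $(A,B)\mapsto A\cdot B=\mathrm{Tr}(AB^*)$. The bound $|A\cdot B|\le \|A\|\,\|B\|$ makes this drop-in substitution painless at the analytic level, and the algebraic part is essentially a Leibniz computation combined with the very definition $\delti = \der - a$.

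First I would check that $\Mti\cdot L\in\cacti_{m+1}(\R)$. The candidate $(\Mti\cdot L)_{t_1\cdots t_{m+1}}(\xi)=\Mti_{t_1t_2}(\xi)\cdot L_{t_2\cdots t_{m+1}}$ is continuous in its arguments, and vanishes whenever two consecutive $t_i$ coincide because $\Mti$ and $L$ individually have that vanishing property. The required Laplace integrability then follows from
$$
\cn\bigl[(\Mti\cdot L)_{t_1\cdots t_{m+1}};\cl_1\bigr]\le \cn\bigl[\Mti_{t_1t_2};\cl_1\bigr]\,\|L_{t_2\cdots t_{m+1}}\|_{\R^{k,l}}<\infty.
$$

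Next I would establish the algebraic identity. The only nontrivial input is the analogue of the second item of Proposition~\ref{difrul} for the dot product when $\Mti\in\cacti_2$ and $L\in\cac_m$, namely
$$
\der(\Mti\cdot L)_{t_1\cdots t_{m+2}}=(\der \Mti)_{t_1t_2t_3}\cdot L_{t_3\cdots t_{m+2}}-\Mti_{t_1t_2}\cdot (\der L)_{t_2\cdots t_{m+2}}.
$$
This is a direct expansion of $\sum_{i=1}^{m+2}(-1)^i(\Mti\cdot L)_{t_1\cdots\widehat{t_i}\cdots t_{m+2}}$: the $i=1$ and $i=2$ terms recombine with $L_{t_3\cdots t_{m+2}}$ to produce $(\der\Mti)_{t_1t_2t_3}\cdot L_{t_3\cdots t_{m+2}}$, while the $i\ge 3$ terms recombine with $\Mti_{t_1t_2}$ to produce $-\Mti_{t_1t_2}\cdot(\der L)_{t_2\cdots t_{m+2}}$. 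The bilinearity of the trace/dot-product is the only property used here, so the same cancellations as in Lemma~\ref{lem:delta-prod-ML} go through verbatim.

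Finally I would combine this with the definition of the twisted operator:
$$
\delti(\Mti\cdot L)_{t_1\cdots t_{m+2}}(\xi)=\der(\Mti\cdot L)_{t_1\cdots t_{m+2}}(\xi)-a_{t_1t_2}(\xi)\,\Mti_{t_2t_3}(\xi)\cdot L_{t_3\cdots t_{m+2}}.
$$
Substituting the Leibniz expression above and regrouping the two terms that carry $L_{t_3\cdots t_{m+2}}$ as a right factor gives
$$
\bigl[(\der\Mti)_{t_1t_2t_3}(\xi)-a_{t_1t_2}(\xi)\,\Mti_{t_2t_3}(\xi)\bigr]\cdot L_{t_3\cdots t_{m+2}}-\Mti_{t_1t_2}(\xi)\cdot(\der L)_{t_2\cdots t_{m+2}},
$$
which by the definition of $\delti$ is precisely $(\delti\Mti\cdot L-\Mti\cdot \der L)_{t_1\cdots t_{m+2}}(\xi)$. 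There is no real obstacle: the entire content of the lemma is an index-shuffling verification, and the only mild care is to book-keep the exponential factor $a_{t_1t_2}(\xi)$, which does not interfere with $L$ because the latter does not depend on $\xi$.
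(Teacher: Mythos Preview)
Your proposal is correct and follows exactly the approach the paper intends: the paper does not give a separate proof but simply says ``hence'' after noting $|A\cdot B|\le\|A\|\|B\|$, deferring to the computation already done in Lemma~\ref{lem:delta-prod-ML}. One small imprecision: the $i=1,2$ terms of $\der(\Mti\cdot L)$ give only two of the three summands of $(\der\Mti)_{t_1t_2t_3}\cdot L_{t_3\cdots t_{m+2}}$, and the $i\ge 3$ terms give only $m$ of the $m+1$ summands of $-\Mti_{t_1t_2}\cdot(\der L)_{t_2\cdots t_{m+2}}$; the missing term $\mp\Mti_{t_1t_2}\cdot L_{t_3\cdots t_{m+2}}$ appears once on each side and cancels, which is the actual mechanism behind the Leibniz identity.
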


\smallskip

Here is now the natural way to integrate convolutional controlled processes in our context:

\begin{proposition}\label{prop:intg-controlled-proc}
For two given coefficients $\ga,\ka$ such that $1/3<\ka<\ga$,
let $x$ be a process satisfying Hypothesis \ref{hyp:X2}. Furthermore,  let
$z\in \cq^\ka(I;\R^n)$ with decomposition
\begin{equation}\label{eq:dcp-m}
(\der z)_{ts}=(x_{ts}^1 \zeta_s)^* +r_{ts},
\quad\mbox{ where }\quad
\zeta\in\cac_1^\ka(I;\R^{n,n}), \, r\in\cac_2^{2\ka}(I;\R^n).
\end{equation}
Define $\Ati$ by $\Ati_0=\hti$ (where $\hti \in \cl_1$) and
\begin{equation}\label{eq:dcp-mdx}
(\delti \Ati)_{ts} =\xti^1 z+\xti^2 \cdot\zeta^\ast+\Lati(\xti^1 r+\xti^2 \cdot (\der \zeta)^\ast-\xti^3 \cdot\zeta^\ast).
\end{equation}
Finally, set
 \begin{equation*} 
 \cj(\dti x \, z) = \delti \Ati. 
 \end{equation*}
Then:
\begin{enumerate}
\item
$\Ati$ is well-defined as an element of $\Qti^\ka(I;\R)$, and $\cj_{ts}(\dti x \, z)(\xi)$ coincides with the integral $\ist e^{-\xi(t-v)} dx_v \, z_v$ in case of two smooth functions $x$ and $z$.
\item
The semi-norm of $\Ati$ in $\Qti^\ka(I;\R)$ can be estimated as
\begin{equation}\label{eqn:trois-etape}
\cn[\Ati; \Qti^\ka(I;\R)]\le
c_{x}
\lcl (\cn[z; \cac_1^{0}(I;\R^n)] + \ep^{\ga-\ka}\cn[z; \cq^\ka(I;\R^n)]\rcl,
\end{equation}
for a positive constant $c_{x}$ depending only on $x$. 
\item
It holds
\begin{equation}\label{eq:rsums-imdx}
 \cj_{ts}(\dti x \, z)
=\lim_{|\Pi_{ts}|\to 0}\sum_{i=0}^n
\lc \hxu_{t_{i+1}, t_{i}} \, z_{t_{i}}
+ \hxd_{t_{i+1}, t_{i}} \cdot \zeta_{t_{i}}^\ast \rc \quad \mbox{in} \ \cl_1,
\end{equation} for any $\ell_1\le s<t\le \ell_2$,
where the limit is over all partitions $\Pi_{ts} = \{t_0=t,\dots,
t_n=s\}$ of $[s,t]$ as the mesh of the partition goes to zero.
\end{enumerate}
\end{proposition}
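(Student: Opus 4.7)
The plan is to first verify that the argument of $\Lati$ in the definition of $\delti\Ati$ belongs to $\cz\cacti_3^\mu$ for some $\mu>1$, so that Proposition \ref{th:conv-lambda} can be applied. Writing
$$h := \xti^1\, r + \xti^2\cdot(\der\zeta)^* - \xti^3\cdot\zeta^*,$$
and regarding the three summands as elements of $\cacti_3$ via the convention (\ref{cvpdt}), their H\"older regularities are respectively $\ga+2\ka$, $2\ga+\ka$ and $3\ga$; all three exceed $1$ because $\ga,\ka>1/3$. Splitting $h$ into these pieces in the sense of the infimum norm (\ref{eq:normOCC2}) then yields a quantitative control of $\cn[h;\cacti_3^\mu]$ by the norms of $\xti^1,\xti^2,\xti^3$ and the components $r,\zeta$ of $z$, for some $\mu \in (1,\ga+2\ka]$.

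The key step is the algebraic identity $\delti h = 0$. I would establish it by combining the twisted Leibniz rules of Lemmas \ref{lem:delta-prod-ML} and \ref{lem:prod-scal-matrices}, the structural relations $\delti\xti^1 = 0$ and $\delti\xti^2 = \xti^1\otimes x^1 + \xti^3$ of Hypothesis \ref{hyp:X2}, the derived identity $\delti\xti^3 = \xti^1\otimes\der x^1$ (obtained by applying $\delti$ to the formula for $\delti\xti^2$ and using $\delti\delti\xti^2 = 0$ together with Lemma \ref{lem:delta-prod-ML}), and, on the $z$-side, the relation $\der r = -\der(x^1\zeta)^*$ obtained by applying $\der$ to the decomposition (\ref{eq:dcp-m}) and using $\der\der = 0$. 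A direct expansion then shows that all contributions to $\delti h$ pair up and cancel. Applying the same algebra to the full right-hand side of the definition of $\delti\Ati$, one sees that this right-hand side is a closed element of $\cacti_2$, so the acyclicity of $(\cacti_*,\delti)$ (Proposition \ref{acyc}) produces an $\Ati\in\cacti_1$ realising the identity, and the initial condition $\Ati_0 = \hti$ removes the remaining ambiguity.

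The fact that $\Ati\in\Qti^\ka(I;\R)$ is then read off from the decomposition $(\delti\Ati)_{ts} = \xti^1_{ts} z_s + \rti_{ts}$ with remainder $\rti := \xti^2\cdot\zeta^* + \Lati h$: the first summand of $\rti$ lies in $\cacti_2^{2\ga}$ and the second in $\cacti_2^\mu$, both of which embed into $\cacti_2^{2\ka}$. The estimate (\ref{eqn:trois-etape}) follows by combining the contraction inequality (\ref{ineqhla}) for $\Lati$ with standard H\"older product estimates and the embedding $\cn[z;\cac_1^\ka(I)] \le c_x\,\ep^{\ga-\ka}\cn[z;\cq^\ka(I)]$ recorded after Definition \ref{def:controlled-path-q}; the factor $\ep^{\ga-\ka}$ in (\ref{eqn:trois-etape}) arises through the gain $\ep^{\mu-2\ka}$ when one converts the $\mu$-H\"older bound on $\Lati h$ into the required $2\ka$-H\"older bound on $\rti$.

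Finally, item (3) follows from Corollary \ref{cor:integration-tilde} applied to $\gti := \xti^1 z + \xti^2\cdot\zeta^*$: the same computation as above yields $\delti\gti = -h \in \cacti_3^\mu$ with $\mu>1$, and the corollary then delivers the Riemann sum representation (\ref{eq:rsums-imdx}) in $\cl_1$. For the coincidence claim in item (1), in the smooth case the correction sum $\sum e^{-\,\cdot\,(t-t_{i+1})}\xti^2_{t_{i+1},t_i}\cdot\zeta_{t_i}^*$ vanishes as $|\Pi_{ts}|\to 0$ (its summands are then $O(|t_{i+1}-t_i|^2)$), while the leading sum is a standard Riemann--Stieltjes sum converging to $\ist e^{-\xi(t-v)}\,dx_v\,z_v$. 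I expect the main technical obstacle to be the verification $\delti h = 0$: it is not conceptually deep, but requires meticulous bookkeeping with the two matrix conventions $\otimes$ and $\cdot$, and with the two distinct Leibniz rules attached to $\der$ and $\delti$.
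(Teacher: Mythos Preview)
Your proposal is correct and arrives at the same destination as the paper, but the organisation differs in one useful way worth flagging.

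The paper does not verify $\delti h=0$ directly. Instead it works in the smooth case, writes $\cj(\dti x\,r)=\cj(\dti x\,z)-\xti^1 z-\xti^2\cdot\zeta^*$, and applies $\delti$ using only Lemma~\ref{lem:prod-scal-matrices}, Hypothesis~\ref{hyp:X2} and the decomposition of $\der z$. This yields in one stroke the identity
\[
\delti\bigl(\xti^1 z+\xti^2\cdot\zeta^*\bigr)=-h,
\]
which simultaneously (i) exhibits $h$ as an element of $\text{Im}(\delti)=\ker(\delti)$, so that $\Lati$ applies, and (ii) shows that the formula \eqref{eq:dcp-mdx} agrees with the Riemann integral when $x,z$ are smooth. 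Your route verifies $\delti h=0$ by a separate four-increment computation involving $\delti\xti^3=\xti^1\otimes\der x^1$; this is correct, but note that it is actually redundant: your own later step ``the full right-hand side is a closed element of $\cacti_2$'' is precisely the displayed identity above, and that already forces $h\in\ker\delti$. So you can drop the direct check and the appeal to $\delti\xti^3$ entirely, and you recover exactly the paper's argument. Conversely, the paper's smooth-case derivation is just a convenient heuristic wrapper around that same algebraic identity; your purely abstract presentation makes it clearer that nothing beyond Hypothesis~\ref{hyp:X2} is used. Items (2) and (3), and your Riemann-sum argument for the coincidence in the smooth case, match the paper's treatment.
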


\begin{remark}
It is certainly possible to state and prove continuity results for our extended integral in terms of a sequence $x^n$ converging to $x$ in the sense of convolutional controlled processes. We did not go into these considerations for sake of conciseness.
\end{remark}

\begin{proof}[Proof of Proposition \ref{prop:intg-controlled-proc}]
{\it (1)} If $z,x$ are two smooth functions, then $\cj(\hddx x \, z)$ can be defined as a Riemann integral, and as in the Young case, one can write:
$$
\cj_{ts}(\hddx x \, z)(\xi)=\ist e^{-\xi(t-v)} dx_v \, z_v= \hxu(\xi) \, z_s + 
\ist e^{-\xi(t-v)} dx_v \, (\der z)_{vs}.
$$
Plugging the decomposition (\ref{eq:dcp-m}) for $(\der z)_{vs}$ into this last expression, and observing that, thanks to some elementary matrix manipulations, we have  $dx_v \, (x_{vs}^1 \, \zeta_s)^\ast= (dx_v \otimes  x_{vs}^1)   \cdot  \zeta_s^\ast$, we end up with:
\bean
\cj_{ts}(\hddx x \, z)(\xi)&=&\hxu(\xi) \, z_s +
\ist e^{-\xi(t-v)} dx_v \, \lc (x_{vs}^1 \, \zeta_s)^\ast +r_{vs} \rc  \\
&=&\hxu_{ts}(\xi) \, z_s + \lc \ist e^{-\xi(t-v)} dx_v \otimes  x_{vs}^1 \rc  \cdot  \zeta_s^\ast
+ \ist e^{-\xi(t-v)} dx_v \, r_{vs}  \\
&=&\hxu_{ts}(\xi) \, z_s + \hxd_{ts}(\xi) \cdot  \zeta_s^\ast + \ist e^{-\xi(t-v)} dx_v \, r_{vs},
\eean
or otherwise stated:
\beq\label{eq:rel-intg-hdx-r}
\cj_{ts}(\hddx x \, r)=\cj_{ts}(\hddx x \, z)
-\hxu _{ts} z_s - \hxd_{ts} \cdot  \zeta_s^\ast.
\eeq
In order to analyze the term $\cj_{ts}(\hddx x \, r)$, let us apply, like in the Young case, $\hdx$ to both members of the equality above. This gives, owing to Proposition~\ref{prop:dif-convol-intg}, Lemma~\ref{lem:prod-scal-matrices}, Hypothesis~\ref{hyp:X2}, and using the decomposition (\ref{eq:dcp-m}):
\bean
\delti\lp  \cj(\hddx x \, r) \rp&=& \hxu \, \der z - \hdx \xti^2 \cdot \zeta^\ast 
+ \xti^2 \cdot (\der \zeta)^\ast  \\
&=& \hxu \, r + \xti^2 \cdot (\der \zeta)^\ast - \xti^3 \cdot \zeta^\ast
\eean
When all these terms have a Hölder regularity greater than 1, we are now in a shape to apply the operator $\Lati$, which gives:
$$
\cj(\hddx x \, r)=\Lati \lp  \hxu \, r +\xti^2 \cdot (\der \zeta)^\ast - \xti^3 \cdot \zeta^\ast \rp.
$$
Plugging this equality back into (\ref{eq:rel-intg-hdx-r}), we have proved the relation
$$
\cj(\hddx x \, z)=\hxu \, z + \hxd \cdot \zeta^\ast 
+\Lati\lp  \hxu\, r + \hxd \cdot (\der\zeta)^\ast - \xti^3\cdot \zeta^\ast \rp,
$$
in case of some regular functions $x$ and $z$.

\smallskip

\noindent {\it (2)} 
Let us analyze the two terms of the remainder $\Rti$ of $\Ati$ defined by (\ref{eq:dcp-mdx}), namely:
$$\Rti= \xti^2 \zeta +\Lati(\hxu\, r + \hxd \cdot (\der\zeta)^\ast - \xti^3\cdot \zeta^\ast).$$
For the first term, we have
$$\cn[\xti^2 \cdot \zeta^\ast; \cacti_{2}^{2\ka}(I)] \leq \ep^{2(\ga-\ka)} \cn[\xti^2; \cacti_{2}^{2\ga}(I)] \, \cn[\zeta; \cac_1^{0}(I)].$$
As for the second term, we use the contraction property (\ref{ineqhla}) of $\Lati$ to deduce:
\begin{multline*}
\cn[\Lati(\hxu\, r + \hxd \cdot (\der\zeta)^\ast - \xti^3\cdot \zeta^\ast); \cacti_{2}^{2\ka}(I)]\\
 \leq  \ep^\ga \cn[\Lati(\hxu\, r + \hxd \cdot (\der\zeta)^\ast - \xti^3\cdot \zeta^\ast); \cacti_{2}^{2\ka+\ga}(I)]
 \leq  C  \ep^\ga \lp I+II+III \rp,
\end{multline*}
with
$$I=\cn[\xti^1  r; \cacti_{2}^{2\ka+\ga}(I)] \leq \cn[\xti^1;\cacti_{2}^\ga(I)] \, \cn[r; \cac_2^{2\ka}(I)],$$
$$II=\cn[\xti^2 \cdot (\der \zeta)^\ast; \cacti_{2}^{2\ka+\ga}(I)] \leq \ep^{\ga-\ka} \cn[\xti^2;\cacti_{2}^{2\ga}(I)] \, \cn[\zeta; \cac_1^\ka(I)],$$
$$III=\cn[\xti^3 \cdot \zeta^\ast; \cacti_{2}^{2\ka+\ga}(I)] \leq \ep^{2(\ga-\ka)} \cn[\xti^3; \cacti_{2}^{3\ga}(I)] \, \cn[\zeta; \cac_1^0(I)].$$
Thus, we get $\cn[\Rti; \cacti_{2}^{2\ka}(I)] \leq \ep^{2(\ga-\ka)} \, \cn[z; \cq^\ka(I)]$. Besides, we already mentioned that $\cn[z; \cac_1^\ka(I)] \leq c_x \ep^{\ga-\ka} \, \cn[z; \cq^\ka(I)]$. The estimation (\ref{eqn:trois-etape}) easily follows.

\smallskip

\noindent {\it (3)}  Remark that
$$\delti \Ati =(\id-\Lati \delti)(\xti^1 z+\xti^2 \cdot \zeta^\ast).$$
The result then stems from Corollary \ref{cor:integration-tilde}.

\end{proof}

In order to define the term $\cj(\dti x \, \si(y))$ in our Volterra equation, we need the following multidimensional version of the previous proposition:

\begin{definition}
We say that $z\in \cac_1(\R^{k,l})$ is controlled by $x^1$ (with regularity $\ka$) if $z=(z^{(1)}, \ldots, z^{(l)})$, with $z^{(i)}\in \cq^\ka(\R^{k})$. Denote $\cq^\ka(\R^{k,l})$ this set of controlled processes, and define, for any $z\in \cq^\ka(\R^{k,l})$,
$$\cn[z;\cq^\ka(\R^{k,l})]=\lp\sum_{i=1}^l \cn[z^{(i)};\cq^\ka(\R^k)]^2\rp^{1/2}.$$
Let us also introduce the set $\Qti^\ka(\R^{k,l})$ along the same principle, together with the norm
$$\cn[\zti;\Qti^\ka(\R^{k,l})]=\lp\sum_{i=1}^l \cn[\zti^{(i)};\Qti^\ka(\R^k)]^2\rp^{1/2}.$$
\end{definition}

\begin{corollary}
If $z \in \cq^\ka(\R^{n,l})$, the process $\cj(\dti x \, z)$ (with values in $\R^{l}$) defined by
$$\cj(\dti x \, z)^{(i)}=\cj(\dti x \, z^{(i)}) \quad \mbox{for all} \ i\in \{1,\ldots, l\},$$
belongs to $\Qti^\ka(\R^{l})$. Moreover, the conclusions of Proposition \ref{prop:intg-controlled-proc} still hold in this context. In particular,
\begin{multline*}
\cn[\cj(\dti x \, z); \Qti^\ka([\ell_1,\ell_2];\R^{l})]\\ \le
c_x
\lcl (\cn[z; \cac_1^{0}([\ell_1,\ell_2];\R^{n,l})] + \lln \ell_2-\ell_1\rrn^{\ga-\ka}\cn[z; \cq^\ka([\ell_1,\ell_2];\R^{n,l})]\rcl.
\end{multline*}
\end{corollary}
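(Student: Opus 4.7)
The strategy is to reduce the multidimensional statement to a coordinate-wise application of Proposition \ref{prop:intg-controlled-proc}. By hypothesis, $z = (z^{(1)}, \ldots, z^{(l)}) \in \cq^\ka(\R^{n,l})$, meaning each column $z^{(i)} \in \cq^\ka(\R^n)$ admits a decomposition
$$
(\der z^{(i)})_{ts} = (x^1_{ts} \, \zeta_s^{(i)})^\ast + r^{(i)}_{ts},
\qquad \zeta^{(i)} \in \cac_1^\ka(\R^{n,1}), \quad r^{(i)} \in \cac_2^{2\ka}(\R^n).
$$
For each fixed $i$, Proposition \ref{prop:intg-controlled-proc} directly produces an element $\cj(\dti x \, z^{(i)}) \in \Qti^\ka(\R)$ together with the one-dimensional estimate
$$
\cn[\cj(\dti x \, z^{(i)}); \Qti^\ka([\ell_1,\ell_2];\R)]
\le c_x \lcl \cn[z^{(i)}; \cac_1^0([\ell_1,\ell_2];\R^n)] + \ep^{\ga-\ka}\cn[z^{(i)}; \cq^\ka([\ell_1,\ell_2];\R^n)] \rcl,
$$
where $\ep = \ell_2-\ell_1$. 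Setting $\cj(\dti x \, z)^{(i)} := \cj(\dti x \, z^{(i)})$ then defines an element of $\Qti^\ka(\R^l)$ by the very definition of this space.

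The remaining work is the bookkeeping of passing from coordinate-wise bounds to the Euclidean norm on $\Qti^\ka(\R^l)$ and $\cq^\ka(\R^{n,l})$. Squaring each of the $l$ one-dimensional estimates, summing in $i$, and applying the elementary inequality $(a+b)^2 \le 2(a^2+b^2)$ yields
$$
\sum_{i=1}^l \cn[\cj(\dti x \, z^{(i)}); \Qti^\ka]^2
\le 2 c_x^2 \Bigl( \sum_{i=1}^l \cn[z^{(i)}; \cac_1^0]^2 + \ep^{2(\ga-\ka)} \sum_{i=1}^l \cn[z^{(i)}; \cq^\ka]^2 \Bigr).
$$
Taking square roots, and noting that $\cn[z; \cac_1^0(\R^{n,l})]$ and $\cn[z; \cq^\ka(\R^{n,l})]$ are defined as the $\ell^2$-aggregates of the column norms, gives the claimed inequality (up to an adjustment of the constant $c_x$, which is harmless). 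The Riemann-sum representation \eqref{eq:rsums-imdx} likewise holds coordinate-wise and hence in the vector sense, since $\cl_1$ convergence in $\R^l$ is equivalent to $\cl_1$ convergence of each component.

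There is no real obstacle here: all the analytic content — the construction of the integral, the use of the operator $\Lati$, the contraction estimate \eqref{ineqhla}, and the identification with Riemann sums — has been settled in Proposition \ref{prop:intg-controlled-proc}. The only mildly delicate point is to make sure that the $\ell^2$-aggregation is consistent with the definition of $\cq^\ka(\R^{k,l})$ adopted just above the statement, which is precisely an $\ell^2$-sum over columns. This compatibility makes the estimate transfer with only a universal multiplicative constant.
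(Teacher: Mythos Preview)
Your proposal is correct and matches the paper's approach: the paper states this corollary without proof, treating it as an immediate consequence of applying Proposition~\ref{prop:intg-controlled-proc} column by column and aggregating via the $\ell^2$-definition of the multidimensional norms. Your write-up makes this explicit, and the only slight wrinkle---that $\bigl(\sum_i \cn[z^{(i)};\cac_1^0]^2\bigr)^{1/2}$ may exceed $\cn[z;\cac_1^0(\R^{n,l})]$ by a factor depending on $l$---is harmless, since it is absorbed into the constant $c_x$.
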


\subsection{Localized controlled processes}

In order to get a global solution for our rough Volterra system, we still have to perform a technical step. Indeed, like in the Young case, we will solve the equation by patching solutions defined on small intervals, and this patching procedure will involve a localization of some convolutional paths around a certain smooth increment $f$, which represents in general an initial condition. The current section is thus devoted to adapt our previous definitions and propositions to this localized setting. Notice that we assume, throughout the section, that $x$ satisfies Hypothesis \ref{hyp:X2}.

\smallskip

Fix thus an interval $I=[a,b]$ and denote $\ep=b-a$. The following subsets of $\cq^\ka(I;\R^k)$ will come into play:

\begin{definition}
Let $f \in \cac_2^1(I;\R^k)$. A process $y \in \cac_1^\ga(I;\R^k)$ is said to be $\ka$-weakly controlled around $f$ if 
\begin{equation}\label{decompo-controlled-around}
(\der y)_{ts}-f_{ts}=(x^1_{ts} \zeta_s^y)^\ast+r_{ts}^y, \ \mbox{with} \ \zeta^y \in \cac_1^\ka(I;\R^{n,k})\ \mbox{and} \ r^y \in \cac_2^{2\ka}(I;\R^k).
\end{equation}

\smallskip

Denote $\ca_{f,h}^\ka(I;\R^k)$ the set of $\ka$-weakly controlled around $f$ processes such that $z_a=h$, and for any $y \in \ca_{f,h}^\ka(I;\R^k)$, define its semi-norm by:
$$\cm[y;\ca_{f,h}^\ka(I;\R^k)]=\cn[\zeta^y;\cac_1^0(I)]+\cn[\zeta^y;\cac_1^\ka(I)]+\cn[r^y;\cac_2^{2\ka}(I)]+\cn[y;\cac_1^\ka(I)].$$ 

\end{definition}

The following elementary facts are worth noticing:
obviously, $\ca_{0,h}^\ka(I)=\cq_h^\ka(I)$, the norm $\cm[.;\ca_{0,h}^\ka(I;\R^k)]$ coincides with $\cn[.;\cq^\ka(I;\R^k)]$ and for any $f \in \cac_2^1(I;\R^k)$, $\ca_{f,h}^\ka(I) \subset \cq_h^\ka(I)$. The important point in our localization around $f$ is precisely that this latter increment does not play any role in the computation of $\cm[y;\ca_{f,h}^\ka(I;\R^k)]$ (thus the new notation $\cm$, instead of $\cn$, for the norm of $y$).

\smallskip

Let us now see how the spaces $\ca_{f,h}^\ka(I)$ pop out naturally when one integrates a convolutional controlled process.
\begin{proposition}\label{prop:passage-yti-y}
Let $\yti \in \Qti_{\hti}^\ka(I;\R^k)$ with decomposition $\delti \yti=(\xti^1 \zeta^{\yti})^\ast+\rti^{\yti}$. Set $y=a_0+\int_0^\infty d\xi \, \hphi(\xi) \, \yti(\xi)$. Then $y\in \ca_{f,h}^\ka(I;\R^k)$, with $f_{ts}=\int_0^\infty d\xi \, \hphi(\xi) \, a_{ts}(\xi) e^{-\xi(s-a)}\hti(\xi)$ and $h=a_0+\int_0^\infty d\xi \, \hphi(\xi) \, \hti(\xi)$. Moreover,
\begin{equation}\label{passage-yti-y}
\cm[y;\ca_{f,h}^\ka(I)] \leq c_x \lcl \cn[\yti;\Qti^\ka(I)]+\ep^{1-\ka} \cn[\hti;\cl_1] \rcl.
\end{equation}
\end{proposition}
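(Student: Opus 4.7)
The starting point is the identity
\[
(\der \yti)_{ts}(\xi)=(\delti \yti)_{ts}(\xi)+a_{ts}(\xi)\,\yti_s(\xi),
\]
which follows directly from (\ref{defhd}). Since $y_t=a_0+\int_0^\infty d\xi\,\hphi(\xi)\yti_t(\xi)$, integrating this against $\hphi(\xi)d\xi$ gives a first decomposition of $(\der y)_{ts}$ into two pieces. The first piece, coming from $(\delti \yti)_{ts}=(\xti^1_{ts}\zeta^{\yti}_s)^*+\rti^{\yti}_{ts}$, produces the term $(x^1_{ts}\zeta^{\yti}_s)^*$ once one uses the definition $x^1_{ts}=\int_0^\infty d\xi\,\hphi(\xi)\xti^1_{ts}(\xi)$ from Hypothesis~\ref{hyp:X1-bis}, plus the $\cacti_2^{2\ka}$-valued remainder $\int_0^\infty d\xi\,\hphi(\xi)\rti^{\yti}_{ts}(\xi)$. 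The second piece $\int d\xi\,\hphi(\xi) a_{ts}(\xi)\yti_s(\xi)$ must be handled via the twisted Taylor-type expansion
\[
\yti_s(\xi)=e^{-\xi(s-a)}\hti(\xi)+(\delti\yti)_{sa}(\xi),
\]
which is immediate from $(\delti\yti)_{sa}=\yti_s-(1+a_{sa})\yti_a$ together with $\yti_a=\hti$. Plugging this in isolates exactly the increment $f_{ts}=\int_0^\infty d\xi\,\hphi(\xi)a_{ts}(\xi)e^{-\xi(s-a)}\hti(\xi)$ claimed in the statement.

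This identifies the decomposition
\[
(\der y)_{ts}-f_{ts}=(x^1_{ts}\zeta_s^y)^*+r_{ts}^y,
\]
with $\zeta_s^y=\zeta_s^{\yti}$ and
\[
r_{ts}^y=\int_0^\infty d\xi\,\hphi(\xi)\bigl[a_{ts}(\xi)(\delti\yti)_{sa}(\xi)+\rti^{\yti}_{ts}(\xi)\bigr].
\]
The values at $s=a$ give $y_a=h$ as required. Next one has to prove that $\zeta^y\in\cac_1^\ka$ and $r^y\in\cac_2^{2\ka}$ and establish the quantitative bound (\ref{passage-yti-y}). The bounds on $\zeta^y$ in $\cac_1^0$ and $\cac_1^\ka$ are immediate from the corresponding norms of $\zeta^{\yti}$ appearing in $\cn[\yti;\Qti^\ka]$. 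For the $\cacti_2^{2\ka}$-valued part of $r^y$, the estimate $\int_0^\infty d\xi|\hphi(\xi)||\rti^{\yti}_{ts}(\xi)|\le\int_0^\infty d\xi|\hphi(\xi)|(1+\xi)|\rti^{\yti}_{ts}(\xi)|\le\cn[\yti;\Qti^\ka](t-s)^{2\ka}$ is direct.

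The only delicate piece is the $a_{ts}(\xi)(\delti\yti)_{sa}(\xi)$ term. The key pointwise estimate here is
\[
|a_{ts}(\xi)|\le\min\bigl(1,\xi(t-s)\bigr)\le\xi^{2\ka}(t-s)^{2\ka},
\]
valid since $2\ka<1$ and $\xi^{2\ka}\le 1+\xi$. Combining this with $\int_0^\infty d\xi|\hphi(\xi)|(1+\xi)|(\delti\yti)_{sa}(\xi)|\le\cn[\yti;\Qti^\ka](s-a)^\ka$ yields the contribution $\ep^\ka(t-s)^{2\ka}\cn[\yti;\Qti^\ka]$, giving $\cn[r^y;\cac_2^{2\ka}]\le C\cn[\yti;\Qti^\ka]$.

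The term $\cn[y;\cac_1^\ka]$ is where the $\ep^{1-\ka}\cn[\hti;\cl_1]$ summand in the statement originates. Writing $(\der y)_{ts}=\int d\xi\,\hphi(\xi)[(\delti\yti)_{ts}(\xi)+a_{ts}(\xi)\yti_s(\xi)]$ and splitting $\yti_s$ as above, the $(\delti\yti)_{ts}$-contribution and the $a_{ts}(\xi)(\delti\yti)_{sa}(\xi)$-contribution are controlled by $\cn[\yti;\Qti^\ka]$ using the previous estimates. The $a_{ts}(\xi)e^{-\xi(s-a)}\hti(\xi)$-contribution is where one exploits the sharper bound $|a_{ts}(\xi)|\le\xi(t-s)=\xi(t-s)^\ka(t-s)^{1-\ka}\le\xi\ep^{1-\ka}(t-s)^\ka$, which, combined with $|e^{-\xi(s-a)}|\le 1$, produces precisely the summand $\ep^{1-\ka}(t-s)^\ka\cn[\hti;\cl_1]$. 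Collecting all four pieces of $\cm$ yields the bound (\ref{passage-yti-y}). The main obstacle is therefore just the bookkeeping of the two different scalings of $|a_{ts}(\xi)|$ (the $\xi^{2\ka}(t-s)^{2\ka}$ version for the $\cac_2^{2\ka}$ estimates, and the $\xi\ep^{1-\ka}(t-s)^\ka$ version that lets one absorb $\hti$ into a factor of $\ep^{1-\ka}$), balanced against the weights $(1+\xi)$ appearing in the definition of $\cl_1$.
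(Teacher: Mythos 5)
Your proposal is correct and follows essentially the same route as the paper's proof: you use the identity $\der\yti=\delti\yti+a\,\yti$ together with $\yti_s(\xi)=e^{-\xi(s-a)}\hti(\xi)+(\delti\yti)_{sa}(\xi)$ to isolate $f$, set $\zeta^y=\zeta^{\yti}$ and $r^y_{ts}=\int_0^\infty d\xi\,\hphi(\xi)\{\rti^{\yti}_{ts}(\xi)+a_{ts}(\xi)(\delti\yti)_{sa}(\xi)\}$, and obtain the bounds from the two interpolated estimates on $|a_{ts}(\xi)|$, exactly as in the paper. The only difference is cosmetic bookkeeping of where the $\ep^{1-\ka}\cn[\hti;\cl_1]$ factor appears, which matches the paper's estimate $\norm{f_{ts}}\leq\lln t-s\rrn\,\cn[\hti;\cl_1]$.
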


\begin{proof}
If $s <t \in I$, write
\bean
(\der y)_{ts} &=& \int_0^\infty d\xi \, \hphi(\xi) (\der \yti)_{ts}(\xi)\\
&=& \int_0^\infty d\xi \, \hphi(\xi) (\delti \yti)_{ts}(\xi)+\int_0^\infty d\xi \, \hphi(\xi) a_{ts}(\xi) \yti_s(\xi)\\
&=& (x_{ts}^1 \zeta_s^{\yti})^\ast+\int_0^\infty d\xi \, \hphi(\xi) \rti_{ts}^{\yti}(\xi)+\int_0^\infty d\xi \, \hphi(\xi)a_{ts}(\xi) (\delti \yti)_{sa}(\xi)+f_{ts}.
\eean
Set $\zeta^y_s=\zeta^{\yti}_s$, $r^y_{ts}=\int_0^\infty d\xi \, \hphi(\xi) \lcl \rti^{\yti}_{ts}(\xi)+a_{ts}(\xi)(\delti \yti)_{sa}(\xi) \rcl $. Then
$$\cn[r^y;\cac_2^{2\ka}] \leq c \lcl \cn[\rti^{\yti};\cacti_{2}^{2\ka}] +\cn[\yti;\cacti_{1}^\ka] \rcl \leq c \, \cn[\yti;\Qti^\ka],$$
and $\norm{(\der y)_{ts} } \leq \norm{f_{ts} }+\lln t-s \rrn^\ga \cn[x^1;\cac_2^\ga] \cn[\zeta^{\yti};\cac_1^0]+\lln t-s\rrn^{2\ka} \cn[r^y;\cac_2^{2\ka}]$. But $\norm{f_{ts}} \leq \lln t-s \rrn \cn[\hti;\cl_1]$, hence $\cn[y;\cac_1^\ka ] \leq \ep^{1-\ka} \cn[\hti;\cl_1]+c_x \cn[\yti;\Qti^\ka]$, and (\ref{passage-yti-y}) is thus proved.

\end{proof}

An analog of Proposition \ref{cp:weak-phi} concerning the composition of a localized controlled process with a smooth function is the following:
\begin{proposition}\label{prop:passage-y-sigmay}
Let $y \in \ca_{f,h}^\ka(I)$, and consider a function $\si \in \cac^{3,b }$. Then $\si(y) \in \ca_{D\si(h)f,\si(h)}^\ka(I)$ and we have the following bound on the norm of $\si(y)$:
\begin{multline}\label{passage-y-sigmay}
\cm[\si(y);\ca_{D\si(h)f,\si(h)}^\ka(I)] \\
\leq c_{x,\si} \lcl 1+\cm[y;\ca_{f,h}^\ka(I)]^2+\ep^{1-\ka} \cm[y;\ca_{f,h}^\ka(I)] \cn[f;\cac_2^1(I)]+\ep^{1-\ka} \cn[f;\cac_2^1(I)] \rcl.
\end{multline}
Moreover, if $\yun, \yde \in \ca_{f,h}^\ka(I)$,
\begin{multline}\label{passage-y-sigmay-contraction}
\cn[\si(\yun)-\si(\yde);\cq^\ka(I)] \\
\leq c_{x,\si} \cn[\yun-\yde;\cq^\ka(I)] \big\{ 1+\cm[\yun;\ca_{f,h}^\ka(I)]^2+\cm[\yde;\ca_{f,h}^\ka(I)]^2\\
+\ep^{1-\ka} \cn[f;\cac_2^1(I)] ( 1+\cn[\yun;\cac_1^\ka(I)]+\cn[\yde;\cac_1^\ka(I)] ) \big\}.
\end{multline}
\end{proposition}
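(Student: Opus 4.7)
The first step is to produce the candidate decomposition of $\si(y)$. I would Taylor-expand $\si$ at $y_s$ to order one, which yields
$$(\der \si(y))_{ts} = D\si(y_s)(\der y)_{ts} + T_{ts}, \qquad T_{ts}=\si(y_t)-\si(y_s)-D\si(y_s)(\der y)_{ts},$$
so that $|T_{ts}|\le c\norm{D^2\si}_\infty |(\der y)_{ts}|^2$. Substituting the localized decomposition $(\der y)_{ts}=f_{ts}+(x^1_{ts}\zeta^y_s)^\ast+r^y_{ts}$ and using the elementary identity $D\si(y_s)(x^1_{ts}\zeta^y_s)^\ast=(x^1_{ts}\zeta^y_s(D\si(y_s))^\ast)^\ast$, I then isolate the "initial part" $D\si(h)f_{ts}$ to get
$$(\der \si(y))_{ts}-(D\si(h)f)_{ts}=(x^1_{ts}\,\zeta^{\si(y)}_s)^\ast+r^{\si(y)}_{ts},$$
with $\zeta^{\si(y)}_s=\zeta^y_s(D\si(y_s))^\ast$ and $r^{\si(y)}_{ts}=(D\si(y_s)-D\si(h))f_{ts}+D\si(y_s)r^y_{ts}+T_{ts}$, while $\si(y)_a=\si(h)$ is immediate. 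The extra presence of $D\si(h)f_{ts}$, absent from Proposition~\ref{cp:weak-phi}, is precisely what forces the localization.

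The next step is to verify the four norms entering $\cm[\si(y);\ca_{D\si(h)f,\si(h)}^\ka(I)]$. For $\zeta^{\si(y)}$ I would write $\der\zeta^{\si(y)}_{ts}=(\der\zeta^y)_{ts}(D\si(y_t))^\ast+\zeta^y_s(D\si(y_t)-D\si(y_s))^\ast$, which through $\norm{D^2\si}_\infty$ and $\cn[y;\cac_1^\ka]\le \cm$ gives the desired bounds in $\cac_1^0$ and $\cac_1^\ka$. For each term of $r^{\si(y)}$: the first uses $|y_s-h|\le \cm\,\ep^\ka$ and $|f_{ts}|\le \cn[f;\cac_2^1]|t-s|^{2\ka}\ep^{1-2\ka}$, producing the cross-term $\ep^{1-\ka}\cm\,\cn[f;\cac_2^1]$; the second is directly $\norm{D\si}_\infty\cm$; the third, via $|(\der y)_{ts}|^2$, contributes $\cm^2$ (through $|x^1\zeta|^2$ and $|r^y|^2$) together with $\ep^{1-\ka}\cm\,\cn[f;\cac_2^1]$ and $\ep^{2-2\ka}\cn[f;\cac_2^1]^2$, the last being absorbed into $1+\ep^{1-\ka}\cn[f;\cac_2^1]$ via $a^2\le 1+a^2$. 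The last remaining norm $\cn[\si(y);\cac_1^\ka]$ stems from $|\si(y_t)-\si(y_s)|\le \norm{D\si}_\infty |(\der y)_{ts}|$ and yields exactly the standalone $\ep^{1-\ka}\cn[f;\cac_2^1]$ in (\ref{passage-y-sigmay}). Summing these bounds delivers (\ref{passage-y-sigmay}).

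For the Lipschitz estimate (\ref{passage-y-sigmay-contraction}), the key observation is that, since $\yun$ and $\yde$ share the same $(f,h)$, the difference $\si(\yun)-\si(\yde)$ is weakly controlled around $0$ (i.e.\ lies in $\cq_0^\ka(I)$), its Gubinelli derivative being $\zeta^{\yun}(D\si(\yun))^\ast-\zeta^{\yde}(D\si(\yde))^\ast$. I would split every difference as
$$A_1B_1-A_2B_2=(A_1-A_2)B_1+A_2(B_1-B_2),$$
with $A$ being a $D^k\si$-quantity and $B$ a $y$-controlled quantity (or vice versa), and bound each piece by $\cn[\yun-\yde;\cq^\ka(I)]$ times either a smoothness constant or a factor involving $\cm[\yun;\ca_{f,h}^\ka]+\cm[\yde;\ca_{f,h}^\ka]$. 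The $\cac^{3,b}$-hypothesis enters here because the Lipschitz estimate on $D^2\si$, required when estimating the $\cac_1^\ka$-norm of $\zeta^{\si(\yun)}-\zeta^{\si(\yde)}$, demands that $D^3\si$ be bounded. The role of $\cn[f;\cac_2^1]$ in the final bound mirrors exactly its role in (\ref{passage-y-sigmay}), and the powers of $\ep$ come out identically.

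The main obstacle is not conceptual but organizational: one must track carefully roughly a dozen remainder terms of different Hölder regularities, consistently trading the "missing" powers $|t-s|^{1-2\ka}$ in $|f_{ts}|$ for factors $\ep^{1-2\ka}$, and combining them with the $\ep^{\ga-\ka}$ or $\ep^\ka$ factors coming from either $x^1$ or the Hölder estimate $|y_s-h|\le\cm\,\ep^\ka$. This is what produces the characteristic $\ep^{1-\ka}$ scalings in both (\ref{passage-y-sigmay}) and (\ref{passage-y-sigmay-contraction}), and ultimately is what will allow to close the fixed-point argument on small enough subintervals in the proof of Theorem \ref{main-theorem-rough}.
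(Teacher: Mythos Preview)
Your approach is essentially the paper's, with only a cosmetic reorganization: the paper keeps the Taylor expansion in integral form and splits the remainder as $r^{\si(y),1}+r^{\si(y),2}$ (the parts multiplied by $(x^1\zeta)^\ast+r^y$ and by $f$ respectively), while you isolate the full second-order Taylor remainder $T$ as a single block. Both routes give the same Gubinelli derivative $\zeta^{\si(y)}_s=\zeta^y_s(D\si(y_s))^\ast$, the same extra term $(D\si(y_s)-D\si(h))f_{ts}$ responsible for the $\ep^{1-\ka}\cm\,\cn[f;\cac_2^1]$ contribution, and the same use of $\si\in\cac^{3,b}$ in the contraction estimate.

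One small slip to fix: when you bound $T$ by expanding $|(\der y)_{ts}|^2$ into $|f|^2+|x^1\zeta|^2+|r^y|^2+\text{cross terms}$, you produce a contribution $\ep^{2-2\ka}\cn[f;\cac_2^1]^2$ that is \emph{not} in the stated right-hand side of (\ref{passage-y-sigmay}), and your absorption claim ``$a^2\le 1+a^2$'' is vacuous and does not remedy this. The correct move---which is implicitly what the paper does---is to recall that $\cn[y;\cac_1^\ka]$ is part of $\cm[y;\ca_{f,h}^\ka]$ by definition, so that
\[
|T_{ts}|\le c\,\norm{D^2\si}_\infty\,|(\der y)_{ts}|^2\le c\,\cn[y;\cac_1^\ka]^2\,|t-s|^{2\ka}\le c\,\cm[y;\ca_{f,h}^\ka]^2\,|t-s|^{2\ka}
\]
directly, without ever splitting $(\der y)$. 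This keeps all $f$-dependence confined to the first remainder piece $(D\si(y_s)-D\si(h))f_{ts}$ and to the $\cn[\si(y);\cac_1^\ka]$ estimate, exactly matching the four terms on the right of (\ref{passage-y-sigmay}).
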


\begin{proof}
If $s,t \in I$, write
\begin{eqnarray}
\der(\si(y))_{ts} &=& \int_0^1 d\la \, D\si(y_s+\la(\der y)_{ts}) (\der y)_{ts} \nonumber\\
& =& \int_0^1 d\la \, D\si(y_s+\la(\der y)_{ts})f_{ts}+\int_0^1 d\la \, D\si(y_s+\la(\der y)_{ts})((x_{ts}^1 \zeta^y_s)^\ast +r^y_{ts}) \nonumber\\
&=& D\si(h)f_{ts}+(x^1_{ts} \zeta_s^{\si(y)})^\ast +r_{ts}^{\si(y),1}+r_{ts}^{\si(y),2},\label{decompo-sigma-around}
\end{eqnarray}
with $\zeta_s^{\si(y)}=\zeta_s^y D\si(y_s)^\ast$, $$r_{ts}^{\si(y),1}=\int_0^1 d\la \lc D\si(y_s+\la(\der y)_{ts})-D\si(y_s)\rc (x_{ts}^1 \zeta_s^y)^\ast+\int_0^1 d\la \, D\si(y_s+\la(\der y)_{ts}) r^y_{ts},$$ 
$$r_{ts}^{\si(y),2}=\int_0^1 d\la \lc D\si(y_s+\la(\der y)_{ts})-D\si(y_s)\rc f_{ts}+ \lc D\si(y_s)-D\si(y_a)\rc f_{ts}.$$
By standard computations, 
$$\cn[\zeta^{\si(y)};\cac_1^0]+\cn[\zeta^{\si(y)};\cac_1^\ka]+\cn[r^{\si(y),1};\cac_2^{2\ka}] \leq c_{x,\si} \lcl 1+\cm[y;\ca_{f,h}^\ka ]^2 \rcl.$$
Besides,
$$\norm{r_{ts}^{\si(y),2}} \leq \norm{D^2\si }_\infty \cn[y;\cac_1^\ka ] \cn[f;\cac_2^1] \lcl \lln t-s \rrn^{1+\ka}+\lln s-a \rrn^\ka \lln t-s \rrn \rcl,$$
and hence $\cn[r^{\si(y),2}; \cac_2^{2\ka}] \leq c_\si \cm[y;\ca_{f,h}^\ka ] \cn[f;\cac_2^1] \ep^{1-\ka}$. Finally, going back to decomposition (\ref{decompo-sigma-around}), we obtain:
\begin{multline*}
\norm{ \der(\si(y))_{ts} }\\
\leq \norm{D\si}_\infty \lln t-s \rrn \cn[f;\cac_2^1]+\lln t-s \rrn^\ka \cn[x^1;\cac_2^\ga] \cm[y;\ca_{f,h}^\ka] \norm{D\si}_\infty+\lln t-s \rrn^{2\ka} \cn[r^{\si(y)};\cac_2^{2\ka}],
\end{multline*}
so that $\cn[\si(y);\cac_1^\ka] \leq c_{\si,x} \lcl \ep^{1-\ka} \cn[f;\cac_2^1]+\cm[y;\ca_{f,h}^\ka]+\cn[r^{\si(y)};\cac_2^{2\ka}] \rcl$, which achieves the proof of (\ref{passage-y-sigmay}).

\smallskip

As for (\ref{passage-y-sigmay-contraction}), we have, with the notations (\ref{decompo-sigma-around}),
$$\der(\si(\yun)-\si(\yde))_{ts}=(x^1_{ts} [\zeta_s^{\si(\yun)}-\zeta_s^{\si(\yde)}])^\ast+[r_{ts}^{\si(\yun),1}-r_{ts}^{\si(\yde),1}]+[r_{ts}^{\si(\yun),2}-r_{ts}^{\si(\yde),2}].$$
If we refer now to the proof of \cite[Proposition 4]{Gu}, we effortlessly get
\begin{multline*}
\cn[\zeta^{\si(\yun)}-\zeta^{\si(\yde)};\cac_1^0(I)]+\cn[\zeta^{\si(\yun)}-\zeta^{\si(\yde)};\cac_1^\ka(I)]+\cn[r^{\si(\yun),1}-r^{\si(\yde),1};\cac_2^{2\ka}(I)]\\
\leq c_{x,\si} \lcl 1+\cm[\yun;\ca_{f,h}^\ka(I)]^2+\cm[\yde;\ca_{f,h}^\ka(I)]^2\rcl \cn[\yun-\yde;\cq^\ka(I)].
\end{multline*}
As far as $r^{\si(\yun),2}-r^{\si(\yde),2}$ is concerned, notice that
\begin{multline*}
\norm{r_{ts}^{\si(\yun),2}-r_{ts}^{\si(\yde),2}}
\leq \lln t-s \rrn \cn[f;\cac_2^1(I)]\\ \Big\{ \int_0^1 d\la \, \norm{D\si(\yun_s+\la(\der \yun)_{ts})-D\si(\yun_s)-D\si(\yde_s+\la(\der \yde)_{ts})+D\si(\yde_s)} \\
+\norm{D\si(\yun_s)-D\si(\yun_a)-D\si(\yde_s)+D\si(\yde_a) } \Big\}.
\end{multline*}
Some standard computations (see e.g. \cite[Lemma 3.1]{QT} for further details), using differentiations along the path 
$$
a(\mu,\nu)= \yun_s + \mu (\yun_t-\yun_s)+\nu (\yde_s-\yun_s)
+\mu\nu (\yde_t-\yde_s-\yun_t+\yun_s)
$$ 
defined for $\mu,\nu\in\ou$, then lead to 
\begin{multline*}
\cn[r^{\si(\yun),2}-r^{\si(\yde),2};\cac_2^{2\ka}(I)]\\
\leq c_\si \ep^{1-\ka} \cn[f;\cac_2^1(I)] \lcl 1+\cn[\yun;\cac_1^\ka(I)]+\cn[\yde;\cac_1^\ka(I)] \rcl \cn[\yun-\yde;\cac_1^\ka(I)].
\end{multline*}
Inequality (\ref{passage-y-sigmay-contraction}) easily follows.

\end{proof}

\smallskip

Observe again that $\ca_{f,h}^\ka$ is a subset of $\cq^{\ka}$, which means that, for any path $z\in\ca_{f,h}^\ka$, the integral $\cj(\dti x \, z)$ is defined thanks to Proposition \ref{prop:intg-controlled-proc}. In the particular context of a process $z\in\ca_{f,h}^\ka$, the bounds on this generalized integral can be improved as follows:
\begin{proposition}\label{prop:intg-local-ctrl-ps}
If $z \in \ca_{f,h}^\ka(I;\R^n)$, then the semi-norm of the process $\zti$ in $\Qti^\ka(I;\R)$ defined by $\zti_a=\hti \in \cl_1$ and $\delti \zti =\cj(\dti x \, z)$ can be estimated as
\begin{equation}\label{passage-sigmay-zti-1d}
\cn[\zti;\Qti^\ka(I)] \leq c_x \lcl \cn[z;\cac_1^0(I)]+\ep^{\ga-\ka}\cm[z;\ca_{f,h}^\ka(I)]+\ep^{1-\ka} \cn[f;\cac_2^1(I)]\rcl.
\end{equation}
\end{proposition}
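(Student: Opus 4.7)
The strategy is to revisit the computation underlying Proposition \ref{prop:intg-controlled-proc}, exploiting the extra regularity of the localizing increment $f$ to get a sharper estimate. A direct application of Proposition \ref{prop:intg-controlled-proc} to $z$, viewed merely as an element of $\cq^\ka(I;\R^n)$, would produce a term of order $\ep^{\ga+1-3\ka} \cn[f;\cac_2^1(I)]$, which dominates $\ep^{1-\ka}\cn[f;\cac_2^1(I)]$ only under the restrictive assumption $\ga \ge 2\ka$. The issue must therefore be addressed by isolating the contribution of $f$ and bounding it in a finer H\"older space.

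First, I view $z\in\ca_{f,h}^\ka(I;\R^n)$ as an element of $\cq^\ka(I;\R^n)$ with decomposition
$$(\der z)_{ts} = (x^1_{ts}\,\zeta^z_s)^\ast + r_{ts}, \qquad r_{ts} := r^z_{ts} + f_{ts},$$
and apply the formula \eqref{eq:dcp-mdx} with $\zeta=\zeta^z$ and $r=r^z+f$. This yields
$$\delti \zti = (\xti^1 z)^\ast + \rti^{\zti}, \qquad \rti^{\zti} = \xti^2\cdot(\zeta^z)^\ast + \Lati\bigl(\xti^1 r^z + \xti^1 f + \xti^2\cdot(\der\zeta^z)^\ast - \xti^3\cdot(\zeta^z)^\ast\bigr),$$
so that the Gubinelli-type derivative of $\zti$ is $z$ itself.

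Second, I bound $\rti^{\zti}$ in $\cacti_2^{2\ka}(I)$ piece by piece. The four summands not involving $f$ are estimated exactly as in the proof of Proposition \ref{prop:intg-controlled-proc}, each giving a contribution bounded by $c_x\,\ep^{\ga-\ka}\cm[z;\ca_{f,h}^\ka(I)]$ (using in particular $\cn[r^z;\cac_2^{2\ka}(I)]$, $\cn[\zeta^z;\cac_1^{\ka}(I)]$ and $\cn[\zeta^z;\cac_1^{0}(I)]$ to absorb the relevant factors). The key new ingredient is the term $\Lati(\xti^1 f)$: since $f\in\cac_2^1(I)$ and $\xti^1\in\cacti_2^\ga(I)$, Lemma \ref{lem:delta-prod-ML} gives $\xti^1 f\in\cacti_3^{1+\ga}(I)$ with $1+\ga>1$, so that the contraction property \eqref{ineqhla} of $\Lati$ yields
$$\cn[\Lati(\xti^1 f); \cacti_2^{1+\ga}(I)] \leq c_x\, \cn[f;\cac_2^1(I)].$$
Converting to the weaker $\cacti_2^{2\ka}$ norm via the trivial bound $\cn[\,\cdot\,;\cacti_2^{2\ka}(I)]\le \ep^{1+\ga-2\ka}\cn[\,\cdot\,;\cacti_2^{1+\ga}(I)]$ and using $1+\ga-2\ka = (1-\ka)+(\ga-\ka)$ produces the sought-after factor $\ep^{1-\ka}\cn[f;\cac_2^1(I)]$ (up to a multiplicative constant depending only on $T$ and $x$).

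Third, it remains to assemble the bounds on $\cn[\zti;\Qti^\ka(I)] = \cn[\zti;\cacti_1^\ka(I)] + \cn[z;\cac_1^0(I)] + \cn[z;\cac_1^\ka(I)] + \cn[\rti^{\zti};\cacti_2^{2\ka}(I)]$. The term $\cn[z;\cac_1^\ka(I)]$ is directly controlled by $\cm[z;\ca_{f,h}^\ka(I)]$ by definition of the norm $\cm$. For $\cn[\zti;\cacti_1^\ka(I)] = \cn[\delti\zti;\cacti_2^\ka(I)]$, I split $\delti\zti = (\xti^1 z)^\ast + \rti^{\zti}$: the leading term is bounded by $c_x\cn[z;\cac_1^0(I)]$ since $\|\xti^1_{ts}z_s\| \le \cn[\xti^1;\cacti_2^\ga(I)]\,\cn[z;\cac_1^0(I)]\,|t-s|^\ga$, while the remainder is absorbed into $T^\ka\cn[\rti^{\zti};\cacti_2^{2\ka}(I)]$. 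Adding everything up gives the announced estimate \eqref{passage-sigmay-zti-1d}. The main obstacle, as already emphasized, is the separate treatment of the $\Lati(\xti^1 f)$ term; once that is handled, the rest is a bookkeeping exercise mirroring Proposition \ref{prop:intg-controlled-proc}.
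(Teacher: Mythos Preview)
Your overall strategy is the same as the paper's: decompose $\delti\zti$ via \eqref{eq:dcp-mdx} with $\zeta^{\zti}=z$ and $\rti^{\zti}=\xti^2\cdot(\zeta^z)^\ast+\Lati(\xti^1 r^z+\xti^1 f+\xti^2\cdot(\der\zeta^z)^\ast-\xti^3\cdot(\zeta^z)^\ast)$, isolate the contribution of $f$ through $\Lati(\xti^1 f)\in\cacti_2^{1+\ga}$, and bound the remaining pieces as in Proposition \ref{prop:intg-controlled-proc}. That part of your argument is correct and matches the paper.

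There is, however, a genuine gap in your third step. You estimate $\cn[z;\cac_1^\ka(I)]$ simply by $\cm[z;\ca_{f,h}^\ka(I)]$, invoking the definition of the semi-norm $\cm$. That is a valid inequality, but it is too crude: it produces a term $\cm[z;\ca_{f,h}^\ka(I)]$ \emph{without} the prefactor $\ep^{\ga-\ka}$, so the resulting bound is
\[
\cn[\zti;\Qti^\ka(I)] \le c_x\lcl \cn[z;\cac_1^0(I)]+\cm[z;\ca_{f,h}^\ka(I)]+\ep^{1-\ka}\cn[f;\cac_2^1(I)]\rcl,
\]
which is strictly weaker than \eqref{passage-sigmay-zti-1d}. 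The missing factor $\ep^{\ga-\ka}$ is precisely what makes the fixed-point argument of Theorem \ref{main-theorem-rough} close (see the system \eqref{systeme-alpha}), so this is not a cosmetic issue.

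The fix, which is what the paper does, is to bound $\cn[z;\cac_1^\ka(I)]$ not by the definition of $\cm$ but by expanding the decomposition $(\der z)_{ts}=f_{ts}+(x^1_{ts}\zeta^z_s)^\ast+r^z_{ts}$:
\[
\|(\der z)_{ts}\| \le |t-s|\,\cn[f;\cac_2^1(I)]+c_x|t-s|^\ga\,\cn[\zeta^z;\cac_1^0(I)]+|t-s|^{2\ka}\,\cn[r^z;\cac_2^{2\ka}(I)],
\]
which after division by $|t-s|^\ka$ yields
\[
\cn[z;\cac_1^\ka(I)] \le c_x\lcl \ep^{1-\ka}\cn[f;\cac_2^1(I)]+\ep^{\ga-\ka}\cm[z;\ca_{f,h}^\ka(I)]\rcl.
\]
With this refinement in place, your assembly step goes through and gives exactly \eqref{passage-sigmay-zti-1d}.
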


\begin{proof}
According to Proposition \ref{prop:intg-controlled-proc}, $\zti$ can be decomposed as a controlled process, with $\zeta^{\zti}=z$ and $\rti^{\zti}=\rti^{\zti,1}+\rti^{\zti,2}$, where
$$\rti^{\zti,1}=\xti^2 \cdot  \zeta^z \quad \mbox{and} \quad \rti^{\zti,2}=\Lati(\xti^1 r^z+\xti^2 \cdot \der \zeta^z-\xti^3 \cdot \zeta^z+\xti^1 f).$$
First, since $(\der z)_{ts}=f_{ts}+x_{ts}^1 \zeta_s^z+r^z_{ts}$,
$$\cn[\zeta^{\zti};\cac_1^\ka(I)]=\cn[z;\cac_1^\ka(I)] \leq c_x \lcl \ep^{1-\ka}\cn[f;\cac_2^1(I)] +\ep^{\ga-\ka} \cm[z;\ca_{f,h}^\ka(I)] \rcl.$$
As for the remainder term, we have $\cn[\rti^{\zti,1};\cacti_{2}^{2\ka}] \leq c_x \ep^{2(\ga-\ka)} \cm[z;\ca_{f,h}^\ka(I)]$, while, thanks to the contraction property (\ref{contraction}),
$$\cn[\rti^{\zti,2};\cacti_{2}^{2\ka}(I)] \leq c_x \lcl \ep^\ga \cm[z;\ca_{f,h}^\ka(I)]+\ep^{1+\ga-2\ka}\cn[f;\cac_2^1(I)] \rcl.$$
Finally, $\cn[(\delti \zti)_{ts};\cl_1] \leq c_x \lcl \lln t-s \rrn^\ga \cm[z;\ca_{f,h}^\ka(I)]+\lln t-s \rrn^{1+\ga} \cn[f;\cac_2^1(I)]\rcl$, hence
$$\cn[\zti;\cacti_{1}^\ka(I)]\leq c_x \lcl \ep^{\ga-\ka} \cm[z;\ca_{f,h}^\ka(I)]+\ep^{1+\ga-\ka}\cn[f;\cac_2^1(I)]\rcl,$$
which achieves the proof of (\ref{passage-sigmay-zti-1d}).
\end{proof}

\begin{remark}
If $f \in \cac_2^1(I;\R^{k,l})$ and $h \in \R^{k,l}$, we can define $\ca_{f,h}^\ka(I;\R^{k,l})$ along the same lines as $\cq^\ka(I;\R^{k,l})$. If $z \in \ca_{f,h}^\ka(I;\R^{n,l})$, inequality (\ref{passage-sigmay-zti-1d}) remains true, that is
\begin{equation}\label{passage-sigmay-zti}
\cn[\zti;\Qti^\ka(I;\R^l)] \leq c_x \lcl \cn[z;\cac_1^0(I;\R^{n,l})]+\ep^{\ga-\ka}\cm[z;\ca_{f,h}^\ka(I;\R^{n,l})]+\ep^{1-\ka} \cn[f;\cac_2^1(I;\R^{n,l})]\rcl,
\end{equation}
where $\zti$ is defined analogously to Proposition \ref{prop:intg-local-ctrl-ps}.
\end{remark}

\subsection{Rough Volterra equations}

We are now in position to prove the main result of this section:

\begin{theorem}\label{main-theorem-rough}
Let $\ga \in (1/3,1/2)$ and $1/3 < \ka < \ga$. Assume $x$ satisfies Hypothesis \ref{hyp:X2} and $\si \in \cac^{3,\textbf{\textit{b}} }(\R^{1,d}; \R^{n,d})$. Then Equation (\ref{eq:volterra-ytilde-bis}) admits a unique solution in $\Qti_{0}^\ka([0,T];\R^{d})$.
\end{theorem}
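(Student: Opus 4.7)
The plan is to establish existence and uniqueness through a Picard iteration on small intervals, followed by a patching procedure, just as in Theorem \ref{th:young-volterra}, but now relying on the chain of estimates furnished by Propositions \ref{prop:passage-yti-y}, \ref{prop:passage-y-sigmay} and \ref{prop:intg-local-ctrl-ps} instead of the single Young bound. Fix an interval $I=[a,b]$ of length $\ep$ and an initial datum $\hti\in\cl_1$ (with $\hti=0$ at the first step). Define the map $\Gamma:\Qti_{\hti}^\ka(I)\to\Qti_{\hti}^\ka(I)$ by $\Gamma(\yti)=\zti$, where $\zti_a=\hti$ and $\delti\zti=\cj(\dti x\, \si(y))$ with $y=a+\int_0^\infty \hphi(\xi)\yti(\xi)\,d\xi$. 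The integral on the right makes sense as a convolutional controlled process thanks to the multidimensional version of Proposition \ref{prop:intg-controlled-proc}, provided $\si(y)$ is itself controlled, which will be ensured in the composite estimate below.

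The heart of the argument is to chain the three passages together. Starting from $\yti\in\Qti_\hti^\ka(I)$, Proposition \ref{prop:passage-yti-y} yields $y\in\ca_{f,h}^\ka(I)$ for $f_{ts}=\int_0^\infty \hphi(\xi)a_{ts}(\xi)e^{-\xi(s-a)}\hti(\xi)\,d\xi$ and $h=a+\int_0^\infty \hphi(\xi)\hti(\xi)\,d\xi$, with the bound \eqref{passage-yti-y}. Proposition \ref{prop:passage-y-sigmay} then places $\si(y)$ in $\ca_{D\si(h)f,\si(h)}^\ka(I)$ with the quadratic bound \eqref{passage-y-sigmay}. Finally Proposition \ref{prop:intg-local-ctrl-ps} (and inequality \eqref{passage-sigmay-zti}) gives a control of $\cn[\zti;\Qti^\ka(I)]$ in terms of these quantities. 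Composing, one obtains an estimate of the schematic form
\[
\cn[\Gamma(\yti);\Qti^\ka(I)]\;\le\; c_{x,\si}\Bigl(A_\hti+\ep^{\ga-\ka}\bigl(B_\hti+\cn[\yti;\Qti^\ka(I)]^2\bigr)\Bigr),
\]
where $A_\hti, B_\hti$ depend only on $\cn[\hti;\cl_1]$. Because the dependence on $\cn[\yti;\Qti^\ka(I)]$ is only at order $\ep^{\ga-\ka}$ times a quadratic term, one can select $N=N_\hti$ large enough and $\ep=\ep_\hti>0$ small enough that the ball $\{\yti:\cn[\yti;\Qti^\ka(I)]\le N\}$ in $\Qti_\hti^\ka(I)$ is invariant under $\Gamma$.

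Contraction is then obtained on a possibly smaller subinterval of length $\eta\le\ep$, exactly as in Step 2 of the proof of Theorem \ref{th:young-volterra}. Given $\yti^{(1)},\yti^{(2)}$ in the invariant ball sharing the initial value $\hti$, their difference lies in $\cq_0^\ka$ (no $f$ term, zero initial value), so Proposition \ref{prop:passage-y-sigmay} inequality \eqref{passage-y-sigmay-contraction} together with inequality \eqref{passage-sigmay-zti} produces
\[
\cn[\Gamma(\yti^{(1)})-\Gamma(\yti^{(2)});\Qti^\ka([a,a+\eta])]\;\le\; c_{x,\si,N}\,\bigl(\eta^{\ga-\ka}+\eta^{1-\ka}\bigr)\,\cn[\yti^{(1)}-\yti^{(2)};\Qti^\ka([a,a+\eta])],
\]
which is a strict contraction for $\eta$ small enough. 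An auxiliary lemma modeled on Lemma \ref{lem:stability} (extending candidates constantly, i.e. via $e^{-\xi(t-(a+\eta))}\yti_{a+\eta}$, from $[a,a+\eta]$ to the full $I$) ensures that the contraction takes place inside the invariant ball, so Banach yields a unique fixed point on $[a,a+\eta]$.

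The hard part will be the global patching argument. Once a local solution $\yti^l$ has been built on $[0,l\ep]$, one wants to extend it to $[0,(l+1)\ep]$ with \emph{uniform} $\ep$. The initial datum for the extension is $\hti^l=\yti^l_{l\ep}\in\cl_1$, whose $\cl_1$-norm, as well as the norm $\cn[\yti^l;\Qti^\ka]$, may grow with $l$; accordingly, both the ball radius $N_{l+1}$ and the contraction substep $\eta_{l+1}$ will depend on $l$. The key observation, as in the Young proof, is that in the invariance inequality the explosive term $\cn[\yti;\Qti^\ka]^2$ is multiplied by $\ep^{\ga-\ka}$, so that (as for the Young case) $\ep$ can be chosen uniformly in $l$ while $N_{l+1}$ is taken sufficiently large; then only finitely many contraction substeps of length $\eta_{l+1}$ cover $[l\ep,(l+1)\ep]$. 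Repeating this construction finitely many times covers $[0,T]$, and uniqueness in $\Qti_0^\ka([0,T])$ follows from the local contraction, as pointwise agreement of two solutions on $[0,s]$ plus the contraction property on a small interval $[s,s+\eta]$ forces agreement on $[s,s+\eta]$.
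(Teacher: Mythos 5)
Your local argument --- chaining Propositions \ref{prop:passage-yti-y}, \ref{prop:passage-y-sigmay} and \ref{prop:intg-local-ctrl-ps} to obtain invariant balls and a contraction for $\Gamma$ --- is exactly the machinery of Steps 1 and 2 of the paper's proof. The gap lies in the global patching, precisely the part you call ``the hard part''. Your pivotal claim that, since the term $\cn[\yti;\Qti^\ka]^2$ carries a factor $\ep^{\ga-\ka}$, ``$\ep$ can be chosen uniformly in $l$ while $N_{l+1}$ is taken sufficiently large, as for the Young case'' is a false analogy. In the Young case the fixed-point bound is affine in $\cn[\yti]$ with slope $c\,\ep^\ga$, so once $\ep$ is small \emph{any} sufficiently large radius is invariant and can absorb arbitrarily large initial data. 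Here the bound (\ref{eqn:resultat}) is quadratic: invariance of a ball of radius $N$ forces $c\,\ep^{\ga-\ka}N^2\le N$, i.e.\ $N\le (c\,\ep^{\ga-\ka})^{-1}$, and the contraction constant contains $c\,\ep^{\ga-\ka}N^2$, which caps $N$ even more severely (this is the reason for the extra condition $\al_2<\frac{\ga-\ka}{2}$ in the paper). With a fixed $\ep$ you therefore cannot inflate $N_{l+1}$ at will: the initial datum $\cn[\hti_l;\cl_1]$ grows along the patching (by roughly $c\,N_l\,\ep^\ga$ per step), and once it pushes the required radius past this cap, both invariance and contraction break down. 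Nothing in your proposal tracks this growth or shows that the window of admissible radii stays nonempty over the $O(T/\ep)$ steps.

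That bookkeeping is the actual content of the paper's proof: the intervals have shrinking lengths $\ep_n=\frac{1}{N+n}$ (whose sum still diverges, so finitely many cover $[0,T]$), the radii and initial-data norms are allowed to grow polynomially, $N_n=(N+n)^{\al_2}$ and $\cn[\hti_n;\cl_1]\le (N+n)^{\al_1}$, and the exponents must satisfy the system (\ref{systeme-alpha}) together with $0<\al_2<\frac{\ga-\ka}{2}$ and $\al_2-\ga<\al_1-1<\al_2-\ka$; these conditions simultaneously yield invariance, the inductive control (H) on the initial data, and $J_{N+n}\to 0$ for the contraction. A uniform-$\ep$ scheme might conceivably be salvaged by taking $N_{l+1}$ as \emph{small} as the data allow (not ``sufficiently large'') and running a discrete induction on $\cn[\hti_l;\cl_1]$, but that is a delicate balancing act which your proposal neither states nor carries out; likewise the verification that the glued path lies in $\Qti_{0}^\ka([0,T])$ and solves the equation globally (cf.\ the telescoping identity (\ref{decomposition-intervalles-successifs})) is left implicit. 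The decisive missing ingredient is the quantitative control of radii and initial data along the subdivision.
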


\begin{proof}
As in the Young case, the solution we are looking for is seen as a fixed point of some naturally defined application $\Gamma$. The fixed point argument is then divided into two steps: we first establish the invariance of some well-chosen balls of $\Qti_1^\ka$, and then show a contraction property on these balls.

\smallskip

{\it Step 1: invariant balls.} Fix a positive integer $N$ and consider a sequence of intervals $I^N_n=[l^N_n,l^N_{n+1}]$ with $l_0^N=0$ and $\ep_n =\ep_n^N=l^N_{n+1}-l^N_n=\frac{1}{N+n}$, so that $[0,T]$ is covered by a finite union of $(I^N_n)_{n\geq 0}$. Introduce also a sequence of balls 
$$B_n^{\hti_n}=\{ \yti \in \Qti^\ka(I^N_n): \ \yti_{l_n^N}=\hti_n, \, \zeta^{\yti}_{l_n^N}=\si(h_n), \,   \cn[\yti; \Qti^\ka(I^N_n)] \leq (N+n)^{\alpha_2} \},$$
where $\hti_n \in \cl_1$ is such that $\cn[\hti_n; \cl_1] \leq (N+n)^{\alpha_1}$. We are thus given a control over both $\delti \yti$ and the initial condition $\yti_{l^N_n}$. If $\yti \in B_n^{\hti_n}$, $\zti:=\Gamma(\yti)$ is the path in $\cacti_{1}^\ga(I^N_n)$ defined by the two conditions: $\zti_{l^N_n}=\hti_n$ and for all $s,t \in I^N_n$, $(\delti \zti)_{ts}=\cj_{ts}(\dti x \, \si(y))$, with $y=a_0+\int_0^\infty d\xi \, \hphi(\xi)\yti(\xi)$. With these notations, and using the previous propositions, we are going to prove the existence of two constants $\alpha_1,\alpha_2 >0$ such that the sets $B_n^{\hti_n}$ are invariant by $\Gamma$ and the following property holds: 
\begin{center}
(H) \ \ \ \ If $\yti \in B_n^{\hti_n}$, then $\cn[\yti_{l^N_{n+1}}; \cl_1]\leq (N+n+1)^{\alpha_1}$. 
\end{center}
Thanks to (H), the local solutions can then be patched together, as we shall see at the end of the proof.

\smallskip

Let $\yti \in B_n^{\hti_n}$, $\zti =\Gamma(\yti)$. As in Proposition \ref{prop:passage-yti-y}, denote $f^n_{ts}=\int_0^\infty d\xi \, \hphi(\xi) a_{ts}(\xi) e^{-\xi(s-a)}$ $\hti_n(\xi)$ 
and $h_n=a_0+\int_0^\infty d\xi \, \hphi(\xi)\hti_n(\xi)$. In order to estimate $\cn[\zti;\cq^\ka(I^N_n)]$, use successively (\ref{passage-sigmay-zti}), (\ref{passage-y-sigmay}) and (\ref{passage-yti-y}), together with the fact that
$$\cn[D\si(h_n)f^n;\cac_2^1(I^N_n)] \leq c_\si \cn[f^n;\cac_2^1(I^N_n)] \leq c_\si \cn[\hti_n;\cl_1],$$
to get 
\begin{multline}\label{eqn:resultat}
\cn[\zti;\Qti^\ka(I^N_n)] \leq c_{\si,x}^1 \big\{ 1+\ep_n^{\ga-\ka} \cn[\yti;\Qti^\ka(I^N_n)]^2+\ep_n^{\ga-\ka+2(1-\ka)} \cn[\hti_n;\cl_1]^2\\
+\ep_n^{1+\ga-2\ka} \cn[\yti;\Qti^\ka(I^N_n)] \cn[\hti_n;\cl_1]+\ep_n^{1-\ka} \cn[\hti_n;\cl_1] \big\}.
\end{multline}

If one desires to stay in the ball $B_n^{\hti_n}$ after applying $\Gamma$, one is naturally led to consider the system
\begin{equation}\label{systeme-alpha}
\begin{cases}
2\alpha_2-(\ga-\ka) < \al_2\\
2\al_1-(\ga-\ka+2(1-\ka)) < \al_2 \\
\al_1+\al_2-(1+\ga-2\ka) < \al_2\\
\al_1-(1-\ka) < \al_2,
\end{cases}
\end{equation}
which reduces to
$$\begin{cases}
\al_2 < \ga-\ka \\
\al_1-1 < \al_2 -\ka.
\end{cases}$$
In fact, for some reasons that will arise soon, we should add the conditions $\al_2 < \frac{\ga-\ka}{2}$ and $\al_1 -1>\al_2-\ga$, which turn the previous system into
$$\begin{cases}
0 <\al_2 < \frac{\ga-\ka}{2} \\
\al_2-\ga <\al_1-1 < \al_2 -\ka.
\end{cases}$$

\smallskip

Notice that the conditions above can be easily met (and are assumed to be met in the sequel) whenever $\ka<\ga$. Now, going back to (\ref{eqn:resultat}), we get $\cn[\zti; \Qti^\ka(I^N_n)] \leq 6c_{x,\si}^1 \,(N+n)^{\al_3}$, where $\al_3$ stands for the maximum of the left members of the system (\ref{systeme-alpha}). As $\al_3 < \al_2$, we can pick $N$ sufficiently large such that for any $n \geq 0$, $(N+n)^{\al_2-\al_3} \geq 6c_{x,\si}^1$, and so $\cn[\zti;\Qti^\ka(I_n^N)]\leq (N+n)^{\al_2}$.

\smallskip

It remains to analyze the condition (H). But $\yti_{l^N_{n+1}}=e^{-.\, \ep_n} \yti_{l^N_n}+(\delti \yti)_{l^N_{n+1}l^N_n}$, so that
$$\cn[\yti_{l^N_{n+1}}; \cl_1] \leq (N+n)^{\al_1}+\cn[\yti; \cacti_{1}^\ga(I^N_n)] \frac{1}{(N+n)^\ga} \leq (N+n)^{\al_1}+c_x^2 \, (N+n)^{\al_2-\ga}.$$
Now notice that, when $m\to\infty$, we have $\frac{c_x^2 \, m^{\al_2-\ga}}{(m+1)^{\al_1}-m^{\al_1}} \sim \frac{c_x^2}{\al_1}m^{\al_2-\ga-(\al_1-1)}$. But remember that we have assumed $\al_1-1 >\al_2-\ga$, so that if $N$ is large enough, the last equivalent yields $\frac{c_x^2 \, (N+n)^{\al_2-\ga}}{(N+n+1)^{\al_1}-(N+n)^{\al_1}} \leq 1$ for any $n \geq 0$. Hence $\cn[\yti_{l_{n+1}^N};\cl_1] \leq (N+n+1)^{\al_1}$, which achieves the first step.

\smallskip

\noindent 
{\it Step 2: contraction property.} The contraction argument is now easy to settle. Indeed, if $\yti^{(i)} \in B_n^{\hti_n}$ and $\zti^{(i)} =\Gamma(\yti^{(i)})$, then, owing to relation (\ref{passage-sigmay-zti}), we have
\begin{multline*}
\cn[\ztiun-\ztide; \Qti_1^\ka(I^N_n)] \\
\leq c \lcl \cn[\si(\yun)-\si(\yde); \cac_1^0(I^N_n)]+\ep_n^{\ga-\ka} \, \cn[\si(\yun)-\si(\yde); \cq^\ka(I^N_n)] \rcl.
\end{multline*}
But $\cn[\si(\yun)-\si(\yde); \cac_1^0(I^N_n)] \leq \ep_n^\ga \, \cn[\si(\yun)-\si(\yde); \cq^\ka(I^N_n)]$, and the previous relation, together with (\ref{passage-y-sigmay-contraction}) and (\ref{passage-yti-y}), gives $\cn[\ytiun-\ytide;\Qti^\ka(I^N_n)] \leq c_{\si,x} \, J_{N+n} \, \cn[\ytiun-\ytide;\Qti_1^\ka(I^N_n)]$, with
$$J_n=n^{-(\ga-\ka)} \lcl 1+n^{2\al_2}+n^{-2(1-\ka)} n^{2\al_1}+n^{-(1-\ka)} n^{\al_1} \lcl 1+n^{\al_2}+n^{-(1-\ka)} n^{\al_1} \rcl \rcl.$$

It is finally readily checked that the two conditions $2\al_2-(\ga-\ka)<0$ and $\al_1<\al_2+1-\ka$ entail $ \lim_{N\to \infty }J_N=0$. Therefore, here again, we just have to take $N$ sufficiently large for the contraction argument to work on the balls $B_n^{\hti_n}$, $n\geq 0$.

\smallskip

\noindent 
{\it Step 3: patching solutions.}
The construction of the announced solution $\yti \in \Qti_{0}^\ka([0,T])$ reduces now to a patching argument. Let us make it precise.

\smallskip

First, define a sequence $(\yti^n,\zeta^{\yti^n})_{n\geq 0}$ by the recursive condition: $(\yti^0,\zeta^{\yti^0}) \in \Qti^\ka(I_0^N)$ is the fixed point of $\Gamma$ in $B_0^0$ and for any $n \geq 1$, $(\yti^n,\zeta^{\yti^n}) \in \Qti^\ka(I_n^N)$ is the fixed point of $\Gamma$ in $B_n^{\yti_{l_n^N}^{n-1}}$. This construction is allowed by the first part. Then set, for any $t\in [0,T]$,
$$\yti_t=\sum_{n=0}^{N_T} \yti_t^n \, \textbf{1}_{I_n^N}(t) \quad , \quad \zeta_t^{\yti}=\sum_{n=0}^{N_T} \zeta_t^{\yti^n} \, \textbf{1}_{I_n^N}(t),$$
where $N_T$ stands for the lowest integer such that $\sum_{n=0}^{N_T} |I_n^N | \geq T$.

\smallskip

If $l_{k-1}^N <s \leq l_k^N < \ldots < l_{k'}^N \leq t < l_{k'+1}^N$, use the relation
\begin{equation}\label{decomposition-intervalles-successifs}
(\delti \yti)_{ts}=e^{-\cdot (t-l_k^N)}(\delti \yti)_{l_k^N s}+(\delti \yti)_{t l_{k'}^N}+\sum_{i=k}^{k'-1} e^{-\cdot (t-l_{i+1}^N)}(\delti \yti)_{l_{i+1}^Nl_i^N},
\end{equation}
together with $\delti \xti^1=0$, to deduce $(\delti \yti)_{ts}=\xti^1_{ts} \zeta_s^{\yti}+\rti_{ts}^{\yti}$, where $\rti_{ts}^{\yti}=\rti_{ts}^{\yti,1}+\rti_{ts}^{\yti,2}$,
$$\rti_{ts}^{\yti,1}=\xti_{tl_k^N}^1 \lc \zeta_{l_k^N}^{\yti^k}-\zeta_s^{\yti^{k-1}}\rc+\sum_{i=k+1}^{k'} \xti^1_{tl_i^N} \lc \zeta_{l_i^N}^{\yti^i}-\zeta_{l_{i-1}^N}^{\yti^{i-1}} \rc,$$
$$\rti_{ts}^{\yti,2}=e^{-\cdot (t-l_k^N)} \rti_{l_k^N s}^{\yti^{k-1}}+\rti_{t l_{k'}^N}^{\yti^{k'}}+\sum_{i=k}^{k'-1} e^{-\cdot (t-l_{i+1}^N)} \rti_{l_{i+1}^n l_i^N}^{\yti^i}.$$
Owing to the regularity of each $\zeta^{\yti^k}$, this proves that $(\yti,\zeta^{\yti})$ actually belongs to $\Qti_{0}^\ka([0,T])$.

\smallskip

Finally, let us go back to the decomposition (\ref{decomposition-intervalles-successifs}) to deduce
$$
(\delti \yti)_{ts}= e^{-\cdot (t-l_k^N)} \cj_{l_k^N s}(\dti x \, \si(y))+\cj_{t l_{k'}^N}(\dti x \, \si(y))+\sum_{i=k}^{k'-1} e^{-\cdot (t-l_{i+1}^N)} \cj_{l_{i+1}^Nl_i^N}(\dti x \, \si(y)).
$$
Furthermore, invoking the fact that $\delti \lp \cj(\dti x \, z)\rp =0$, we obtain:
$$
\cj_{t l_{k'-1}^N}(\dti x \, \si(y))=\cj_{t l_{k'}^N}(\dti x \, \si(y))+
e^{-\cdot (t-l_{k'}^N)} \cj_{l_{k'}^Nl_{k'-1}^N}(\dti x \, \si(y)),
$$
and hence
$$
(\delti \yti)_{ts}= e^{-\cdot (t-l_k^N)} \cj_{l_k^N s}(\dti x \, \si(y))+\cj_{t l_{k'-1}^N}(\dti x \, \si(y))+\sum_{i=k}^{k'-2} e^{-\cdot (t-l_{i+1}^N)} \cj_{l_{i+1}^Nl_i^N}(\dti x \, \si(y)).
$$
Iterating this procedure, we end up with the relation $(\delti \yti)_{ts}=\cj_{ts}(\dti x \, \si(y))$ for all $s,t\in\ott$, which proves that $y$ is a global solution to equation (\ref{eq:volterra-ytilde-bis}).

\end{proof}

\subsection{Application to the Brownian case}\label{sec:appli-usual-brownian}
We now intend to show that the previous results can be applied to a (classical) brownian motion $X=(X^{(1)},\ldots,X^{(n)})$ with values in $\R^{1,n}$. In other words, we shall consider the processes $\Xti^1$, $\Xti^2$, $\Xti^3$ defined in a natural way, according to Remark \ref{rmk:4.5}, by
$$\Xti^1_{ts}=\int_s^t e^{-.(t-v)}dX_v, \ \Xti^2_{ts}=\int_s^t e^{-.(t-v)} dX_v \otimes X^1_{vs}, \ \Xti^3_{tus}=\int_u^t e^{-.(t-v)}dX_v \otimes (\der X^1)_{vus},$$
where $X^1_{ts}=\ist \phi(t-v) \, dX_v$, and where all the stochastic integrals above are understood in the Itô sense. We thus have to prove that those processes satisfy the required regularity conditions.
 
\smallskip

As far as $\Xti^1$ is concerned, we can use the same proof as in the Young case, and the following regularity result is easily shown:
\begin{lemma}
If $\hphi$ is such that $\int_0^\infty d\xi \, |\hphi(\xi)|(1+\xi) < \infty $, then, for any $\ga \in (1/3,1/2)$, $\Xti^1 \in \cacti_{1}^\ga([0,T];\R^{1,n})$ a.s.
\end{lemma}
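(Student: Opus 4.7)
The plan is to mimic the proof of Theorem \ref{thm:3.14}, the only substantive changes being that the Wiener covariance formula (\ref{eq:inner-pdt-H}) is replaced by It\^o's isometry and the weight $\be=1$ (dictated by Remark \ref{rmk:4.1}) replaces the Young weight $\be=\ga$. I first verify the algebraic condition $\delti\Xti^1=0$ directly: for $s<u<t$ and $\xi\ge 0$, additivity of the It\^o integral together with the semigroup identity $e^{-\xi(t-v)}=e^{-\xi(t-u)}e^{-\xi(u-v)}$ give $\Xti^1_{ts}(\xi)=\Xti^1_{tu}(\xi)+e^{-\xi(t-u)}\Xti^1_{us}(\xi)$, and substitution into the expanded form $(\delti A)_{tus}=A_{ts}-A_{tu}-e^{-\xi(t-u)}A_{us}$ yields the claim. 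A jointly continuous modification is then produced in the spirit of Lemma \ref{lem:c-2-delta}: the increment $\Xti^1_{t_1 s_1}(\xi_1)-\Xti^1_{t_2 s_2}(\xi_2)$ splits into three It\^o integrals whose $L^2$-norms are controlled by It\^o's isometry together with the elementary bound $|e^{-\xi_1(t_1-u)}-e^{-\xi_2(t_2-u)}|\le N|t_1-t_2|+T|\xi_1-\xi_2|$ for $\xi_1,\xi_2\in[0,N]$, giving a Kolmogorov-type estimate polynomial in $|s_1-s_2|,|t_1-t_2|,|\xi_1-\xi_2|$.

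With $\delti\Xti^1=0$ and continuity at hand, the convolutional Garsia-Rodemich-Rumsey inequality (Proposition \ref{prop:g-r-r}) applied with $\psi(x)=x^{2p}$ and $\phi(r)=r^{\ga+1/p}$ yields
\[
\norm{\Xti^1_{ts}(\xi)}\le c_{\ga,p}\,|t-s|^{\ga}\,\Uti_{\ga,2p}(\xi)^{1/2p},
\]
with $\Uti_{\ga,2p}$ defined as in (\ref{defiuti}). By It\^o's isometry, $\Xti^1_{ts}(\xi)$ is a centred Gaussian vector with variance $(1-e^{-2\xi(t-s)})/(2\xi)\le t-s$, so the Gaussian moment equivalence delivers $E[\norm{\Xti^1_{ts}(\xi)}^{2p}]\le c_p|t-s|^p$ \emph{uniformly} in $\xi\ge 0$. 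Choosing $p$ large enough that $p(1-2\ga)>1$ then makes $E[\Uti_{\ga,2p}(\xi)]\le M$ for a constant $M$ independent of $\xi$.

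A Jensen inequality with respect to the finite measure $|\hphi(\xi)|(1+\xi)\,d\xi$ concludes the argument:
\[
E\bigl[\cn[\Uti_{\ga,2p}^{1/2p};\cl_1]\bigr]\le M^{1/2p}\int_0^\infty|\hphi(\xi)|(1+\xi)\,d\xi<\infty,
\]
which is finite by hypothesis. Hence $\Xti^1\in\cacti_{2,1}^\ga$ almost surely, which is the desired conclusion in the notation of Remark \ref{rmk:4.1}. The only non-routine ingredient compared with Theorem \ref{thm:3.14} is the uniform-in-$\xi$ moment bound above; its availability is precisely what permits the present assumption $\int|\hphi(\xi)|(1+\xi)\,d\xi<\infty$, the stronger weight $(1+\xi)$ (as opposed to the Young weight $(1+\xi^\ga)$) being forced by the choice $\be=1$ for the ambient $\cl_\be$-norm.
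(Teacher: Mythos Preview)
Your proposal is correct and follows exactly the route sketched in the paper: reproduce the proof of Theorem~\ref{thm:3.14} with the It\^o isometry replacing the fBm covariance identity~(\ref{eq:inner-pdt-H}), so that the moment bound $E[\norm{\Xti^1_{wv}(\xi)}^{2p}]\le c_p|w-v|^{p}$ holds uniformly in $\xi$ and feeds into the GRR inequality of Proposition~\ref{prop:g-r-r}. Your write-up is in fact more detailed than the paper's (which merely points back to Theorem~\ref{thm:3.14} and Lemma~\ref{lem:c-2-delta}), but the argument is the same.
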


\begin{proof}
The same trick as in the proof of Lemma \ref{lem:c-2-delta} leads to the existence of a continuous version of $\Xti^1(\xi)$ for any fixed $\xi$. Now, just as in the Young case (see the proof of Theorem \ref{thm:3.14}), it is readily checked that $\delti\Xti^1=0$, and thus one is allowed to write, for any $p>0$, 
\begin{equation}\label{majo-xtilde-1}
\norm{ \Xti^1_{ts}(\xi)} \leq C \, \lln t-s\rrn^\ga (\Uti_{\ga,2p}(\xi))^{1/2p},\ \mbox{where} \ \Uti_{\ga,2p}(\xi)=\iint_{0 <s<t<T} \frac{\norm{\Xti_{wv}^1(\xi)}^{2p}}{\lln w-v\rrn^{2\ga p+2}} dvdw.
\end{equation}
Our claim is thus easily proved by replacing (\ref{isom-young}) with the usual Itô isometry property.

\end{proof}

Consider now $\Xti^3$, and notice that, up to a Fubini-type theorem, this process can be written for all $\xi\geq 0$ as
\begin{equation}\label{decompo-xtilde-3}
\Xti^3_{tus}(\xi)=\int_0^\infty d\eta \, \hphi(\eta) \Xdeti^4_{tu}(\xi,\eta)\otimes \Xti^1_{us}(\eta),
\end{equation}
with $\Xdeti^4_{tu}(\xi,\eta)=\int_u^t e^{-\xi(t-v)}a_{vu}(\eta)dX_v$. The issue then consists in studying the regularity of $\Xdeti^4$. To this end, we shall resort to a GRR-type argument, which requires the introduction of a new incremental operator $\deldeti$ acting on the space $\cacdeti_2$ of applications on $\mathcal{S}_2$ with values in the space of two-variables functions. This operator should send $\cacdeti_2$ into the space $\cacdeti_3$ of applications on $\cs_3$ with values in the space of two-variables functions.

\smallskip

In order to define $\deldeti$, observe that for all $\xi,\eta\geq 0$, 
\begin{multline}\label{eq:50}
\Xdeti_{ts}^4(\xi,\eta)-\Xdeti_{tu}^4(\xi,\eta) e^{-\eta(u-s)}-e^{-\xi(t-u)} \Xti_{us}^4(\xi,\eta)\\
= \int_u^t e^{-\xi(t-v)}\lc a_{vs}(\eta)-a_{vu}(\eta)e^{-\eta(u-s)}\rc dX_v  =  \Xti_{tu}^1(\xi) a_{us}(\eta),
\end{multline}
the regularity of which is known. This simple relation yields naturally the following:
\begin{definition}
If $\Rdeti \in \cacdeti_2$, let $\deldeti \Rdeti$ the element of $\cacdeti_3$ defined by the relation
$$(\deldeti \Rdeti)_{tus}(\xi,\eta)=(\der \Rdeti)_{tus}(\xi,\eta)-a_{tu}(\xi)\Rdeti_{us}(\xi,\eta)-\Rdeti_{tu}(\xi,\eta)a_{us}(\eta)$$
for any $\xi,\eta \geq 0$.
\end{definition}
With such a definition, the above relation (\ref{eq:50}) can be written as: $(\deldeti \Xdeti^4)(\xi,\eta) =\Xti^1_{tu}(\xi)  a_{us}(\eta)$. Furthermore, we have the following equivalent of Proposition \ref{prop:g-r-r}, whose proof is postponed to the appendix for sake of readability:

\begin{proposition}\label{prop:4.19}
Let $(V, \norm{.})$ a Banach space and fix $\xi,\eta \geq 0$. Let $\Rdeti \in \cacdeti_2(T;E)$ such that $\Rdeti_{..} (\xi,\eta) \in \cac_2(V)$, and set
$$\Udeti(\xi,\eta)=\iint_{0<s<t<T} \psi\lp \frac{\norm{ \Rdeti_{ts}(\xi,\eta)}}{\phi(\lln t-s\rrn)} \rp dtds,$$
where $\psi,\phi: \mathbb{R}^+ \rightarrow \mathbb{R}^+$ are strictly increasing functions and $\phi(0)=0$. Assume now that there exists some $\Cdeti(\xi,\eta)\geq 0$ such that, for all $\ell_1 <\ell_2 \in [0,T]$, 
$$\sup_{\ell_1 \leq u\leq \ell_2} \norm{(\deldeti \Rdeti)_{\ell_2 u \ell_1}(\xi,\eta)}\leq \psi^{-1}\lp \frac{4\, \Cdeti(\xi,\eta)}{\lln \ell_2-\ell_1\rrn^2}\rp \phi\lp \lln \ell_2-\ell_1 \rrn\rp.$$
Then, for all $s,t \in T$,
$$\norm{ \Rdeti_{ts}(\xi,\eta)} \leq c \int_0^{\lln t-s\rrn} \lc \psi^{-1}\lp \frac{4\, \Udeti(\xi,\eta)}{r^2}\rp + \psi^{-1}\lp \frac{4\, \Cdeti(\xi,\eta)}{r^2}\rp \rc d\phi(r).$$
\end{proposition}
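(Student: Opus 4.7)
My plan is to adapt the argument of Proposition \ref{prop:g-r-r} to the two-sided twisted setting. Fix $\xi,\eta\ge 0$ and abbreviate $R_{ts}=\Rdeti_{ts}(\xi,\eta)$, $H_{tus}=(\deldeti\Rdeti)_{tus}(\xi,\eta)$. A direct rewriting of the definition of $\deldeti$, together with $(\der R)_{tus}=R_{ts}-R_{tu}-R_{us}$ and $a_{tu}(\xi)=e^{-\xi(t-u)}-1$, yields the identity
$$R_{ts}=e^{-\xi(t-u)}R_{us}+e^{-\eta(u-s)}R_{tu}+H_{tus},\qquad s\le u\le t,$$
with the crucial feature that both exponential factors belong to $[0,1]$. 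Since neither weight can amplify the two pieces of $R$ that carry the non-trivial content, the whole iteration below can proceed as if the identity were the usual cocycle $R_{ts}=R_{us}+R_{tu}+H_{tus}$.

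The next step is the Garsia--Rodemich--Rumsey selection argument, in exactly the form used in the appendix proof of Proposition \ref{prop:g-r-r}. For a given $s<t$ with $d=t-s$, the finiteness of
$$\iint_{s<v<w<t}\psi\!\lp\frac{\norm{R_{wv}}}{\phi(w-v)}\rp dv\,dw\le \Udeti(\xi,\eta)$$
combined with a Tchebychev-type averaging exhibits an intermediate point $u\in[s,t]$ at which $\norm{R_{us}}$ and $\norm{R_{tu}}$ are \emph{simultaneously} bounded by $\psi^{-1}(4\Udeti(\xi,\eta)/d^{2})\,\phi(d)$. Plugging this $u$ into the identity above and invoking the hypothesis on $\deldeti\Rdeti$, which controls $\norm{H_{tus}}$ by $\psi^{-1}(4\Cdeti/d^{2})\phi(d)$ for every admissible $u$, yields a first bound of the form
$$\norm{R_{ts}}\le 2\,\psi^{-1}\!\lp\tfrac{4\Udeti}{d^{2}}\rp\phi(d)+\psi^{-1}\!\lp\tfrac{4\Cdeti}{d^{2}}\rp\phi(d)+\text{contributions from two subintervals},$$
where the two subintervals of $[s,t]$ produced by the selection both have length at most a fixed fraction of $d$.

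Iterating the procedure on those subintervals generates a dyadic sequence of scales $d_k\sim 2^{-k}d$; thanks to the uniform bound of the twisting exponentials by $1$, the errors committed at level $k$ accumulate telescopically and are majorised by $\psi^{-1}(4\Udeti/d_k^{2})\phi(d_k)+\psi^{-1}(4\Cdeti/d_k^{2})\phi(d_k)$. Summing the geometric series and comparing with the Riemann sums of $\int_0^d[\psi^{-1}(4\Udeti/r^{2})+\psi^{-1}(4\Cdeti/r^{2})]\,d\phi(r)$ yields the announced estimate. The only genuinely new ingredient relative to Proposition \ref{prop:g-r-r} is the simultaneous presence of the two twisting exponentials (one in $\xi$ and one in $\eta$); since each is uniformly bounded by $1$ in all parameters, the combinatorics of the iteration are untouched and I expect the main obstacle to be purely book-keeping, namely verifying carefully that the selection of a single intermediate $u$ indeed controls both $R_{us}$ and $R_{tu}$ at the same time, which is achieved by applying the averaging inequality separately on $[s,\frac{s+t}{2}]$ and $[\frac{s+t}{2},t]$ and then splitting $[s,t]$ accordingly.
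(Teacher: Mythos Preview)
Your core adaptation is exactly the paper's: rewrite the cocycle as
\[
R_{ts}=e^{-\xi(t-u)}R_{us}+e^{-\eta(u-s)}R_{tu}+(\deldeti R)_{tus},
\]
observe that both exponential weights lie in $[0,1]$, and conclude that the triangle-inequality chain of Proposition~\ref{prop:g-r-r} goes through verbatim. The paper's proof says precisely this and nothing more, so in substance you are on the same track.

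However, your paraphrase of the GRR selection step does not match the Stroock argument you claim to invoke, and taken literally it has a gap. You assert that a Chebyshev-type averaging produces a single $u\in[s,t]$ at which \emph{both} $\norm{R_{us}}$ and $\norm{R_{tu}}$ are bounded by $\psi^{-1}(4\Udeti/d^{2})\phi(d)$. This cannot be obtained directly from $\Udeti$: the latter is a \emph{double} integral over all pairs $(v,w)$, and Chebyshev applied to the slice $v=s$ (or $w=t$) gives no control, since $\int_s^t\psi(\norm{R_{us}}/\phi(u-s))\,du$ is not dominated by $\Udeti$. The actual Stroock scheme used in the proof of Proposition~\ref{prop:g-r-r} is asymmetric: one builds a monotone sequence $s_0>s_1>\cdots\to s$ by choosing $s_{k+1}$ so that both $I(s_{k+1})\le 2\Udeti/\alpha_k$ and $\psi(\norm{R_{s_k s_{k+1}}}/\phi(\lambda_k-\lambda_{k+1}))\le 2I(s_k)/\alpha_k$, then does the analogous construction from $t$ downward, and finally glues the two estimates at $s_0$ via the identity above. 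Your ``dyadic bisection with a single midpoint'' description, and the remark about averaging separately on $[s,\tfrac{s+t}{2}]$ and $[\tfrac{s+t}{2},t]$, do not reproduce this mechanism. If you simply run the Stroock sequence as in the appendix, replacing relation~(\ref{proof-g-r-r-1}) by the two-weight identity, the proof is complete; the rest of your write-up should be discarded in favour of that.
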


\smallskip

It is now possible to give some regularity results for the increment $\Xti^3$:
\begin{lemma}
If $\hphi$ is such that $\int_0^\infty d\xi \, |\hphi(\xi)|(1+\xi) < \infty$, then, for any $\ga \in (1/3,1/2)$, $\Xti^3 \in \cacti_{3}^{3\ga}([0,T];\R^{n,n})$ a.s.
\end{lemma}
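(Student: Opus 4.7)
The plan is to combine the Fubini-type decomposition \eqref{decompo-xtilde-3} with the two-parameter Garsia-type estimate of Proposition \ref{prop:4.19} applied to $\Xdeti^4$, so as to realize $\Xti^3$ as an element of $\cacti_{3}^{(2\ga,\ga)}$: the factor $\Xdeti^4_{tu}(\xi,\eta)$ will provide the $2\ga$-regularity in $|t-u|$, while $\Xti^1_{us}(\eta)$ provides the $\ga$-regularity in $|u-s|$ (as already controlled in the previous lemma), totalling $3\ga$.

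First I would apply Proposition \ref{prop:4.19} to $\Xdeti^4(\xi,\eta)$ with $\psi(x)=x^{2p}$ and $\phi(r)=r^{2\ga+1/p}$, for $p$ large enough that $(1-2\ga)p>1$. The key input is the identity
$$(\deldeti \Xdeti^4)_{tus}(\xi,\eta) = \Xti^1_{tu}(\xi)\, a_{us}(\eta),$$
together with the pointwise bound $\|\Xti^1_{tu}(\xi)\|\le C|t-u|^{\ga}\Uti_{\ga,2p}(\xi)^{1/2p}$ of \eqref{majo-xtilde-1} and the elementary estimate $|a_{us}(\eta)|\le \eta^{\ga}|u-s|^{\ga}$. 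Together these yield an admissible constant $\Cdeti(\xi,\eta)^{1/2p}\le C\,\eta^{\ga}\,\Uti_{\ga,2p}(\xi)^{1/2p}$. For the $\Udeti$ input, the It\^o isometry (applied as in the $\Xti^1$ case) gives
$$E\|\Xdeti^4_{ts}(\xi,\eta)\|^{2p}\le C_p\,\eta^{2\ga p}\,|t-s|^{(2\ga+1)p},$$
whence $E\,\Udeti(\xi,\eta)\le C\eta^{2\ga p}$ uniformly in $\xi$ provided $p$ is large enough, and Jensen then gives $E[\Udeti(\xi,\eta)^{1/2p}]\le C\eta^{\ga}$. The conclusion of Proposition \ref{prop:4.19} reads
$$\|\Xdeti^4_{tu}(\xi,\eta)\|\;\le\;C\,|t-u|^{2\ga}\bigl[\Udeti(\xi,\eta)^{1/2p}+\eta^{\ga}\Uti_{\ga,2p}(\xi)^{1/2p}\bigr].$$

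Second, plug this back into \eqref{decompo-xtilde-3} and use $\|\Xti^1_{us}(\eta)\|\le C|u-s|^{\ga}\Uti_{\ga,2p}(\eta)^{1/2p}$ to obtain
$$\|\Xti^3_{tus}(\xi)\|\;\le\;C\,|t-u|^{2\ga}|u-s|^{\ga}\int_0^\infty d\eta\,|\hphi(\eta)|\bigl[\Udeti(\xi,\eta)^{1/2p}+\eta^{\ga}\Uti_{\ga,2p}(\xi)^{1/2p}\bigr]\Uti_{\ga,2p}(\eta)^{1/2p}.$$
Taking expectation of the $\cl_1$-norm $\int d\xi|\hphi(\xi)|(1+\xi)\|\Xti^3_{tus}(\xi)\|$ and applying Cauchy--Schwarz in $\omega$ (to decorrelate the cross-terms involving $\xi$ and $\eta$), one can bound each of the four resulting expectations by taking slightly larger moments (say $4p$ rather than $2p$, still all finite by Gaussianity) and invoking the uniform-in-$\xi$ bound $E[\Uti_{\ga,2p}(\xi)^{1/q}]\le C$. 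The outcome is
$$E\,\cn[\Xti^3;\cacti_3^{(2\ga,\ga)}]\;\le\;C\,\Bigl(\int_0^\infty |\hphi(\xi)|(1+\xi)\,d\xi\Bigr)\Bigl(\int_0^\infty |\hphi(\eta)|\,\eta^{\ga}\,d\eta\Bigr),$$
both factors being finite under the standing assumption on $\hphi$ (since $\eta^{\ga}\le 1+\eta$). This gives the desired a.s. finiteness of the $\cacti_3^{3\ga}$-norm.

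The main technical obstacle is twofold: first, to justify the Fubini exchange producing \eqref{decompo-xtilde-3} in the It\^o framework (a standard stochastic Fubini argument, using the integrability of $\hphi$ plus $L^2$-estimates on the integrands); second, and more crucially, to make sure the correct $\eta^{\ga}$-weight propagates through every term of the Garsia estimate --- this is what guarantees integrability against $|\hphi(\eta)|$ and is the reason why one must apply Proposition \ref{prop:4.19} in the twisted complex $\cacdeti_*$ rather than a plain Kolmogorov criterion on the raw fourth-order integral. Everything else is a routine Gaussian moment computation mirroring the proof of Theorem \ref{thm:3.14}.
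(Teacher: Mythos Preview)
Your proposal is correct and follows essentially the same route as the paper: apply Proposition \ref{prop:4.19} to $\Xdeti^4$ using the twisted relation $(\deldeti \Xdeti^4)_{tus}(\xi,\eta)=\Xti^1_{tu}(\xi)\,a_{us}(\eta)$ to extract the $|t-u|^{2\ga}$ factor, combine with the GRR bound \eqref{majo-xtilde-1} on $\Xti^1_{us}(\eta)$ for the $|u-s|^{\ga}$ factor, and then control the $\cl_1$-norm via Cauchy--Schwarz/Jensen on the random constants $\Udeti,\Cdeti,\Uti$. The only cosmetic differences are that the paper keeps three separate exponents $p,p',p''$ (you use a single large $p$, which is fine), and the paper obtains the slightly sharper weight $\eta$ rather than $\eta^{\ga}$ from the It\^o isometry on $\Xdeti^4$; since both are dominated by $1+\eta$, this is immaterial under the hypothesis on $\hphi$.
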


\begin{proof}
We will apply of course Proposition \ref{prop:4.19} to $\Xdeti^4$, with $\psi(x)=x^{2p'}$ and $\phi(x)=x^{2\ga+1/p'}$. The same arguments as in the proof of Lemma \ref{lem:c-2-delta} enable to assert that, a.s.,  $\Xdeti^4_{..}(\xi,\eta) \in \cac_2(\R^{1,n})$ for all $\xi,\eta \geq 0$. To find out what $\Cdeti(\xi,\eta)$ should be, remember that $(\deldeti \Xdeti^4)_{\ell_2 u\ell_1}(\xi,\eta)=\Xti^1_{\ell_2 u}(\xi)a_{ul_1}(\eta)$. Hence, according to (\ref{majo-xtilde-1}),
\bean
\norm{(\deldeti \Xdeti^4)_{\ell_2 u\ell_1}(\xi,\eta)} &\leq & c \, \lln l_2-u\rrn^\ga (\Uti_{\ga,2p}(\xi))^{1/2p} \eta^\ga \lln u-\ell_1 \rrn^\ga\\
&\leq & c \, \lln \ell_1-\ell_2\rrn^{2\ga} \lc (\Uti_{\ga,2p}(\xi))^{1/2p} \eta^\ga \rc.
\eean
Therefore, set
\begin{equation}\label{notation-cdeti-udeti}
\Cdeti_{\ga,2p,2p'}(\xi,\eta)=(\Uti_{\ga,2p}(\xi))^{2p'/2p} \eta^{2p' \ga}, \quad \Udeti_{2\ga,2p'}(\xi,\eta)=\iint_{0 <v<w<T} \frac{\norm{ \Xdeti^4_{wv}(\xi,\eta)}^{2p'}}{\lln w-v\rrn^{4p'\ga+2}} dvdw
\end{equation}
and with these notations,
$$\norm{ \Xdeti_{ts}^4(\xi,\eta)} \leq c \, \lln t-s\rrn^{2\ga} \lcl (\Udeti_{\ga,2p'}(\xi,\eta))^{1/2p'}+(\Cdeti_{\ga,2p,2p'}(\xi,\eta))^{1/2p'} \rcl.$$

Going back to (\ref{decompo-xtilde-3}) and estimating $\Xti_{us}^1(\eta)$ with
$\norm{ \Xti^1_{us}(\eta)} \leq c \lln u-s\rrn^\ga (\Uti_{\ga,2p''}(\eta))^{1/2p''}$ for some $p'' >0$, we get
$$\norm{ \Xti_{tus}^3(\xi)} \leq c  \lln t-u\rrn^{2\ga}\lln u-s\rrn^\ga \Rti_{\ga,2p,2p',2p''}(\xi),$$
where
\begin{eqnarray}\label{eq:52}
\Rti_{\ga,2p,2p',2p''}(\xi) &=& \int_0^\infty d\eta \, |\hphi(\eta)|(\Udeti_{2\ga,2p'}(\xi,\eta))^{1/2p'} (\Uti_{\ga,2p''}(\eta))^{1/2p''}  \nonumber\\
& & \hspace{3cm} +\int_0^\infty d\eta \, |\hphi(\eta)|(\Cdeti_{\ga,2p,2p'}(\xi,\eta))^{1/2p'} (\Uti_{\ga,2p''}(\eta))^{1/2p''}  \nonumber\\
&=& \Rti^1_{\ga,2p',2p''}(\xi)+\Rti^2_{\ga,2p,2p',2p''}(\xi).
\end{eqnarray}
To prove that $\Xti^3 \in \cacti_{1}^{3\ga}$ a.s, it is now sufficient to show that $\cn[\Rti_{\ga,2p,2p',2p''}; \cl_1] <\infty $ a.s, which will be seen as a consequence of $E[\cn[\Rti_{\ga,2p,2p',2p''}; \cl_1]] <\infty$.

\smallskip

In order to prove this latter relation, use first succesively Schwarz and Jensen inequalities to obtain
\beq\label{eq:53}
E[ (\Udeti_{2\ga,2p'}(\xi,\eta))^{1/2p'} (\Uti_{\ga,2p''}(\eta))^{1/2p''}] \leq E[\Udeti_{2\ga,2p'}(\xi,\eta)]^{1/2p'}E[\Uti_{\ga,2p''}(\eta)]^{1/2p''}.
\eeq 
To estimate the first term in the right hand side above, we resort to the fact that
$$E[\norm{ \Xdeti_{wv}^4(\xi,\eta)}^2] =\int_v^w e^{-2\xi(w-u)}a_{uv}^2(\eta) du \leq  \eta^2 \lln w-v\rrn^2.
$$
Furthermore, $\Xdeti_{wv}^4(\xi,\eta)$ is a random variable in the second chaos of the Brownian motion, on which all the $L^p$-norms are equivalent. Thus $E[\norm{ \Xdeti_{wv}^4(\xi,\eta)}^{2p'}]\le \eta^{2p'} \lln w-v\rrn^{2p'}$ for any $p'\ge 1$, which yields:
$$E[\Udeti_{2\ga,2p'}(\xi,\eta)]^{1/2p'} \leq c \, \eta \lp \iint_{0<v<w<T} \lln w-v\rrn^{2p'-4\ga p'-2} dvdw \rp^{1/2p'}.$$
Hence, if we take $p'$ such that $p'-2\ga p'-1 >0$, that is $p' >1/(1-2\ga)$, then the quantity $\Udeti_{2\ga,2p'}(\xi,\eta)$ can be bounded as $E[\Udeti_{2\ga,2p'}(\xi,\eta)]^{1/2p'}$  $\leq C \, \eta$. As for the second term of (\ref{eq:53}), we have (remember that $n$ stands for the dimension of $\Xti^1$)
$$E[\norm{ \Xti_{wv}^1(\eta)}^2] =n\int_v^w e^{-2\eta(w-u)}du \leq n \lln w-v\rrn,$$
so that the same kind of arguments as for $\Xti^4$ yield 
$$
E[\Uti_{\ga,2p''}(\eta)] \leq C \iint_{0<v<w<T} \lln w-v\rrn^{p''-2\ga p''-2}dvdw.
$$ 
By choosing $p''> 2/(1-2\ga)$, we get $E[\Uti_{\ga,2p''}(\eta)] \leq c$. Consequently , recalling that $\Rti^1$ is defined at equation (\ref{eq:52}), one gets:
\begin{multline}\label{eq:54}
E[\cn[\Rti_{\ga,2p',2p''}^{1};\cl_1]] \leq c \int_0^\infty d\xi \, |\hphi(\xi)|(1+\xi) \int_0^\infty d\eta \, |\hphi(\eta)| \, \eta\\
\leq c \lp \int_0^\infty d\xi \, |\hphi(\xi)|(1+\xi)\rp^2.
\end{multline}

As far as $\Rti_{\ga,2p,2p',2p''}^2$ is concerned, we use the definition of $\Cdeti_{\ga,2p,2p'}$, together with the previous estimation of $E[\Uti_{\ga,2p''}(\eta)]$, to assert that, if $p> 2/(1-2\ga)$,
\bean
E[\Cdeti_{\ga,2p,2p'}(\xi,\eta)^{1/2p'} \Uti_{\ga,2p''}(\eta)^{1/2p''} ] &=& \eta^\ga E[\Uti_{\ga,2p}(\xi)^{1/2p} \Uti_{\ga,2p''}(\eta)^{1/2p''} ] \\
&\leq & \eta^\ga E[\Uti_{\ga,2p}(\xi)]^{1/2p} E[\Uti_{\ga,2p''}(\eta)]^{1/2p''}\ \leq C \ \, \eta^\ga.
\eean
Hence,
\begin{multline}\label{eq:55}
E[\cn[\Rti_{\ga,2p,2p',2p''}^2 ;\cl_1]] \leq c\int_0^\infty d\xi \, |\hphi(\xi)|(1+\xi)\int_0^\infty d\eta \, |\hphi(\eta)| \, \eta^\ga\\
 \leq c \lp \int_0^\infty d\xi \, |\hphi(\xi)|(1+\xi) \rp^2.
\end{multline}
Putting together the estimates (\ref{eq:54}) and (\ref{eq:55}), we end up with 
$E[\cn[\Rti_{\ga,2p,2p',2p''}; \cl_1]] <\infty$, which ends the proof.

\end{proof}

It remains to analyze the regularity of $\Xti^2$. To this purpose, we will apply Proposition \ref{prop:g-r-r} again, which means that both the moments of $\Xti^2$ and $\delti \Xti^2 =\Xti^1 X^1+\Xti^3$ have to be controlled. We first have to check the following property:
\begin{lemma}\label{lem:4.21}
If $\int_0^\infty d\eta \, |\hphi(\eta)| < \infty$, then, a.s., $\Xti^2_{..}(\xi) \in \cac_2(\R^{n,n})$ for any $\xi \geq 0$.
\end{lemma}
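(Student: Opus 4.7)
The plan is to imitate the proof of Lemma \ref{lem:c-2-delta}: produce a modification of an auxiliary double-integral process that is jointly continuous in all parameters, then integrate in the frequency variable to recover $\Xti^2$. Concretely, for every $\xi \ge 0$ one can write, up to a stochastic Fubini argument,
$$\Xti^2_{ts}(\xi)=\int_0^\infty d\eta\, \hphi(\eta)\,\Xdeti^5_{ts}(\xi,\eta), \qquad \Xdeti^5_{ts}(\xi,\eta):=\int_s^t e^{-\xi(t-v)}\,dX_v\otimes\Xti^1_{vs}(\eta),$$
and $\Xdeti^5_{ts}(\xi,\eta)$ is a double It\^o integral living in the second Wiener chaos of $X$. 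Since the assumption on $\hphi$ here is merely $\int_0^\infty |\hphi(\eta)|\,d\eta<\infty$, it will be crucial that all estimates on $\Xdeti^5$ are \emph{uniform} in the frequency variables $\xi,\eta$.

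The key step is to prove an $L^2$-continuity estimate of the form
\begin{equation*}
E\bigl[\bigl\|\Xdeti^5_{t_1s_1}(\xi_1,\eta_1)-\Xdeti^5_{t_2s_2}(\xi_2,\eta_2)\bigr\|^2\bigr] \le c_N\bigl\{|t_1-t_2|^{\alpha}+|s_1-s_2|^{\alpha}+|\xi_1-\xi_2|^{\alpha}+|\eta_1-\eta_2|^{\alpha}\bigr\}
\end{equation*}
for all $s_i,t_i\in[0,T]$, $\xi_i,\eta_i\in[0,N]$, some $\alpha>0$, and a constant $c_N$ independent of the remaining parameters. One obtains this bound by decomposing the difference into four pieces (changing a single parameter at a time) and applying the It\^o isometry for iterated integrals together with the elementary estimates $e^{-\xi r}\le 1$ and $|e^{-\xi_1 r}-e^{-\xi_2 r}|\le T|\xi_1-\xi_2|$ valid for $r\in[0,T]$. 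Hypercontractivity inside the fixed second Wiener chaos upgrades this to the corresponding $L^{2p}$-estimate for every $p\ge 1$, and a four-parameter Kolmogorov continuity criterion (as in Lemma \ref{lem:c-2-delta}) delivers a modification $\Xdeti^{5,\ast}$ of $\Xdeti^5$ that is jointly continuous on $[0,T]^2\times[0,\infty)^2$, a.s.

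With this continuous modification at hand, the uniform bound $E[\|\Xdeti^5_{ts}(\xi,\eta)\|^2]\le c\,|t-s|^2$ (independent of $\xi,\eta$) combined with $\int_0^\infty |\hphi(\eta)|\,d\eta<\infty$ allows a dominated-convergence argument: the map $(s,t,\xi)\mapsto \int_0^\infty \hphi(\eta)\,\Xdeti^{5,\ast}_{ts}(\xi,\eta)\,d\eta$ is continuous on $[0,T]^2\times \R_+$ on a full-measure event, and since $\Xdeti^5_{tt}(\xi,\eta)=0$ trivially, this yields a modification of $\Xti^2$ for which $(s,t)\mapsto\Xti^2_{ts}(\xi)$ lies in $\cac_2(\R^{n,n})$ for every $\xi\ge 0$ simultaneously. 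The main obstacle is the $L^2$ continuity estimate for $\Xdeti^5$: one must check carefully that each of the four separate differences is controlled by a power of the corresponding increment with a multiplicative constant that does \emph{not} depend on the silent frequency variable. This uniformity is what allows the dominated-convergence step to succeed under the weak integrability assumption on $\hphi$.
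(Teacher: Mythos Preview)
Your moment estimates on the four-parameter process $\Xdeti^5$ are correct, and the hypercontractivity/Kolmogorov step does produce a jointly continuous modification $\Xdeti^{5,\ast}$ on compacts. The gap is in the passage from $\Xdeti^{5,\ast}$ back to $\Xti^2$. You invoke dominated convergence to conclude that $(s,t,\xi)\mapsto\int_0^\infty\hphi(\eta)\,\Xdeti^{5,\ast}_{ts}(\xi,\eta)\,d\eta$ is continuous, citing the \emph{uniform $L^2$ bound} $E[\|\Xdeti^5_{ts}(\xi,\eta)\|^2]\le c|t-s|^2$. But dominated convergence is a pathwise statement: you would need a random function $g(\eta)$ with $\sup_{(s,t,\xi)\in K}\|\Xdeti^{5,\ast}_{ts}(\xi,\eta)\|\le g(\eta)$ and $\int_0^\infty|\hphi(\eta)|\,g(\eta)\,d\eta<\infty$ a.s. An $L^2$ bound uniform in $\eta$ does not deliver this; Kolmogorov's criterion only controls $\Xdeti^{5,\ast}$ on compacts in $\eta$, and the resulting random constant may blow up as the $\eta$-range grows. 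Under the sole hypothesis $\int|\hphi|<\infty$ there is no obvious way to absorb that growth.

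The paper sidesteps this by applying the Kolmogorov argument of Lemma~\ref{lem:c-2-delta} directly to the \emph{three}-parameter process $(s,t,\xi)\mapsto\Xti^2_{ts}(\xi)$, with $\eta$ already integrated out. Your own estimates are in fact enough to carry this out: since they are uniform in $\eta$, Minkowski's integral inequality gives
\[
E\bigl[\|\Xti^2_{t_1s_1}(\xi_1)-\Xti^2_{t_2s_2}(\xi_2)\|^2\bigr]^{1/2}
\le \int_0^\infty |\hphi(\eta)|\,E\bigl[\|\Xdeti^5_{t_1s_1}(\xi_1,\eta)-\Xdeti^5_{t_2s_2}(\xi_2,\eta)\|^2\bigr]^{1/2}\,d\eta,
\]
which, together with $\int|\hphi|<\infty$, yields precisely the three-parameter moment estimate required. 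So the remedy is to integrate in $\eta$ at the $L^2$ level \emph{before} invoking Kolmogorov, rather than building a four-parameter continuous object and trying to integrate it pathwise afterwards.
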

\begin{proof}
This is the same Kolmogorov-type argument as the one used in Lemma \ref{lem:c-2-delta}. The details are left to the reader.

\end{proof}

\smallskip

Let us state now the regularity result for $\Xti^2$:
\begin{lemma}
If $\hphi$ is such that $\int_0^\infty d\xi \, |\hphi(\xi)|(1+\xi) < \infty$, then, for any $\ga \in (1/3,1/2)$, $\Xti^2 \in \cacti_{2}^{2\ga}([0,T]; \R^{n,n})$ a.s.
\end{lemma}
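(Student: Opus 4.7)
The plan is to mimic the strategy already used for $\Xti^1$ and $\Xti^3$: apply the convolutional GRR estimate of Proposition \ref{prop:g-r-r} to $\Xti^2$ with the choices $\psi(x)=x^{2p}$ and $\phi(x)=x^{2\ga+1/p}$, for $p$ large enough so that $2\ga+1/p<1$, and then integrate the resulting random quantities against $|\hphi(\xi)|(1+\xi)\,d\xi$. Lemma \ref{lem:4.21} ensures that $\Xti^2_{..}(\xi)$ is a.s.\ an element of $\cac_2(\R^{n,n})$ for every $\xi\ge 0$, so the proposition is applicable. The first ingredient is the $\delti$-type bound: thanks to Hypothesis \ref{hyp:X2},
\[
(\delti \Xti^2)_{tus}(\xi) = \Xti^1_{tu}(\xi)\otimes X^1_{us} + \Xti^3_{tus}(\xi),
\]
which, using $\|\Xti^1_{tu}(\xi)\|\le c|t-u|^\ga (\Uti_{\ga,2p''}(\xi))^{1/2p''}$ (from (\ref{majo-xtilde-1})), the $\ga$-Hölder regularity of $X^1$ (coming from $\Xti^1\in\cacti_1^\ga$ and the integrability of $\hphi$ against $(1+\xi^\ga)$, cf.\ Remark \ref{rmk:X1}), and the bound on $\Xti^3$ established in the previous lemma, yields
\[
\sup_{s\le u\le t}\|(\delti\Xti^2)_{tus}(\xi)\|\le c\,|t-s|^{2\ga}\,\Cti_\ga(\xi),
\]
with $\Cti_\ga(\xi)$ a random quantity built out of $(\Uti_{\ga,2p''}(\xi))^{1/2p''}$, the seminorm of $X^1$ in $\cac_2^\ga$ and $\Rti_{\ga,2p,2p',2p''}(\xi)$ from (\ref{eq:52}).

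With this at hand, Proposition \ref{prop:g-r-r} produces, for all $0\le s<t\le T$,
\[
\|\Xti^2_{ts}(\xi)\|\le c\,|t-s|^{2\ga}\Bigl[(\Vti_{2\ga,2p}(\xi))^{1/2p}+\bigl(\Cti_\ga(\xi)\bigr)^{1/2p}\Bigr],
\quad
\Vti_{2\ga,2p}(\xi):=\iint_{0<v<w<T}\frac{\|\Xti^2_{wv}(\xi)\|^{2p}}{|w-v|^{4\ga p+2}}\,dvdw.
\]
It therefore suffices to check that $E\bigl[\cn\bigl[\Vti_{2\ga,2p}^{1/2p}+\Cti_\ga^{1/2p};\cl_1\bigr]\bigr]<\infty$, which by Jensen reduces to showing that both $\xi\mapsto E[\Vti_{2\ga,2p}(\xi)]^{1/2p}$ and $\xi\mapsto E[\Cti_\ga(\xi)]^{1/2p}$ are integrable against $|\hphi(\xi)|(1+\xi)\,d\xi$.

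For the $\Vti$-term, the second-chaos nature of $\Xti^2_{wv}(\xi)$ makes all $L^{2p}$ norms equivalent to the $L^2$ norm, and the It\^o isometry together with $E[\|X^1_{vs}\|^2]\le c(v-s)$ gives
\[
E\bigl[\|\Xti^2_{wv}(\xi)\|^{2p}\bigr]\le c_p\,|w-v|^{2p},
\]
\emph{uniformly in $\xi\ge 0$}, because $e^{-\xi(w-v)}\le 1$. Hence $E[\Vti_{2\ga,2p}(\xi)]\le c_p\iint|w-v|^{2p-4\ga p-2}\,dvdw$, which is finite and independent of $\xi$ as soon as $p>1/(1-2\ga)$. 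The $\Cti_\ga$-term is handled exactly as in the proof of the regularity of $\Xti^3$: the computations carried out for $\Rti_{\ga,2p,2p',2p''}$ around equations (\ref{eq:54})--(\ref{eq:55}) already show that $E[\Cti_\ga(\xi)^{1/2p}]$ is bounded by a polynomial in $\xi$ of degree $\ga<1$, which is integrable against $|\hphi(\xi)|(1+\xi)\,d\xi$ by assumption. Combining these two estimates yields $\Xti^2\in\cacti_2^{2\ga}$ almost surely.

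The only slightly delicate point is the bookkeeping of exponents: one must pick $p,p',p''$ simultaneously large enough to make all the iterated integrals finite, but this is the same type of juggling performed for $\Xti^1$ and $\Xti^3$ in the preceding lemmas, and it goes through without any new difficulty.
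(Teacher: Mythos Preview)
Your proposal is correct and follows essentially the same route as the paper: apply the convolutional GRR inequality (Proposition~\ref{prop:g-r-r}) to $\Xti^2$, feed in the bound on $\delti\Xti^2=\Xti^1\otimes X^1+\Xti^3$ obtained from the two preceding lemmas, and control the $\Uti$-term via moment estimates of $\Xti^2_{wv}(\xi)$. The only substantive difference is that the paper bounds $E[\|\Xti^2_{wv}(\xi)\|^{4p_5}]$ directly by Burkholder--Davis--Gundy, whereas you invoke the second-chaos hypercontractivity together with It\^o isometry; both give the same $c_p|w-v|^{2p}$ bound uniformly in $\xi$, so this is a matter of taste.

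Two cosmetic points worth cleaning up: (i) in your displayed GRR output the term should be $\Cti_\ga(\xi)$ rather than $\Cti_\ga(\xi)^{1/2p}$, since you defined $\Cti_\ga$ as the bound on $\|(\delti\Xti^2)\|$ itself; (ii) the claim that $E[\Cti_\ga(\xi)^{1/2p}]$ is ``bounded by a polynomial in $\xi$ of degree $\ga$'' is imprecise---the estimates around (\ref{eq:54})--(\ref{eq:55}) and for $\Uti_{\ga,2p''}$ actually give a bound \emph{independent of $\xi$}, which is exactly what makes the final integration against $|\hphi(\xi)|(1+\xi)\,d\xi$ work under the stated hypothesis. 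Neither point affects the validity of the argument.
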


\begin{proof}
Invoking the fact that $\delti \Xti^2 =\Xti^1 X^1+\Xti^3$ and the previous estimations of $\Xti^1$ and $\Xti^3$, we deduce $\norm{(\delti \Xti^2)_{\ell_2 u\ell_1}(\xi)} \leq c \lln \ell_2-\ell_1\rrn^{2\ga} \Dti(\xi)^{1/4p_5}$, with
$$\Dti(\xi)^{1/4p_5} =\Rti_{\ga,2p_0,2p_1,2p_2}(\xi)+(\Uti_{\ga,2p_3}(\xi))^{1/2p_3} \int_0^\infty d\eta \, |\hphi(\eta)| (\Uti_{\ga,2p_4}(\eta))^{1/2p_4}.$$

We are thus ready to apply Proposition \ref{prop:g-r-r} to $\Xti^2$ with $\psi(x)=x^{4p_5}$ and $\phi(x)=x^{2\ga+1/2p_5}$ to get $\norm{ \Xti^2_{ts}(\xi)} \leq c \, \lln t-s\rrn^{2\ga} \lcl \Vti_{2\ga,4p_5}(\xi)^{1/4p_5}+\Dti(\xi)^{1/4p_5} \rcl$, where we set
$$\Vti_{2\ga,4p_5}(\xi)=\iint_{0<v<w<T} \frac{\norm{\Xti_{wv}^2(\xi)}^{4p_5}}{\lln w-v\rrn^{8\ga p_5+2}} dvdw,$$
and so $\cn[\Xti^2; \cacti_{1}^{2\ga}] \leq c \lcl \cn[\Vti_{2\ga,4p_5}^{1/4p_5}; \cl_1]+\cn[\Dti^{1/4p_5}; \cl_1] \rcl $.

\smallskip

The fact that $\cn[\Dti^{1/4p_5}; \cl_1] < \infty$ a.s has been shown while studying the regularities of $\Xti^1$ and $\Xti^3$, for some well-chosen $p_0,p_1,p_2,p_3,p_4$.

\smallskip

To conclude with, let us prove that $E[\cn[\Vti_{2\ga,4p_5}^{1/4p_5};\cl_1]] < \infty$: notice that $$E[\cn[\Vti_{2\ga,4p_5}^{1/4p_5};\cl_1]] \leq \int_0^\infty d\xi \, |\hphi(\xi)|(1+\xi)E[\Vti_{2\ga,4p_5}(\xi)]^{1/4p_5},$$ so that the issue consists in estimating $E[\norm{\Xti^2_{wv}(\xi)}^{4p_5}]$. To this end, observe that
$$E\lc \norm{\Xti^2_{wv}(\xi)}^{4p_5}\rc \leq c\lcl E\lc | \Xti_{wv}^{2,(1,1)}(\xi)|^{4p_5}\rc +E\lc | \Xti_{wv}^{2,(1,2)}(\xi)|^{4p_5}\rc \rcl,$$
where $\Xti_{wv}^{2,(i,j)}(\xi)$ is defined as $\ist e^{-\xi(t-v)} dX_v^{(i)}\, X_{vs}^{1,(j)}$. But, thanks to the Burkholder-Davis-Gundy inequality, we know that
\bean
E[| \Xti^{2,(1,1)}_{wv}(\xi)|^{4p_5}] &\leq& c E\lc \lp \int_v^w (e^{-\xi(w-s)} X^{1,(1)}_{sv})^2 ds \rp^{2p_5} \rc  \\ 
&\leq& c \lln w-v \rrn^{2p_5-1} \int_v^w E\lc (X^{1,(1)}_{sv})^{4p_5} \rc ds,
\eean
and
\bean
E[(X^{1,(1)}_{sv})^2] &=& E\lc \lp \int_0^\infty d\eta \hphi(\eta)\int_v^s e^{-\eta(s-t)} dX^{(1)}_t\rp^2\rc\\
&=& \int_v^s \lp \int_0^\infty d\eta \hphi(\eta)e^{-\eta(s-t)}\rp^2 dt \ \leq \ \lln s-v\rrn \lp \int_0^\infty d\eta \hphi(\eta)\rp^2,
\eean
which gives $E[(X^{1,(1)}_{sv})^{4p_5}] \leq c \lln s-v\rrn^{2p_5}$ and thus $E[(\Xti^{2,(1,1)}_{wv}(\xi))^{4p_5}] \leq c \lln w-v\rrn^{4p_5}$. 

\smallskip
In fact, this reasoning remains true for $E[(X^{1,(1,2)}_{sv})^{4p_5}]$, so that finally $E[\norm{\Xti^{2}_{wv}(\xi)}^{4p_5}] \leq c \lln w-v\rrn^{4p_5}$.
If we take $p_5>1/(1/2-\ga)$, then we get
$$E[\Vti_{2\ga,4p_5}(\xi) ] \leq \iint_{0<v<w<T} \lln w-v\rrn^{4p_5-8\ga p_5-2}dwdv \leq M  < \infty,$$
which leads to the announced claim $E[\cn[\Vti_{2\ga,4p_5}^{1/4p_5};\cl_1]] < \infty$, provided $\int_0^\infty d\xi \, |\hphi(\xi)|(1+\xi) < \infty$.

\end{proof}

We are now able to write Theorem \ref{main-theorem-rough} in the Brownian setting:

\begin{theorem}
Let $X=(X^{(1)},\ldots,X^{(n)})$ a standard Brownian motion on $[0,T]$ with values in $\R^{1,n}$. Introduce coefficients $\ga \in (1/3,1/2)$, $\ka \in (1/3,\ga)$ and assume that $\int_0^\infty d\xi \, |\hphi(\xi)|(1+\xi) < \infty$. If $\si\in \cac^{3,\textbf{\textit{b}}}(\R^{1,d};\R^{n,d})$, then, a.s, the system
$$\begin{cases}
\Yti_0  =  0\\
(\delti \Yti)_{ts} =  \cj_{ts}\lp \dti X \, \si\lp a+\int_0^\infty d\eta \, \hphi(\eta) \, \Yti(\eta)\rp\rp
\end{cases} $$
admits a unique solution in $\Qti^\ka([0,T],\R^{1,d})$.

\end{theorem}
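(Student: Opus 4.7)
The plan is to recognize that this statement is essentially a corollary of Theorem \ref{main-theorem-rough}: all that is really required is to verify that the three Itô-based processes $\Xti^1, \Xti^2, \Xti^3$ defined at the beginning of Section \ref{sec:appli-usual-brownian} together satisfy Hypothesis \ref{hyp:X2} almost surely, with the choice $\ga \in (1/3, 1/2)$ stated. Once this is secured, the existence and uniqueness in $\Qti^\ka([0,T]; \R^{1,d})$ follows by direct invocation of Theorem \ref{main-theorem-rough}, since the remaining hypothesis on $\si$ is identical.

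The regularity part of Hypothesis \ref{hyp:X2} is already collected in the three lemmas preceding the theorem statement: those give $\Xti^1 \in \cacti_{1}^\ga$, $\Xti^2 \in \cacti_{2}^{2\ga}$ and $\Xti^3 \in \cacti_{3}^{3\ga}$, almost surely, under the standing integrability assumption $\int_0^\infty |\hphi(\xi)| (1+\xi) d\xi < \infty$. Thus the only remaining piece is the algebraic identity
\[
(\delti \Xti^2)_{tus} = \Xti^1_{tu} \otimes X^1_{us} + \Xti^3_{tus}, \qquad (t,u,s) \in \cs_3,
\]
and the identity $\delti \Xti^1 = 0$. For the latter, one checks directly from the Wiener-integral definition of $\Xti^1$ that $(\delti \Xti^1)_{tus}(\xi) = \Xti^1_{ts}(\xi) - e^{-\xi(t-u)} \Xti^1_{us}(\xi) - \Xti^1_{tu}(\xi) = 0$ pathwise, as was observed in the Young case. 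The Chasles-type relation is simply a consequence of the definition of Itô integrals on adjacent intervals.

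For the identity on $\Xti^2$, I would fix $(t,u,s) \in \cs_3$ and $\xi \ge 0$, and split the It\^o integral defining $\Xti^2_{ts}(\xi)$ at $u$:
\[
\Xti^2_{ts}(\xi) = \int_s^u e^{-\xi(t-v)} dX_v \otimes X^1_{vs} + \int_u^t e^{-\xi(t-v)} dX_v \otimes X^1_{vs}.
\]
The first piece equals $e^{-\xi(t-u)} \Xti^2_{us}(\xi)$ by pulling out the deterministic exponential factor. In the second piece, write $X^1_{vs} = X^1_{vu} + (\der X^1)_{vus} + X^1_{us}$: the $X^1_{us}$ term factors out as $\Xti^1_{tu}(\xi) \otimes X^1_{us}$, the $X^1_{vu}$ term reproduces $\Xti^2_{tu}(\xi)$, and the $(\der X^1)_{vus}$ term is exactly $\Xti^3_{tus}(\xi)$. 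Collecting and subtracting the $a_{tu}(\xi)$ correction that defines $\delti$ yields the identity in $\cs_3 \times \R_+$, modulo a null set depending on $(t,u,s,\xi)$. The main (mild) obstacle is to upgrade this pointwise-a.s. identity to an identity holding almost surely for all $(t,u,s,\xi)$ simultaneously; this can be done by the same Kolmogorov-type continuity argument as in Lemma \ref{lem:c-2-delta} and Lemma \ref{lem:4.21}, which provides jointly continuous modifications on which both sides agree by density of rationals.

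With Hypothesis \ref{hyp:X2} now established almost surely, I would conclude the proof in one line: on the full-measure event where $\Xti^1, \Xti^2, \Xti^3$ satisfy the required algebraic and analytic conditions, Theorem \ref{main-theorem-rough} applies verbatim, since $\si \in \cac^{3,\textbf{\textit{b}}}(\R^{1,d}; \R^{n,d})$ and $1/3 < \ka < \ga < 1/2$. This produces the unique fixed point $\Yti \in \Qti_0^\ka([0,T]; \R^{1,d})$ of the map $\Gamma$ associated to the Volterra equation, and the integral $\cj(\dti X\, \si(\cdot))$ is interpreted pathwise through Proposition \ref{prop:intg-controlled-proc}, so that the solution thus obtained coincides with the one constructed via It\^o stochastic integration. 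The only subtle points are the measurability of the exceptional set (trivial, by the continuity of the constructed modifications) and the verification that the pathwise definition of the integral agrees with the Itô integral for smooth inputs, which is precisely the content of Proposition \ref{prop:intg-controlled-proc}(1).
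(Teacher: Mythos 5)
Your proposal is correct and takes essentially the same route as the paper: there the theorem is stated as an immediate consequence of the three preceding lemmas (which give $\Xti^1\in\cacti_{1}^\ga$, $\Xti^2\in\cacti_{2}^{2\ga}$, $\Xti^3\in\cacti_{3}^{3\ga}$ a.s.) combined with Theorem \ref{main-theorem-rough}, the algebraic identities $\delti\Xti^1=0$ and $\delti\Xti^2=\Xti^1\otimes X^1+\Xti^3$ being treated as readily checked (cf.\ Remark \ref{rmk:4.5} and their use inside the proof of the lemma on $\Xti^2$). Your explicit Chasles-type splitting of the It\^o integral at $u$ and the Kolmogorov-continuity upgrade of the null sets simply spell out what the paper leaves implicit, so no gap remains.
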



\section{Application to a fBm with Hurst parameter $H \in (1/3,1/2)$}\label{sec:applic-rough-fbm}
This section is devoted to prove that Hypothesis \ref{hyp:X2} is fulfilled for a $n$-dimensional fractional Brownian motion with Hurst parameter $H\in (1/3,1/2)$. More specifically, we will construct a stochastic vector $$(X,\Xti^1,\Xti^2,\Xti^3) \in L^1(\Omega;\cac_1^\ga(\R^{1,n}) \times \cacti_{2}^\ga(\R^{1,n}) \times \cacti_{2}^{2\ga}(\R^{n,n}) \times \cacti_{3}^{3\ga}(\R^{n,n}))$$ lying above $X$ (in some rough path sense) such that:
\begin{itemize}
\item $X$ is a fBm with Hurst parameter $H$,
\item $\ga \in (1/3,H)$,
\item almost surely, $(\Xti^1,\Xti^2,\Xti^3)$ satisfies Hypothesis \ref{hyp:X2}, that is
$$\delti \Xti^1=0, \quad \delti \Xti^2=\Xti^1 \otimes X^1+\Xti^3,\quad \mbox{where} \ X^1_{ts}=\int_0^\infty d\xi \, \hphi(\xi) \, \Xti^1_{ts}(\xi).$$

\end{itemize}

\smallskip

As mentioned in the introduction, to this end, we shall resort to an approximation of the fBm introduced by Unterberger in \cite{Un}. Let us recall first briefly the definition of this approximation in the one-dimensional case.

\smallskip

All the processes we deal with in the sequel are defined on the same complete probability space $(\Omega,\cf,P)$. As shown in \cite{Un}, a simple explicit decomposition of  the covariance of the fBm allows to introduce  an analytic process $X^{+'}$ on the complex half-plane $\Pi^+=\lcl x+iy\in \mathbb{C}: y >0\rcl$ such that, if $X^{-'}$ is defined on $\Pi^-$ by $X_w^{-'}=\bar X_{\overline{w}}^{+'}$, then, for all $z,w \in \Pi^+$,
\beq\label{eq:58}
E\lc X_z^{+'}X_w^{+'}\rc =E\lc X_{\overline{z}}^{-'}X_{\overline{w}}^{-'}\rc =0, \quad E\lc X_z^{+'}X_{\overline{w}}^{-'}\rc =\frac{H(1-2H)}{2\cos(\pi H)} (-i(z-\overline{w}))^{2H-2}.
\eeq
The process $X_z^{+'}$ has to be interpreted as an analytic approximation of the derivative of the fBm, and the simple expression (\ref{eq:58}) for its covariance function is at the core of our further calculations. If one desires to construct an approximation of the fBm itself, just pick, for $t\in\R$ and any $\ep >0$, a continuous path $\ga_{\ep,t}:[0,1]\rightarrow \Pi^+$ such that $\ga_{\ep,t}(0)=i\ep$ and $\ga_{\ep,t}(1)=t+i\ep$. Set then $X_t^{+,\ep}=\int_{\ga_{\ep,t}}X_z^{+'} \, dz$. Likewise, a process $X_t^{-,\ep}$ can be defined as $X_t^{-,\ep}=\int_{\gati_{\ep,t}}X_z^{-'} \, dz$, where $\gati_{\ep,t}:[0,1]\rightarrow \Pi^-$ is such that $\gati_{\ep,t}(0)=-i\ep$ and $\gati_{\ep,t}(1)=t-i\ep$. Of course, $X_t^{-,\ep}=\bar X_t^{+,\ep}$, and the (real) approximation we shall work with is finally defined as
\beq\label{eq:59}
X_t^\ep=2 \,\mbox{Re}(X_t^{+,\ep})=X_t^{+,\ep}+X_t^{-,\ep}.
\eeq
The next proposition, borrowed from \cite{Un}, gives a first relation between the approximation we have just recalled and the usual fBm indexed by $\R$:
\begin{proposition}\label{prop:convergence-covariance}
Let $X^\ep$ be the process constructed above, given by relation (\ref{eq:59}). For all $s,t \in \R$, we have
$$\lim_{\ep \to 0}E\lc X_t^\ep X_s^\ep \rc =\frac{1}{2}\lcl |t|^{2H}+|s|^{2H}-\lln t-s\rrn^{2H}\rcl .$$
\end{proposition}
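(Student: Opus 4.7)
The plan is to reduce the covariance $E[X_t^\ep X_s^\ep]$ to the single complex cross-covariance $E[X_t^{+,\ep} X_s^{-,\ep}]$, which we can compute explicitly thanks to analyticity. Since $X_t^{-,\ep} = \overline{X_t^{+,\ep}}$ (this follows from writing $\gati_{\ep,t} = \overline{\ga_{\ep,t}}$ and applying the definition $X^{-'}_w = \overline{X^{+'}_{\bar w}}$), we would first expand
\begin{equation*}
E[X_t^\ep X_s^\ep] = E[(X_t^{+,\ep}+X_t^{-,\ep})(X_s^{+,\ep}+X_s^{-,\ep})] = 2\,\textrm{Re}\bigl(E[X_t^{+,\ep} X_s^{-,\ep}]\bigr),
\end{equation*}
where the diagonal terms $E[X_t^{\pm,\ep} X_s^{\pm,\ep}]$ vanish thanks to the first identity in (\ref{eq:58}), and the other cross term is simply the complex conjugate of the one we keep.

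Next, using the second formula in (\ref{eq:58}), we would write the surviving cross-covariance as the iterated contour integral
\begin{equation*}
E[X_t^{+,\ep} X_s^{-,\ep}] = \frac{H(1-2H)}{2\cos(\pi H)} \int_{\ga_{\ep,t}}\!\int_{\gati_{\ep,s}} (-i(z-w))^{2H-2}\, dw\, dz.
\end{equation*}
For $z \in \Pi^+$ and $w \in \Pi^-$ one has $\textrm{Re}(-i(z-w)) = \textrm{Im}(z)-\textrm{Im}(w) > 0$, so the integrand is analytic in $(z,w)$ on $\Pi^+\times \Pi^-$ (using the principal branch of the power). The key observation is then that $F(z,w) = -(4\cos(\pi H))^{-1}(-i(z-w))^{2H}$ is an explicit antiderivative in the sense that $\partial_z\partial_w F$ equals the integrand, which one checks by direct differentiation and the identity $2H(2H-1) = -2H(1-2H)$. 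By Cauchy's theorem the double contour integral therefore depends only on the four endpoints:
\begin{equation*}
E[X_t^{+,\ep} X_s^{-,\ep}] = F(t+i\ep, s-i\ep) - F(t+i\ep, -i\ep) - F(i\ep, s-i\ep) + F(i\ep, -i\ep).
\end{equation*}

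Finally, we would take $\ep \to 0$ and extract the real part. For any real $\tau$, writing $-i\tau = |\tau| e^{\mp i\pi/2}$ according to the sign of $\tau$ (with the principal branch), one has $\textrm{Re}[(-i\tau)^{2H}] = |\tau|^{2H}\cos(\pi H)$, and $F(i\ep,-i\ep) = -(2\ep)^{2H}/(4\cos(\pi H)) \to 0$. Collecting the four contributions yields
\begin{equation*}
\lim_{\ep\to 0} E[X_t^\ep X_s^\ep] = -\frac{\cos(\pi H)}{2\cos(\pi H)}\bigl[|t-s|^{2H} - |t|^{2H} - |s|^{2H}\bigr] = \frac{1}{2}\bigl(|t|^{2H}+|s|^{2H}-|t-s|^{2H}\bigr),
\end{equation*}
as claimed. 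The only delicate point is the bookkeeping of the branch of the complex power $(-i(z-w))^{2H}$ and the justification that the antiderivative formula applies, which both reduce to checking analyticity on $\Pi^+\times\Pi^-$; once this is settled, the passage to the limit in $\ep$ is just a matter of computing real parts of elementary complex numbers.
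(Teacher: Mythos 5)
Your proof is correct. Note first that the paper itself does not prove this proposition: it is quoted from \cite{Un}, so there is no in-paper argument to compare against line by line. Your computation is a clean, self-contained verification in exactly the spirit of the complex-analytic manipulations the paper does carry out elsewhere (Lemma \ref{lem:lemme-de-base} and the appendix): the diagonal terms vanish by the first identity in (\ref{eq:58}), the two cross terms are conjugate so the covariance equals $2\,\mathrm{Re}\,E[X_t^{+,\ep}X_s^{-,\ep}]$, and since $\mathrm{Re}(-i(z-w))=\mathrm{Im}(z)-\mathrm{Im}(w)>0$ on $\Pi^+\times\Pi^-$ the principal-branch kernel is analytic there and admits the explicit primitive $F(z,w)=-(4\cos(\pi H))^{-1}(-i(z-w))^{2H}$ (your constant is right: $\partial_z\partial_w(-i(z-w))^{2H}=2H(2H-1)(-i(z-w))^{2H-2}$, and $\tfrac{H(1-2H)}{2\cos(\pi H)}\cdot\tfrac{1}{2H(2H-1)}=-\tfrac{1}{4\cos(\pi H)}$), so the double contour integral reduces to the four endpoint values; letting $\ep\to0$ and using $\mathrm{Re}[(-i\tau)^{2H}]=|\tau|^{2H}\cos(\pi H)$ for real $\tau$ gives the stated limit, with the degenerate endpoints (e.g.\ $t=s$, or the term $F(i\ep,-i\ep)$) correctly tending to $0$. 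Two small points you could make explicit: the interchange of expectation with the two contour integrals (immediate here, since the path integrals are $L^2$-limits of Riemann sums of a continuous Gaussian process on compact contours, so covariances pass to the limit), and the remark that $\cos(\pi H)\neq0$ because $H<1/2$, which is what makes the normalizing constant in (\ref{eq:58}) and your cancellation legitimate.
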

This statement in law will be improved at Theorem \ref{thm:5.2}. Just notice for the moment that, for any fixed $\ep>0$, $\lcl \Xep_t,t\in[0,T]\rcl$ is a smooth process and $(\Xep)'_t=X_{t+i\ep}^{+'}+X_{t-i\ep}^{-'}$.

\smallskip

In a natural way, the $n$-dimensional analog of our analytic approximation is a process $\Xep:=(X^{\ep,(1)}, \ldots, X^{\ep,(n)})$, where the components $X^{\ep,(i)}$ are constructed from independent copies of $X^{+'}$. We can then introduce the following smooth integral (in the Riemann sense) processes associated to $\Xep$:
\begin{align}\label{eq:59b}
&\Xtiunep_{ts}(\xi)=\int_s^t e^{-\xi(t-u)} d\Xep_u, \quad \Xunep_{ts}=\int_0^\infty d\xi \, \hphi(\xi) \, \Xtiunep_{ts}(\xi),  \\
&\Xtideep_{ts}(\xi)=\int_s^t e^{-\xi(t-u)} d\Xep_u \otimes \Xunep_{us}, \quad \Xtitrep_{tus}(\xi)=\int_u^t e^{-\xi(t-v)} d\Xep_v \otimes (\der \Xunep)_{vus}.  \nonumber
\end{align}
The main result of this section, which entails in particular our Theorem \ref{thm:1.1}, can be summarized as follows:
\begin{theorem}\label{thm:5.2}
Assume that $\hphi$ is such that $\int_0^\infty d\xi \, | \hphi(\xi)| (1+\xi^{2}) < \infty$. Then, for any $\ga \in (1/3,H)$, the sequence of processes $(\Xep,\Xtiunep,\Xtideep,\Xtitrep)$ converges in $L^1(\Omega; \cac_1^\ga \times \cacti_{2}^\ga \times \cacti_{2}^{2\ga} \times \cacti_{3}^{3\ga})$, as $\ep$ tends to $0$, to a process $(X,\Xti^1,\Xti^2,\Xti^3)$ . Furthermore, $X$ has the same law as a fBm with Hurst parameter $H$ and $\Xti^1,\Xti^2,\Xti^3$ satisfy Hypothesis 3.
\end{theorem}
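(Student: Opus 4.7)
The plan is to exploit the explicit analytic structure of Unterberger's approximation: because the covariance of $X^{+'}$ on $\Pi^+$ is given by the simple formula $E[X^{+'}_z X^{-'}_{\overline w}] = c_H (-i(z-\overline w))^{2H-2}$, every second moment of the approximating processes can be written as a product of contour integrals of an explicit analytic integrand on $\Pi^+$, which we can estimate by deforming contours. The global strategy is (i) show that the quadruple $(\Xep,\Xtiunep,\Xtideep,\Xtitrep)$ is Cauchy in $L^1(\Omega; \cac_1^\ga \times \cacti_{2}^\ga \times \cacti_{2}^{2\ga} \times \cacti_{3}^{3\ga})$ as $\ep\to 0$, (ii) identify $X$ with a fBm via Proposition \ref{prop:convergence-covariance}, and (iii) pass to the limit in the algebraic identities $\delti \Xtiunep = 0$ and $\delti \Xtideep = \Xtiunep\otimes X^{1,\ep} + \Xtitrep$, which hold trivially for the smooth approximations.

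For the analytic heart of the argument I would proceed component by component. For $\Xtiunep$, since each $\Xtiunep_{ts}(\xi)$ lives in the first Wiener chaos, all $L^{2p}$-moments are determined by the covariance, and the contour representation gives bounds of the form $E[\|\Xtiunep_{ts}(\xi)-\Xti^{1,\ep'}_{ts}(\xi)\|^{2p}] \leq c\, |\ep-\ep'|^{\alpha p} |t-s|^{(2H-\alpha)p}$ for any small $\alpha>0$, with constants uniform in $\xi$. Combining this with the Garsia estimate of Proposition~\ref{prop:g-r-r} applied along the lines of Theorem~\ref{thm:3.14} (note $\delti\Xtiunep=0$), integrating against $\hphi(\xi)(1+\xi^\ga)\,d\xi$ and using $\int(1+\xi^2)|\hphi(\xi)|\,d\xi<\infty$, yields a Cauchy bound in $L^1(\Omega;\cacti_{2}^\ga)$. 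The same kind of argument, adapted to $\Xep$ itself (no $\xi$-dependence), produces convergence in $L^1(\Omega;\cac_1^\ga)$ and, by Proposition \ref{prop:convergence-covariance}, identifies the limit with fBm.

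For the iterated integrals, use the decomposition of Remark \ref{rmk:4.5} to write $\Xtitrep_{tus}(\xi) = \int_0^\infty d\eta\,\hphi(\eta)\,\Xdetiquep_{tu}(\xi,\eta) \otimes \Xtiunep_{us}(\eta)$, and observe that $\Xdetiquep$ lives in the second Wiener chaos, so a Wick expansion again reduces moments to products of two explicit kernels on $\Pi^+$. The relation $\deldeti \Xdetiquep(\xi,\eta) = \Xtiunep \cdot a_{\cdot\cdot}(\eta)$, combined with the doubly-twisted Garsia estimate of Proposition \ref{prop:4.19} applied just as in Section \ref{sec:appli-usual-brownian} (with $\Cdeti$ and $\Udeti$ defined as in (\ref{notation-cdeti-udeti})), gives the desired $\cacti_3^{3\ga}$ bound and Cauchy property for $\Xtitrep$ once one integrates in $\eta$ against $|\hphi(\eta)|(1+\eta)\,d\eta$. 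For $\Xtideep$ one uses $\delti \Xtideep = \Xtiunep\otimes X^{1,\ep}+\Xtitrep$ together with moment estimates on $\Xtideep$ itself, which once more reduce (via the second Wiener chaos structure) to contour integrals, and Proposition \ref{prop:g-r-r}.

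The main obstacle will be the contour deformation bounds: showing that moments of differences $\Xtiunep-\Xti^{1,\ep'}$, $\Xtideep-\Xti^{2,\ep'}$, $\Xtitrep-\Xti^{3,\ep'}$ decay polynomially in $|\ep-\ep'|$ while remaining uniformly controlled in the Laplace variables $\xi,\eta$ with only a $(1+\xi^2)|\hphi(\xi)|$-type integrability requirement. This requires a careful analysis of the singular integrand $(-i(z-\overline w))^{2H-2}$ on $\Pi^+$ when folded against the exponentials $e^{-\xi(t-u)}$ and the convolutional kernels $a_{vs}(\eta)$; the key technical point is that each factor $e^{-\xi(t-u)}$ or $a_{vu}(\eta)$ remains bounded by $1$ on the contours used, so the $\xi,\eta$-dependence only enters through the $\phi$-moment bounds in Proposition \ref{prop:g-r-r} and \ref{prop:4.19}, yielding exactly $(1+\xi^\ga)$, $(1+\xi^{2\ga})$, $(1+\xi^{3\ga})$ factors which are absorbed by the assumption $\int(1+\xi^2)|\hphi(\xi)|\,d\xi<\infty$. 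Once convergence is secured in all four spaces, the algebraic identities pass to the limit by continuity, and Hypothesis \ref{hyp:X2} is verified for $(\Xti^1,\Xti^2,\Xti^3)$.
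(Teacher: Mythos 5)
Your overall architecture coincides with the paper's: prove that the quadruple is Cauchy in $L^1(\Omega; \cac_1^\ga \times \cacti_{2}^\ga \times \cacti_{2}^{2\ga} \times \cacti_{3}^{3\ga})$ via moment bounds on differences combined with the twisted GRR estimates (Propositions \ref{prop:g-r-r} and \ref{prop:4.19}), identify the law of the limit through Proposition \ref{prop:convergence-covariance}, and pass the algebraic relations $\delti \Xtiunep=0$ and $\delti \Xtideep=\Xtiunep\otimes X^{1,\ep}+\Xtitrep$ to the limit (the paper notes this last step also uses that $\Xtiunep$ and $X^{1,\ep}$ are Cauchy in $L^2(\Omega)$, which follows from Gaussianity). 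Your treatment of $\Xep$, $\Xtiunep$ and, modulo the slip that $\Xdetiquep$ is a first-chaos (not second-chaos) object, of $\Xtitrep$ matches Propositions \ref{prop:5.2} and \ref{prop:5.3}.

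There is, however, a genuine gap at the hardest step, namely the second-moment bound for $\Xtidede_{ts}(\xi)$ (the paper's Lemma \ref{lem:lemme-cinq}), and it is hidden behind two incorrect simplifications in your last paragraph. First, the claim that the moments of $\Xtideep$ ``once more reduce to contour integrals'' is not true as stated: after the Wick expansion, the off-diagonal contribution contains the product $K_{\ep,\ep}^{2H-2}(x_1,x_2)\,K_{\ep,\ep}^{2H-2}(x_4,x_3)$ in which the second kernel appears with \emph{reversed} orientation relative to the integration ranges $x_3\le x_1$, $x_4\le x_2$; deforming both pairs of variables onto $\ga_{(s,t)}\times\overline{\ga_{(s,t)}}$ would make $-i(w_2-z_2)$ have the wrong sign and the kernel ill-defined for small $\ep$. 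The paper must instead integrate the exponential weights by parts (mimicking the identity $A^\ep_{ts}=\tfrac12(X^\ep_t-X^\ep_s)^2$ for the diagonal part) and, for the remaining off-diagonal integral, run a separate technical lemma with a four-part contour including a half-circle around the origin. Second, your key technical point --- that the weights $e^{-\xi(t-u)}$ and $a_{vu}(\eta)$ are bounded by $1$ on the contours so that the Laplace variables only enter through the GRR weights $(1+\xi^\ga)$ --- is false and would not even give the right time regularity: the bound actually used is $|a_{zs}(\mu)|\le c\,\mu^\la|t-s|^\la$, whose factor $\mu^\la$ is indispensable to upgrade $\Xdetiquep$ from $\ga$- to $2\ga$-regularity, and the integrations by parts generate explicit polynomial factors $\xi,\xi^2,\mu,\mu^2$ (see the remainders $\Rti^{\ep,2}$, $\Rdeti^{\ep,3}$ and the final bound $E[\|\Xtidede_{ts}(\xi)\|^2]\le c\,\ep^\ka|t-s|^{4H-\la}(1+\xi^2)$). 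It is precisely these polynomial factors in the Laplace variables --- not the H\"older weights of Proposition \ref{prop:g-r-r} --- that force the hypothesis $\int_0^\infty|\hphi(\xi)|(1+\xi^{2})\,d\xi<\infty$. Without an argument replacing this analysis, your scheme does not close for the $\cacti_2^{2\ga}$ component.
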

 The proof of this theorem will be carried out in the sections below, and the main step in this process will be to prove that $(\Xep,\Xtiunep,\Xtideep,\Xtitrep)$ is a Cauchy sequence.  This is achieved once the following stronger statement is proved:
\begin{multline}\label{eq:60}
E \Big[ \cn[ \Xep-\Xet;\cac_1^\ga]+\cn[ \Xtiunep-\Xtiunet;\cacti_{1}^\ga ]\\
+\cn[ \Xtideep-\Xtideet;\cacti_{2}^{2\ga}]+\cn[ \Xtitrep-\Xtitret;\cacti_{3}^{3\ga}] \Big] \leq c \, \ep^\al,
\end{multline}
with $\al >0$. To do so, let us fix $0<\eta <\ep$ and follow the same lines as in the Brownian case (see Section \ref{sec:appli-usual-brownian}), which means that the issue mainly consists in estimating the moments of any order of the processes at stake.

\smallskip

For the sake of clarity, the proofs of the lemmas to come are carried over to the appendix. Let us also introduce the notation
\beq\label{eq:61}
\Xde=\Xep-\Xet, \quad \Xti^{i,\Delta}=\Xti^{i,\ep}-\Xti^{i,\eta}, \ i\in \lcl 1,2,3\rcl.
\eeq

\subsection{Estimation of the first order integrals} 
Our approximation results will stem from the association of Proposition \ref{prop:g-r-r} and the following lemma concerning Wiener integrals of analytic functions:

\begin{lemma}\label{lem:lemme-un}
Let $0\leq s <t \leq T$ and $f_{ts}$ an analytic function in a neighbourhood of
$$\Pi_{(s,t)}=\{ z=a+ib \in \mathbb{C}: \ a\in [s,t], \, b \in [-(t-s),t-s] \}$$
such that the restriction $f_{ts_{|[s,t]}}$ takes value in $\R$. Suppose that $f_{ts}$ is bounded on $\Pi_{(s,t)}$. Then, for any $\al \in (0,2H)$, 
$$E \lc\left\Vert\int_s^t f_{ts}(u) \, dX_u^\ep-\int_s^t f_{ts}(u) \, dX_u^\eta \right\Vert^2 \rc  \leq c_\al \norm{f}_{\infty, \Pi_{(s,t)}}^2 \lln t-s \rrn^{2H-\al} \lln \ep-\eta\rrn^\al,$$
where the constant $c_\al$ does not depend on $s,t,\ep,\eta$.
\end{lemma}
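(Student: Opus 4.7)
The first step in my approach is to exploit the decomposition $\Xep = X^{+,\ep} + X^{-,\ep}$ together with the identity $X^{-'}_{\bar{w}} = \overline{X^{+'}_w}$ and the assumption that $f_{ts}$ is real-valued on $[s,t]$ to write
$$
\int_s^t f_{ts}(u)\,d(X^\ep_u - X^\eta_u) = 2\,\mathrm{Re}(D^+), \qquad D^+ := \int_s^t f_{ts}(u)\bigl[X^{+'}_{u+i\ep} - X^{+'}_{u+i\eta}\bigr]\,du.
$$
Since the first line of (\ref{eq:58}) forces $E[(D^+)^2]=0$, it will suffice to bound $E[|D^+|^2]$ by $c_\alpha \|f_{ts}\|_{\infty,\Pi_{(s,t)}}^2 |t-s|^{2H-\alpha}|\ep-\eta|^\alpha$.

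Next I would use the second line of (\ref{eq:58}) to expand
$$
E[|D^+|^2] = c_H \iint_{[s,t]^2} f_{ts}(u)\,f_{ts}(v)\,\Delta(u-v)\,du\,dv,
$$
where $\Delta(w) := \phi_w(2\ep) - 2\phi_w(\ep+\eta) + \phi_w(2\eta)$ with $\phi_w(c) := (-iw + c)^{2H-2}$, recognising the centred second difference of $\phi_w$ of step $\ep-\eta$. Combining the trivial bound $|\phi_w(c)| \le ((c+y)^2+x^2)^{H-1}$ (for $w=x+iy$ with $c+y\ge 0$) with the Taylor bound $|\Delta(w)| \le (\ep-\eta)^2 \sup_{c\in[2\eta,2\ep]} |\phi_w''(c)|$, a standard interpolation yields
$$
|\Delta(x+iy)| \le C\,(\ep-\eta)^\alpha\,(y^2 + x^2)^{H-1-\alpha/2}, \qquad y\ge 0,\ \alpha\in[0,2].
$$

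The central step is then to deform the $u$-contour of the inner integral (keeping $v$ real), exploiting the joint analyticity of $u \mapsto f_{ts}(u)\Delta(u-v)$ on $\Pi_{(s,t)} \cap \{\mathrm{Im}(u)>-2\eta\}$. By Cauchy's theorem the segment $[s,t]$ may be replaced with the rectangular contour ascending from $s$ to $s+i(t-s)$, crossing horizontally to $t+i(t-s)$, and descending to $t$; the hypothesis that $f_{ts}$ is holomorphic on a neighbourhood of $\Pi_{(s,t)}$ provides exactly the vertical room $t-s$ needed. On the horizontal piece, the interpolation bound gives $|\Delta| \le C(\ep-\eta)^\alpha(t-s)^{2H-2-\alpha}$, whose contribution to $E[|D^+|^2]$ is controlled by $\|f_{ts}\|_\infty^2(\ep-\eta)^\alpha(t-s)^{2H-\alpha}$. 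For each vertical piece I would parametrise $u = s + iy$ (respectively $u = t+iy$) with $y\in[0,t-s]$ and, after Fubini, reduce to
$$
\iint_{[0,t-s]^2} (y^2 + (v-s)^2)^{H-1-\alpha/2}\,dy\,dv \le C \int_0^{\sqrt{2}\,(t-s)} r^{2H-1-\alpha}\,dr \le C\,(t-s)^{2H-\alpha},
$$
by polar coordinates, the last integral converging precisely because $\alpha < 2H$.

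The main obstacle I anticipate is this step from the direct covariance expression to the contour-deformed version. A naive estimate on the real contour yields $\int|u-v|^{2H-2-\alpha}\,du\,dv$, which is non-integrable near the diagonal for every admissible $\alpha$ when $H<1/2$; any attempt to regularise by retaining the $2\eta$ factor inside the second difference reintroduces an $\eta$-dependent blow-up of the form $\eta^{2H-1-\alpha}$ that overwhelms the target bound. The contour lift trades the singular factor $|u-v|^{2H-2-\alpha}$ for $(y^2+x^2)^{H-1-\alpha/2}$, which becomes integrable once $y$ is allowed to range in $(0, t-s]$, and the threshold $\alpha<2H$ emerges naturally from the resulting radial integral. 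It is therefore the analyticity of $f_{ts}$ on the full rectangle $\Pi_{(s,t)}$, and not merely on $[s,t]$, that powers the entire argument.
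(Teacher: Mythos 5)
Your proposal is correct and follows essentially the same route as the paper: write the variance through the explicit analytic covariance kernel $(-i(z-w)+\ep_1+\ep_2)^{2H-2}$, interpolate in $\ep-\eta$, and tame the diagonal singularity by deforming the integration path onto the rectangular contour where the kernel's argument stays in the right half-plane, which is exactly where the analyticity of $f_{ts}$ on all of $\Pi_{(s,t)}$ is used. The only implementation differences — reducing to $2\,\mathrm{Re}(D^+)$ with $E[(D^+)^2]=0$ so that a single second-difference kernel appears, and deforming only the $u$-contour while keeping $v$ real, instead of the paper's two-sided deformation onto $\ga_{(s,t)}$ and $\overline{\ga_{(s,t)}}$ via Lemma \ref{lem:lemme-de-base} — are cosmetic and both work.
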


\smallskip

With this lemma in hand, our approximation result for the first order integrals based on $X^\ep$ can be written as:
\begin{proposition}\label{prop:5.2}
Let $\Xep$ and $\Xtiunep$ be the increments defined by (\ref{eq:59b}). Then there exists a constant $\al>0$ such that
$$
E \Big[ \cn[ \Xep-\Xet;\cac_1^\ga]+\cn[ \Xtiunep-\Xtiunet;\cacti_{2}^\ga ] \Big] \leq c \, \ep^\al,
$$
for any $0<\eta<\ep$.
\end{proposition}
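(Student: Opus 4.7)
The overall strategy is to feed Lemma \ref{lem:lemme-un} into a Garsia--Rodemich--Rumsey argument, upgrading second-moment pointwise estimates into Hölder-norm bounds on $\cac_1^\ga$ and $\cacti_{2}^\ga$. First I would obtain $L^2$-estimates for the increments of $\Xde := \Xep - \Xet$ and $\Xti^{1,\Delta} := \Xtiunep - \Xtiunet$. Lemma \ref{lem:lemme-un} applied with $f_{ts} \equiv 1$ immediately gives
\begin{equation*}
E\lc \norm{\Xde_{ts}}^2\rc \leq c_\al \, \lln t-s\rrn^{2H-\al} \, \lln \ep-\eta\rrn^\al,
\end{equation*}
while the choice $f_{ts}(u) = e^{-\xi(t-u)}$ is analytic in $u$ and bounded by $1$ on $\Pi_{(s,t)}$ (since $|e^{-\xi(t-u)}| = e^{-\xi(t-\text{Re}(u))} \leq 1$ for $\text{Re}(u)\leq t$ and $\xi \geq 0$), so Lemma \ref{lem:lemme-un} yields the same estimate for $E[\norm{\Xti^{1,\Delta}_{ts}(\xi)}^2]$, uniformly in $\xi \geq 0$.

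Since both $\Xde_{ts}$ and $\Xti^{1,\Delta}_{ts}(\xi)$ live in the first Wiener chaos associated to $X^{+'}$, I would then invoke Gaussian hypercontractivity to lift these $L^2$-estimates to all $L^{2p}$ moments, namely
\begin{equation*}
E\lc \norm{\Xde_{ts}}^{2p}\rc + \sup_{\xi\geq 0} E\lc \norm{\Xti^{1,\Delta}_{ts}(\xi)}^{2p}\rc \leq c_{p,\al} \, \lln t-s\rrn^{p(2H-\al)} \, \lln \ep-\eta\rrn^{p\al},
\end{equation*}
at the price of a constant depending on $p$ only. Arbitrarily high moments will be needed to close the GRR integrability conditions.

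For $\Xde$, a standard Kolmogorov/Garsia argument then gives $E[\cn[\Xde;\cac_1^\ga]] \leq c\,\lln\ep-\eta\rrn^{\al/2}$ as soon as $\ga < H-\al/2-1/(2p)$. For $\Xti^{1,\Delta}$, I would apply Proposition \ref{prop:g-r-r} with $\psi(x)=x^{2p}$, $\phi(r)=r^{\ga+1/p}$, and the crucial observation that $\Cti(\xi)\equiv 0$ can be taken: indeed $\delti \Xtiunep = \delti \Xtiunet = 0$ by the very definition of the exponentially weighted integral, so $\delti \Xti^{1,\Delta}=0$. This yields a pointwise bound of the form
\begin{equation*}
\norm{\Xti^{1,\Delta}_{ts}(\xi)} \leq c \, \lln t-s\rrn^\ga \, \Uti(\xi)^{1/(2p)}, \qquad \Uti(\xi) = \iint_{0<v<w<T} \frac{\norm{\Xti^{1,\Delta}_{wv}(\xi)}^{2p}}{\lln w-v\rrn^{2\ga p+2}} \, dv\,dw.
\end{equation*}
Integrating against $|\hphi(\xi)|(1+\xi)\, d\xi$ (which is finite thanks to the standing hypothesis $\int(1+\xi^2)|\hphi(\xi)|\,d\xi<\infty$) and applying Jensen's inequality together with the $L^{2p}$-moment estimate above yields $E[\cn[\Xti^{1,\Delta};\cacti_{2}^\ga]] \leq c\,\lln\ep-\eta\rrn^{\al/2}$, provided the double integral $\iint \lln w-v\rrn^{p(2H-\al)-2p\ga-2}\,dv\,dw$ converges, i.e.\ $p(2H-\al-2\ga)>1$.

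The only delicate point is to calibrate the parameters $(p,\al,\ga)$ simultaneously: for any target $\ga \in (1/3,H)$, pick $\al>0$ small so that $\ga < H-\al/2$ still holds, and then $p$ large so that both $p(2H-\al-2\ga)>1$ and $\ga+1/(2p) < H-\al/2$. All constraints are then satisfied, and summing the two contributions produces the announced bound with exponent $\al/2$.
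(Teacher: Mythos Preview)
Your proposal is correct and follows essentially the same route as the paper: apply Lemma~\ref{lem:lemme-un} with $f\equiv 1$ and $f_{ts}(u)=e^{-\xi(t-u)}$ to get uniform second-moment bounds, lift to $L^{2p}$ via Gaussianity, then run the classical GRR for $\Xde$ and Proposition~\ref{prop:g-r-r} (with $\Cti\equiv 0$ thanks to $\delti\Xti^{1,\Delta}=0$) for $\Xti^{1,\Delta}$, followed by Jensen and integration against $|\hphi(\xi)|(1+\xi)\,d\xi$. Your parameter calibration (take $\al$ small so that $\ga<H-\al/2$, then $p$ large) is in fact cleaner than the paper's stated range for $\al_1$.
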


\begin{proof}
Recall our notation (\ref{eq:61}) for the differences of increments based on $X$. For the estimation of $\Xde$, use the classical Garsia-Rumsey-Rodemich inequality to deduce $\norm{ (\der\Xde)_{ts} }$ $\leq c\lln t-s \rrn^\ga (U_{\ga,2p}^\Delta)^{1/2p}$, where
$$U_{\ga,2p}^\Delta =\iint_{0<v<w<T} \frac{\norm{ (\der\Xde)_{wv}}^{2p}}{|w-v|^{2\ga p+2}} dvdw,$$
which leads to $E\big[ \cn[\Xde;\cac_1^\ga]\big] \leq c \, E[U_{\ga,2p}^\Delta]^{1/2p}$. But, according to Lemma \ref{lem:lemme-un} (take $f_{wv}=1$), we have
$$E[\norm{(\der \Xde)_{wv}}^2]=E[ \norm{ (\Xep_w-\Xep_v)-(\Xet_w-\Xet_v)}^2]\leq c \lln \ep-\eta \rrn^{\al_1} \lln w-v\rrn^{2H-\al_1},$$
for an arbitrary constant $\al_1 \in (0,2H)$. Hence, since $X^+$ is a Gaussian process, we also obtain $E[\norm{(\der \Xde)_{wv}}^{2p}]\leq c \lln \ep-\eta \rrn^{\al_1p} \lln w-v\rrn^{2Hp-\al_1p}$, so that, if $\al_1 \in (2\ga,2H)$ and $p >2/(2H-2\ga-\al_1)$, $E\big[ \cn[\Xde;\cac_1^\ga]\big] \leq c \lln \ep-\eta\rrn^{\al_1 /2}$.

\smallskip

In order to estimate $\cn[\Xtiunde;\cacti_{1,\be}^\ga ]$, notice that we obviously have $\delti \Xtiunde =0$, which, thanks to Proposition \ref{prop:g-r-r}, gives $\norm{ \Xtiunde_{ts}(\xi)} \leq c \lln t-s\rrn^\ga (\Uti_{\ga,2p}^\Delta(\xi))^{1/2p}$, where
$$\Uti_{\ga,2p}^\Delta(\xi)=\iint_{0<v<w<T}\frac{\norm{\Xtiunde_{wv}(\xi)}^{2p}}{|w-v|^{2\ga p+2}} dvdw.$$
Therefore $E\big[ \cn[\Xtiunde;\cacti_{1}^\ga]\big] \leq c \int_0^\infty d\xi \, |\hphi(\xi)| (1+\xi) \, E[\Uti_{\ga,2p}^\Delta(\xi)]^{1/2p}$, and invoking again Lemma \ref{lem:lemme-un} with $f_{wv}(u)=e^{-\xi(w-u)}$, we get $E[ \norm{\Xtiunde_{wv}(\xi)}^{2p}] \leq c \lln \ep-\eta \rrn^{\al_1 p}\lln w-v\rrn^{2Hp-\al_1 p}$. Thus, just as in the case of $\Xde$, we end up with
$$E\big[\cn[\Xtiunde;\cacti_{1}^\ga ]\big] \leq c \lln \ep-\eta \rrn^{\al_1/2} \int_0^\infty d\xi \, |\hphi(\xi)|(1+\xi) \leq c \lln \ep-\eta \rrn^{\al_1/2},$$
which finishes the proof.

\end{proof}

\subsection{Estimation of the second order integrals} 
We now proceed to the estimation of the increments $\Xtideep$ and $\Xtitrep$, starting with the second one: 
\begin{proposition}\label{prop:5.3}
Let $\Xtitrep$ be the increment defined at (\ref{eq:59b}). Then there exists a constant $\al>0$ such that
$$
E \Big[ \cn[ \Xtitrep-\Xtitret;\cacti_3^{3\ga}] \Big] \leq c \, \ep^\al,
$$
for any $0<\eta<\ep$.
\end{proposition}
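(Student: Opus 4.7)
The plan is to mimic the treatment of $\Xti^3$ in Section \ref{sec:appli-usual-brownian} for the standard Brownian case, replacing the It\^o isometry by the analytic moment bound of Lemma \ref{lem:lemme-un}. Since $\Xep$ is smooth, a Fubini-type manipulation (identical to the one leading to (\ref{decompo-xtilde-3}) in the Brownian setting) produces
\[
\Xtitrep_{tus}(\xi) = \int_0^\infty d\eta\, \hphi(\eta)\, \Xdetiquep_{tu}(\xi,\eta) \otimes \Xtiunep_{us}(\eta),\qquad \Xdetiquep_{tu}(\xi,\eta) := \int_u^t e^{-\xi(t-v)} a_{vu}(\eta)\, d\Xep_v,
\]
from which, by linearity,
\[
\Xtitrde_{tus}(\xi) = \int_0^\infty d\eta\, \hphi(\eta)\lc \Xdetiqude_{tu}(\xi,\eta) \otimes \Xtiunep_{us}(\eta) + \Xdetiquet_{tu}(\xi,\eta) \otimes \Xtiunde_{us}(\eta)\rc.
\]
The second tensor factors are already under control via Proposition \ref{prop:5.2}, so the crux of the argument is a sharp moment estimate on the auxiliary two-variable increment $\Xdetiqude$.

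\textbf{Moment and regularity of $\Xdetiqude$.} I would apply Lemma \ref{lem:lemme-un} to $f_{wv}(z) = e^{-\xi(w-z)} a_{zv}(\eta)$, which is entire in $z$. On the box $\Pi_{(v,w)}$, one has $|e^{-\xi(w-z)}|\leq 1$ (since $\xi\geq 0$ and $\mathrm{Re}(z)\leq w$), while the identity $e^{-\eta(z-v)}-1 = -\eta(z-v)\int_0^1 e^{-t\eta(z-v)}dt$ combined with $\mathrm{Re}(z)\geq v$ yields $|a_{zv}(\eta)|\leq 2\eta|w-v|$. Hence, using equivalence of Gaussian moments, for any $\al_1\in(0,2H)$ and any integer $p\geq 1$,
\[
E\lc \norm{\Xdetiqude_{wv}(\xi,\eta)}^{2p}\rc \leq c\, \eta^{2p}\, |w-v|^{p(2H+2-\al_1)}\, (\ep-\eta)^{p\al_1},
\]
uniformly in $\xi\geq 0$. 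A Kolmogorov-type argument analogous to Lemma \ref{lem:4.21} shows that $\Xdetiqude_{\cdot\cdot}(\xi,\eta)\in \cac_2$ almost surely, after which I invoke Proposition \ref{prop:4.19} with $\psi(x)=x^{2p}$ and $\phi(x)=x^{2\ga+1/p}$. The required $\deldeti$-coboundary is $(\deldeti \Xdetiqude)_{tus}(\xi,\eta) = \Xtiunde_{tu}(\xi)\, a_{us}(\eta)$, whose norm is controlled by $\eta|u-s|$ times the Garsia integrand on $\Xtiunde(\xi)$ already analyzed in Proposition \ref{prop:5.2}. Choosing $\al_1$ close enough to $2H$ and $p$ large enough that $p(2H+2-\al_1)-4\ga p-2>0$, one obtains
\[
E\lc \norm{\Xdetiqude_{tu}(\xi,\eta)} \rc \leq c\, \eta\, (\ep-\eta)^\beta\, |t-u|^{2\ga}
\]
for some $\beta>0$, uniformly in $\xi$.

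\textbf{Assembly.} Plugging this back into the decomposition of $\Xtitrde$, the first summand $\Xdetiqude_{tu}(\xi,\eta)\otimes \Xtiunep_{us}(\eta)$ carries $2\ga$-H\"older behaviour in $(t,u)$ and $\ga$-H\"older behaviour in $(u,s)$, placing it in $\cacti_3^{(2\ga,\ga)}\subset \cacti_3^{3\ga}$; the second summand enjoys the symmetric splitting, the $(u,s)$-regularity of $\Xtiunde$ being provided by Proposition \ref{prop:5.2}. Taking expectation, applying Cauchy--Schwarz to separate the two tensor factors, and integrating successively in $\eta$ against $\hphi(\eta)$ (picking up the extra factor of $\eta$ produced above) and in $\xi$ against $|\hphi(\xi)|(1+\xi)$ (from the $\cl_1$-norm), the desired estimate follows provided $\int_0^\infty |\hphi(\eta)|(1+\eta)\eta\, d\eta<\infty$: this is guaranteed by the assumption $\int |\hphi(\xi)|(1+\xi^{2})d\xi < \infty$ of Theorem \ref{thm:5.2}.

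\textbf{Main obstacle.} The delicate point is the joint bookkeeping of the factors $\eta$ and $(\ep-\eta)^\beta$ inside the two nested Garsia-type arguments. The extra factor of $\eta$ produced by the sup-bound on $a_{zv}$ over the enlarged complex box $\Pi_{(v,w)}$ is what forces the reinforced integrability hypothesis on $\hphi$, and it must be extracted \emph{together} with a strictly positive fractional power of $(\ep-\eta)$. This requires a careful joint optimization of the parameters $p$, $\al_1$ and of the exponent appearing in $\phi$ in the application of Proposition \ref{prop:4.19}, analogous to but more intricate than the calibration performed in Section \ref{sec:appli-usual-brownian}.
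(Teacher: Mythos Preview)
Your strategy coincides with the paper's: both use the factorization $\Xtitrep_{tus}(\xi)=\int_0^\infty \hphi(\mu)\,\Xdetiquep_{tu}(\xi,\mu)\otimes\Xtiunep_{us}(\mu)\,d\mu$, split $\Xtitrde$ into two summands according to which tensor factor carries the difference, control $\Xdetiqude$ pathwise via the two-variable Garsia lemma (Proposition \ref{prop:4.19}) fed by the moment estimate of Lemma \ref{lem:lemme-un}, and assemble by Cauchy--Schwarz before integrating in the Laplace variables. The only cosmetic difference is that you fix the interpolation exponent in $|a_{zv}(\mu)|\le c\,\mu^\lambda|w-v|^\lambda$ at $\lambda=1$, while the paper keeps a free $\lambda\in(0,1)$; both choices are sufficient under the standing hypothesis $\int_0^\infty(1+\xi^2)|\hphi(\xi)|\,d\xi<\infty$.

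One ingredient is left implicit in your sketch: for the second summand $\Xdetiquet_{tu}(\xi,\mu)\otimes\Xtiunde_{us}(\mu)$ (and symmetrically for the first) you also need a \emph{uniform-in-the-regularization} moment bound on the non-difference factors $\Xdetiquet$ and $\Xtiunep$, not just on the differences. The paper isolates these as Lemmas \ref{lem:lemme-trois} and \ref{lem:lemme-quatre}; they are obtained by the same contour deformation as Lemma \ref{lem:lemme-un} but without the subtraction of kernels, hence are strictly easier and do not constitute a real obstacle.
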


\begin{proof}
As in the Brownian case, write $\Xtitrep$ as
$$\Xtitrep_{tus}(\xi)=\int_0^\infty d\mu \, \hphi(\mu) \, \Xdetiquep_{tu}(\xi,\mu) \otimes \Xtiunep_{us}(\mu),$$
with $\Xdetiquep_{tu}(\xi,\mu)=\int_u^t e^{-\xi(t-v)}a_{vu}(\mu) \, d\Xep_v$. Then the increment $\Xtitrde$ defined at equation (\ref{eq:61}) satisfies:
\begin{eqnarray}\label{eq:63}
\Xtitrde_{tus}(\xi)&=&\int_0^\infty d\mu \, \hphi(\mu) \, \Xdetiqude_{tu}(\xi,\mu)\otimes \Xtiunep_{us}(\mu)+\int_0^\infty d\mu \, \hphi(\mu) \, \Xdetiquet_{tu}(\xi,\mu) \otimes \Xtiunde_{us}(\mu) \nonumber\\
&:=& ^{\sharp}\mbox{ }\negthickspace\negmedspace\Xtitrde_{tus}(\xi)+^{\flat}\negthickspace\negmedspace\Xtitrde_{tus}(\xi).
\end{eqnarray}
We will now bound these last two terms separately.

\smallskip

Let us start by controlling $^{\sharp}\negmedspace\Xtitrde$: notice that $(\deldeti\Xdetiqude)(\xi,\mu)=\Xtiunde(\xi)a(\mu)$, and thus, using the same arguments and notations (\ref{notation-cdeti-udeti}) as in the Brownian case, we get
$$\norm{ \Xdetiqude_{tu}(\xi,\mu)} \leq c \lln t-u\rrn^{2\ga} \lcl (\Udeti_{2\ga,2p'}^\Delta(\xi,\mu))^{1/2p'}+(\Cdeti_{\ga,2p,2p'}^\Delta(\xi,\mu))^{1/2p'} \rcl.$$
As a consequence, $\norm{\Xdetiqude_{tu}(\xi,\mu)\otimes \Xtiunep_{us}(\mu)} \leq c \lln t-u\rrn^{2\ga}\lln u-s\rrn^\ga \Rdeti_{\ga,2p,2p',2p''}^{\Delta,\ep}(\xi,\mu)$, where
\begin{eqnarray}\label{eq:64}
\Rdeti_{\ga,2p,2p',2p''}^{\Delta,\ep}(\xi,\mu)&=& \Udeti_{2\ga,2p'}^\Delta(\xi,\mu)^{1/2p'}\Uti_{\ga,2p''}^\ep(\mu)^{1/2p''}+\Cdeti_{\ga,2p,2p'}^\Delta(\xi,\mu)^{1/2p'}\Uti_{\ga,2p''}^\ep(\mu)^{1/2p''} \nonumber\\
&:=& ^1\negmedspace\Rdeti_{\ga,2p',2p''}^{\Delta,\ep}(\xi,\mu)+^2\negthickspace\Rdeti_{\ga,2p,2p',2p''}^{\Delta,\ep}(\xi,\mu),
\end{eqnarray}
which leads to
$$E\big[ \cn[^{\sharp}\negmedspace\Xtitrde;\cacti_{3,\be}^{3\ga} ]\big]
\leq \sum_{i=1}^2 \int_0^\infty d\xi \, |\hphi(\xi)|(1+\xi^\be)\int_0^\infty d\mu \, |\hphi(\mu)| \, E\big[ ^i\negmedspace\Rdeti_{\ga,2p',2p''}^{\Delta,\ep}(\xi,\mu)\big].$$

\smallskip

In order to estimate the first term of the latter sum, start with
\begin{equation}\label{eq:65}
E[ \Udeti_{2\ga,2p'}^\Delta(\xi,\mu)^{1/2p'}\Uti_{\ga,2p''}^\ep(\mu)^{1/2p''}] \leq E[\Udeti_{2\ga,2p'}^\Delta(\xi,\mu)]^{1/2p'} E[\Uti_{\ga,2p''}^\ep(\mu)]^{1/2p''}.
\end{equation}
Then use Lemma \ref{lem:lemme-un} to assert that, for any $\la \in (0,1)$ and any $\al \in (0,2H)$,
$$E\lc \norm{ \int_s^t e^{-\xi(t-u)}a_{us}(\mu) \, d\Xep_u-\int_s^t e^{-\xi(t-u)}a_{us}(\mu) \, d\Xet_u }^2\rc \leq c \, \mu^{2\la} \lln \ep-\eta \rrn^\al \lln t-s\rrn^{(2H-\al)+2\la}.$$
Indeed, if $z\in \Pi_{(s,t)}$,
$$\lln a_{zs}(\mu)\rrn =\lln e^{-\mu(z-s)}-1 \rrn \leq 2 \mu^\la \lln z-s\rrn^\la \leq 2\mu^\la \lln t+i(t-s)-s \rrn^\la \leq c \, \mu^\la \lln t-s \rrn^\la.$$
Accordingly, $E[\norm{\Xdetiqude_{wv}(\xi,\mu)}^{2p'}] \leq c \, \mu^{2p'\la_1}\lln \ep-\eta \rrn^{\al_2p'}\lln w-v\rrn^{(2H-\al_2+2\la_1)p'}$, and then, if we take $(\la_1,\al_2)\in (0,1)\times (0,2H)$ such that $2(H+\la_1)-\al_2-4\ga >0$ and $p'>2/(2(H+\la_1)-\al_2-4\ga)$, we get $E[\Udeti_{2\ga,2p'}^\Delta(\xi,\mu)]^{1/2p'} \leq c \, \mu^{\la_1}\lln \ep-\eta \rrn^{\al_2/2}$.

\smallskip

To deal with $E[\Uti_{\ga,2p''}^\ep(\mu)]$ in (\ref{eq:65}), consider the following estimation (recall that the proofs of all the lemmas in this section are postponed to the appendix):
\begin{lemma}\label{lem:lemme-trois}
Let $0<s<t<T$, $\mu \geq 0$. Then
$$E\lc \left\Vert \int_s^t e^{-\mu(t-u)} d\Xep_u \right\Vert^2\rc \leq c \, \lln t-s\rrn^{2H},$$
where the constant $c$ does not depend on $s,t,\mu,\ep$.
\end{lemma} 

Therefore, $E[\norm{\Xtiunep_{wv}(\mu)}^{2p''}]\leq c \, \lln w-v\rrn^{2Hp''}$, which, by taking $p''>1/(H-\ga)$, gives $E[\Uti_{\ga,2p''}^\ep(\mu)]\leq c$. We can thus assert that
$E[ ^1\negmedspace\Rdeti_{\ga,2p',2p''}^{\Delta,\ep}(\xi,\mu)] \leq c \, \mu^{\la_1}\lln \ep-\eta\rrn^{\al_2/2}$.

\smallskip

As far as $E[ ^2\negmedspace\,\Rdeti_{\ga,2p',2p''}^{\Delta,\ep}(\xi,\mu)]$ in (\ref{eq:64}) is concerned, go back to the definition of $\Cdeti_{\ga,2p,2p'}^\Delta$, together with the previous estimations of $E[\Uti_{\ga,2p}^\Delta(\xi)]$ and $E[\Uti_{\ga,2p''}^\ep(\mu)]$, to deduce
$$E\big[ ^2\negmedspace\Rdeti_{\ga,2p',2p''}^{\Delta,\ep}(\xi,\mu)\big] \leq  \mu^\ga E\big[ \Uti_{\ga,2p}^\Delta(\xi)\big]^{1/2p} E\big[\Uti_{\ga,2p''}^\ep(\mu)\big]^{1/2p''} \leq c \, \mu^\ga \lln \ep-\eta\rrn^{\al_1/2}.$$
Putting together the estimates on $^1\negmedspace\,\Rdeti$ and $^2\negmedspace\,\Rdeti$, we thus have proved that
\begin{eqnarray}\label{eq:66}
E\big[ \cn[^{\sharp}\negmedspace\Xtitrde;\cacti_{3}^{3\ga}]\big] &\leq& \lp \int_0^\infty d\xi \, |\hphi(\xi)|(1+\xi)\rp^2 \lcl \lln \ep-\eta\rrn^{\al_1/2}+\lln \ep-\eta\rrn^{\al_2/2}\rcl  \nonumber\\
 &\leq&c \lcl \lln \ep-\eta\rrn^{\al_1/2}+\lln \ep-\eta\rrn^{\al_2/2}\rcl.
\end{eqnarray}

Going back to equation (\ref{eq:63}), let us deal with the term $E[ \cn[^{\flat}\negthickspace\Xtitrde;\cacti_{3}^{3\ga}]]$. But in this latter case, it is readily checked  that the previous reasoning and bound (\ref{eq:66}) remain true by inverting the roles of $\Xdeti^4$ and $\Xti^1$, thanks to Lemma \ref{lem:lemme-un} and invoking the following lemma:
\begin{lemma}\label{lem:lemme-quatre}
Let $0<s<t<T$, $\xi,\mu \geq 0$. Then, for any $\la\in (0,1)$,
$$E\lc \norm{ \int_s^t e^{-\xi(t-u)}a_{us}(\mu)\, d\Xet_u}^2\rc \leq c \, \mu^{2\la}\lln t-s\rrn^{2H+2\la},$$
where the constant $c$ does not depend on $s,t,\xi,\mu,\eta$.
\end{lemma} 

This remark allows us to finally plug the bounds on $E[ \cn[^{\flat}\negthickspace\Xtitrde;\cacti_{3}^{3\ga}]]$, $E[ \cn[^{\sharp}\negthickspace\Xtitrde;\cacti_{3}^{3\ga}]]$ back into equation (\ref{eq:63}), and claim that the expected relation
$$E\big[\cn[\Xtitrde;\cacti_{3}^{3\ga}]\big] \leq c \lln \ep-\eta\rrn^\al \quad \mbox{for some} \ \al>0$$
holds true.

\end{proof}

The upper bound for $\Xtideep$ can be written in a similar way as for the previous cases:
\begin{proposition}\label{prop:5.4}
Let $\Xtideep$ be the increment defined at (\ref{eq:59b}). Then there exists a constant $\al>0$ such that
$$
E \Big[ \cn[ \Xtideep-\Xtideet;\cacti_2^{2\ga}] \Big] \leq c \, \ep^\al,
$$
for any $0<\eta<\ep$.
\end{proposition}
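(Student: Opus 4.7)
The argument follows closely that of Proposition \ref{prop:5.3}, relying again on the convolutional GRR inequality (Proposition \ref{prop:g-r-r}) applied to $\Xtidede$ with $\psi(x) = x^{2p}$ and $\phi(x) = x^{2\ga + 1/p}$ for $p$ large enough. This reduces the problem to bounding the $L^{2p}$ moments of $\Xtidede_{wv}(\xi)$ itself together with those of $(\delti \Xtidede)_{tus}(\xi)$.

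The control of $\delti \Xtidede$ is the easier piece. Since for the smooth approximations one has $\delti \Xtideep = \Xtiunep \otimes \Xunep + \Xtitrep$, subtracting the analogous identity for $\eta$ yields
$$
\delti \Xtidede = \Xtiunep \otimes \lp \int_0^\infty d\mu\, \hphi(\mu)\, \Xtiunde(\mu) \rp + \Xtiunde \otimes \lp \int_0^\infty d\mu\, \hphi(\mu)\, \Xtiunet(\mu) \rp + \Xtitrde.
$$
Each of these three contributions inherits a factor $\lln \ep - \eta \rrn^{\al}$ for some $\al > 0$, thanks to the estimates already obtained for $\Xtiunde$ in Proposition \ref{prop:5.2} and for $\Xtitrde$ in Proposition \ref{prop:5.3}, together with Gaussian hypercontractivity to pass from $L^2$ to $L^{2p}$.

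The main new step is the direct moment estimate on $\Xtidede_{wv}(\xi)$. Invoking Fubini, write $\Xtideep_{wv}(\xi) = \int_0^\infty d\mu\, \hphi(\mu)\, \Xdetiquep_{wv}(\xi,\mu)$ with $\Xdetiquep_{wv}(\xi,\mu) = \int_v^w e^{-\xi(w-u)} d\Xep_u \otimes \Xtiunep_{uv}(\mu)$, and split the corresponding difference as
$$
\Xdetiqude_{wv}(\xi,\mu) = \int_v^w e^{-\xi(w-u)}\, d\Xde_u \otimes \Xtiunep_{uv}(\mu) + \int_v^w e^{-\xi(w-u)}\, d\Xet_u \otimes \Xtiunde_{uv}(\mu).
$$
Each piece lives in the second Wiener chaos of the analytic approximations $X^{\pm'}$, so its $L^2$-moment admits an explicit representation as an integral of the covariance \eqref{eq:58}, whose singular kernel can be tamed by a contour deformation into $\Pi^{\pm}$ in both integration variables simultaneously, exactly in the spirit of Lemma \ref{lem:lemme-un}. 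This should yield a bound of the form
$$
E\big[\, \norm{\Xdetiqude_{wv}(\xi,\mu)}^2 \,\big] \leq c\, (1+\mu^{2\la})\, \lln \ep-\eta \rrn^{\be}\, \lln w-v \rrn^{4H + 2\la - \be},
$$
valid for any $\la \in (0,1)$ and $\be \in (0, 2H)$, which lifts to all $L^{2p}$ by hypercontractivity and, after integration against $|\hphi(\mu)|$ (using $\int |\hphi(\mu)|(1+\mu^2)\, d\mu < \infty$), provides the required moment bound on $\Xtidede_{wv}(\xi)$.

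Feeding both estimates into Proposition \ref{prop:g-r-r}, integrating the resulting pointwise bound on $\norm{\Xtidede_{ts}(\xi)}$ against $|\hphi(\xi)|(1+\xi^2)$, and choosing $p$, $\la$, $\be$ so that the various double integrals over $[0,T]^2$ converge, one obtains $E\big[\cn[\Xtidede; \cacti_2^{2\ga}]\big] \leq c\, \lln \ep - \eta \rrn^\al$ for some $\al > 0$. The main technical obstacle is the double contour-deformation estimate for $\Xdetiqude$: since the outer differential $d\Xde_u = d\Xep_u - d\Xet_u$ does not decouple from the inner analytic factor $\Xtiunep_{uv}(\mu)$, one has to shift the contour in $\Pi^{\pm}$ simultaneously in both variables of the iterated integral while keeping track of the exponential kernels and of the $\mu^{2\la}$ weight. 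However this is a direct adaptation of the argument already carried out for $\Xti^{3,\Delta}$ in the proof of Proposition \ref{prop:5.3}.
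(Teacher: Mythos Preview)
Your overall architecture is correct and matches the paper: apply the convolutional GRR (Proposition \ref{prop:g-r-r}) to $\Xtidede$, control $\delti\Xtidede$ via the already proved estimates on $\Xtiunde$ and $\Xtitrde$, and supply a direct moment bound on $\Xtidede_{wv}(\xi)$. Your decomposition of $\delti\Xtidede$ is exactly the one the paper uses.

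The gap is in the direct moment estimate, which you dismiss as ``a direct adaptation of the argument already carried out for $\Xti^{3,\Delta}$''. It is not. In Proposition \ref{prop:5.3} the increment $\Xtitrep$ factorizes at fixed $(t,u,s)$ as a product $\Xdetiquep_{tu}\otimes\Xtiunep_{us}$ of two \emph{first}-order Wiener integrals, each of which can be estimated separately via Lemma \ref{lem:lemme-un} or Lemma \ref{lem:lemme-quatre}; no genuine second-order contour argument is needed there. For $\Xtideep$ no such factorization is available, and the second moment produces a fourfold integral with two covariance kernels. The simultaneous contour deformation you propose works only for some of the resulting terms: when the kernel appears with arguments in the ``wrong'' order (the $\Bti^{\ep,\eta}$, $\Cti^{\ep,\eta}$ terms in the paper's treatment of the off-diagonal component), the deformed kernel $K^{2H-2}_{\ep,\ep}(w_2,z_2)$ becomes ill-defined, and the paper explicitly says the deformation ``is not allowed''. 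For the diagonal component the paper does not attempt a contour argument at all; instead it integrates by parts to mimic the classical identity $\int_s^t dX_u\int_s^u dX_v=\tfrac12(X_t-X_s)^2$, producing a main term plus remainders that can be bounded by elementary means.

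These integrations by parts are precisely what generate the factor $(1+\xi^2)$ in the paper's Lemma \ref{lem:lemme-cinq}, which reads $E[\norm{\Xtidede_{ts}(\xi)}^2]\le c\,\ep^\ka\,|t-s|^{4H-\la}(1+\xi^2)$. Your conjectured bound $c(1+\mu^{2\la})|\ep-\eta|^\be|w-v|^{4H+2\la-\be}$ has the wrong shape: the $\xi$-dependence is missing, and there is no $\mu^{2\la}$ gain available here (the inner integral runs from $s$ up to the outer variable, not over a short interval). You should replace the sketched contour argument by the integration-by-parts strategy of Lemma \ref{lem:lemme-cinq}.
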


\begin{proof}
Here again, we will proceed as in the Brownian case of Section \ref{sec:appli-usual-brownian}, and we shall apply Proposition \ref{prop:g-r-r}. This means that we must control both the regularity of $\delti \Xtidede$ and the moments of $\Xtidede_{wv}$. However, $\delti \Xtidede=\Xtiunde\otimes \Xunep+\Xtiunet \otimes X^{1,\Delta}+\Xtitrde$, so that the previous estimations of Propositions \ref{prop:5.3} and \ref{prop:5.4} easily lead to $\norm{(\delti \Xtidede)_{tus}(\xi)} \leq \lln t-s \rrn^{2\ga} \Dti^{\Delta,\ep,\eta}(\xi)$, where $\Dti^{\Delta,\ep,\eta}$ satisfies $E[\cn[\Dti^{\Delta,\ep,\eta};\cl_1]] \leq c\lln \ep-\eta\rrn^\al$ for some $\al >0$. We have thus obtained that $\norm{\Xtidede_{ts}(\xi)}\leq c \lln t-s\rrn^{2\ga}\{ \Vti_{2\ga,4p}^\Delta(\xi)^{1/4p}+\Dti^{\Delta,\ep,\eta}(\xi) \}$, with
$$\Vti_{2\ga,4p}^\Delta(\xi)=\iint_{0<v<w<T} \frac{\norm{\Xtidede_{wv}(\xi)}^{4p}}{\lln w-v\rrn^{8\ga p+2}} dvdw,$$
and hence $E\big[\cn[\Xtidede;\cacti_{2}^{2\ga}]\big] \leq c \{ E\big[ \cn[(\Vti_{2\ga,4p}^\Delta)^{1/4p};\cl_1]\big]+\lln \ep-\eta \rrn^\al \}$.

\smallskip

To study $E\big[ \cn[(\Vti_{2\ga,4p}^\Delta)^{1/4p};\cl_1]\big]$, we first give a bound on the second moments of the increment $\Xtidede_{ts}(\xi)$:
\begin{lemma}\label{lem:lemme-cinq}
Let $0<s<t<T$, $\xi\geq 0$. Then, for any $\la\in (0,2H)$, there exists $\ka >0$ such that
$$E[\norm{\Xtidede_{ts}(\xi)}^2] \leq c  \, \ep^\ka  \lln t-s\rrn^{4H-\la} (1+\xi^2),$$
where the constant $c$ does not depend on $\xi,s,t,\ep,\eta$.
\end{lemma}
Since $\Xtidede_{ts}(\xi)$ is an element of the second chaos associated to the Gaussian process $X^+$, we can easily deduce from the previous lemma that $E[\norm{\Xtidede_{wv}(\xi)}^{4p}]\leq c \, \ep^{2p\ka} (1+\xi^2)^{2p} \lln w-v\rrn^{8Hp-2p\la}$. Thus, if we pick $\la \in (0,4(H-\ga))$ and $p>1/(4(H-\ga)-\la)$,
\begin{eqnarray*}
E\big[ \cn[(\Vti_{2\ga,4p}^\Delta)^{1/4p};\cl_1]\big] &\leq & \int_0^\infty d\xi \, |\hphi(\xi)|(1+\xi) \, E[\Vti_{2\ga,4p}^\Delta(\xi)]^{1/4p}\\
&\leq & c \, \ep^{\ka/2}\int_0^\infty d\xi \, |\hphi(\xi)|(1+\xi^{2}) \ \leq \ c \, \ep^{\ka/2}
\end{eqnarray*}
and the expected result $E[ \cn[\Xtidede;\cacti_{2}^{2\ga}]] \leq c \, \ep^\al$ holds true.

\end{proof}

\smallskip

We can now conclude this section with the proof of our main theorem.

\begin{proof}[Proof of Theorem \ref{thm:5.2}]
Putting together Propositions \ref{prop:5.2}, \ref{prop:5.3} and \ref{prop:5.4}, it is readily checked that $(\Xep,\Xtiunep,\Xtideep,\Xtitrep)$ is a Cauchy sequence in the Banach space $L^1(\Omega;\cac_1^\ga \times \cacti_{2}^\ga \times \cacti_{2}^{2\ga}\times \cacti_{3}^{3\ga})$.

\smallskip

The fact that the process $\lcl X_t,t\in [0,T]\rcl$ has the same law as a fBm with Hurst parameter $H$ is a direct consequence of Proposition \ref{prop:convergence-covariance}.

\smallskip

Finally, it is readily checked that the algebraic relations $\delti \Xtiunep=0$ is preserved as $\ep$ tends to $0$, by taking $L^1(\Omega)$-limits on both sides of the equality. The same kind of limit can be also taken for the relation $\delti \Xtideep=\Xtiunep \otimes X^{1,\ep}+\Xtitrep$,  provided one can prove that $\Xtiunep$ and $X^{1,\ep}$ are in fact Cauchy sequences in $L^2(\Omega)$. But this is achieved by a slight elaboration of Proposition \ref{prop:5.2}, thanks to the fact that $X^{1,\ep}$ and $\Xtiunep$ are Gaussian processes.

\end{proof}


\section{Appendix}

\subsection{Proofs of the GRR type propositions}
This section gathers the proofs of all the general results we need for the regularity of the stochastic processes handled in this article.

\begin{proof}[Proof of Proposition \ref{prop:g-r-r}]
This is an adaptation of Stroock's proof of the (classical) Garsia-Rodemich-Rumsey inequality (see \cite{St}).

\smallskip

Let $s,t \in [0,T]$ and notice that, for any sequence of decreasing times $(s_k)\in (s,t)$, 
\begin{equation}\label{proof-g-r-r-1}
(\delti \Rti)_{s_k s_{k+1}s}(\xi)=\Rti_{s_k s}(\xi)-\Rti_{s_ks_{k+1}}(\xi)-e^{-\xi(s_k-s_{k+1})}\Rti_{s_{k+1}s}(\xi),
\end{equation}
so that $\norm{ \Rti_{s_k s}(\xi)} \leq \norm{ \Rti_{s_{k+1}s}(\xi)}+\norm{\Rti_{s_ks_{k+1}}(\xi)}+\norm{(\delti \Rti)_{s_ks_{k+1}s}(\xi)}$ and by iteration,
\begin{equation}\label{decompo-g-r-r}
\norm{ \Rti_{s_0 s}(\xi)} \leq \norm{ \Rti_{s_{n+1}s}(\xi)}+\sum_{k=0}^n \lc \norm{\Rti_{s_k s_{k+1}}(\xi)}+\norm{(\delti \Rti)_{s_ks_{k+1}s}(\xi)} \rc.
\end{equation}

For all $v \geq s$, set $I(v)=\int_s^v \psi\lp\frac{\norm{\Rti_{v}(\xi)|}}{\phi(v-u)}\rp du$, and define the sequence $(s_k)$ as follows. First, fix $s_0 \in (s,t)$ arbitrarily. Next, given $s_k \in (s,t)$, write $s_k=s+\la_k$ ($\la_k \in (0,t-s)$) and define $\al_k < \la_k$ by the relation $2\phi(\al_k)=\phi(\la_k)$. Then set $s_{k+1}=s+\la_{k+1}$, where $\la_{k+1} \in (0,\al_k)$ is such that
\beq\label{eq:69}
I(s+\la_{k+1})\leq \frac{2\, \Uti(\xi)}{\al_k} \quad \mbox{and} \quad \psi\lp \frac{\norm{\Rti_{s_k,s+\la_{k+1}}(\xi)}}{\phi(\la_k-\la_{k+1})}\rp \leq \frac{ 2\, I(s_k)}{\al_k}.
\eeq
Such an element always exists since if we call $A_k \ (\mbox{resp.} \ B_k) \subset (0,\al_k)$ the set on which the first (resp. the second) inequality fails, we have
$\Uti(\xi) \geq \int_{A_k} I(s+u)\, du > \frac{2\, \Uti(\xi)}{\al_k} \mu(A_k)$, whereas
$$I(s_k)=\int_0^{\la_k} \psi\lp \frac{\norm{\Rti_{s_k,s+u}(\xi)}}{\phi(\la_k-u)}\rp du \geq \int_{B_k} \psi\lp \frac{\norm{\Rti_{s_k,s+u}(\xi)}}{\phi(\la_k-u)}\rp du > \frac{2\, I(s_k)}{\al_k} \mu(B_k).$$
The last two inequalities yield $\mu(A_k)<\al_k/2$ and $\mu(B_k)<\al_k/2$,
and thus $\mu(A_k \cup B_k) < \al_k$. It is then clear that $(s_k)$ decreases to $s$. 

\smallskip

Observe now that
\begin{multline*}
\phi(\la_k-\la_{k+1}) \leq \phi(\la_k)=2\phi(\al_k)=4\lp \phi(\al_k)-\frac{\phi(\al_k)}{2}\rp\\
\leq 4\lp \phi(\al_k)-\frac{\phi(\la_{k+1})}{2}\rp=4(\phi(\al_k)-\phi(\al_{k+1})).
\end{multline*}
Plugging this observation into equation (\ref{eq:69}), we end up with:
\begin{eqnarray*}
\norm{\Rti_{s_ks_{k+1}}(\xi)} &\leq& \phi(\la_k-\la_{k+1})\psi^{-1}\lp \frac{2\, I(s_k)}{\al_k}\rp \leq 4(\phi(\al_k)-\phi(\al_{k+1}))\psi^{-1}\lp \frac{4\, \Uti(\xi)}{\al_k \al_{k-1}}\rp\\
&\leq& 4\int_{\al_{k+1}}^{\al_k} \psi^{-1}\lp \frac{4\, \Uti(\xi)}{r^2}\rp d\phi(r),
\end{eqnarray*}
where we have used the fact that $\psi^{-1}$ is an increasing function. Besides, condition (\ref{eq:27}) entails:
\begin{eqnarray*}
\norm{(\delti \Rti)_{s_ks_{k+1}s}(\xi)} &\leq&  \psi^{-1}\lp \frac{4\, \Cti(\xi)}{\la_k^2}\rp \phi(\la_k)\leq  4 \,\psi^{-1}\lp \frac{4\, \Cti(\xi)}{\la_k^2}\rp (\phi(\al_k)-\phi(\al_{k+1})) \\
 &\leq&  4\int_{\al_{k+1}}^{\al_k} \psi^{-1}\lp \frac{4\, \Cti(\xi)}{r^2}\rp d\phi(r).
\end{eqnarray*}
As $\Rti \, (\xi) \in \cac_2$, we get, by letting $n$ tend to infinity in (\ref{decompo-g-r-r}),
$$\norm{\Rti_{s_0s}(\xi)} \leq 4\int_0^{\lln t-s \rrn} \lc \psi^{-1}\lp \frac{4\, \Uti(\xi)}{r^2}\rp +\psi^{-1}\lp \frac{4\, \Cti(\xi)}{r^2}\rp \rc d\phi(r).$$
In the same way, we find 
$$\norm{\Rti_{ts_0}(\xi)} \leq 4\int_0^{\lln t-s \rrn} \lc \psi^{-1}\lp \frac{4\, \Uti(\xi)}{r^2}\rp +\psi^{-1}\lp \frac{4\, \Cti(\xi)}{r^2}\rp \rc d\phi(r).$$
Write now 
\begin{equation} \label{proof-g-r-r-2}
\Rti_{ts}(\xi)=\Rti_{ts_0}(\xi)+e^{-\xi(t-s_0)}\Rti_{s_0s}(\xi)+(\delti \Rti)_{ts_0s}
\end{equation}
to deduce
\bean
\norm{ \Rti_{ts}(\xi)} & \leq & \norm{ \Rti_{ts_0}(\xi)}+\norm{\Rti_{s_0s}(\xi)}+\norm{(\delti \Rti)_{ts_0s}(\xi)}\\
&\leq & 8\int_0^{\lln t-s \rrn} \lc \psi^{-1}\lp \frac{4\, \Uti(\xi)}{r^2}\rp +\psi^{-1}\lp \frac{4\, \Cti(\xi)}{r^2}\rp \rc d\phi(r)+\norm{(\delti \Rti)_{ts_0s}(\xi)},
\eean
and observe that
$$\norm{(\delti \Rti)_{ts_0s}(\xi)} \leq \psi^{-1}\lp \frac{4\, \Cti(\xi)}{\lln t-s\rrn^2}\rp \phi(\lln t-s\rrn) \leq \int_0^{\lln t-s\rrn} \psi^{-1}\lp \frac{4\, \Cti(\xi)}{r^2}\rp d\phi(r),$$
which achieves the proof.

\end{proof}

\smallskip

We also need to prove a slight extension of the previous proposition to functions indexed by two Laplace variables:

\begin{proof}[Proof of Proposition \ref{prop:4.19}]
It follows the same lines as the proof of Proposition \ref{prop:g-r-r}. Relation (\ref{proof-g-r-r-1}) has to be replaced with
$$(\deldeti \Rdeti)_{s_k s_{k+1}s}(\xi,\eta)=\Rdeti_{s_ks}(\xi,\eta)-\Rdeti_{s_ks_{k+1}}(\xi,\eta)e^{-\eta(s_{k+1}-s)}-e^{-\xi(s_k-s_{k+1})} \Rdeti_{s_{k+1}s}(\xi,\eta),$$
which leads to the expected estimation
$$\norm{ \Rdeti_{s_k s}(\xi,\eta)}\leq \norm{ \Rdeti_{s_{k+1}s}(\xi,\eta)}+\norm{ \Rdeti_{s_k s_{k+1}}(\xi,\eta)}+\norm{ (\deldeti \Rdeti)_{s_k s_{k+1}s}(\xi,\eta)},$$
whereas (\ref{proof-g-r-r-2}) becomes
$$\Rdeti_{ts}(\xi,\eta)=e^{-\eta(s_0-s)}\Rdeti_{ts_0}(\xi,\eta)+e^{-\xi(t-s_0)} \Rdeti_{s_0s}(\xi,\eta)+(\deldeti \Rdeti)_{ts_0s}(\xi,\eta)$$
and thus $\norm{ \Rdeti_{ts}(\xi,\eta)} \leq \norm{ \Rdeti_{ts_0}(\xi,\eta)}+\norm{ \Rdeti_{s_0s}(\xi,\eta)}+\norm{ (\deldeti \Rdeti)_{ts_0s}(\xi,\eta)}$.

\end{proof}

\subsection{Proofs of the complex analysis lemmas}
We will prove in this section Lemmas \ref{lem:lemme-un}, \ref{lem:lemme-trois} and \ref{lem:lemme-quatre}. The key ingredients for those proofs are the following elementary estimations:

\begin{lemma}\label{lem:lemme-de-base}
Let $0\leq s <t \leq T$ and $\ga_{(s,t)}$ the three-part path in $\Pi^+$
\begin{equation}\label{definition-gamma-s-t}
\ga_{(s,t)}=[s,s+i(t-s)] \cup [s+i(t-s),t+i(t-s)] \cup [t+i(t-s),t].
\end{equation}
Then, for any $\ep >0$ and any $\al \in (0,1)$, 
\begin{equation}\label{estimation-noyau-simple}
\int_{\ga_{(s,t)}} \lln dz \rrn \int_{\overline{\ga_{(s,t)}}} \lln dw \rrn \lln -i(z-w)+\ep \rrn^{\al-2} \leq c \lln t-s \rrn^\al,
\end{equation}
where the constant $c$ does not depend on $s,t,\ep$. Moreover, for any $\eta >0$ and $\la \in (0,\al)$,
\begin{equation}\label{estimation-noyau-double}
\int_{\ga_{(s,t)}} \lln dz \rrn \int_{\overline{\ga_{(s,t)}}} \lln dw \rrn \lln (-i(z-w)+\ep)^{\al-2}-(-i(z-w)+\eta)^{\al-2} \rrn \leq c \lln t-s \rrn^{\al-\la} \lln \ep-\eta\rrn^\la,
\end{equation}
where the constant $c$ does not depend on $s,t,\ep,\eta$.
\end{lemma}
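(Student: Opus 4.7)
The plan is to reduce both estimates to the single finite integral
\[
C_\alpha := \int_{\gamma_{(0,1)}}\!\!\int_{\overline{\gamma_{(0,1)}}}|d\tilde z|\,|d\tilde w|\,|\tilde z-\tilde w|^{\alpha-2} < \infty,
\]
and then rescale. The key geometric observation is that for $z\in\gamma_{(s,t)}\subset\Pi^+$ and $w\in\overline{\gamma_{(s,t)}}\subset\Pi^-$, the quantity $A:=-i(z-w)$ has nonnegative real part $\operatorname{Re}(A)=\operatorname{Im}(z)-\operatorname{Im}(w)\ge 0$. Consequently, for any $\tau\ge 0$,
\[
|A+\tau|^2 = |A|^2 + 2\tau\,\operatorname{Re}(A) + \tau^2 \ge |A|^2,
\]
and since $\alpha-2<0$ this yields the pointwise bound $|-i(z-w)+\varepsilon|^{\alpha-2}\le |z-w|^{\alpha-2}$, which makes the first estimate automatically uniform in $\varepsilon$.

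For the first inequality, I would then rescale via $z=s+(t-s)\tilde z$, $w=s+(t-s)\tilde w$, so the integral becomes $(t-s)^\alpha C_\alpha$. To see $C_\alpha<\infty$, I split each of the two paths into its three linear segments, producing nine sub-integrals. Seven of these involve points that cannot coincide (either one of $\tilde z,\tilde w$ lies on a horizontal segment at height $\pm i$, or they lie on vertical segments at opposite endpoints), so $|\tilde z-\tilde w|$ stays bounded below by a positive constant and the sub-integral is finite by compactness. The two delicate sub-integrals are those where $\tilde z$ and $\tilde w$ lie on vertical segments emanating from the same endpoint, e.g.\ $\tilde z=iy$, $\tilde w=-iy'$ with $y,y'\in[0,1]$: here $|\tilde z-\tilde w|=y+y'$, and a direct computation using $u=y+y'$ as new variable gives $\int_0^1\int_0^1(y+y')^{\alpha-2}dy\,dy'\le \int_0^2 u\cdot u^{\alpha-2}du = 2^\alpha/\alpha<\infty$.

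For the second inequality, I would interpolate between two pointwise controls on $D:=(-i(z-w)+\varepsilon)^{\alpha-2}-(-i(z-w)+\eta)^{\alpha-2}$. The first, obtained from the triangle inequality together with $|A+\tau|^{\alpha-2}\le |A|^{\alpha-2}$, gives $|D|\le 2|A|^{\alpha-2}=2|z-w|^{\alpha-2}$. The second follows by writing $D = (\alpha-2)\int_\eta^\varepsilon(A+\tau)^{\alpha-3}d\tau$ (the principal branch is well defined since $A+\tau$ stays in the closed right half-plane) and applying $|A+\tau|\ge |A|$ with $\alpha-3<0$, giving $|D|\le (2-\alpha)|\varepsilon-\eta|\,|z-w|^{\alpha-3}$. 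The elementary inequality $\min(X,Y)\le X^{1-\lambda}Y^\lambda$ applied to these two bounds yields
\[
|D|\le c\,|\varepsilon-\eta|^\lambda\,|z-w|^{\alpha-\lambda-2}
\]
for any $\lambda\in(0,1)$. Since the hypothesis $\lambda<\alpha$ ensures $\alpha-\lambda\in(0,\alpha)\subset(0,1)$, applying the first estimate with exponent $\alpha-\lambda$ in place of $\alpha$ to integrate the right-hand side produces the desired bound $c\,|\varepsilon-\eta|^\lambda(t-s)^{\alpha-\lambda}$.

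The main difficulty is essentially bookkeeping: verifying that $\operatorname{Re}(-i(z-w))\ge 0$ holds uniformly on the Cartesian product of the two paths, so that the monotonicity argument $|A+\tau|\ge|A|$ is valid throughout, and then handling the nine geometric sub-cases. Once this setup is in place, both inequalities reduce to elementary one-variable integrations.
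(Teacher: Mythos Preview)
Your proof is correct and follows essentially the same approach as the paper: both split the double integral into the nine sub-cases determined by the three segments of $\gamma_{(s,t)}$ and its conjugate, isolate the two singular cases where $z$ and $w$ can meet at a common real endpoint (which reduce to $\int_0^1\int_0^1(u+v)^{\alpha-2}\,du\,dv<\infty$), and derive the second estimate by interpolating a trivial bound with a mean-value bound and then applying the first result with exponent $\alpha-\lambda$. Your packaging is slightly slicker --- the observation $\operatorname{Re}(-i(z-w))\ge 0$ lets you strip out $\varepsilon$ uniformly at the outset and rescale to a single fixed integral $C_\alpha$, whereas the paper carries $\varepsilon$ through each parametrized sub-case --- but the underlying argument is the same.
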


\begin{proof}
Denote $\ga^1_{(s,t)}=[s,s+i(t-s)]$, $\ga^2_{(s,t)}=[s+i(t-s),t+i(t-s)]$ and $\ga_{(s,t)}^3=[t+i(t-s),t]$ (see the figure below), so that
$$\int_{\ga_{(s,t)}} \lln dz \rrn \int_{\overline{\ga_{(s,t)}}} \lln dw \rrn \lln -i(z-w)+\ep \rrn^{\al-2}
=\sum_{1\leq j,k\leq 3}\int_{\ga_{(s,t)}^j} \lln dz \rrn \int_{\overline{\ga_{(s,t)}^k}} \lln dw \rrn \lln -i(z-w)+\ep \rrn^{\al-2}.$$

\begin{figure}[!h]\label{figure-un}
\centering
\begin{pspicture}(0,0)(10,10)
\psline{->}(1,1)(1,9) 
\psline{->}(0.2,5)(9,5) 
\psline[linestyle=dashed,ArrowInside=->,ArrowInsidePos=0.5](2.4,5)(2.4,8)(5.4,8)(5.4,5) 
\psline[linestyle=dashed,ArrowInside=->,ArrowInsidePos=0.5](2.4,5)(2.4,2)(5.4,2)(5.4,5) 
\psdots[dotstyle=+](1,8)
\psdots[dotstyle=+](1,2)  
\rput(0,2){$-i(t-s)$}
\rput(0.2,8){$i(t-s)$}    
\rput(2.2,4.8){$s$} 
\rput(5.6,4.8){$t$} 
\rput(2.7,6.5){$\ga^1$}   
\rput(3.8,7.7){$\ga^2$}  
\rput(5.1,6.5){$\ga^3$} 
\end{pspicture}
\caption{Contours of integration}
\end{figure}
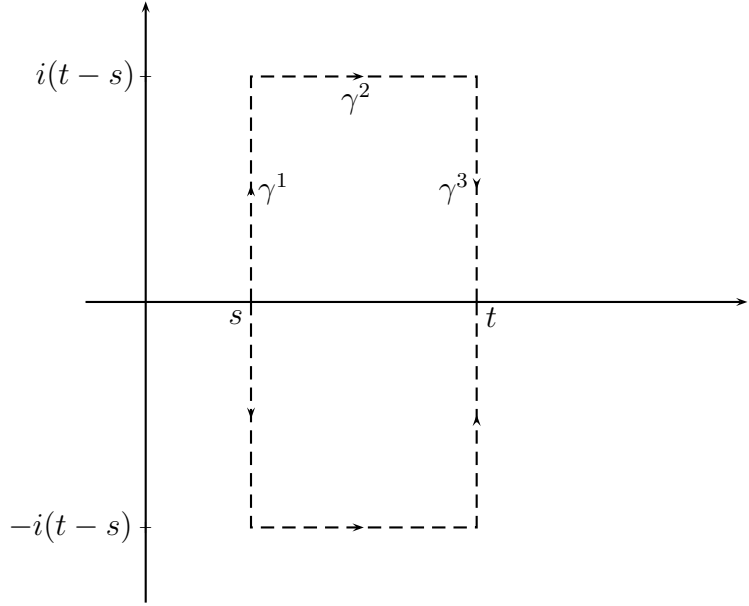

Let us now estimate each term of the latter sum. Notice first that, for any $\al>0$, the integral $\int_0^1  ( u+v)^{\al-2} dudv$ is finite. This allows to obtain:

\smallskip

\noindent
{\it Case} $j=k=1$:
\begin{align*}
&\int_{\ga_{(s,t)}^1} \lln dz \rrn \int_{\overline{\ga_{(s,t)}^1}} \lln dw \rrn \lln -i(z-w)+\ep \rrn^{\al-2}= \int_0^{t-s} \int_0^{t-s}  ( u+v+\ep)^{\al-2}dudv\\
&\leq  \int_0^{t-s}\int_0^{t-s} ( u+v)^{\al-2} dudv  
 \leq  \lln t-s\rrn^\al \int_0^1 \int_0^1  ( u+v)^{\al-2} dudv 
 \leq c \, \lln t-s\rrn^\al.
\end{align*}

\smallskip

\noindent
{\it Case} $j=k=2$:
\bean
\int_{\ga_{(s,t)}^2} \lln dz \rrn \int_{\overline{\ga_{(s,t)}^2}} \lln dw \rrn \lln -i(z-w)+\ep \rrn^{\al-2} &=&  \int_0^{t-s}\int_0^{t-s} \lln 2(t-s)+\ep-i(u-v)\rrn^{\al-2} dudv \\
&\leq &  \lln t-s\rrn^\al.
\eean

\smallskip

\noindent
{\it Case} $j=k=3$:
\begin{align*}
& \int_{\ga_{(s,t)}^3} \lln dz \rrn \int_{\overline{\ga_{(s,t)}^3}} \lln dw \rrn \lln -i(z-w)+\ep \rrn^{\al-2} \\
&=  \int_0^{t-s} \int_0^{t-s} \lln (t-s-u)+(t-s-v)+\ep\rrn^{\al-2}dudv 
\leq  \int_0^{t-s} \int_0^{t-s} \lln u+v\rrn^{\al-2} dudv \\ 
&\leq \ c \lln t-s\rrn^\al.
\end{align*}

\smallskip

\noindent
{\it Case} $j=1,k=2$:
\bean
\int_{\ga_{(s,t)}^1} \lln dz \rrn \int_{\overline{\ga_{(s,t)}^2}} \lln dw \rrn \lln -i(z-w)+\ep \rrn^{\al-2} &=&  \int_0^{t-s}\int_0^{t-s} \lln (u+t-s)-iv\rrn^{\al-2} du dv\\
&\leq &  \lln t-s\rrn^\al.
\eean

\smallskip

\noindent
{\it Case} $j=1,k=3$:
\begin{multline*}
\int_{\ga_{(s,t)}^1} \lln dz \rrn \int_{\overline{\ga_{(s,t)}^3}} \lln dw \rrn \lln -i(z-w)+\ep \rrn^{\al-2} \\
=  \int_0^{t-s}\int_0^{t-s} \lln -i(s-t)+(u-v+t-s)\rrn^{\al-2} dudv 
 \leq \ \lln t-s \rrn^\al.
\end{multline*}


\smallskip

\noindent
{\it Case} $j=2,k=3$:
\begin{multline*}
\int_{\ga_{(s,t)}^2} \lln dz \rrn \int_{\overline{\ga_{(s,t)}^3}} \lln dw \rrn \lln -i(z-w)+\ep \rrn^{\al-2}  \\ 
=  \int_0^{t-s}\int_0^{t-s} \lln 2(t-s)-v-i(s-t+u)\rrn^{\al-2} dudv 
\leq   \lln t-s \rrn^\al.
\end{multline*}

It is clear that the other cases can be dealt with in the same way, which achieves the proof of (\ref{estimation-noyau-simple}). As for (\ref{estimation-noyau-double}), notice that, if $z\in \ga_{(s,t)}$ and $w \in \overline{\ga_{(s,t)}}$,
\begin{multline*}
\lln (-i(z-w)+\ep)^{\al-2}-(-i(z-w)+\eta)^{\al-2} \rrn, \\
\leq (2\sup_{x\in [\eta,\ep]} \lln -i(z-w)+x \rrn^{\al-2})^{1-\la} (\lln \al-2\rrn \sup_{x\in [\eta,\ep]} \lln -i(z-w)+x \rrn^{\al-3})^\la \lln \ep-\eta \rrn^\la\\
\leq c \lln -i(z-w)+\eta \rrn^{(\al-\la)-2} \lln \ep-\eta\rrn^\la,
\end{multline*}
so that the result is a consequence of (\ref{estimation-noyau-simple}).

\end{proof}

\begin{remark}
As mentioned in \cite{Un}, one of the interesting features of the complex analysis approach for the stochastic calculus with respect to fBm is that simple deformations of contour like (\ref{definition-gamma-s-t}) allow to transform very singular kernels like $(u-v)^{\al-2}$ into a much more tractable term of the form $(u+v)^{\al-2}$.
\end{remark}

We can now begin with the proof of our lemmas, after introducing an additional notation:
for the sake of conciseness, we shall henceforth denote 
\beq\label{eq:75}
K_{\ep_1,\ep_2}^\al(x,y)=(-i(x-y)+\ep_1+\ep_2)^\al.
\eeq

\begin{proof}[Proof of Lemma \ref{lem:lemme-un}]
Write the left-hand-side as
\begin{multline*}
A_{ts}(\ep,\eta):= E\lc \norm{ \int_s^t f_{ts}(u)\, dX^{\ep}_u-\int_s^t f_{ts}(u)\, dX^{\eta}_v }^2 \rc \\
= n\int_s^t \int_s^t f_{ts}(u)f_{ts}(v)\, E \lc \lp (X^{\ep,(1)})_u'-(X^{\eta,(1)})_u' \rp \lp (X^{\ep,(1)})'_v-(X^{\eta,(1)})_v'\rp \rc du dv.
\end{multline*}
Notice then that
\begin{align*}
&E\lc (X^{\ep,(1)})'_u (X^{\ep,(1)})'_v \rc\\
&=E\lc \lp X^{'+,(1)}(u+i\ep)+X^{'-,(1)}(u-i\ep)\rp \lp X^{'+,(1)}(v+i\ep)+X^{'-,(1)}(v-i\ep)\rp\rc \\
&= c_H \lcl K_{\ep,\ep}^{2H-2}(u,v)+K_{\ep,\ep}^{2H-2}(v,u) \rcl,
\end{align*}
where $c_H=\frac{H(1-2H)}{2\cos(\pi H)}$, and
$$E\lc (X^{\ep,(1)})'_u (X^{\eta,(1)})_v'\rc =E\lc (X^{\eta,(1)})_u'(X^{\ep,(1)})_v' \rc 
=c_H \lcl K_{\ep,\eta}^{2H-2}(u,v)+K_{\ep,\eta}^{2H-2}(v,u) \rcl.$$
Using these identities, one can decompose $A_{ts}(\ep,\eta)$ into a sum of terms whose prototype is:
$$
A^1_{ts}(\ep,\eta)=\int_s^t du\int_s^t dv \, f_{ts}(u) f_{ts}(v) [K_{\ep,\ep}^{2H-2}(u,v)-K_{\ep,\eta}^{2H-2}(u,v)].
$$ 
Let us focus then on the estimation of this last term: by a deformation of contour, we get (remember that $\ga_{(s,t)}$ is defined by (\ref{definition-gamma-s-t}))
$$A_{ts}^1(\ep,\eta)=\int_{\ga_{(s,t)}} dz \int_{\overline{\ga_{(s,t)}}} dw \, f_{ts}(z) f_{ts}(w) [K_{\ep,\ep}^{2H-2}(z,w)-K_{\ep,\eta}^{2H-2}(z,w)],$$
hence, owing to (\ref{estimation-noyau-double}), it is easily seen that:
\begin{multline*}
\lln A_{ts}^1(\ep,\eta) \rrn \leq \norm{f}_{\infty,\Pi_{(s,t)}}^2 \int_{\ga_{(s,t)}} \lln dz\rrn \int_{\overline{\ga_{(s,t)}}} \lln dw \rrn \lln  K_{\ep,\ep}^{2H-2}(z,w)-K_{\ep,\eta}^{2H-2}(z,w)\rrn\\
\leq c \norm{f}_{\infty,\Pi_{(s,t)}}^2 \lln \ep-\eta \rrn^\al \lln t-s \rrn^{2H-\al}.
\end{multline*}
Clearly, this argument remains true for the other terms composing $A_{ts}(\ep,\eta)$, which achieves the proof.

\end{proof}

\begin{proof}[Proof of Lemma \ref{lem:lemme-trois}]
We have
$$E \lc \norm{\int_s^t e^{-\mu(t-u)} d\Xep_u }^2 \rc =n \int_s^t du \int_s^t dv \, e^{-\mu(t-u)}e^{-\mu(t-v)} E[(\Xepun)'_u(\Xepun)'_v],$$
with $E[(\Xepun)'_u(\Xepun)'_v]=c_H \lcl K_{\ep,\ep}^{2H-2}(u,v)+K_{\ep,\ep}^{2H-2}(v,u)\rcl$, which gives, by an argument of symetry,
$$E \lc \norm{\int_s^t e^{-\mu(t-u)} d\Xep_u }^2 \rc =c \int_s^t du \int_s^t dv \, e^{-\mu(t-u)}e^{-\mu(t-v)} K_{\ep,\ep}^{2H-2}(u,v).$$
In the latter integral, deform the line $[s,t]$ into $\ga_{(s,t)}$ for $u$ and $\overline{\ga_{(s,t)}}$ for $v$. The result is then a consequence of (\ref{estimation-noyau-simple}).
\end{proof}

\begin{proof}[Proof of Lemma \ref{lem:lemme-quatre}]
It goes along the same lines as the previous proof, taking into account the fact that if $z \in \ga_{(s,t)} \cup \overline{\ga_{(s,t)}}$,
$$\lln a_{zs}(\mu)\rrn \leq \lln e^{-\mu(z-s)}-1\rrn^\la \leq \mu^\la \lln z-s \rrn^\la \leq \mu^\la \lln s+i(t-s)-s \rrn^\la \leq c \, \mu^\la \lln t-s \rrn^\la.$$

\end{proof}

\subsection{Proof of Lemma \ref{lem:lemme-cinq}}
As one might expect, the estimation of the variance of the convolutional Levy area $\Xti_{ts}^{2,\Delta}$ (defined by equation (\ref{eq:61})) gives rise to more intricate calculations. We shall distinguish the diagonal and non-diagonal terms, respectively denoted by $\Xti_{ts}^{2,\Delta,(1,1)}$ and $\Xti_{ts}^{2,\Delta,(1,2)}$, and use the fact that $$E [ \norm{\Xti_{ts}^{2,\Delta}(\xi)}^2] \leq c \lcl E [ \norm{\Xti_{ts}^{2,\Delta,(1,1)}(\xi)}^2] +E [ \norm{\Xti_{ts}^{2,\Delta,(1,2)}(\xi)}^2 ] \rcl.$$

\smallskip

\noindent
\textbf{The diagonal term $\mathbf{\Xti_{ts}^{2,\Delta,(1,1)}}$:} For the sake of conciseness, denote $\Xep:=\Xepun$. The following property of $\Xep$ will be useful:
\begin{lemma}
Let $\ep,\eta >0$, $0\leq s ,t \leq T$. Then, for any $\al \in (0,2H)$,
\begin{equation}\label{majo-de-base}
E[ (\Xep_t-\Xep_s )^2 ] \leq c_1 \lln t-s \rrn^{2H} \quad , \quad E[ (\Xep_t-\Xet_t )^2 ] \leq c_2 \lln \ep-\eta \rrn^{\al},
\end{equation} 
\begin{equation} \label{majo-double-increment}
E[ ( \lcl \Xep_t-\Xep_s\rcl^2-\lcl \Xet_t-\Xet_s \rcl^2 )^2 ] \leq c_3 \lln t-s \rrn^{4H-\al} \lln \ep-\eta \rrn^{\al^2/4H},
\end{equation}
where the constants $c_1,c_2,c_3$ do not depend on $\ep,\eta,s,t,\al$.
\end{lemma}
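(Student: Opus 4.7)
The first two inequalities of \eqref{majo-de-base} are immediate consequences of results already established in the paper. For the first one, one just notes that $\int_s^t dX^\ep_u = X^\ep_t-X^\ep_s$, so that Lemma \ref{lem:lemme-trois} applied with $\mu=0$ yields $E[(X^\ep_t - X^\ep_s)^2]\le c \lln t-s\rrn^{2H}$. For the second inequality, the key observation is that $X_0^\ep=0$: indeed, $\gamma_{\ep,0}$ is a closed contour starting and ending at $i\ep$, and since $X^{+\prime}_z$ is analytic on $\Pi^+$, Cauchy's theorem gives $X^{+,\ep}_0 = 0$, hence $X^\ep_0=0$ and similarly $X^\eta_0=0$. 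Therefore $X^\ep_t-X^\eta_t = \int_0^t dX^\ep_u - \int_0^t dX^\eta_u$, to which Lemma \ref{lem:lemme-un} applies with $s=0$ and $f_{ts}\equiv 1$, providing the bound $E[(X^\ep_t-X^\eta_t)^2] \le c \, T^{2H-\al}\lln \ep-\eta\rrn^\al \le c_2 \lln \ep-\eta\rrn^\al$.

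The heart of the lemma is the bound \eqref{majo-double-increment}. The plan is to factorize $A^2-B^2 = (A-B)(A+B)$, where $A:=X^\ep_t-X^\ep_s$ and $B:=X^\eta_t-X^\eta_s$, and then apply the Cauchy--Schwarz inequality:
$$
E[(A^2-B^2)^2] \le E[(A-B)^4]^{1/2} \, E[(A+B)^4]^{1/2}.
$$
Since $X^\ep$ is a (real) Gaussian process, the increments $A-B$ and $A+B$ are centered Gaussian random variables, so Nelson's moment equivalence yields $E[(A\pm B)^4]\le 3\, E[(A\pm B)^2]^2$. The two second moments are then controlled separately: Lemma \ref{lem:lemme-un} applied to $A-B = \int_s^t dX^\ep_u - \int_s^t dX^\eta_u$ (with $f_{ts}\equiv 1$) yields $E[(A-B)^2]\le c \lln t-s\rrn^{2H-\al}\lln \ep-\eta\rrn^\al$, while the trivial bound $E[(A+B)^2]\le 2E[A^2]+2E[B^2]$ together with the first part of \eqref{majo-de-base} gives $E[(A+B)^2]\le c \lln t-s\rrn^{2H}$.

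Putting these estimates together produces $E[(A^2-B^2)^2] \le c \lln t-s\rrn^{4H-\al}\lln \ep-\eta\rrn^\al$, which is in fact strictly stronger than the inequality we wish to establish: since $\al<2H<4H$, we have $\al^2/4H < \al$, and in the relevant regime $\lln \ep-\eta\rrn \le 1$ this implies $\lln \ep-\eta\rrn^\al \le \lln \ep-\eta\rrn^{\al^2/4H}$, so \eqref{majo-double-increment} follows with $c_3$ independent of $\ep,\eta,s,t$.

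There is no substantial obstacle here: once one recognizes that $X^\ep_0=0$ (via the Cauchy-theorem argument for the closed contour $\gamma_{\ep,0}$) and invokes the Gaussian equivalence of moments for the elements of the first chaos, the proof is a straightforward composition of Lemmas \ref{lem:lemme-un} and \ref{lem:lemme-trois} with the Cauchy--Schwarz inequality.
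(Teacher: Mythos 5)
Your proof is correct, and for (\ref{majo-double-increment}) it takes a genuinely different (and in fact sharper) route than the paper. For (\ref{majo-de-base}) you do what the paper does: Lemma \ref{lem:lemme-trois} with $\mu=0$ for the first bound, and Lemma \ref{lem:lemme-un} with $f\equiv 1$, $s=0$ for the second; your explicit remark that $X^\ep_0=0$ (Cauchy's theorem for the closed contour $\ga_{\ep,0}$, equivalently path-independence of the analytic integral) is a justification the paper leaves implicit. For (\ref{majo-double-increment}) the paper also factorizes $A^2-B^2=(A-B)(A+B)$ and uses Cauchy--Schwarz together with Gaussian moment equivalence, but it then discards the $|t-s|$-dependence in the factor coming from $A-B$, keeping only the two crude bounds $E[(A^2-B^2)^2]\le c\,|t-s|^{4H}$ and $E[(A^2-B^2)^2]\le c\,|\ep-\eta|^{\al}$, and recovers the mixed estimate by writing $E=E^{\la}E^{1-\la}$ with $\la=1-\al/4H$; this geometric interpolation is precisely the origin of the exponent $\al^2/4H$ in the statement. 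You instead retain the full estimate $E[(A-B)^2]\le c\,|t-s|^{2H-\al}|\ep-\eta|^{\al}$ from Lemma \ref{lem:lemme-un}, which gives directly $c\,|t-s|^{4H-\al}|\ep-\eta|^{\al}$ with no interpolation step --- a cleaner argument and a stronger bound in the regime $|\ep-\eta|\le 1$, which is the only one used afterwards (one takes $0<\eta<\ep\to 0$). The one caveat is that for arbitrary $\ep,\eta>0$ your estimate does not formally imply the stated one (if $|\ep-\eta|>1$ then $|\ep-\eta|^{\al}>|\ep-\eta|^{\al^2/4H}$); to obtain the statement verbatim for all $\ep,\eta$ you would combine your bound with the trivial one $E[(A^2-B^2)^2]\le c\,|t-s|^{4H}$ and interpolate, which is exactly the paper's closing step.
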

\begin{proof}
The first (resp. second) inequality is a direct consequence of Lemma \ref{lem:lemme-trois} (resp. Lemma \ref{lem:lemme-un}). Those two inequalities account for (\ref{majo-double-increment}). Indeed, we have, on the one hand, 
$$E[ ( \lcl \Xep_t-\Xep_s\rcl^2-\lcl \Xet_t-\Xet_s \rcl^2 )^2 ] \leq c \lcl E[ (\Xep_t-\Xep_s)^4] +E[ (\Xet_t-\Xet_s)^4 ] \rcl\leq c \lln t-s \rrn^{4H}.$$
On the other hand, using Hölder's inequality,
\begin{multline}\label{eq:78}
E[ ( \lcl \Xep_t-\Xep_s\rcl^2-\lcl \Xet_t-\Xet_s \rcl^2 )^2 ]\\
 =E[ \lcl (\Xep_t-\Xet_t)-(\Xep_s-\Xet_s)\rcl^2\lcl \Xep_t-\Xep_s+\Xet_t-\Xet_s \rcl^2] \\
\leq c \sqrt{E[ \lcl (\Xep_t-\Xet_t)-(\Xep_s-\Xet_s)\rcl^4] },
\end{multline}
and thus
\begin{eqnarray}\label{eq:79}
E[ ( \lcl \Xep_t-\Xep_s\rcl^2-\lcl \Xet_t-\Xet_s \rcl^2 )^2 ]
&\leq& c \sqrt{ E[ (\Xep_t-\Xet_t)^4]+E[ (\Xep_s-\Xet_s)^4 ] } \nonumber\\
&\leq& c \lln \ep-\eta \rrn^{\al}.
\end{eqnarray}
Hence for any $\la \in (0,1)$, thanks to (\ref{eq:78}) and (\ref{eq:79}), we end up with
\begin{align*}
&E\lc ( \lcl \Xep_t-\Xep_s\rcl^2-\lcl \Xet_t-\Xet_s \rcl^2 )^2 \rc \\
&=E^{\la} \lc ( \lcl \Xep_t-\Xep_s\rcl^2-\lcl \Xet_t-\Xet_s \rcl^2 )^2 \rc 
E^{1-\la} \lc ( \lcl \Xep_t-\Xep_s\rcl^2-\lcl \Xet_t-\Xet_s \rcl^2 )^2 \rc \\
&\leq c \lln t-s \rrn^{4H\la} \lln \ep-\eta \rrn^{\al(1-\la)},
\end{align*}
which gives the result if we take $\la=1-\frac{\al}{4H}$.

\end{proof}

\smallskip

Recall from (\ref{eq:59b}) that we have $\Xti_{ts}^{2,\ep,(1,1)}(\xi)=\int_0^\infty d\mu \, \hphi(\mu) \, \Xdeti_{ts}^{2,\ep,(1,1)}(\xi,\mu)$, with
\beq\label{eq:80}
\Xdeti_{ts}^{2,\ep,(1,1)}(\xi,\mu)= 
\int_s^t du \, e^{-\xi(t-u)} dX_u^\ep \int_s^u e^{-\mu(u-v)} \, dX_v^{\ep}.
\eeq
Our main effort will of course concern the estimation of $\Xdeti_{ts}^{2,\Delta,(1,1)}$. In the absence of exponential weights like $e^{-\xi(t-u)}$, namely in the case of the usual Levy area $A_{ts}^{\ep}=\int_s^t  dX_u^\ep \int_s^u  dX_v^{\ep}$, the strategy is obvious (see \cite{Un}): one can compute explicitly $A_{ts}^{\ep}=\frac12 (X_t^\ep-X_s^{\ep})^2$, from which all the useful bounds can be easily deduced. The situation is less simple here due to our exponential weights, but we will try to mimic the classical situation with a natural trick: integrate the exponential weights by parts.

\smallskip

More specifically, for a fixed $\ep>0$, since $X^\ep$ is a smooth process, it is easily derived from equation (\ref{eq:80}) that 
\begin{multline*}
\Xdeti_{ts}^{2,\ep,(1,1)}(\xi,\mu)=\frac{1}{2} \lcl (\Xep_t)^2-2e^{-\mu(t-s)}\Xep_s \Xep_t+ e^{-\xi(t-s)}(\Xep_s)^2 \rcl  \\
+ \Rdeti_{ts}^{\ep,1}(\xi,\mu)+\Rti_{ts}^{\ep,2}(\xi)+\Rdeti_{ts}^{\ep,3}(\xi,\mu),
\end{multline*}
where the remainders $\Rdeti^{\ep,1},\Rti^{\ep,2}$ and $\Rdeti^{\ep,3}$ are defined by:
\bean
\Rdeti_{ts}^{\ep,1}(\xi,\mu)&=&-\mu \int_s^t du \, e^{-\xi(t-u)} (\Xep)_u' \int_s^u dv \, e^{-\mu(u-v)}\Xep_v \\ 
\Rti_{ts}^{\ep,2}(\xi)&=&-\frac{\xi}{2}\int_s^t du \, e^{-\xi(t-u)}(\Xep_u)^2  \\
\Rdeti_{ts}^{\ep,3}(\xi,\mu)&=&(\xi-\mu)  \Xep_s \int_s^t du \, e^{-\xi(t-u)}e^{-\mu(u-s)} \Xep_u.
\eean
A little more elementary algebraic manipulations yield then:
\begin{multline}\label{decompo-xdeti-1-2}
\Xdeti_{ts}^{2,\Delta,(1,1)}(\xi,\mu)=\frac{1}{2} \lcl (\Xep_t-\Xep_s)^2-(\Xet_t-\Xet_s)^2 \rcl +(1-e^{-\mu(t-s)}) \lcl \Xep_s\Xep_t-\Xet_s\Xet_t \rcl\\
+\frac{1}{2} (e^{-\xi(t-s)}-1) \lcl (\Xep_s)^2-(\Xet_s)^2 \rcl+\Rdeti_{ts}^\Delta(\xi,\mu)
:=\sum_{j=1}^{4} K_j,
\end{multline}
where, of course, $\Rdeti_{ts}^\Delta(\xi,\mu)=\Rdeti_{ts}^{\Delta,1}(\xi,\mu)+\Rti_{ts}^{\Delta,2}(\xi)+\Rdeti_{ts}^{\Delta,3}(\xi,\mu)$. Recall that we have to estimate $E[(\Xti_{ts}^{2,\Delta,(1,1)}(\xi))^2]$, and according to (\ref{decompo-xdeti-1-2}), we will treat the different terms $K_j$ separately.

\smallskip

\noindent
{\it Study of} $K_1:$ The expected value $E[K_1^2]$ can be bounded easily thanks to  (\ref{majo-double-increment}), applied for some fixed $\al \in (0,2H)$. 

\smallskip

\noindent
{\it Study of} $K_3:$ By Hölder's inequality, we have
\bean
\lefteqn{E\lc \lp (e^{-\xi(t-s)}-1) [(\Xep_s)^2-(\Xet_s)^2 ] \rp^2 \rc}\\
 &\leq & \xi^2 \lln t-s \rrn^2 E\lc (\Xep_s-\Xet_s)^2(\Xep_s+\Xet_s)^2 \rc\\
&\leq & \xi^2 \lln t-s \rrn^2 \sqrt{E[(\Xep_s-\Xet_s)^2]} \sqrt{E[(\Xep_s-\Xet_s)^2(\Xep_s+\Xet_s)^4)}\\
&\leq & c \, \xi^2 \lln t-s \rrn^2 \lln \ep-\eta \rrn^{\al/2},
\eean 
where we have used (\ref{majo-de-base}) to get the last inequality. 

\smallskip

\noindent
{\it Study of} $K_2:$ Likewise, using the decomposition $\Xep_s\Xep_t-\Xet_s\Xet_t=(\Xep_t-\Xet_t)\Xep_s+\Xet_t(\Xep_s-\Xet_s)$, we get, for any $\mu_1,\mu_2 \geq 0$,
\begin{eqnarray}
\lefteqn{E[ (1-e^{-\mu_1(t-s)})(1-e^{-\mu_2(t-s)}) \lcl \Xep_s\Xep_t-\Xet_s \Xet_t \rcl ]} \nonumber\\
&\leq & \mu_1\, \mu_2 \lln t-s \rrn^2 \lcl E[(\Xep_t-\Xet_t)\Xep_s ]+E[\Xet_t(\Xep_s-\Xet_s)] \rcl \label{decompo-trick}\\
&\leq & c \, \mu_1\, \mu_2 \lln t-s \rrn^2 \lcl \sqrt{E[(\Xep_t-\Xet_t)^2]}+\sqrt{E[(\Xep_s-\Xet_s)^2]} \rcl \nonumber\\
&\leq & c \, \mu_1 \, \mu_2 \lln t-s \rrn^2 \lln \ep-\eta \rrn^{\al/2} \nonumber.
\end{eqnarray}

\smallskip

\noindent
{\it Study of} $K_4:$ For our purposes, it remains in fact to estimate $E[\Rdeti_{ts}^\Delta(\xi,\mu_1) \Rdeti_{ts}^\Delta(\xi,\mu_2)]$ for any $\mu_1,\mu_2 \geq 0$. This expression can be decomposed into
\begin{multline*}
E[\Rdeti_{ts}^\Delta(\xi,\mu_1)\Rdeti_{ts}^\Delta(\xi,\mu_2)]\\
=\sum_{i,j \in \{1,3\}} E[\Rdeti_{ts}^{\Delta,i}(\xi,\mu_1)\Rdeti_{ts}^{\Delta,j}(\xi,\mu_2)]+\sum_{k\in\{1,3\}} \sum_{l\in\{1,2\}} E[ \Rti_{ts}^{\Delta,2}(\xi) \Rdeti_{ts}^{\Delta,k}(\xi,\mu_l)]+E[\Rti_{ts}^{\Delta,2}(\xi)^2],
\end{multline*}
whose terms will be treated again separately. For the last term, we have, just as above,
\bean
E[\Rti_{ts}^{\Delta,2}(\xi)^2]&=&E[(\Rti_{ts}^{\ep,2}(\xi)-\Rti_{ts}^{\eta,2}(\xi))^2]\\
& \leq & \frac{\xi^2}{4}\int_s^t du\int_s^t dv \, E\lc \lcl (\Xep_u)^2-(\Xet_u)^2 \rcl \lcl (\Xep_v)^2-(\Xet_v)^2 \rcl \rc \\
&\leq & c \, \xi^2 \lln t-s \rrn^2 \lln \ep-\eta \rrn^{\al/2}.
\eean
By the same arguments, we easily deduce 
$$E[ \Rti_{ts}^{\Delta,2}(\xi) \Rdeti_{ts}^{\Delta,3}(\xi,\mu_i)] \leq c \, \xi \lln \xi-\mu_i\rrn \lln t-s \rrn^2 \lln \ep-\eta \rrn^{\al/2} \quad \mbox{for} \ i \in {1,2}.$$
To deal with $E[\Rdeti_{ts}^{\Delta,3}(\xi,\mu_1)\Rdeti_{ts}^{\Delta,3}(\xi,\mu_2)]$, use the same trick as in (\ref{decompo-trick}) to deduce
\begin{multline*}
E[\Rdeti_{ts}^{\Delta,3}(\xi,\mu_1)\Rdeti_{ts}^{\Delta,3}(\xi,\mu_2)]\\
\leq \lln \xi-\mu_1 \rrn \lln \xi-\mu_2\rrn \int_s^t du \int_s^t dv \, E\lc \lcl \Xep_s\Xep_u-\Xet_s\Xet_u\rcl \lcl \Xep_s \Xep_v-\Xet_s\Xet_v \rcl \rc\\
 \leq  c \, \lln \xi-\mu_1\rrn \lln \xi-\mu_2\rrn \lln t-s \rrn^2 \lln \ep-\eta \rrn^{\al/2}.
\end{multline*}

\smallskip

In order to handle the terms involving $\Rdeti_{ts}^{1,\Delta}$, we resort again to the integration by parts method, which yields:
$$\Rdeti_{ts}^{1,\ep}(\xi,\mu)=c \lcl \mu \Adeti_{ts}^\ep(\xi,\mu)+\mu(\xi-\mu)\Bdeti_{ts}(\xi,\mu) \rcl,$$
where
$$\Adeti_{ts}^\ep(\xi,\mu)=-\Xep_t\int_s^t du \, e^{-\mu(t-u)}\Xep_u +\int_s^t du \, e^{-\xi(t-u)}(\Xep_u)^2,$$
$$\Bdeti_{ts}^\ep(\xi,\mu)=\int_s^t du \, e^{-\xi(t-u)} \Xep_u \int_s^u dv \, e^{-\mu(u-v)} \Xep_v.$$
Then it is readily checked, by some elementary computations, that:
\begin{eqnarray*}
E[ \Rdeti_{ts}^{\Delta,1}(\xi,\mu_i)\Rdeti_{ts}^{\Delta,1}(\xi,\mu_j)]&\leq& c \lln t-s \rrn^2 \lln \ep-\eta \rrn^{\al/2} \lcl 1+\xi^2 \rcl \prod_{k=1}^2 \lcl 1+\mu_l^2 \rcl  \\
E[\Rdeti_{ts}^{\Delta,1}(\xi,\mu_i)\Rti_{ts}^{\Delta,2}(\xi)]&\leq& c \lln t-s \rrn^2 \lln \ep-\eta \rrn^{\al/2} \lcl \mu_i \xi+\mu_i \xi^2+\mu_i^2 \xi\rcl  \\
E[ \Rdeti_{ts}^{\Delta,1}(\xi,\mu_i)\Rdeti_{ts}^{\Delta,3}(\xi,\mu_j)] &\leq& c \lln t-s \rrn^2 \lln \ep-\eta \rrn^{\al/2} \lcl 1+\xi^2 \rcl \prod_{l=1}^2 \lcl 1+\mu_l^2 \rcl.
\end{eqnarray*}
It is also easily seen, by means of the same considerations, that:
$$E[\Xdeti_{ts}^{2,\Delta,(1,1)}(\xi,\mu_1) \Xdeti_{ts}^{2,\Delta,(1,1)}(\xi,\mu_2)] \leq c \lln t-s \rrn^{4H-\al}\lln \ep-\eta \rrn^\ka \lcl 1+\xi^2 \rcl \prod_{i=1}^2 \lcl 1+\mu_l^2 \rcl,$$
for some $\ka >0$.

\smallskip

Finally, gathering all the estimates we have provided so far, and assuming the condition $\int_0^\infty d\mu \, |\hphi(\mu)| \lcl 1+\mu^2\rcl < \infty$ on $\phi$, we end up with the announced estimation, that is 
\beq\label{eq:82b}
E[(\Xti_{ts}^{2,\Delta,(1,1)}(\xi))^2 ] \leq c \lln t-s \rrn^{4H-\al} \ep^\ka \lcl 1+\xi^2\rcl,
\eeq
for a certain strictly positive $\ka$.

\medskip

\noindent
\textbf{The off-diagonal term $\mathbf{\Xti_{ts}^{\Delta,(1,2)}}$:} 
Recall that $\Xti_{ts}^{\ep,(1,2)}$ is defined by:
$$
\Xti_{ts}^{\ep,(1,2)}\int_0^\infty dy \, \hphi(y) \int_s^t dx \int_s^{x} dv \,
e^{-\xi(t-x)}e^{-y(x-v)} (\Xepun)'_{x}(\Xepde)'_{v}.
$$
Hence, the moment $E[(\Xti_{ts}^{\Delta,(1,2)})^2]$ can be written as
\begin{multline}\label{eq:83}
E\big[|\Xti^{\Delta,(1,2)}_{ts}(\xi)|^2\big]  \\
=\int_0^\infty dy_1 \, \hphi(y_1)\int_0^\infty dy_2 \, \hphi(y_2)
\int_s^t\int_s^t\int_s^{x_1}\int_s^{x_2}dx_1\,dx_2\,dx_3\,dx_4 \, \tilde M_{x_1x_2x_3x_4}^{\ep,\eta},  
\end{multline}
with
\begin{multline*}
\tilde M_{x_1x_2x_3x_4}^{\ep,\eta}=e^{-\xi(t-x_1)}e^{-y_1(x_1-x_3)}e^{-\xi(t-x_2)}e^{-y_2(x_2-x_4)}  
E\big[ \big\{(\Xepun)'_{x_1}(\Xepde)'_{x_3} - \\
(\Xetun)'_{x_1}(\Xetde)'_{x_3}\big\} 
\big\{(\Xepun)'_{x_2}(\Xepde)'_{x_4}-(\Xetun)'_{x_2}(\Xetde)'_{x_4}\big\}\big].
\end{multline*}
But according to our convention (\ref{eq:75}) for the fractional Brownian kernel, we can write
\begin{eqnarray}\label{expression-un}
\lefteqn{E\lc (\Xepun)'_{x_1}(\Xepde)'_{x_3}(\Xepun)'_{x_2}(\Xepde)'_{x_4}\rc}\\
&=& E\lc (\Xepun)'_{x_1}(\Xepun)'_{x_2}\rc E\lc(\Xepde)'_{x_3}(\Xepde)'_{x_4}\rc \nonumber \\
&=& c_H^2 \lcl K_{\ep,\ep}^{2H-2}(x_1,x_2)+ K_{\ep,\ep}^{2H-2}(x_2,x_1) \rcl \lcl K_{\ep,\ep}^{2H-2}(x_3,x_4)+ K_{\ep,\ep}^{2H-2}(x_4,x_3)\rcl,\nonumber
\end{eqnarray}
where we have used (\ref{eq:58}) in order to compute expressions like $E[(\Xepun)'_{x_1}(\Xepun)'_{x_2}]$. In the same way, one can check that
\begin{multline}\label{expression-deux}
E\lc (\Xepun)'_{x_1}(\Xepde)'_{x_3}(\Xetun)'_{x_2}(\Xetde)'_{x_4}\rc\\
=c_H^2 \lcl K_{\ep,\eta}^{2H-2}(x_1,x_2)+ K_{\ep,\eta}^{2H-2}(x_2,x_1) \rcl \lcl K_{\ep,\eta}^{2H-2}(x_3,x_4)+ K_{\ep,\eta}^{2H-2}(x_4,x_3)\rcl, 
\end{multline}
so that the difference between Expressions (\ref{expression-un}) and (\ref{expression-deux}), denoted by $\Delta_{x_1x_2x_3x_4}^{\ep,\eta}$, can be decomposed as $\Delta^{\ep,\eta}=c_H^2\{ \Ati^{\ep,\eta}+\Bti^{\ep,\eta}+\Cti^{\ep,\eta}+\Dti^{\ep,\eta}\}$, with
\begin{multline*}
\Ati^{\ep,\eta}_{x_1x_2x_3x_4} = \lcl K_{\ep,\ep}^{2H-2}(x_1,x_2)-K_{\ep,\eta}^{2H-2}(x_1,x_2)\rcl K_{\ep,\ep}^{2H-2}(x_3,x_4)\\
+K_{\ep,\eta}^{2H-2}(x_1,x_2)\lcl K_{\ep,\ep}^{2H-2}(x_3,x_4)-K_{\ep,\eta}^{2H-2}(x_3,x_4)\rcl= \Ati^{\ep,\eta,1}_{x_1x_2x_3x_4}+\Ati^{\ep,\eta,2}_{x_1x_2x_3x_4},
\end{multline*}
\begin{multline*}
\Bti^{\ep,\eta}_{x_1x_2x_3x_4} = \lcl K_{\ep,\ep}^{2H-2}(x_1,x_2)-K_{\ep,\eta}^{2H-2}(x_1,x_2)\rcl K_{\ep,\ep}^{2H-2}(x_4,x_3)\\
+K_{\ep,\eta}^{2H-2}(x_1,x_2)\lcl K_{\ep,\ep}^{2H-2}(x_4,x_3)-K_{\ep,\eta}^{2H-2}(x_4,x_3)\rcl= \Bti^{\ep,\eta,1}_{x_1x_2x_3x_4}+\Bti^{\ep,\eta,2}_{x_1x_2x_3x_4},
\end{multline*}
\begin{multline*}
\Cti^{\ep,\eta}_{x_1x_2x_3x_4} = \lcl K_{\ep,\ep}^{2H-2}(x_2,x_1)-K_{\ep,\eta}^{2H-2}(x_2,x_1)\rcl K_{\ep,\ep}^{2H-2}(x_3,x_4)\\
+K_{\ep,\eta}^{2H-2}(x_2,x_1)\lcl K_{\ep,\ep}^{2H-2}(x_3,x_4)-K_{\ep,\eta}^{2H-2}(x_3,x_4)\rcl,
\end{multline*}
\begin{multline*}
\Dti^{\ep,\eta}_{x_1x_2x_3x_4} = \lcl K_{\ep,\ep}^{2H-2}(x_2,x_1)-K_{\ep,\eta}^{2H-2}(x_2,x_1)\rcl K_{\ep,\ep}^{2H-2}(x_4,x_3)\\
+K_{\ep,\eta}^{2H-2}(x_2,x_1)\lcl K_{\ep,\ep}^{2H-2}(x_4,x_3)-K_{\ep,\eta}^{2H-2}(x_4,x_3)\rcl.
\end{multline*}
Thanks to Lemma \ref{lem:lemme-de-base}, the treatment of $\Ati^{\ep,\eta}$ in the expression (\ref{eq:83}) becomes easy. Indeed, we are allowed to deform the expression
$$A^{\ep,\eta,1}=\int_s^t dx_1 \int_s^t dx_2 \int_s^t dx_3 \int_s^t dx_4 \, e^{-\xi(t-x_1)}e^{-\mu_1(x_1-x_3)}e^{-\xi(t-x_2)}e^{-\mu_2(x_2-x_4)} \Ati^{\ep,\eta,1}_{x_1x_2x_3x_4}$$
into
\begin{multline}\label{deformation-contour}
A^{\ep,\eta,1}=\int_{\ga_{(s,t)}} dz_1 \int_{\overline{\ga_{(s,t)}}} dw_1 \, e^{-\xi(t-z_1)} e^{-\xi(t-w_1)} \lcl K_{\ep,\ep}^{2H-2}(z_1,w_1)-K_{\ep,\eta}^{2H-2}(z_1,w_1)\rcl \\
\int_{\ga_{(s,t)}(z_1)} dz_2 \int_{\overline{\ga_{(s,t)}}(w_1)} dw_2 \, e^{-\mu_1(z_1-z_2)}e^{-\mu_2(w_1-w_2)} K_{\ep,\ep}^{2H-2}(z_2,w_2),
\end{multline}
where $\ga_{(s,t)}(z)$ stands for the path $\ga_{(s,t)}$ stopped at $z$, defined similarly to \cite[Proof of Theorem 3.4]{Un}. Hence, invoking (\ref{estimation-noyau-simple}) and (\ref{estimation-noyau-double}), we get
\begin{multline*}
\lln A^{\ep,\eta,1}\rrn \leq \int_{|\ga_{(s,t)}|} \lln dz\rrn \int_{|\overline{\ga_{(s,t)}}|} \lln dw\rrn \lln K_{\ep,\ep}^{2H-2}(z,w)-K_{\ep,\eta}^{2H-2}(z,w)\rrn\\
\int_{|\ga_{(s,t)}|} \lln dz\rrn \int_{|\overline{\ga_{(s,t)}}|} \lln dw\rrn \lln K_{\ep,\ep}^{2H-2}(z,w) \rrn \leq c \lln t-s \rrn^{4H-\al} \lln \ep-\eta \rrn^\al.
\end{multline*}
The same arguments hold for $\Ati^{\ep,\eta,2}$, as well as for $\Dti^{\ep,\eta}$.

\smallskip

The estimation for $\Bti^{\ep,\eta}$ and $\Cti^{\ep,\eta}$ is less obvious, since we must cope with integrals of the form
\begin{multline*}
B^{\ep,\eta,1}=\int_s^t dx_1 \int_s^t dx_2  \, e^{-\xi(t-x_1)}e^{-\xi(t-x_2)} \lcl  K_{\ep,\ep}^{2H-2}(x_1,x_2)-K_{\ep,\eta}^{2H-2}(x_1,x_2)\rcl\\
\int_s^{x_1} dx_3 \int_s^{x_2}dx_4 \, e^{-\mu_1(x_1-x_3)}e^{-\mu_2(x_2-x_4)} K_{\ep,\ep}^{2H-2}(x_4,x_3),
\end{multline*}
for which the complex deformation (\ref{deformation-contour}) is not allowed ($K_{\ep,\ep}^{2H-2}(w_2,z_2)$ would be ill-defined for small $\ep$, since $-i(w_2-z_2)$ might be negative, see \cite{Un} for a further explanation). In fact, the result is a consequence of the technical lemma below. Indeed, with our notations, (\ref{estimation-noyau-inverse}) is equivalent to 
$$\lln B^{\ep,\eta,1}_{ts} \rrn \leq c \lln t-s \rrn^{4H-\al} \ep^\al (1+\xi^2)(1+\mu_1^2)(1+\mu_2^2),$$
which, as $\int_0^\infty d\mu \, |\hphi(\mu)| (1+\mu^2) < \infty$, gives an accurate bound for our purposes. To conclude with, it only remains to observe that the reasoning which leads to (\ref{estimation-noyau-inverse}) can be easily adapted to $\Bti^{\ep,\eta,2}$. The term $\Cti^{\ep,\eta}$ is then handled with an argument of symmetry.

\smallskip

Putting all our estimates together, we have thus proved that
\beq\label{eq:88}
E[(\Xti_{ts}^{2,\Delta,(1,2)}(\xi))^2 ] \leq c \lln t-s \rrn^{4H-\al} \ep^\al \lcl 1+\xi^2\rcl,
\eeq
for a certain $\al>0$. 

\smallskip

Owing to inequalities (\ref{eq:82b}) and (\ref{eq:88}), the proof of Lemma \ref{lem:lemme-cinq} is now easily finished. We are thus only left with the proof of the following lemma:

\begin{lemma}
Let $s<t$, $\xi,\mu_1,\mu_2 >0$, and set
\begin{multline*}
Q_{ts}^{\ep,\eta}(\xi,\mu_1,\mu_2)=\int_s^t dx_1 \int_s^t dx_2 \, e^{-\xi(t-x_1)}e^{-\xi(t-x_2)} \lcl K_{\ep,\ep}^{2H-2}(x_1,x_2)-K_{\ep,\eta}^{2H-2}(x_1,x_2)\rcl\\
\int_s^{x_1}dx_3\int_s^{x_2} dx_4 \, e^{-\mu_1(x_1-x_3)}e^{-\mu_2(x_2-x_4)} K_{\ep,\ep}^{2H-2}(x_4,x_3).
\end{multline*}
Then, for any $\al < 4H-1$, we have
\begin{equation}\label{estimation-noyau-inverse}
\big| Q_{ts}^{\ep,\eta}(\xi,\mu_1,\mu_2) \big|
\leq c \lln t-s \rrn^{4H-\al}\ep^\al (1+\xi^2) \prod_{l=1}^2(1+\mu_l^2),
\end{equation}
where the constant $c$ does not depend on $s,t,\ep,\eta,\xi,\mu_1,\mu_2$.
\end{lemma}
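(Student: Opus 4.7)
The obstruction (already flagged by the authors in the discussion preceding this lemma) is that the kernel $K_{\ep,\ep}^{2H-2}(x_4,x_3)$ has its two arguments in the reverse order compared to what the natural complex deformation of the outer variables $(x_1\in\Pi^+,x_2\in\Pi^-)$ would induce on $(x_3,x_4)$. Specifically, the ordering constraints $x_3\le x_1$, $x_4\le x_2$ force $x_3$ onto a contour in $\Pi^+$ and $x_4$ onto one in $\Pi^-$, and with this choice $-i(x_4-x_3)+2\ep$ may acquire a negative real part, so $K_{\ep,\ep}^{2H-2}(x_4,x_3)$ is no longer analytic. A direct four-variable deformation analogous to (\ref{deformation-contour}) is therefore unavailable, and one must first desingularize the inner kernel before deforming.

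My plan is to first smooth the inner integral by integration by parts, and only afterwards perform a contour shift on the outer variables. Exploiting the primitive identity $\partial_{x_3}K_{\ep,\ep}^{2H-1}(x_4,x_3)=i(2H-1)K_{\ep,\ep}^{2H-2}(x_4,x_3)$, two successive integrations by parts in $x_3$ yield
$$\int_s^{x_1}e^{-\mu_1(x_1-x_3)}K_{\ep,\ep}^{2H-2}(x_4,x_3)\,dx_3 = \mathcal{B}^{(0)}(x_1,x_4)+\mu_1\mathcal{B}^{(1)}(x_1,x_4)+\mu_1^2\int_s^{x_1}e^{-\mu_1(x_1-x_3)}K_{\ep,\ep}^{2H}(x_4,x_3)\,dx_3,$$
where the boundary contributions $\mathcal{B}^{(j)}$ are linear combinations of $K_{\ep,\ep}^{2H-1+j}(x_4,x_1)$ and $e^{-\mu_1(x_1-s)}K_{\ep,\ep}^{2H-1+j}(x_4,s)$, while the remaining interior kernel $K_{\ep,\ep}^{2H}$ is bounded and Hölder continuous (since $2H\in(2/3,1)$). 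Repeating the procedure in $x_4$ produces an analogous decomposition in powers $\mu_2^k$, $k\in\{0,1,2\}$. Summing the nine resulting cross terms and using $\mu_1^a\mu_2^b\le(1+\mu_1^2)(1+\mu_2^2)$ for $a,b\in\{0,1,2\}$ produces exactly the required polynomial factor in $\mu_1,\mu_2$.

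For each of these nine summands, the dependence on $(x_3,x_4)$ is either through a boundary evaluation that decouples the two variables, or through the regular kernel $K_{\ep,\ep}^{2H}(x_4,x_3)$, and in both cases the outer double integral in $(x_1,x_2)$ becomes amenable to contour deformation: we shift $x_1$ onto $\ga_{(s,t)}$ and $x_2$ onto $\overline{\ga_{(s,t)}}$, and apply the pointwise interpolation
$$|K_{\ep,\ep}^{2H-2}(x_1,x_2)-K_{\ep,\eta}^{2H-2}(x_1,x_2)|\le c\,\ep^{\al}\,|{-i(x_1-x_2)+\ep+\eta}|^{2H-2-\al},\qquad\al\in(0,1),$$
which extracts the factor $\ep^{\al}$. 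Integrating the resulting kernel along the deformed contour via an analogue of (\ref{estimation-noyau-double}) applied to the exponent $2H-\al$ contributes $(t-s)^{2H-\al}$ from the outer integration, while the smoothed inner piece contributes an additional $(t-s)^{2H}$; combining them gives the total exponent $(t-s)^{4H-\al}$. The factor $(1+\xi^{2})$ arises analogously from one or two integrations by parts in $x_1,x_2$ against the exponentials $e^{-\xi(t-x_i)}$, or alternatively by the crude bound $e^{-\xi(t-x_i)}\le 1$ absorbed into a splitting $\xi\le 1$ vs.\ $\xi>1$.

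The main obstacle is the bookkeeping of the nine summands produced by the joint IBP expansion: each combination of boundary-versus-interior contributions must be estimated separately, with particular care for the boundary evaluations $K_{\ep,\ep}^{2H-1}(x_4,x_1)$ and $K_{\ep,\ep}^{2H-1}(x_4,s)$ which, although less singular than $K^{2H-2}$, are still non-smooth and require the complex deformation of Lemma \ref{lem:lemme-de-base} (at the appropriate exponent) when integrated against the outer difference. The condition $\al<4H-1$ enters precisely at the final step of combining the outer deformation estimate $(t-s)^{2H-\al}$ with the inner contribution $(t-s)^{2H}$: the exponent constraint needed for integrability along the contours $\ga_{(s,t)}\times\overline{\ga_{(s,t)}}$ at the perturbed power $2H-\al$ forces $4H-\al>1$, which is exactly the sharpness threshold stated in the lemma.
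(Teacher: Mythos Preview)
Your strategy (desingularize the inner kernel by integration by parts in $x_3,x_4$, then handle the outer $(x_1,x_2)$ integral) is exactly the paper's approach, and your reading of where the constraint $\al<4H-1$ enters is correct. There is, however, a genuine gap in the deformation step. After the integrations by parts, one of the boundary contributions is precisely $K^{2H}_{\ep,\ep}(x_2,x_1)$ (from $x_3=x_1$, $x_4=x_2$), and this kernel has its arguments in the \emph{same} reversed order that made the original $K^{2H-2}_{\ep,\ep}(x_4,x_3)$ problematic. On the contour $\ga_{(s,t)}\times\overline{\ga_{(s,t)}}$ of Lemma~\ref{lem:lemme-de-base}, the argument $-i(x_2-x_1)+2\ep$ crosses the cut $(-\infty,0]$ of $z\mapsto z^{2H}$: take $x_1=s+ib_1$, $x_2=s-ib_2$ on the vertical pieces with $b_1+b_2>2\ep$ (possible since $t-s>2\ep$), and the argument becomes the real number $2\ep-(b_1+b_2)<0$. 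The integrand is therefore not analytic along your proposed deformation, and Lemma~\ref{lem:lemme-de-base} does not apply to this term. The same obstruction affects boundary pieces such as $K^{2H}_{\ep,\ep}(s,x_1)$.

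The paper does not use the two-variable deformation for these terms. For the principal one (called $\Iti_{ts}(\xi)$), it changes variables to $u=x_1-x_2$, so that the product $[K^{2H-2}_{\ep,\ep}-K^{2H-2}_{\ep,\eta}](x_1,x_2)\,K^{2H}_{\ep,\ep}(x_2,x_1)$ becomes a function of $u$ alone; then it performs a \emph{one-dimensional} contour shift of $u$ around the origin (Figure~2), which keeps both factors analytic simultaneously. The mixed boundary terms $\doubletilde{II},\doubletilde{III}$ are treated by a further integration by parts in the outer variables before any deformation. Your outline would become correct if, for the reversed-order boundary pieces, you replace the appeal to Lemma~\ref{lem:lemme-de-base} either by this one-variable deformation, or by a direct real-line estimate: on $[s,t]^2$ one has $|K^{2H-2}_{\ep,\ep}-K^{2H-2}_{\ep,\eta}|\cdot|K^{2H}_{\ep,\ep}(x_2,x_1)|\le c(\ep-\eta)(|x_1-x_2|+\ep)^{4H-3}$, whose integral is $\le c(t-s)\ep^{4H-1}\le c\,\ep^{\al}(t-s)^{4H-\al}$ precisely when $\al<4H-1$.
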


\begin{proof}
First, notice that the estimation is obvious if $t-s \leq 2\ep$, since then $|K_{\ep,\ep}^{2H-2}(x_4,x_3)|$  $\leq \lln t-s \rrn^{2H-2}$ and 
$$|K_{\ep,\ep}^{2H-2}(x_1,x_2)-K_{\ep,\eta}^{2H-2}(x_1,x_2)| \leq c \, \ep^{2H-2-\al} \lln \ep-\eta \rrn^\al \leq c \lln t-s \rrn^{2H-2-\al} \ep^\al.$$
From now on, we thus assume that $2\ep < t-s$. The strategy in order to control our multiple integral $Q_{ts}^{\ep,\eta}(\xi,\mu_1,\mu_2)$ consists then in two main steps: (i) Handle the exponential weights by means of successive integrations by parts (recall that $X^{\ep}$ is a smooth process for a given $\ep>0$). (ii) Control the singularities of the fBm kernel by a convenient deformation of contour.

\smallskip

A first application of the integration by parts trick gives the following identity:
\begin{eqnarray}
\lefteqn{\int_s^{x_1} dx_3\int_s^{x_2}dx_4 \, e^{-\mu_1(x_1-x_3)}e^{-\mu_2(x_2-x_4)}K_{\ep,\ep}^{2H-2}(x_4,x_3) } \nonumber\\
&=& c_1\big\{ K_{\ep,\ep}^{2H}(x_2,x_1)-e^{-\mu_1(x_1-s)}K_{\ep,\ep}^{2H}(x_2,s)-e^{-\mu_2(x_2-s)}K_{\ep,\ep}^{2H}(s,x_1) \nonumber\\
& & \hspace{5cm}+e^{-\mu_1(x_1-s)}e^{-\mu_2(x_2-s)}(2\ep)^{2H} \big\}+c_2 \Rti_{x_1x_2}(\mu_1,\mu_2)\nonumber\\
&=& c_1 \big\{ A_{x_1x_2}^{2\ep}+\Bti_{x_1x_2}^{2\ep}(\mu_1)+\Cti_{x_1x_2}^{2\ep}(\mu_2)+\Dti_{x_1x_2}^{2\ep}(\mu_1,\mu_2) \big\}+c_2 \Rti_{x_1x_2}(\mu_1,\mu_2), \label{second-noyau}
\end{eqnarray}
where $\Rti$ can be further decomposed into $\Rti_{x_1x_2}(\mu_1,\mu_2)=\Rti^1_{x_1x_2}(\mu_1,\mu_2)+\Rti^2_{x_1x_2}(\mu_1)+\Rti^3_{x_1x_2}(\mu_1,\mu_2)$, with:
\bean
\Rti^1_{x_1x_2}(\mu_1,\mu_2)&=&c \, \mu_2 \int_s^{x_1} dx_3\int_s^{x_2}dx_4 \, e^{-\mu_1(x_1-x_3)}e^{-\mu_2(x_2-x_4)}K_{\ep,\ep}^{2H-1}(x_4,x_3)  \\
\Rti^2_{x_1x_2}(\mu_1)&=&c \, \mu_1 \int_s^{x_1} dx_3 \, e^{-\mu_1(_1-x_3)}K_{\ep,\ep}^{2H}(x_2,x_3)  \\
\Rti^3_{x_1x_2}(\mu_1,\mu_2)&=&c \, \mu_1 \int_s^{x_1}dx_3 \, e^{-\mu_1(x_1-x_3)}e^{-\mu_2(x_2-s)}K_{\ep,\ep}^{2H}(s,x_3).
\eean
We have thus proved that
$$
Q_{ts}^{\ep,\eta}(\xi,\mu_1,\mu_2)=\tilde I_{ts}(\xi) + \doubletilde{II}_{ts}(\xi,\mu_1,\mu_2)
+\doubletilde{III}_{ts}(\xi,\mu_1,\mu_2)+\doubletilde{IV}_{ts}(\xi,\mu_1,\mu_2)
+c_2 \Rti_{x_1x_2}(\mu_1,\mu_2),
$$
with
\bean
\tilde I_{ts}(\xi)&=&\int_s^t dx_1 \int_s^t dx_2 \, m_{x_1,x_2}^{\ep,\eta}(\xi)  K_{\ep,\ep}^{2H}(x_2,x_1)  \\
\doubletilde{II}_{ts}(\xi,\mu_1,\mu_2)&=&
\int_s^t dx_1\int_s^t dx_2 \,  m_{x_1,x_2}^{\ep,\eta}(\xi)\Bti_{x_1x_2}^{2\ep}(\mu_1,\mu_2)\\
\doubletilde{III}_{ts}(\xi,\mu_1,\mu_2)&=&
\int_s^t dx_1\int_s^t dx_2 \,  m_{x_1,x_2}^{\ep,\eta}(\xi)\Cti_{x_1x_2}^{2\ep}(\mu_1,\mu_2)\\
\doubletilde{IV}_{ts}(\xi,\mu_1,\mu_2)&=&
\int_s^t dx_1\int_s^t dx_2 \,  m_{x_1,x_2}^{\ep,\eta}(\xi)\Dti_{x_1x_2}^{2\ep}(\mu_1,\mu_2),
\eean
where we have set $m_{x_1,x_2}^{\ep,\eta}(\xi)= e^{-\xi(t-x_1)}e^{-\xi(t-x_2)} 
\{ K_{\ep,\ep}^{2H-2}(x_1,x_2)-K_{\ep,\eta}^{2H-2}(x_1,x_2) \}$.
We will now estimate these 4 terms separately.

\smallskip

To begin with, let us consider the case of $\Iti_{ts}(\xi)$: an elementary change of variables $(u=x_1-x_2,v=x_2)$ yields:
$$\Iti_{ts}(\xi)=\int_{-(t-s)}^{t-s} du \, f_{ts}^\xi(u) \lcl (-iu+2\ep)^{2H-2}-(-iu+\ep+\eta)^{2H-2} \rcl (iu+2\ep)^{2H},$$
where
$$f_{ts}^\xi(u)=\textbf{1}_{\{u\leq 0\} } \int_{s-u}^t dv \, e^{-\xi(t-u-v)}e^{-\xi(t-v)}+\textbf{1}_{\{ u\geq 0\} } \int_s^{t-u}dv \, e^{-\xi(t-u-v)}e^{-\xi(t-v)}.$$
It is easily seen that for $u\in (0,t-s)$,
$$f_{ts}^\xi(u)=\frac{1}{2\xi} \lcl e^{-\xi u}-e^{-\xi(t-s-u)}e^{-\xi(t-s)} \rcl =f_{ts}^\xi(-u),$$
so that
$$\Iti_{ts}(\xi) =2 \, \mbox{Re} \lp \int_0^{t-s}du \, f_{ts}^\xi(u) \lcl (-iu+2\ep)^{2H-2}-(-iu+\ep+\eta)^{2H-2} \rcl (iu+2\ep)^{2H} \rp.$$
Write the latter integral as
\begin{equation}\label{integrale-gamma-epsilon}
i^{2H} \int_{[-2i\ep,t-s-2i\ep]} dz \, f_{ts}^\xi(z+2i\ep) \lcl (-iz+4\ep)^{2H-2}-(-iz+3\ep+\eta)^{2H-2} \rcl z^{2H},
\end{equation}
and deform the line $[-2i\ep,t-s-2i\ep]$ into a four-part contour $\ga_{\ep,(t-s)}=\ga^1_{\ep,(t-s)} \cup \ga^2_{\ep,(t-s)} \cup \ga^3_{\ep,(t-s)} \cup \ga^4_{\ep,(t-s)}$, where $\ga^1_{\ep,(t-s)}$ runs along the half-circle centered at the origin from $-2i\ep$ to $2i\ep$ in $\{ z : \, \mbox{Re} \, z \geq 0\}$, $\ga^2_{\ep,(t-s)}$ is the line $[2i\ep,i(t-s)]$, $\ga^3_{\ep,(t-s)}$ the line $[i(t-s),t-s+i(t-s)]$ and $\ga^4_{\ep,(t-s)}$ the line $[t-s+i(t-s),t-s-2i\ep]$.

\begin{figure}[!h]
\centering
\begin{pspicture}(0,0)(10,10)
\psline{->}(1,1)(1,9) 
\psline{->}(0.2,4)(9,4) 
\psarc[linestyle=dashed]{->}(1,4){1}{270}{1}  
\psarc[linestyle=dashed](1,4){1}{0}{90} 
\psline[linestyle=dashed,ArrowInside=->,ArrowInsidePos=0.5](1,5)(1,8)(5,8)(5,3)
\psline[ArrowInside=->,ArrowInsidePos=0.5](1,3)(5,3)
\rput(0.4,3){$-2i\ep$}
\rput(0.2,8){$i(t-s)$}
\rput(0.6,5){$2i\ep$}
\rput(5.6,3.7){$t-s$}
\rput(2.4,4){$\ga^1$}
\rput(1.4,6.4){$\ga^2$}
\rput(3,7.6){$\ga^3$}
\rput(4.6,5.6){$\ga^4$}
\end{pspicture}
\caption{Deformation of $[-2i\ep,t-s-2i\ep]$}
\end{figure}

Using the decomposition 
$$e^{-\xi u}-e^{-\xi(t-s-u)}e^{-\xi(t-s)}=e^{-\xi(t-s-u)}(1-e^{-\xi(t-s)})-(e^{-\xi(t-s-u)}-1)+(e^{-\xi u}-1),$$                 
it is readily checked that $\sup_{z \in \ga_{\ep,(t-s)}} |f_{ts}^\xi(z+2i\ep)| \leq c \, \lln t-s \rrn$. The estimation of (\ref{integrale-gamma-epsilon}) on each of the $\ga_{\ep,(t-s)}^i$'s is then a matter of elementary calculations, that we proceed to detail now: for $\ga_{\ep,(t-s)}^1$, one has, if $y \in [\eta,\ep]$ and $\theta \in [-\pi/2,\pi/2]$, that $|-2i\ep\, e^{i\theta}+3\ep+y | \geq \ep$. Thanks to the fact that $\ep<(t-s)/2$, this leads to
\begin{multline*}
\Big| \int_{\ga_{\ep,(t-s)}^1} dz \, f_{ts}^\xi(z+2i\ep) \lcl (-iz+4\ep)^{2H-2}-(-iz+3\ep+\eta)^{2H-2} \rcl z^{2H} \Big|\\
\leq c  \lln t-s \rrn \ep^{4H-1} \leq c  \lln t-s \rrn^{4H-\al}\ep^\al.
\end{multline*}
For the upper bound on the path $\ga_{\ep,(t-s)}^2$, notice that if $x\in [2\ep,t-s]$, $y\in [\eta,\ep]$, then of course $|x+3\ep+y| \geq x$. Thus, for a small enough positive parameter $\al$, we obtain:
\bean
\lefteqn{\Big| \int_{\ga_{\ep,(t-s)}^2} dz \, f_{ts}^\xi(z+2i\ep) \lcl (-iz+4\ep)^{2H-2}-(-iz+3\ep+\eta)^{2H-2} \rcl z^{2H} \Big|}\\
&\leq & c  \lln t-s \rrn \int_{2\ep}^{t-s}dx \, x^{2H} (x^{2H-2-\al} \lln \ep-\eta \rrn^\al) \\
& \leq & c  \lln t-s \rrn^{4H-\al} \ep^\al \int_0^1 dx \, x^{4H-2-\al} \ \leq \ c  \lln t-s \rrn^{4H-\al}\ep^\al,
\eean
since, by hypothesis, $2+\al-4H <1$. For $\ga_{\ep,(t-s)}^3$, start with $\lln t-s-iu+3\ep+y\rrn \geq \lln t-s\rrn$ if $u\in [0,t-s]$, $y\in [\eta,\ep]$, to deduce
\bean
\lefteqn{\Big| \int_{\ga_{\ep,(t-s)}^3} dz \, f_{ts}^\xi(z+2i\ep) \lcl (-iz+4\ep)^{2H-2}-(-iz+2\ep+\eta)^{2H-2} \rcl z^{2H} \Big|}\\
&\leq & c  \lln t-s \rrn \int_0^{t-s} du \, \lln t-s \rrn^{2H-2-\al}\lln \ep-\eta \rrn^\al \lln i(t-s)+u\rrn^{2H} \ \leq \ c \lln t-s\rrn^{4H-\al}\ep^\al.
\eean
Finally, as far as $\ga_{\ep,(t-s)}^4$ is concerned, observe that for any $v \in [0,t-s+2\ep]$, $y\in [\eta,\ep]$, $\lln -i(t-s)+(t-s)-v+3\ep+y\rrn \geq \lln t-s \rrn$, and thus
\bean
\lefteqn{
\Big| \int_{\ga_{\ep,(t-s)}^4} dz \, f_{ts}^\xi(z+2i\ep) \lcl (-iz+4\ep)^{2H-2}-(-iz+2\ep+\eta)^{2H-2} \rcl z^{2H} \Big|}\\
&\leq & c \lln t-s \rrn \int_0^{t-s+2\ep}dv \, \lln t-s \rrn^{2H-2-\al}\lln \ep-\eta \rrn^\al \lln t-s+i(t-s)-iv\rrn^{2H} \\
&\leq& \ c  \lln t-s\rrn^{4H-\al}\ep^\al.
\eean
Therefore, these four elementary bounds, computed on the paths $\ga_{\ep,(t-s)}^1$ up to $\ga_{\ep,(t-s)}^4$, allow to claim that $| \Iti_{ts}(\xi)| \leq c \lln t-s \rrn^{4H-\al}\ep^\al$.

\smallskip

Consider now the term $\doubletilde{IV}_{ts}(\xi,\mu_1,\mu_2)$: it is readily checked that
\begin{multline*}
\doubletilde{IV}_{ts}(\xi,\mu_1,\mu_2)=(2\ep)^{2H}\int_s^tdx_1\int_s^t dx_2 \, e^{-\xi(t-x_1)}e^{-\xi(t-x_2)}e^{-\mu_1(x_1-s)}e^{-\mu_2(x_2-s)}\\
\lcl (-i(x_1-x_2)+2\ep)^{2H-2}-(-i(x_1-x_2)+\ep+\eta)^{2H-2} \rcl,
\end{multline*}
and perform the same deformation as in Lemma \ref{lem:lemme-de-base} to deduce
$$| \doubletilde{IV}_{ts}(\xi,\mu_1,\mu_2)| \leq c \, (2\ep)^{2H} \lln t-s \rrn^{2H-\al} \lln \ep-\eta \rrn^\al \leq c \lln t-s \rrn^{4H-\al} \ep^\al.$$

\smallskip

In order to deal with $\doubletilde{II}_{ts}(\xi,\mu_1,\mu_2)$, notice that
\begin{multline*}
\doubletilde{II}_{ts}(\xi,\mu_1,\mu_2)
=\int_s^t dx_1\int_s^t dx_2 \, e^{-\xi(t-x_1)}e^{-\xi(t-x_2)}e^{-\mu_1(x_1-s)}  \\
K_{\ep,\ep}^{2H}(x_2,s) \lcl K_{\ep,\ep}^{2H-2}(x_1,x_2)-K_{\ep,\eta}^{2H-2}(x_1,x_2) \rcl,
\end{multline*}
and write, by means of another integration by parts,
\begin{multline*}
\int_s^t dx_1 \, e^{-\xi(t-x_1)}e^{-\mu_1(x_1-s)} \lcl K_{\ep,\ep}^{2H-2}(x_1,x_2)-K_{\ep,\eta}^{2H-2}(x_1,x_2) \rcl\\
= c \Big[ e^{-\mu_1(t-s)}\lcl K_{\ep,\ep}^{2H-1}(t,x_2)-K_{\ep,\eta}^{2H-1}(t,x_2) \rcl
-e^{-\xi(t-s)} \lcl K_{\ep,\ep}^{2H-1}(s,x_2)-K_{\ep,\eta}^{2H-1}(s,x_2) \rcl \Big]\\+\Rdeti_{x_2}^4(\xi,\mu_1),
\end{multline*}
with
$$\Rdeti_{x_2}^4(\xi,\mu_1)=c \, (\xi-\mu_1) \int_s^t dx_1 \, e^{-\xi(t-x_1)}e^{-\mu_1(x_1-s)}
\lcl K_{\ep,\ep}^{2H-1}(x_1,x_2)-K_{\ep,\eta}^{2H-1}(x_1,x_2)\rcl.$$
Now, use the same strategy as for $\doubletilde{I}_{ts}(\xi,\mu_1,\mu_2)$, which consists here in writing

\begin{multline*}
\int_s^t dx_2 \, e^{-\mu_1(t-s)}e^{-\xi(t-x_2)} K_{\ep,\ep}^{2H}(x_2,s) \lcl K_{\ep,\ep}^{2H-1}(t,x_2)-K_{\ep,\eta}^{2H-1}(t,x_2) \rcl\\
=e^{-\mu_1(t-s)}\int_{[2i\ep,t-s+2i\ep]} dz \, e^{-\xi(t-s-z)}(-i)^{2H}z^{2H}\\
\lcl (-i(t-s-z)+4\ep)^{2H-1}-(-i(t-s-z)+3\ep+\eta)^{2H-1} \rcl
\end{multline*}
and deforming the line $[2i\ep,t-s+2i\ep]$ into 
$$\ga_{\ep,(t-s)}=\ga_{\ep,(t-s)}^1 \cup [-2i\ep,-i(t-s)] \cup [-i(t-s),t-s-i(t-s)] \cup [t-s-i(t-s),t-s+2i\ep],$$
with $\ga_{\ep,(t-s)}^1$ the half-circle centered at the origin from $2i\ep$ to $-2i\ep$ in $\{z: \, \mbox{Re}\, z \geq 0\}$. The same kind of elementary estimations as for $\doubletilde{I}_{ts}(\xi,\mu_1,\mu_2)$ then lead to
\begin{multline*}
\Big| \int_s^t dx_2 \, e^{-\mu_1(t-s)}e^{-\xi(t-x_2)} K_{\ep,\ep}^{2H}(x_2,s) 
\lcl  K_{\ep,\ep}^{2H-1}(t,x_2)- K_{\ep,\eta}^{2H-1}(t,x_2) \rcl \Big|  \\
\leq c \lln t-s \rrn^{4H-\al} \ep^\al.
\end{multline*}
Likewise,
\begin{multline*}
\Big| \int_s^t dx_2 \, e^{-\xi(t-s)}e^{-\xi(t-x_2)}  K_{\ep,\ep}^{2H}(x_2,s) 
\lcl  K_{\ep,\ep}^{2H-1}(s,x_2)- K_{\ep,\eta}^{2H-1}(s,x_2) \rcl \Big|  \\
\leq c \lln t-s \rrn^{4H-\al} \ep^\al,
\end{multline*}
and accordingly
$$\Big| \doubletilde{II}_{ts}(\xi,\mu_1,\mu_2)-\int_s^t dx_2 \, e^{-\xi(t-x_2)} K_{\ep,\ep}^{2H}(x_2,s) \Rdeti_{x_2}^4(\xi,\mu_1) \Big|
\leq c \lln t-s\rrn^{4H-\al} \ep^\al.$$
Those arguments can be easily adapted to $\doubletilde{III}_{ts}(\xi,\mu_1,\mu_2)$ to get
$$\Big| \doubletilde{III}_{ts}(\xi,\mu_1,\mu_2)-\int_s^t dx_1 \, e^{-\xi(t-x_1)} K_{\ep,\ep}^{2H}(s,x_1)\Rdeti_{x_1}^5(\xi,\mu_2) \Big| \leq c \lln t-s \rrn^{4H-\al}\ep^\al,$$
where
\begin{multline*}
\Rdeti_{x_1}^5(\xi,\mu_2)=c (\xi-\mu_2) \int_s^t dx_2 \, e^{-\xi(t-x_2)}e^{-\mu_2(x_2-s)}
\lcl  K_{\ep,\ep}^{2H-1}(x_1,x_2)- K_{\ep,\eta}^{2H-1}(x_1,x_2)\rcl.
\end{multline*}

We finally have to cope with the remainders $\Rti, \Rdeti^4,\Rdeti^5$. Owing to the higher regularity of those terms (as regards the kernels), it is rather clear that simple integration by parts should be sufficient to reach the expected bound.

\smallskip

Consider for instance the case of $\Rdeti^2(\xi,\mu_1)$ defined by:
\begin{eqnarray*}
\Rdeti^2(\xi,\mu_1)&\triangleq&\int_s^t dx\int_s^t dy \, e^{-\xi(t-x)}e^{-\xi(t-y)} \lcl K_{\ep,\ep}^{2H-2}(x,y)-K_{\ep,\eta}^{2H-2}(x,y) \rcl \Rti_{xy}^2(\mu_1)\\
&=&c \, \mu_1 \int_s^t dx\int_s^t dy \, e^{-\xi(t-x)}e^{-\xi(t-y)} \lcl K_{\ep,\ep}^{2H-2}(x,y)-K_{\ep,\eta}^{2H-2}(x,y) \rcl \phi^{\mu_1}(x,y),
\end{eqnarray*}
with $\phi^{\mu_1}(x,y)=\int_s^x du \, e^{-\mu_1(x-u)}K_{\ep,\ep}^{2H}(y,u)$. Another integration by parts yields:
\begin{align*}
&\int_s^t dy \, e^{-\xi(t-y)} \lcl K_{\ep,\ep}^{2H-2}(x,y)-K_{\ep,\eta}^{2H-2}(x,y) \rcl \phi^{\mu_1}(x,y)\\
=&c \Big[ \lcl K_{\ep,\ep}^{2H-1}(x,t)-K_{\ep,\eta}^{2H-1}(x,t) \rcl \phi^{\mu_1}(x,t)\\
&\hspace{5cm}-e^{-\xi(t-s)} \lcl K_{\ep,\ep}^{2H-1}(x,s)-K_{\ep,\eta}^{2H-1}(x,s)\rcl \phi^{\mu_1}(x,s) \Big]\\
&-c \int_s^t dy \lcl K_{\ep,\ep}^{2H-1}(x,y)-K_{\ep,\eta}^{2H-1}(x,y) \rcl
e^{-\xi(t-y)} \lcl \xi \, \phi^{\mu_1}(x,y)+ \frac{\partial \phi^{\mu_1}}{\partial y}(x,y) \rcl,
\end{align*}
and thus, plugging this expression into the definition of $\Rdeti^2$ and integrating by parts again, we obtain
\begin{align}\label{rdeti-2}
&\Rdeti^2(\xi,\mu_1)  \\
&=c \, \mu_1
\Big\{ \big[ \lcl (2\ep)^{2H}-(\ep+\eta)^{2H}\rcl \phi^{\mu_1}(t,t)-e^{-\xi(t-s)}\lcl K_{\ep,\ep}^{2H}(s,t)-K_{\ep,\eta}^{2H}(s,t) \rcl \phi^{\mu_1}(s,t) \big] \nonumber\\
&-e^{-\xi(t-s)}\big[ \lcl K_{\ep,\ep}^{2H}(t,s)-K_{\ep,\eta}^{2H}(t,s) \rcl \phi^{\mu_1}(t,s)  
-e^{-\xi(t-s)}\lcl (2\ep)^{2H}-(\ep+\eta)^{2H}\rcl \phi^{\mu_1}(s,s)\big] \Big\} \nonumber\\
&+\Rdeti^{2,1}(\xi,\mu_1)  +\Rdeti^{2,2}(\xi,\mu_1), \nonumber
\end{align}
with
\begin{eqnarray*}
\Rdeti^{2,1}(\xi,\mu_1)&=&c \, \mu_1 \, \xi \int_s^t dx\int_s^t dy \, q_{xy}^{\ep,\eta}(\xi)
\, \phi^{\mu_1}(x,y)  \\
 \Rdeti^{2,2}(\xi,\mu_1)&=&c \, \mu_1  \int_s^t dx\int_s^t dy \, q_{xy}^{\ep,\eta}(\xi) \,
 \frac{\partial \phi^{\mu_1}}{\partial y}(x,y),
\end{eqnarray*}
where we have set $q_{xy}^{\ep,\eta}(\xi)=e^{-\xi(t-x)}e^{-\xi(t-y)} \{ K_{\ep,\ep}^{2H-1}(x,y)-K_{\ep,\eta}^{2H-1}(x,y)\}$. Let us now integrate those last two expressions by parts with respect to $x$: we obtain
\begin{align*}
&\Rdeti^{2,1}(\xi,\mu_1)  \\
=&c_1 \, \mu_1 \, \xi \int_s^t dy \, e^{-\xi(t-y)}
\Big[ \lcl K_{\ep,\ep}^{2H}(t,y)-K_{\ep,\eta}^{2H}(t,y)\rcl \phi^{\mu_1}(t,y)  \\
&\hspace{7cm}
-e^{-\xi(t-s)} \lcl K_{\ep,\ep}^{2H}(s,y)-K_{\ep,\eta}^{2H}(s,y)\rcl \phi^{\mu_1}(s,y) \Big]\\
&+c_2 \, \mu_1 \, \xi^2 \int_s^t dx \int_s^t dy \, e^{-\xi(t-x)}e^{-\xi(t-y)} \lcl K_{\ep,\ep}^{2H}(x,y)-K_{\ep,\eta}^{2H}(x,y)\rcl \phi^{\mu_1}(x,y)\\
&+c_3 \, \mu_1 \, \xi \int_s^t dx \int_s^t dy \, e^{-\xi(t-x)}e^{-\xi(t-y)} \lcl  K_{\ep,\ep}^{2H}(x,y)-K_{\ep,\eta}^{2H}(x,y)\rcl \frac{\partial \phi^{\mu_1}}{\partial x}(x,y),
\end{align*}
and the expression for $\Rdeti^{2,2}$ is
\begin{align*}
&\Rdeti^{2,2}(\xi,\mu_1)\\
=&c_1 \, \mu_1  \int_s^t dy \, e^{-\xi(t-y)}
\Big[ \lcl K_{\ep,\ep}^{2H}(t,y)-K_{\ep,\eta}^{2H}(t,y)\rcl \frac{\partial\phi^{\mu_1}}{\partial y}(t,y)\\
&\hspace{7cm}-e^{-\xi(t-s)} \lcl K_{\ep,\ep}^{2H}(s,y)-K_{\ep,\eta}^{2H}(s,y)\rcl \frac{\partial\phi^{\mu_1}}{\partial y}(s,y) \Big]\\
&+c_2 \, \mu_1 \, \xi \int_s^t dx \int_s^t dy \, e^{-\xi(t-x)}e^{-\xi(t-y)} \lcl K_{\ep,\ep}^{2H}(x,y)-K_{\ep,\eta}^{2H}(x,y)\rcl \frac{\partial\phi^{\mu_1}}{\partial y}(x,y)\\
&+c_3 \, \mu_1  \int_s^t dx \int_s^t dy \, e^{-\xi(t-x)}e^{-\xi(t-y)} \lcl K_{\ep,\ep}^{2H}(x,y)-K_{\ep,\eta}^{2H}(x,y)\rcl \frac{\partial^2 \phi^{\mu_1}}{\partial y\partial x}(x,y).
\end{align*}

The following (easy) estimations come then into play: whenever $x,y\in[s,t]$, we have
$$|\phi^{\mu_1}(x,y)| \leq c \lln t-s\rrn^{2H+1} \ , \ \lln \frac{\partial \phi^{\mu_1}}{\partial x}\rrn \leq c \lln t-s\rrn^{2H}\lcl 1+\mu_1\rcl \ , \ \lln \frac{\partial \phi^{\mu_1}}{\partial y}\rrn \leq c \lln t-s\rrn^{2H}\lcl 1+\mu_1\rcl,$$
and if $a\in \R$, $\lln K_{\ep,\ep}^{2H}(a,0)-K_{\ep,\eta}^{2H}(a,0)\rrn \leq c \, \ep^{2H-1}\lln \ep-\eta \rrn \leq c \, \ep^{2H}$. Besides,
$$\frac{\partial^2 \phi^{\mu_1}}{\partial y\partial x}(x,y)=c \lcl K_{\ep,\ep}^{2H-1}(y,x)-\mu_1\int_s^x du \, e^{-\mu_1(x-u)}K_{\ep,\ep}^{2H-1}(y,u) \rcl.$$
Going back to (\ref{rdeti-2}), the previous estimations finally give rise to
$$|\Rdeti^2(\xi,\mu_1)| \leq c \, \ep^{2H} \lln t-s \rrn^{2H+1} \lcl \mu_1+\mu_1 \xi+\mu_1\xi^2+\mu_1^2\xi+\mu_1^2 \rcl,$$
which leads to the expected bound since $2H+1 >4H-\al$ and $2H > \al$.

\smallskip

The same arguments enable to handle $\Rti^1,\Rti^3,\Rdeti^4,\Rdeti^5$, which achieves the proof.

\end{proof}

\medskip

\noindent
{\bf Acknowledgment:}
We wish to thank M. Gubinelli for introducing us to Stroock's method for the proof of GRR type results. We also had a great benefit of some conversations with J. Unteberger concerning the complex analysis method for estimating iterated integrals of the fBm.

\end{document}